\setlist[itemize]{itemsep=0cm}
\setlist[enumerate]{itemsep=0cm}
\newcommand{\G}{\Sigma}
\newcommand{\CC}{\mathcal{C}}
\newcommand{\F}{\mathcal{F}}
\newcommand{\Gaug}{\mathrm{Gauge}}
\newcommand{\g}{\mathfrak{g}}
\newcommand{\h}{\mathfrak{h}}
\renewcommand{\k}{\mathfrak{k}}
\newcommand{\z}{\mathfrak{z}}
\newcommand{\RR}{\mathbb{R}}
\newcommand{\NN}{\mathbb{N}}
\newcommand{\tto}{\rightrightarrows}
\newcommand{\BB}{\mathbf{X}}
\newcommand{\then}{\Rightarrow}
\newcommand{\gl}{\mathfrak{gl}}
\newtheorem{definition}{Definition}[section]
\newtheorem{theorem}[definition]{Theorem}
\newtheorem{proposition}[definition]{Proposition}
\newtheorem{lemma}[definition]{Lemma}
\newtheorem{corollary}[definition]{Corollary}
\newtheorem*{introtheorem}{Theorem}
\theoremstyle{definition}
\newenvironment{remark}
  {\pushQED{\qed}\remarkx}
  {\popQED\endremarkx}
\newenvironment{example}
  {\pushQED{\qed}\examplex}
  {\popQED\endexamplex}
\newenvironment{axiom}[1]{%
  \manualtheoreminner
}{\endmanualtheoreminner}
\newcommand{\Hei}{\mathbf{Hei}}
\newcommand{\dom}{\text{Dom}}
\newcommand{\Ima}{\text{Im}}
\DeclareMathOperator{\Diff}{Diff}
\DeclareMathOperator{\germ}{germ}
\DeclareMathOperator{\Hom}{Hom}
\DeclareMathOperator{\Germ}{Germ}
\DeclareMathOperator{\Lie}{Lie}
\DeclareMathOperator{\Rep}{Rep}
\DeclareMathOperator{\pr}{pr}
\DeclareMathOperator{\id}{id}
\DeclareMathOperator{\Fr}{Fr}
\DeclareMathOperator{\Ad}{Ad}
\DeclareMathOperator{\GL}{GL}
\DeclareMathOperator{\Sp}{Sp}
\begin{document}

\title{A groupoid approach to transitive differential geometry}

\author{Luca Accornero
%\address{Depart. of Math., Utrecht University, The Netherlands}
%\email{l.accornero@uu.nl}
\footnote{Departement Wiskunde, KU Leuven, Belgium, \url{luca.accornero@kuleuven.be} }
\and
Francesco Cattafi \footnote{Institut f\"ur Mathematik, Julius-Maximilians-Universit\"at W\"urzburg, Germany, \url{francesco.cattafi@mathematik.uni-wuerzburg.de} }}
%\author{Francesco Cattafi}
%\address{Institut f\"ur Mathematik, Julius-Maximilians-Universit\"at W\"urzburg, Germany}
%\email{francesco.cattafi@mathematik.uni-wuerzburg.de}

\maketitle

\begin{abstract}
%This paper fits in programme of exploring the framework of principal actions of Lie groupoids equipped with a multiplicative object as a tool to encode the abstract properties of geometric structures on manifolds. 

%We discuss in details the notion of Pfaffian Morita equivalence and we prove that any transitive Pfaffian groupoid is Pfaffian Morita equivalent to a Lie group equipped with a right invariant and left equivariant form valued in a representation.

This work is a spin-off of an on-going programme \cite{INPROGRESS} which aims at revisiting the original studies of Lie and Cartan on pseudogroups and geometric structures from a modern perspective.

We encode geometric structures induced by transitive Lie pseudogroups into principal $G$-bundles equipped with a transversally parallelisable foliation generated by a subalgebra of $\g$, called {\it Cartan bundles}. %{\color{red} What do you think?}
% equipped with a vertical transversally parallelisable equivariant foliation, called {\it Cartan bundles}. \textcolor{red}{sounds too complicated (too many adjectives)}
Our approach is complementary to~\cite{FRANCESCOPAP} and is based on Morita equivalence of Lie groupoids.

After identifying the main examples and properties, we develop a notion of {\it flatness with respect to a Lie algebra}, which encompasses the classical integrability of $G$-structures, the flatness of Cartan geometries, as well as the integrability of contact structures.
\end{abstract}

\begin{center}
 \textbf{MSC2020}: 58H05, 58A15, 53C10
\end{center}

%58H05 Pseudogroups and differentiable groupoids

%53Cxx Global differential geometry
%53C10 G-structures
%53C15 General geometric structures on manifolds (almost complex, almost product structures, etc.)
%53D17 Poisson manifolds; Poisson groupoids and algebroids

%58A10 Differential forms in global analysis
%58A15 Exterior differential systems (Cartan theory)
%58A17 Pfaffian systems
%58A20 Jets in global analysis

\begin{center}
 \textbf{Keywords}: Lie pseudogroups, Cartan geometries, $G$-structures, contact structures, transitivity, integrability
\end{center}

\tableofcontents

\section*{Introduction}

\addcontentsline{toc}{section}{Introduction}

%As the title suggests, this paper considers similar problems as those faced by Guillemin and Sternberg in \cite{GUILLEMINSTERNBERG}, but adopts a different approach.

The aim of this paper is to provide a model for {\it transitive differential geometry}, in the sense discussed by Guillemin and Sternberg~\cite{GUILLEMINSTERNBERG}. %, but adopting the approach of Lie groupoids.
The main subject of our study is the notion of {\it Cartan bundle}, which has been recently introduced by the second author in \cite{FRANCESCOPAP}. Here we rediscover it with different methods, making use of the Morita theory for a class of Lie groupoids equipped with special vector-valued 1-forms. We have discussed such theory, in the transitive case, in \cite{AccorneroCattafi1}, and we apply here its main results.

Both the points of view and the tools are part of an on-going programme conceived by Marius Crainic, which aims at revisiting the original studies of Lie \cite{LieEngel88TrGroupAll} and Cartan \cite{CARTANINFINITEGROUPS} on pseudogroups and geometric structures from a modern perspective. The first output of such long-term programme was the PhD thesis of Mar\'ia Amelia Salazar in 2013 \cite{MARIA}, followed by the PhD theses of Ori Yudilevich \cite{ORI} and of the two authors \cites{FRANCESCO, LUCA}. All the main outcomes will be collected in the forthcoming monograph \cite{INPROGRESS}, where also this paper will be embedded.

%which lay at the ground of this work. 

\subsection*{Motivation and background}

One of our motivations for this paper was to better understand and further develop the (not very well-known) concept of {\it integrability} of a $G$-structure {\it with respect to a Lie algebra}, introduced in the 80's by Albert and Molino \cite{MOLINOINTEGR}. In order to tackle this problem, we had to zoom out and adopt a different approach to transitive geometries involving Lie groupoids.

Accordingly, our starting point goes back to the works by Lie~\cite{LieEngel88TrGroupAll} and Cartan~\cite{CARTANINFINITEGROUPS} on geometric structures with a ``homogeneity'' property: any two points can be connected by a local symmetry of the structure. In modern language, this ``homogeneity'' assumption -- called {\it transitivity} by Guillemin and Sternberg~\cite{GUILLEMINSTERNBERG} -- allows one to encode geometric structures into Lie groups and principal bundles~\cite{CHERN,STERNBERG,GUILLEMINSTERNBERG,SHARPE,ALEKSEEVSKYMICHOR}, or, infinitesimally, in terms of Lie algebras~\cite{GUILLEMINSTERNBERG,SingerSternberg65}.

On the other hand, the above-mentioned modern reformulation \cite{INPROGRESS} of Lie's and Cartan's work provides the suitable framework to describe geometric structures using Lie {\it groupoids} and principal groupoid bundles. The idea of using groupoids to encode symmetries can be traced back to Ehresmann~\cite{Ehresmann53} and Haefliger~\cite{HAEFLIGERGAMMASTRUCTURES}. However, the key novel insight from~\cite{MARIA} is the remark that structure which controls the PDEs underlying the symmetries is compatible with the groupoid structure. This led in~\cite{MARIA} to the introduction of {\it Pfaffian groupoids} -- i.e.\ Lie groupoids with a compatible ``PDE-structure''. In turn, compatible principal actions of these groupoids were successfully used in~\cite{FRANCESCO} for studying geometric structures on manifolds, by means of {\it principal Pfaffian bundles}.

A question that naturally arises is how to recover the classical group-theoretical approaches from the Pfaffian one. It is not so hard to guess an answer: given a {\it transitive} groupoid, one can look at its isotropy group at any point -- due to transitivity, the choice of point is irrelevant. All groupoid-theoretical constructions, including principal actions, can be rephrased in terms of the isotropy group. The more conceptual way of describing this ``passage to the isotropy group'' involves the classical notion of {\it Morita equivalence} -- see e.g.~\cite{MATIAS}. 

%A Lie groupoid is transitive if and only if it is Morita equivalent to its isotropy group; and there is a one to one correspondence betwwen the principal bundles (up to isomorphisms) of Morita equivalent Lie groupoids.   

To make this reasoning complete, however, one needs to carefully keep track of the extra ``Pfaffian structure'' mentioned above. For transitive groupoids, this was done in~\cite{AccorneroCattafi1}. A more complete and thorough treatment will be available in the monograph~\cite{INPROGRESS}, but for our purposes the transitive case that we dealt with in~\cite{AccorneroCattafi1} is sufficient. 

%In ~\cite{AccorneroCattafi1} we made use of the notion of Pfaffian Morita equivalence introduced in~\cite{FRANCESCO} and proved that principal {\it Pfaffian} bundles of Pfaffian Morita equivalent Pfaffian groupoids are in one to one correspondence (up to isomorphisms). Moreover, the correct notion of {\it Pfaffian isotropy group} is defined.

As a result, in this paper we are able to construct models for transitive differential geometries in terms of principal Pfaffian actions of Pfaffian groups: these models are precisely the {\it Cartan bundles} introduced in~\cite{FRANCESCOPAP} and re-discovered in~\cite{AccorneroCattafi1}. Here, by ``model'' for transitive differential geometry we mean the same as the authors of~\cite{GUILLEMINSTERNBERG}: that is, an abstract machinery suited to describe the possible transitive structures up to a certain notion of equivalence. 

The main advantage of our approach is the very close relationship with the groupoid description, which not only is more general but offers interesting insights in the transitive setting as well.

\subsection*{Structure of the paper and main contributions}

In the first section we introduce the classes of geometric structures that we aim to study. In doing so, we present the modern version of Lie's ``continuous transformation groups'' of symmetries, that is {\it Lie pseudogroups} $\Gamma$, see Definition~\ref{def:Lie_pseudo}. We then describe the rich structure of the jet space $J^k\Gamma$, and encode geometric structures in terms of principal $J^k \Gamma$-bundles, following~\cite{MARIA, ORI,FRANCESCO}. 

In the short second section we provide the basics on Pfaffian groupoids~\ref{def_Pfaffian_groupoid-form}, that we need in the rest of the paper. The guiding example is given by jet groupoids of Lie pseudogroups together with their Cartan forms. We then discuss principal Pfaffian bundles~\ref{def:princ-pfaff-bundles} to encode geometric structures. We warn the reader that the material presented here is by no means complete: we refer to the PhD theses~\cite{MARIA,ORI,FRANCESCO}, and especially to the upcoming monograph~\cite{INPROGRESS}, for a broader discussion. %Most of all, we claim no originality.

We then recall some of the results presented in~\cite{AccorneroCattafi1}, namely the description of Pfaffian groups and the concept of Pfaffian isotropy, as well as the properties of Pfaffian Morita equivalence for transitive groupoids. The outcome is that studying geometric structures where the underlying pseudogroup is transitive is equivalent to studying principal Pfaffian actions of Pfaffian groups (Corollary \ref{corollary_combination_props}).

Such principal actions are called {\it Cartan bundles} (Definition \ref{def:Cartan-bundle}) and are discussed in the third section, where the novel material starts. %The section is mostly independent in technicalities from the rest of the paper, but it is extremely dependent in motivation. 
In Section \ref{exm:G-struct-as-C-bundles} we start by exploring the relevant examples: $G$-structures of first and higher order, as well as Cartan geometries, which provide known approaches to transitive differential geometry~\cite{CHERN,STERNBERG,SHARPE}. We also treat fibres of transitive Pfaffian groupoids; this example recovers the original definition of Cartan bundle presented in \cite{FRANCESCOPAP}.

In Section \ref{section_geometry_of_Cartan_bundles} we prove a few fundamental properties. In particular, we show that any Cartan $G$-bundle $(P,\theta)$ over $M$ induces a tower of the kind
\[
\begin{tikzcd}
P_0=P \arrow["G_0=G"', loop, distance=2em, in=125, out=55] \arrow[r, two heads] \arrow[rrd, two heads] & P_1=P_0/K_0 \arrow[r, two heads] \arrow["G_1=G_0/K_0"', loop, distance=2em, in=125, out=55] \arrow[rd, two heads] & P_2=P_1/K_1 \arrow[r, two heads] \arrow["G_2=G_1/K_1"', loop, distance=2em, in=125, out=55] \arrow[d, two heads] & \ldots \arrow[r, two heads] & P_{i+1}=P_i/K_i \arrow["G_{i+1}=G_i/K_i"', loop, distance=2em, in=125, out=55] \arrow[lld, two heads] \\
                                                                                                       &                                                                                                                   & M                                                                                                                &                             &                                                                                                              
\end{tikzcd}
\]
We distinguish the class of {\it finite-order} Cartan bundles as those for which the sequence stabilises after a finite number of steps. As expected, the Cartan bundles arising from $k$-th order $G$-structures have order $k$.

In Section \ref{section_integrability_Cartan_bundles} we propose a new notion of {\it flatness} (Definition \ref{def_a_flatness}) with respect to the choice of a suitable {\it model} $\z$, that we call Cartan-type extension (Definition \ref{def_infinitesimal_model}), in order to discuss integrability of the underlying geometry. In giving such definition we were of course heavily inspired by the classical definition of flatness for Cartan geometries~\cite{SHARPE,ALEKSEEVSKYMICHOR} (where a model, although not strictly needed, is often taken as part of the definition). However, its applications to $G$-structures provides interesting insights. For instance, we are able to describe a certain class of non-flat $G$-structures as flat with respect to the choice of a non-abelian model: see Example~\ref{ex:h-int-GS}, which reformulates the $\h$-integrability proposed by Albert and Molino \cite{MOLINOINTEGR}. This has applications for instance to contact structures (Example~\ref{ex_integrability_contact_structures}), where the relevant model $\z$ is the Heisenberg algebra.

Strictly related to the notion of flatness is a more general notion of connection for Cartan bundle (Definition \ref{def:h-conn}), which recovers the standard principal connections adapted to first-order $G$-structures, and which plays a key role in a fundamental structure result, see Theorem~\ref{properties_lifts}.

We procede by discussing a special class of models, namely the {\it reductive} ones (Definition \ref{def:reductive}), which include all the main examples discussed so far and are completely characterised by Theorem \ref{characterisation_reductive_extensions}:

\begin{introtheorem} 
 A model is reductive if and only if it is of the form $\g \ltimes \k$.
\end{introtheorem}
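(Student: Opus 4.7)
The plan is to establish both implications directly from the definitions, with the $(\Leftarrow)$ direction being essentially tautological and the $(\Rightarrow)$ direction carrying the substantive content.

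For $(\Leftarrow)$: If $\z = \g \ltimes \k$ as Lie algebras, then the underlying vector-space decomposition exhibits $\k$ as a $\g$-invariant complement to $\g$ in $\z$, since by definition of the semidirect product $\k$ is an ideal and in particular $[\g, \k] \subset \k$. One then checks that the extension data (the action of $\g$ on $\k$ and the bracket inherited on $\k$) satisfy the axioms of a Cartan-type extension, and that the canonical inclusion $\g \hookrightarrow \g \ltimes \k$ provides the equivariant splitting required by Definition~\ref{def:reductive}.

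For $(\Rightarrow)$: Assume $\z$ is reductive. By definition there exists a $\g$-equivariant splitting of the underlying exact sequence, which produces a vector-space decomposition $\z = \g \oplus \k'$ with $[\g, \k'] \subset \k'$, where $\k'$ is the image of the splitting. I would then proceed in two steps. First, show that $\k'$ is closed under the bracket of $\z$, i.e.\ $[\k', \k'] \subset \k'$; this should follow from combining $\g$-equivariance of the splitting with the structural compatibility conditions imposed on Cartan-type extensions (in particular the prescribed interaction between the two factors of $\z$). Second, once $\k'$ is an ideal and $\g$ is a complementary subalgebra, the bracket on $\z$ automatically decomposes into: the bracket on $\g$, the restricted bracket on $\k'$, and the mixed term given by the adjoint action of $\g$ on $\k'$; these are precisely the structure constants of a semidirect product, yielding an isomorphism $\z \cong \g \ltimes \k'$.

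The main obstacle I expect is the first sub-step of the $(\Rightarrow)$ direction: promoting the $\g$-equivariant splitting of vector spaces to a splitting in the category of Lie algebras, i.e.\ verifying $[\k', \k'] \subset \k'$. In general, a $\g$-invariant complement in a Lie algebra extension need not be closed under brackets (it may differ from a subalgebra complement by a $2$-cocycle with values in $\g$), so the key input must be the specific axioms in Definition~\ref{def_infinitesimal_model} that rigidify the bracket of the model. Once this closure is established, identifying $\k'$ with the abstract $\k$ from the extension data and writing the bracket explicitly is a routine verification.
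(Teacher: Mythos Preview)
Your overall strategy matches the paper's, but you have misread Definition~\ref{def:reductive}. ``Reductive'' is \emph{not} merely ``the exact sequence of $G$-representations splits.'' The definition imposes two further conditions on the left splitting $l:\z\to\h$: (a) $l$ is an almost Lie algebra morphism, and (b) the compatibility $\alpha(v)=p([i(\alpha),r(v)]_\z)$ holds. Once you use (a), your ``main obstacle'' evaporates: $\k'=\ker(l)$ is the kernel of an almost Lie algebra morphism, hence automatically a subalgebra (indeed an ideal). There is no $2$-cocycle obstruction to kill; it is already dead by hypothesis. The paper dispatches this in one sentence. (Your attempt to deduce $[\g,\k']\subset\k'$ from $G$-equivariance alone is also unjustified: nothing in the axioms says the $G$-representation on $\z$ is the adjoint action for $[\cdot,\cdot]_\z$.)

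The actual work lies in the step you treat as routine. One must check that the linear isomorphism $z\mapsto(l(z),p(z))$ from $\z$ to $\h\ltimes\k$ preserves brackets. The $\h$-component follows from (a). For the $\k$-component one writes $z_i=i(l(z_i))+r(p(z_i))$ and expands $p([z_1,z_2]_\z)$; the cross terms $p([i(\alpha),r(v)]_\z)$ are precisely where condition (b) enters, identifying them with the representation action $\alpha(v)$. Your phrase ``the mixed term given by the adjoint action of $\g$ on $\k'$'' hides this: without (b), the bracket $[i(\alpha),r(v)]_\z$ inside $\z$ need not agree with the given $\h$-action on $V$, and the map would fail to be an isomorphism onto the semidirect product built from that action.

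One notational point: the formal result (Theorem~\ref{characterisation_reductive_extensions}) is stated for $\h\ltimes\k$, where $\h\subset\g$ is the ideal attached to the Pfaffian group; the introduction writes $\g\ltimes\k$ as shorthand.
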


As an application, using the Cartan structure equations, we arrive to a deeper understanding of flatness, e.g.\ Theorem \ref{flatness_iff_integrability}:
\begin{introtheorem} 
A $G$-structure $P$ is $\g \ltimes \k$-flat if and only if there exists a principal connection $\tau$ such that
\begin{itemize}
 \item the curvature of $\tau$ vanishes;
 \item the torsion of $\tau$ is encoded by the Lie bracket on $\k$.
\end{itemize}
\end{introtheorem}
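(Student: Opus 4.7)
The plan is to unwrap $\g \ltimes \k$-flatness in terms of the Cartan structure equations and match the resulting conditions, component by component, with the classical curvature and torsion of a principal connection. Since a first-order $G$-structure $P \to M$ is a Cartan $G$-bundle whose intrinsic Cartan form is just the soldering form $\theta \in \Omega^1(P; \k)$, a Cartan-type extension to $\g \ltimes \k$ should, via Theorem~\ref{properties_lifts}, correspond to a choice of $\g$-valued lift of $\theta$; I would identify this lift with a principal connection $\tau \in \Omega^1(P; \g)$ on $P$.

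The core computation is then to assemble $\tau$ and $\theta$ into a single $(\g \ltimes \k)$-valued 1-form $\omega = \tau + \theta$ on $P$, and spell out the Maurer--Cartan equation
\[
d\omega + \tfrac{1}{2} [\omega, \omega]_{\g \ltimes \k} = 0
\]
using the semidirect-product bracket $[(X_1, Y_1), (X_2, Y_2)] = ([X_1, X_2]_\g,\ X_1 \cdot Y_2 - X_2 \cdot Y_1 + [Y_1, Y_2]_\k)$. Splitting by the reductive decomposition, the $\g$-component reduces to $d\tau + \tfrac{1}{2}[\tau, \tau]_\g = 0$, which is precisely the vanishing of the curvature of $\tau$. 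The $\k$-component reduces to $d\theta + \tau \cdot \theta + \tfrac{1}{2}[\theta, \theta]_\k = 0$, which identifies the classical torsion $T = d\theta + \tau \cdot \theta$ of $\tau$ with the tensor $-\tfrac{1}{2} [\theta, \theta]_\k$ built from the Lie bracket on $\k$. This is exactly what is meant by ``the torsion is encoded by the Lie bracket on $\k$''.

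The two directions of the equivalence then follow from the same computation read in opposite ways. For ``only if'', $\g \ltimes \k$-flatness yields an $\omega$ satisfying Maurer--Cartan, and the decomposition above produces the required $\tau$. For ``if'', given a principal connection $\tau$ with vanishing curvature and $\k$-bracket-prescribed torsion, one simply defines $\omega = \tau + \theta$ and runs the computation backwards to recover the Maurer--Cartan equation, hence flatness with respect to the model $\g \ltimes \k$.

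The main obstacle is not the computation itself but the bookkeeping linking the two languages: one must check that a $\g$-valued lift of $\theta$ coming from a Cartan-type extension is really a principal connection on the underlying $G$-bundle (right $G$-equivariance and correct restriction to fundamental vector fields), and conversely that every such principal connection arises in this way. I would invoke Theorem~\ref{properties_lifts} to package this correspondence, so that the Maurer--Cartan decomposition on the Cartan bundle side translates cleanly into the classical curvature--torsion decomposition on the $G$-structure side.
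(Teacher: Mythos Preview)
Your proposal is correct and follows essentially the same route as the paper's proof: invoke Theorem~\ref{properties_lifts} (reductivity of $\g\ltimes\k$) to write any lift as $\eta=\tau+\theta$ with $\tau$ a genuine principal connection (since here $\h=\g$), expand $d\eta+\tfrac12[\eta,\eta]_{\g\ltimes\k}$ via the semidirect-product bracket, and read off the $\g$- and $\k$-components as $R(\theta\wedge\theta)$ and $T(\theta\wedge\theta)+\tfrac12[\theta,\theta]_\k$ respectively. The paper packages the component computation through the Cartan structure equations and Proposition~\ref{properties_curvatures}, but the content is the same as what you wrote.
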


With similar techniques (but using the more general notion of connection for Cartan bundles), we prove an analogous result for higher-order $G$-structures (Proposition \ref{flatness_iff_integrability_higher_order}) and we explain that the same could be done for any Cartan bundle of finite order.

Last, in section 4 we return briefly to transitive pseudogroups. Our philosophy is to propose that there are phenomena within the theory of pseudogroup structures that the standard description in terms of principal {\it group} bundles is not able to capture.

We consider the general setting of a Lie group $K$ that acts on $\BB$ freely and transitively, and a transitive pseudogroup $\Gamma$ on $\BB$ which contains global translations by $K$. Then we discuss the existence of natural integrability models (Proposition \ref{transitive_pseudogroup_reductive_extension}) and enlighten the structure of the Pfaffian groupoid $J^1 \Gamma$ (Proposition \ref{prop:isomorphism_source_first_jet} and \ref{prop_Pfaffian_isomorphism_first_jet}).

\subsection*{Notations and conventions}

The reader is assumed to be familiar with the basics of jet bundles and Lie groupoids; these notions are only briefly recalled in section 1 in order to introduce new objects and fix notations.

\

Throughout the paper, we use the notation $\BB$ for the spaces of objects of (Lie) groupoids and for the spaces/manifolds over which pseudogroups are defined. Base spaces of principal bundles are denoted by $M$, and principal bundles themselves are denoted by $P$.

We use the notation $\Sigma\tto \BB$ for arbitrary Lie groupoids, and $\mathcal{G}\tto \BB$ for {\it étale} groupoids (this distinction is relevant only in the first section). The Lie algebroid of a Lie groupoid $\Sigma$ is denoted by ${\rm Lie}(\Sigma)\to \BB$ or, if there is no risk of confusion, simply by $A\to \BB$.

The unit of a (Lie) group $G$ is denoted by $e$, and its Maurer-Cartan form by $\omega_{\rm MC} \in \Omega^1 (G,\g)$. Given a Lie algebra action $a: \g \to \mathfrak{X}(P)$, we denote by $\alpha^\dagger := a(\alpha)$ the fundamental vector field associated to $\alpha \in \g$.

Gothic letters $\z$, $\k$, etc. are used to denote both Lie algebras and {\it almost} Lie algebras (this distinction is relevant only starting from Section \ref{section_integrability_Cartan_bundles}).

%\textcolor{red}{Do we use this here?
%If $V$ is a vector space, we write $ \Omega^k(\BB, V)$ for the space $\Omega^k(\BB, \BB\times V)$ of differential forms on $\BB$ valued in the {\it trivial} vector bundle $\BB\times V \to \BB$.}

The canonical Cartan form on the jet prolongation $J^kY$ of a submersion $Y\to X$ is denoted by $\omega^k$; the same notation is used for the restriction of $\omega^k$ to a submanifold of $J^kY$. We use $\pi^k_h$ to denote the canonical projection from $J^kY$ to $J^hY$, whenever $h<k$, or just $\pr$ when there is no danger of confusion. 

All group(oid) actions are considered from the left and all manifolds and maps are smooth, unless explicitly stated otherwise.

\subsection*{Acknowledgements}
Both the authors would like to thank Marius Crainic for starting the long-term project of which this paper is part, for several insightful discussions, and for the support during their PhD years.

The first author would like to thank \'Alvaro del Pino for offering precious insights on contact structures and homogeneous spaces. The second author would like to thank Andreas \v{C}ap for several clarifications on Cartan geometries and Marco Zambon for suggestions on the foliation in Proposition \ref{properties_ideal_k}.

The first author was partially supported by the NWO through the Utrecht Geometry Center graduate programme (The Netherlands), and by the FWO-FNRS under EOS project G0I2222N (Belgium).

The second author was partially supported by the FWO-FNRS under EOS project G0H4518N (Belgium), by the ESI under the Junior Research Fellowship ``Cartan geometries via Pfaffian groupoids'' (Austria), by the FWF under Mozart Grant I 5015-N (Austria) and by the DFG under Walter Benjamin project 460397678 (Germany), and is a member of the GNSAGA - INdAM (Italy).

\section{Preliminaries and motivation: transitive geometries}\label{sec:first-section}

%\textcolor{red}{Most of the material in this section appeared in this form in chapter 1 of~\cite{LUCA}.}

\subsection{Geometry on manifolds}
In broad terms, by ``geometric structure'' on a manifold $M$ one means some differential geometric data on $M$ (other than its smooth structure). A more precise definition depends, obviously, on the examples one has in mind/theoretical properties one wants to capture. Here, we will look at manifolds that locally look like $\mathbb{R}^n$ {\it equipped with additional structure}. In other words, we are interested in structures that
\begin{itemize}
\item can be ``transported'' to $M$ from analogous structures on $\mathbb{R}^n$ using that $M$ is locally modelled on $\mathbb{R}^n$;
\item are defined by systems of (very) well behaved differential equations.
% which are sufficiently well behaved (and being us differential geometers, ``sufficiently`` is quite the requirement).
\end{itemize}

\subsubsection{Local models}\label{sss:local-models}
Let $M$ be an $n$-dimensional manifold. The smooth structure on $M$ is defined by means of equivalence classes of atlases $(\{U_i\}_{i\in I}, \{\phi_i\}_{i\in I})$ with transition functions $\{\varphi_{ij}\}_{i, j\in I}$. We look at geometric structures defined by requiring the $\varphi_{ij}$'s to belong to a {\it set of symmetries} of some geometry on $\mathbb{R}^n$.
\begin{definition}
A {\bf pseudogroup} $\Gamma$ on a manifold $\BB$ is a subset of the set of smooth embeddings of open sets of $\BB$ into $\BB$ which is 
\begin{itemize}
\item closed under the group-like operations, i.e.\ composition (when defined) and inversion, and containing the identity $\id_\BB:\BB\to \BB$;
\item local, i.e.\ such that restrictions of elements in $\Gamma$ belong to $\Gamma$;
\item closed under gluings, i.e.\ if $\{V_k\}_{k\in K}$ covers an open $U$ and $\varphi_k\in \Gamma$ are the restrictions of an embedding $\varphi:U\hookrightarrow \BB$ to the opens $V_k$'s, then $\varphi\in \Gamma$.
\end{itemize}

\end{definition}
\begin{definition}\label{def:transitive-orbits}
Let $\Gamma$ be a pseudogroup on a manifold $\BB$. The orbit $\mathcal{O}_x$ of $x\in \BB$ is the set of $y\in \BB$ for which there exists $\varphi\in \Gamma$ such that $\varphi(x)=y$.

A pseudogroup on $\BB$ is called {\bf transitive}  if  for all $x,y\in \BB$ there is $\varphi\in \Gamma$ such that $\varphi(x)=y$, i.e.\ the only orbit is $\BB$ itself. 
\end{definition}

If $\Gamma$ is a pseudogroup on $\mathbb{R}^n$, one calls {\bf $\Gamma$-atlas} on $M$ an atlas whose transition functions are elements of $\Gamma$. A {\bf $\Gamma$-structure} is an equivalence class of $\Gamma$-atlases/a maximal $\Gamma$-atlas. The same definitions make sense when $\Gamma$ is defined on {\it any} manifold $\BB$. {\it If $\Gamma$ is transitive}, by restricting to elements of $\Gamma$ defined in a sufficently small open $U\subset \BB$ and with image in $U$ itself, one sees that there is no loss of generality in considering $\Gamma$-structures for pseudogroups on $\mathbb{R}^n$.

\begin{example}\label{ex:Gamma-n}
The simplest example of pseudogroup on $\mathbb{R}^n$ is $\Gamma^n:={\rm Diff}_{\rm loc}(\mathbb{R}^n)$, the set of all locally defined diffeomorphisms of $\mathbb{R}^n$. It can be thought of as the set of symmetries of the standard smooth structure of $\mathbb{R}^n$. A $\Gamma^n$-structure on a manifold $M$ is precisely a smooth structure on $M$; in fact, a $\Gamma^n$-atlas on $M$ is simply an ordinary atlas on $M$.
\end{example}

In general, the notion of pseudogroup is meant to capture the idea of set of symmetries, and concrete examples of pseudogroups arise by considering symmetries of known geometric objects.
%Explicit pseudogroups on $\mathbb{R}^n$ can be obtained considering symmetries of classical structures on $\mathbb{R}^n$.

\begin{example}\label{ex:G-struct}
%A {\bf linear $G$-structure} on $\mathbb{R}^n$ corresponds to the choice of a Lie subgroup of linear transformations $G\subset \GL(n, \mathbb{R})$. Geometrically, we consider the set $\mathcal{S}_G$ of frames in the vector space $\mathbb{R}^n$ that can be obtained from the standard basis via a transformation by $G$. The group of linear transformations $G$ generates a pseudogroup $\Gamma_G$ on $\mathbb{R}^n$, given by those embeddings $\varphi:U\hookrightarrow \mathbb{R}^n$ whose differential at any point $x\in U$ lies in $G$. Notice that $\Gamma_G$ contains all translations; consequently, it is transitive. 
%Notice also that $\Gamma_G$ should be interpreted as a set of (locally defined) symmetries for a ``linear'' structure on $\mathbb{R}^n$. 
Any Lie subgroup of linear transformations $G\subset \GL(n, \mathbb{R})$ generates a pseudogroup $\Gamma_G$ on $\mathbb{R}^n$, given by those embeddings $\varphi:U\hookrightarrow \mathbb{R}^n$ whose differential at any point $x\in U$ lies in $G$. Notice that $\Gamma_G$ contains all translations; consequently, it is transitive. 

On the other hand, $\Gamma_G$ can (and should!) be interpreted as the set of (locally defined) {\bf symmetries of a linear geometric structure} on $\mathbb{R}^n$.  Consider the set $\mathcal{S}_G$ of frames in the vector space $\mathbb{R}^n$ that can be obtained from the standard basis via a transformation by $G$.
% (this can also be rephrased using the language of linear $G$-structures on $\RR^n$). 
%%% Too much info in too little space, I think
For each $x\in \mathbb{R}^n$, let us denote by $T_x\mathbb{R}^n\cong \mathbb{R}^n$ the isomorphism induced by the translation
\[
y\in \mathbb{R}^n \mapsto y+x \in \mathbb{R}^n.
\]
Any map $\varphi\in \Gamma_G$ sends frames of $T_x\mathbb{R}^n\cong \mathbb{R}^n$, $x\in \dom(\varphi)$, lying in $\mathcal{S}_G$ to frames of $T_{\varphi(x)}\mathbb{R}^n\cong \mathbb{R}^n$ lying in $\mathcal{S}_G$.

By specifying $G$, one has more explicit examples; we list some of them below.
\begin{enumerate}
\item If $G=O(n)$, then we are considering orthogonal frames of $\mathbb{R}^n$. The pseudogroup $\Gamma_{O(n)}$ is the set $\Gamma_{\rm eucl}$ of locally defined isometries of the standard Euclidean metric on $\mathbb{R}^n$.
\item If $n=2k$, $G=\GL(k, \mathbb{C})$, then we are considering complex frames of $\mathbb{C}^k\cong \mathbb{R}^{2k}$. The pseudogroup $\Gamma_{\GL(k, \mathbb{C})}$ is the set $\Gamma_{\rm cpx}$ of locally defined biholomorphisms of the standard complex structure on $\mathbb{C}^k$.
\item If $n=2k$, $G=\Sp(k)$, then we are considering symplectic frames of $\mathbb{R}^{2k}$. The pseudogroup $\Gamma_{\Sp(k)}$ is the set $\Gamma_{\rm sp}$ of locally defined symplectomorphisms of the standard symplectic structure on $\mathbb{R}^{2k}$. 
\item If $G=\GL(n-k, k)$ is the group of invertible matrices whose lower left $k \times (n-k)$ block is the null matrix, then we are considering frames of $\mathbb{R}^n$ whose first $(n-k)$ vectors are tangent to the subspace $\mathbb{R}^{n-k}\hookrightarrow \mathbb{R}^n$ given by the first $(n-k)$-coordinates. The pseudogroup $\Gamma_{\GL(n-k, k)}$ is the set of locally defined symmetries of the standard codimension $k$ foliation on $\mathbb{R}^n$ by copies of $\mathbb{R}^k$.
\item If $n=2k+1$, $G=\Sp(k, 1)$ is the group of invertible matrices whose lower left $1 \times 2k$ block is the null matrix and whose top right $2k\times 2k$ block lies in $\Sp(k)$, then we are considering frames of $\mathbb{R}^n$ whose first $2k$ vectors are tangent to the subspace $\mathbb{R}^{2k}\hookrightarrow \mathbb{R}^n$ given by the first $2k$-coordinates and form a symplectic frame on $\mathbb{R}^{2k}$. The pseudogroup $\Gamma_{\Sp(k, 1)}$ is the set of locally defined symmetries of the standard codimension $1$ symplectic foliation whose leaves are copies of $\mathbb{R}^{2k}$ with the canonical symplectic structure.
%\item ...
\end{enumerate} 
The $\Gamma$-structures on a manifold $M$ associated to the above pseudogroups are, in order:
\begin{enumerate}
\item flat Riemannian structures;
\item complex structures;
\item symplectic structures;
\item codimension $k$ foliations;
\item codimension $1$ symplectic foliations.
\end{enumerate}
In all these cases, the structures on $M$ are obtained by pulling back the corresponding ``linear'' structures on $\mathbb{R}^n$ via charts in a $\Gamma$-atlas; one then uses the transition functions in $\Gamma$ to show that these local structures glue correctly. This construction yields a global structure on $M$ that is locally isomorphic to the corresponding ``linear'' one on $\mathbb{R}^n$.
\end{example}

\begin{example}\label{ex:cont-struct}

If a structure on $\mathbb{R}^n$ is not ``linear'' (in the sense of Example~\ref{ex:G-struct}), then its pseudogroup of local symmetries is not of the form $\Gamma_G$, where $G\subset \GL_n(\mathbb{R})$ is a subgroup of linear transformations. For instance, let $n=2k+1$, $k\in \mathbb{N}$ and let us consider coordinates $({\bf x}, {\bf y}, z)$ on $\mathbb{R}^{2k+1}$ where ${\bf x}=(x_1, \dots x_k)\in \mathbb{R}^k$ and ${\bf y}=(y_1, \dots y_k)\in \mathbb{R}^k$. Let 
\[
\alpha_{\rm std}:=\ker(dz-{\bf y}d {\bf x})
\]
be the standard contact form on $\mathbb{R}^{2k+1}$, so that $\xi_{\rm std}=\ker(\alpha_{\rm std})$ is the standard {\bf contact structure} on $\mathbb{R}^{2k+1}$.

Let $\Gamma_{\rm cont}$ be the pseudogroup of locally defined diffeomorphisms of $\mathbb{R}^{2k+1}$ preserving $\alpha_{\rm std}$; that is, a diffeomorphism onto its image $\phi:U\to \mathbb{R}^{2k+1}$, defined on an open subset $U\subset \mathbb{R}^{2k+1}$, belongs to $\Gamma_{\rm cont}$ if and only if $\phi^*(\alpha_{\rm std}|_{\phi(U)})=\alpha_{\rm std}|_U$. The pseudogroup $\Gamma_{\rm cont}$ is transitive, but it is not of the form $\Gamma_G$ for any $G\subset \GL_n(\mathbb{R})$. In fact, $\Gamma_{\rm cont}$ does not contain translations. 

We can still describe explicitly some of the {\it globally defined} elements of $\Gamma_{\rm cont}$ in terms of a Lie group. Recall that the Heisenberg group $\Hei$ is the manifold $\mathbb{R}^{2k+1}$ with Lie group structure given by the multiplication
\[
({\bf x}, {\bf y}, z)\cdot ({\bf x}', {\bf y}', z')=({\bf x}+{\bf x}', {\bf y}+{\bf y}', z+z'+\langle{\bf x}, {\bf y}'\rangle),
\]
where $\langle{\bf x}, {\bf y}'\rangle$ denotes the standard scalar product. The group $\Hei$ acts on 
%$\mathbb{R}^{2k+1}$
itself by right multiplication; the diffeomorphisms of $\mathbb{R}^{2k+1}$ induced 
%by elements of $\Hei$ belong to $\Gamma_{\rm cont}$. 
by this action belong to $\Gamma_{\rm cont}$.
%; actually, they generate it (see Definition \ref{def:generated-pseudogroup} and the discussion below).

Finally, with the same construction outlined for $\Gamma_G$, a $\Gamma_{\rm cont}$-structure on a manifold $M$ corresponds to a {\it coorientable} contact structure on $M$.
\end{example}

Examples~\ref{ex:G-struct} and~\ref{ex:cont-struct} deal with transitive pseudogroups on $\mathbb{R}^n$. In both cases -- pseudogroups of the form $\Gamma_G$,  for some subgroup $G\subset \GL(n, \mathbb{R})$, and $\Gamma_{\rm cont}$ -- a way to realise that the pseudogroup under consideration is transitive is to observe that it contains elements that come from a {\it free and transitive action} of some Lie group on $\mathbb{R}^n$ -- either the group of translations or the Heisenberg group.

\begin{definition}\label{def:generated-pseudogroup}
Let $K$ be a Lie group acting freely and transitively on a manifold $\BB$ and let $\Gamma$ be a pseudogroup on $\BB$. We say that $\Gamma$ {\bf contains $K$} if, for all $k\in K$, the diffeomorphism
\[
T_k: \BB\to \BB, \quad x \mapsto k\cdot x
\] 
belongs to $\Gamma$.

%The {\bf minimal pseudogroup generated by $K$}, denoted by $\Gamma (K)$, is the pseudogroup generated by all the diffeomorphism of the form $\varphi_k, k\in K$.
\end{definition}
%``Generated'' has the obvious meaning: $\Gamma(K)$ contains all the diffeomorphisms of the form $\varphi_k, k\in K$ together with their restrictions to opens in $\BB$ and all the possible gluings.
As an immediate consequence of the definition, a pseudogroup containing a Lie group is transitive.

\begin{example}\label{ex_gamma_G_contains_Rn}
The Lie group of translations of $\mathbb{R}^n$ can be identified with $\mathbb{R}^n$ equipped with the sum of vectors. It acts freely and transitively on $\mathbb{R}^n$. For any Lie subgroup $G\subset \GL(n, \mathbb{R})$, we see that $\mathbb{R}^n$ is contained in $\Gamma_G$.
\end{example}

\begin{example}\label{ex_gamma_cont_contains_heis}
As already mentioned in Example~\ref{ex:cont-struct}, the Heisenberg group $\Hei$ acts freely and transitively on $\mathbb{R}^{2k+1}$ (it is, in fact, a Lie group structure on $\mathbb{R}^{2k+1}$). The pseudogroup $\Gamma_{\rm cont}$ contains $\Hei$.
\end{example}

\begin{example}\label{ex:not_Lie}
Given any (not necessarily regular) foliation $\mathcal{F}$ on $\mathbb{R}^n$, %(including the ``linear'' ones by copies of some $\mathbb{R}^k$) 
the pseudogroup $\Gamma_{\mathcal{F}}$ of {\it leafwise} diffeomorphisms (i.e.\ embeddings sending opens into leaves to opens into {\it the same} leaf) is very rarely transitive. In fact, the orbits of such a pseudogroup are precisely the leaves of $\mathcal{F}$. By choosing $\mathcal{F}$ to be a singular foliation, one can exhibit pseudogroups whose orbits have a very complicated behaviour. 
\end{example}

%In the rest of this paper, we focus on transitive pseudogroups.

\subsubsection{Differential equations}
In Examples~\ref{ex:G-struct} and~\ref{ex:cont-struct} above, our pseudogroups are defined by considering symmetries of global geometric objects on $\mathbb{R}^n$; a map belongs to the pseudogroup if it pulls back the given object to itself.  On each specific example, this request translates into {\it differential conditions}/{\it differential equations} imposed to the elements of the pseudogroup. Let us be precise about the framework for differential equations which we adopt (see e.g.\ \cite{SAUNDERS, Vinogradovetal99} for details).

Recall that, given two manifolds $M$ and $N$, the space $J^k(M, N)$ of $k$-jets consists of equivalence classes $j^k_xf$ of maps $f:M\to N$ up to contact of order $k$ at $x\in M$; i.e., $j^k_xf=j^k_xg$, $f, g:M\to N$ if and only if $f$ and $g$ have the same Taylor expansion of order $k$ at $x$. The space $J^k(M, N)$ is always a smooth manifold -- the coordinates are simply given by the coordinates on $M$ and $N$ together with the coefficients of Taylor expansions of order $k$-- and it surjects onto $J^{k-1}(M, N)$ and onto $M$.
Recall also that a local section
\[
\sigma:U\to J^k(M, N), \quad x \mapsto \sigma(x)
\]
is called {\bf holonomic} when $\sigma (x) = j^k_xf$ for every $x \in U$ and for a fixed $f:U\subset M\to N$\footnote{The reader unfamiliar with jet spaces might want to notice that not all sections are holonomic. In general, when $\sigma:U\to J^k(M,N)$ is a local section, then for all $x\in U$, one has $\sigma(x) = j^k_x f_x$, where the $f_x:U_x\to N$ are some smooth functions defined on neighbourhoods $U_x$ of $x$. Holonomicity correspond to the request that the $f_x$'s can be chosen to be equal to a given function $f$ whose domain is $U$.}.
Holonomic sections are therefore in bijection with locally defined maps from $M$ to $N$.
%; a solution of an order $k$ differential equation $R$ is a holonomic section taking values in $R$.

\begin{definition}\label{def:diff_eq}
A geometric {\bf differential equation of order $k$} is an embedded submanifold $R \subset J^k(M, N)$. A {\bf local solution} is a map $f:U\subset M\to N$ such that the holonomic section 
\[
j^kf:U\to J^k(M, N), \quad x \mapsto j^k_x f
\]
 takes values in $R$.
\end{definition}
%A section $U\subset M\to J^k(M, N)$ is a map  $x\to j^k_x f_x$, where the representative $f_x$ depends on $x$; it is called {\bf holonomic} when $f_x=f:U\to N$ is independent on $x$.

Now, let $\Gamma$ be a pseudogroup on $\BB$; for any $k\in \mathbb{N}$ we can form the $k$-jet space
\[
J^k\Gamma:=\{j^k_x\varphi:\ \varphi\in \Gamma\}\subset J^k(\BB, \BB),
\]
obtaining a tower of surjective continuous maps
\[
\dots \to J^k\Gamma\to J^{k-1}\Gamma\to \dots \to J^0\Gamma\to \BB.
\]
Here, $J^0\Gamma\subset \BB\times \BB$ encodes the orbits of $\Gamma$; that is, $(y, x)\in J^0\Gamma$ if and only if there is $\varphi\in \Gamma$ such that $\varphi(x)=y$. As Example~\ref{ex:not_Lie} shows for $k=0$, there is no reason for $J^k\Gamma$ to be a submanifold of $J^k(\BB, \BB)$.

\begin{definition}\label{def:Lie_pseudo}
A pseudogroup $\Gamma$ is called {\bf Lie pseudogroup} when 
\begin{itemize}
\item $J^k\Gamma$ is an embedded submanifold of $J^k(\BB, \BB)$, for all $k\in \mathbb{N}$;
\item all the projections $J^k\Gamma\to J^{k-1}\Gamma$ are surjective submersions.
\end{itemize}
We call $\Gamma$ a Lie pseudogroup {\bf of order $k$} if, additionally, $\varphi \in \Gamma$ if and only if $j^k\varphi$ takes values in $J^k\Gamma$, i.e.\ if and only if $\varphi$ is a solution of the PDE $J^k\Gamma$.
\end{definition}
The intuition behind the above definition is simply that ``Lie pseudogroups of order $k$ are pseudogroups arising as solutions of PDEs of order $k$''.
\begin{example}
All the pseudogroups from Examples~\ref{ex:G-struct} and~\ref{ex:cont-struct} are Lie pseudogroups. Furthermore, they are of order 1, since the differential conditions defining them are first order conditions.
On the other hand, as anticipated, the pseudogroup of Example~\ref{ex:not_Lie} is in general not Lie.
\end{example}

\subsubsection{Lie groupoids}
There is some more structure on $J^k\Gamma$ that needs to be uncovered. 
%, and that plays a role in our approach to geometries on manifolds.
A first piece of structure, that arises already at the level of jets, is a distribution detecting holonomic sections; we will discuss it in section \ref{subs:cart-dist}. A second piece of structure is directly inherited from the group-like properties of $\Gamma$. Namely, one has
\begin{itemize}
\item a pair of projections $J^k\Gamma\rightrightarrows \BB$, called {\bf source} and {\bf target} map,
\[
s(j^k_x f)=x, \quad \quad t(j^k_x f)=f(x); 
\]
\item a ``partial {\bf multiplication}''  
\[
J^k\Gamma \tensor[_s]{\times}{_t} J^k\Gamma\to J^k\Gamma,\quad (j^k_{\varphi(x)}\varphi', j^k_x\varphi) \mapsto j^k_{\varphi(x)}\varphi'\cdot j^k_x\varphi=j^k_x(\varphi'\circ \varphi)
\]
\end{itemize}
such that
\begin{itemize}
\item there is a canonical global {\bf unit} section $u: \BB\to J^k\Gamma, x \mapsto j^k_x (\mathrm{id}_\BB)$ which is a bijection between points in $\BB$ and units for the multiplication;
\item each element has an inverse;
\item the multiplication is associative.
\end{itemize}

A {\bf groupoid} $\G$ over $\BB$, denoted by $\G\tto \BB$, is a set $\Sigma$ possessing a partial multiplication (and source and target maps) as above; $\G$ is called {\bf arrow space} 
%(by analogy between the elements of $\G$ and arrows having -- indeed -- a source and a target)
 while $\BB$ is called {\bf unit space}.
Groupoids form a category: a {\bf morphism} of groupoids, simply denoted by $\Phi:\Sigma_1\to \Sigma_2$ is a commutative diagram 
\[
\begin{tikzcd}
\Sigma_1\arrow[d, shift left=-.5ex]\arrow[d, shift right=-.5ex]\arrow[r, "\Phi"]& \Sigma_2\arrow[d, shift left=-.5ex]\arrow[d, shift right=-.5ex]\\
\BB_1\arrow[r, "\phi"]& \BB_2
\end{tikzcd}
\]
respecting the multiplication of arrows, i.e.\ such that $\Phi(g\cdot h)=\Phi(g)\cdot \Phi(h)$ for all $h, g\in \Sigma_1$ such that $t(h)=s(g)$. 
 
Notice that, given a groupoid $\G$ over $\BB$, there is a natural notion of {\bf orbit}: the orbit $\mathcal{O}_x$ of $x\in \BB$ is the set of $y\in \BB$ such that there is some $g\in \G$ with $s(g)=x$, $t(g)=y$. Applied to $J^k\Gamma$, this recovers the notion of orbit of a pseudogroup (Definition \ref{def:transitive-orbits}). One also observes that the intersection $\G_x:=s^{-1}(x)\cap t^{-1}(x)$ possesses a group structure; it is called the {\bf isotropy group at $x$}. Isotropy groups at points in the same orbits are isomorphic. Furthermore, the action of $\G_x$ on $s^{-1}(x)$ by right multiplication has quotient $s^{-1}(x)/\G_x$ in bijection with the orbit of $x$ and the quotient map can be identified with $t:s^{-1}(x)\to \BB$; analogously for the action of $\G_x$ on $t^{-1}(x)$ by left multiplication.

When dealing with the {\bf jet groupoid} $J^k\Gamma\tto \BB$ of a Lie pseudogroup $\Gamma$, we also see that both $J^k\Gamma$ and $\BB$ carry smooth structures such that all the groupoid structure operations are smooth; and even more, $s$ and $t$ are surjective submersions. All in all $J^k\Gamma\rightrightarrows \BB$ is a {\bf Lie groupoid}. For details on (Lie) groupoids see e.g.\ \cite[Chapter 1]{MACKENZIE} or \cite[Section 5.1]{MOERDIJK}.

\begin{definition}
A Lie groupoid $\Sigma\tto \BB$ is a groupoid such that 
\begin{itemize}
\item $\Sigma$ and $\BB$ are smooth manifolds;
\item $s$ and $t$ are smooth surjective submersions;
\item all the other structure maps (multiplication, unit section and inversion map) are smooth.
\end{itemize}
\end{definition}
%\begin{remark}
%The fact that we allow the arrow space to be non-Hausdorff and non-second countable might look strange at a first glance. However, one of the important examples of groupoids that we consider, i.e.\ {\it germ groupoids} of pseudogroups (see subsubsection~\ref{subs:germ_groupoid}), have arrow spaces which are rarely Hausdorff and/or second countable. We will later restrict to Lie groupoids with Hausdorff and second countable arrow spaces, see Axiom~\ref{ax:Haus_and_2nd_count}. 
%
%On the other hand, we stress here that in Definition~\ref{def:Lie_pseudo}, we assume the arrow spaces $J^k\Gamma$ to be manifolds in the usual sense, i.e.\ Hausdorff and second-countable.
%\end{remark}

Lie groupoids form a category as well: a {\bf morphism of Lie groupoids} $\Sigma_1\tto \BB_1$ and $\Sigma_2\tto \BB_2$ is a morphism of groupoids 
\[
\begin{tikzcd}
\Sigma_1\arrow[d, shift left=-.5ex]\arrow[d, shift right=-.5ex]\arrow[r, "\Phi"]& \Sigma_2\arrow[d, shift left=-.5ex]\arrow[d, shift right=-.5ex]\\
\BB_1\arrow[r, "\phi"]& \BB_2
\end{tikzcd}
\]
where both $\Phi$ and $\phi$ are smooth maps. When $\Sigma\tto \BB$ is a Lie groupoid (see~\cite[Theorem 5.4]{MOERDIJK} for details):
\begin{itemize}
\item $s^{-1}(x)$ is an embedded submanifold of $\Sigma$, for all $x\in \BB$;
\item the isotropy group at $x$, $\Sigma_x=s^{-1}(x)\cap t^{-1}(x)$ is an embedded submanifold and a Lie group; 
\item the right action of $\Sigma_x$ on $s^{-1}(x)$ is smooth;
\item the orbit of $x\in \BB$, $\mathcal{O}_x$, is an immersed submanifold of $\BB$;
\item $t: s^{-1}(x)\to \BB$ is actually a right principal $\Sigma_x$-bundle over $\mathcal{O}_x$ (similarly replacing $s$ with $t$ -- in that case one gets a left principal bundle).
 \end{itemize}

A Lie groupoid $\Sigma\tto \BB$ is called {\bf transitive} when the {\bf anchor map}
\[
(s, t): \Sigma\to \BB\times \BB\quad g\to (s(g), t(g))
\]
is surjective. 
%We call it {\bf weakly transitive} when the anchor is only surjective. Observe that weakly transitivity 
Transitivity is equivalent to asking that for each $x\in \BB$, $y\in \BB$ there is some $g\in \Sigma$ such that $s(g)=x$ and $t(g)=y$ -- i.e., $\Sigma$ possesses a single orbit: for each $x\in \BB$, $\mathcal{O}_x=\BB$. As a result, 
%if $\Sigma$ is also transitive, 
$t: s^{-1}(x)\to \BB$ is a principal $\Sigma_x$-bundle. %Observe that the smooth structure of the quotient $s^{-1}(x)/\Sigma_x$ -- i.e.\ the smooth structure on $\BB$ {\it as an orbit of $\Sigma$} -- is diffeomorphic to the given smooth structure on $\BB$ thanks to the fact that the anchor map is submersive. In fact, this implies that the restricted target map $t:s^{-1}(x)\to \BB$ is submersive as well.
\begin{remark}[Transitivity and topology of $\Sigma$]\label{rmk:transitivity_and_second_countability}
It is worth noticing here that if $\Sigma\tto \BB$ is transitive, then the anchor map
\[
(s, t): \Sigma\to \BB\times \BB\quad g\to (s(g), t(g))
\]
is also submersive. This is due to the fact that the arrow space $\Sigma$ is a smooth manifold; in particular, one needs that $\Sigma$ is a second countable topological space. One sees that the action of $\Sigma_x$ on the embedded submanifold $s^{-1}(x)$ is principal. The action is free and transitive along the fibres of $t:s^{-1}(x)\to \BB$, and such fibres are invariant under the action. Consequently, the principal quotient projection can be identified with $t:s^{-1}(x)\to \BB$. The topology on the quotient space makes it into a smooth immersed submanifold of $\BB$ (see e.g.\ Theorem 5.4 in~\cite{MOERDIJK}). However, due to second countability of $s^{-1}(x)$, the topology on the quotient space has to be given by the given manifold topology of $\BB$.

The fact that the anchor map of a transitive Lie groupoid is submersive is needed for many fundamental facts to hold true, e.g. Example~\ref{ex:ME_transitive}. Occasionally, one defines Lie groupoids by allowing the arrow space to be a possibly non-Hausdorff and non-second countable manifold\footnote{For a possible reason to do so, see Subsection~\ref{subs:germ_groupoid} and, in particular, Remark~\ref{rmk:germ_not_Lie}.}; see, e.g.,~\cite{MOERDIJK}. In such a situation, one typically defines transitivity by requiring the anchor map to be surjective {\it and submersive}. Cf.~\cite[Remarks A.8-A.10]{AccorneroCattafi1}.
\end{remark}

\begin{remark}[Transitivity of (Lie) pseudogroups]\label{rmk:transitivity_of_pseudo}
Observe that transitivity of a (not necessarily Lie) pseudogroup $\Gamma$ over $\BB$ (see Definition \ref{def:transitive-orbits}) is equivalent to the map
\[
(s, t): J^k\Gamma\to \BB\times \BB\quad j^k_x\varphi \mapsto (x, \varphi(x))
\]
being surjective. Consequently, a {\it Lie} pseudogroup $\Gamma$ is transitive if and only if its jet groupoid $J^k\Gamma\tto \BB$ is transitive. 
\end{remark}

\begin{example}[Isotropies of first jet groupoids]\label{ex:iso_cont}
Observe that the Lie group $G\subset \GL(n, \mathbb{R})$ is isomorphic to the isotropy group (at any point) of $J^1\Gamma_G$. Actually
\[
J^1\Gamma_G\cong \{(x, y, l):\ l:\mathbb{R}^n\to \mathbb{R}^n,\ l\in G\} = \RR^n \times \RR^n \times G.
\]
On the other hand
\[
J^1\Gamma_{\rm cont}\cong \{(x, y, l):\ l:\mathbb{R}^{2k+1}\to \mathbb{R}^{2k+1},\ l\in d_y\varphi_y\circ \Sp(k,1)\circ d_x \varphi^{-1}_x
\}
\]
%d_y\varphi_y\circ \Sp(k, 1)\circ d_y\varphi_y^{-1}
where we identify $\mathbb{R}^{2k+1}$ with the Heisenberg group $\Hei$ and denote by $\varphi_x$ and $\varphi_y$ the diffeomorphisms of $\mathbb{R}^{2k+1}$ induced by right multiplication by $x\in \mathbb{R}^{2k+1}$ and $y\in \mathbb{R}^{2k+1}$ respectively.
% and $\circ$ is the composition of maps on $\mathbb{R}^{2k+1}$. 
It follows that the isotropy group of $J^1\Gamma_{\rm cont}$ is isomorphic to $\Sp(k, 1)$. Even more, it follows $J^1\Gamma_{\rm cont}\cong J^1\Gamma_{\Sp(k, 1)}$ as Lie groupoids (see also Proposition \ref{prop:isomorphism_source_first_jet}).
%
%
%our model geometric structures on $\mathbb{R}^n$ are not simply given as linear/homogeneous objects on $\mathbb{R}^n$ -- such as a symmetric/skew symmetric bilinear form or a distribution -- but they satisfy additional {\it flatness} requirements; we looked at the flat metric on $\mathbb{R}^n$, at the standard symplectic, hence closed, form on $\mathbb{R}^{2k}$, and at the standard contact distribution on $\mathbb{R}^{2k+1}$, where flatness is more subtly encoded in the existence of an adapted frame behaving like the Heisenberg Lie algebra. 
\end{example}

\subsubsection{Lie algebroids}\label{subsec:Lie-alg}

%This subsection is of more technical nature: we briefly recall the basics concerning Lie algebroids. Even though such an intermezzo might appear less motivated than the previous discussion, the concepts presented below will play an important role in the rest of the paper.

Finally, we briefly recall that any Lie groupoid comes with an associated infinitesimal object, a {\bf Lie algebroid}, i.e.\ a vector bundle $A\to \BB$ equipped with
\begin{itemize}
\item a vector bundle map, called {\bf anchor}, $\rho:A\to TM$;
\item a Lie bracket on the space of sections $\Gamma(A)$;
\end{itemize}  
which satisfies the Leibniz identity
\[
[\alpha, f\beta]=f[\alpha, \beta]+L_{\rho(\alpha)}(f)\beta \quad \quad \forall \alpha, \beta\in \Gamma(A), f\in C^\infty(\BB). \]
The Lie algebroid $A:={\rm Lie}(\Sigma)\to \BB$ of a Lie groupoid $\G\tto \BB$ is constructed as follows:
\begin{itemize}
\item the total space $A$ is given by $\ker(ds)|_\BB$ (recall that we can see $\BB$ as an embedded submanifold of $\Sigma$ using the unit (bi)section of $\Sigma\tto \BB$);
\item the anchor map is the restriction $dt|_A: A \to T\BB$;
\item the bracket on sections of $A$ is induced by the bracket of {\it right-invariant vector fields} on $\Sigma\tto \BB$.
\end{itemize}
Throughout this paper we will often write $A\to \BB$ in place of ${\rm Lie}(\Sigma)$ to denote the Lie algebroid of a groupoid $\Sigma\tto \BB$.

The construction of the Lie algebroid of a Lie groupoid is reminescent to the construction of the Lie algebra of a Lie group, which is indeed a particular case -- any Lie group $G$ is a Lie groupoid over the point. Part of the classical Lie theory can be extended to Lie algebroids and Lie groupoids -- e.g.~\cite{MARIUSRUI} and references therein.

%In particular, we will need the following 
%\textcolor{red}{I don't think we need it anymore (the only things in this section which we use later are the def of algeborid and of ideal}
%\begin{proposition}\label{prp:morph-algebroid}
%Let $\Sigma_1\tto \BB_1$ and $\Sigma_2 \tto \BB_2$ be Lie groupoids and $A_1\to \BB_1$, $A_2\to \BB_2$ be their Lie algebroids. Let
%\[
%\begin{tikzcd}
%\Sigma_1\arrow[r, "\Phi"]\arrow[d, shift left = 0.75]\arrow[d, shift right = 0.75] &\Sigma_2 \arrow[d, shift left = 0.75]\arrow[d, shift right = 0.75]\\
%\BB_1\arrow[r, "\phi"] & \BB_2
%\end{tikzcd}
%\]
%be a Lie groupoid morphism. The tangent map of $\Phi$ induces a vector bundle map
%\[
%\begin{tikzcd}
%A_1\arrow[r, "{\rm Lie}(\Phi)"]\arrow[d] & A_2 \arrow[d]\\
%\BB_1\arrow[r, "\phi"] & \BB_2
%\end{tikzcd}
%\]
%which is a morphism of Lie algebroids (see \cite[Definition 4.3.1]{MACKENZIE} or \cite[Section 6.2]{MOERDIJK} for the general definition of morphism).

%Furthermore, if $\Sigma_3\tto \BB_3$ is a Lie groupoid and there is a morphism $\Psi$ from $\Sigma_2\tto \BB_2$ to $\Sigma_3\tto \BB_3$ lifting $\psi:\BB_2\to \BB_3$, then 
%\[
%{\rm Lie}(\Psi\circ \Phi)={\rm Lie}(\Psi)\circ {\rm Lie}(\Phi).
%\]
%\end{proposition}

Recall that a {\bf Lie subalgebroid} over $\BB$ of a Lie algebroid $A\to \BB$ is a vector subbundle such that the inclusion of $A'\to \BB$ into $A\to \BB$ is a Lie algebroid morphism; in particular $\Gamma(A')$ is a subalgebra of $\Gamma(A)$.
%Lie algebroid $A'\to \BB$ that is a vector subbundle such that 
%\begin{itemize}
%\item the anchor of $A'\to \BB$ is the restriction of the anchor of $A\to \BB$;
%\item the bracket on sections of $A'\to \BB$ is the restriction of the bracket on sections of $A\to \BB$.
%\end{itemize}
A subalgebroid $A'\to \BB$ of $A\to \BB$ is called {\bf ideal} if $\Gamma(A')$ is an ideal in $\Gamma(A)$.

\subsection{$\Gamma$-structures as principal bundles}
With the groupoid language at hand, one can conceptualise the notion of geometry on a manifold in terms of familiar objects.

\subsubsection{The germ groupoid of a pseudogroup}\label{subs:germ_groupoid} 
Let $\Gamma$ be a pseudogroup on $\BB$; for the discussion in this subsubsection, we do not need $\Gamma$ to be a \textit{Lie} pseudogroup. There is a further groupoid attached to $\Gamma$: the {\bf germ groupoid} $\Germ(\Gamma)\tto \BB$, whose space of arrows is
\[
\Germ(\Gamma):=\{\germ_x(\varphi):\ x\in \BB,\ \phi\in \Gamma\}.
\]
The source and target maps are given by 
\[
s: \germ_x(\varphi)\in \Germ(\Gamma) \mapsto x\in \BB
\]
and
\[
t: \germ_x(\varphi)\in \Germ(\Gamma) \mapsto \varphi(x)\in \BB,
\]
while the multiplication is defined by 
\[
\germ_{\varphi(x)}(\varphi')\cdot \germ_x(\varphi):= \germ_x(\varphi'\circ \varphi)\in \Germ(\Gamma),
\]
for $\germ_{\varphi(x)}(\varphi'),\ \germ_x(\varphi) \in \Germ(\Gamma)$.

The set $\Germ(\Gamma)$ can be equipped with the {\bf étale topology}, making $s$ and $t$ into local homeomorphisms. All the structure operations -- multiplication, inversion and the unit bisection -- are continuous with respect to this topology. In other words, $\Germ(\Gamma)$ is a {\bf topological groupoid}; in fact, it is an {\bf étale groupoid} -- a topological groupoid where the source and target maps are local homeomorphisms.
% Since $\BB$ is a manifold, $\Germ(\Gamma)\tto \BB$ is a Lie groupoid when given this topology.
Notice that all the source fibres $s^{-1}(x)\subset \Germ(\Gamma)$, $x\in \BB$, are discrete in the étale topology.

\begin{remark}[Smooth structure on the germ groupoid]\label{rmk:germ_not_Lie}
Since the source map $s:\Germ(\Gamma)\to \BB$ is a local homeomorphism, one can use it to pull-back atlases of $\BB$ to $\Germ(\Gamma)$, showing that $\Germ(\Gamma)$ is in fact a non-Hausdorff and non-second countable manifold (see also the second part of Remark \ref{rmk:transitivity_and_second_countability}). This is actually true for any étale groupoid $\mathcal{G}\tto \BB$.
\end{remark}

%\begin{remark}[Transitivity of germ groupoids]\label{rmk:transitivity_of_germ_groupoid}
%If $\Germ(\Gamma)\tto \BB$ is a transitive Lie groupoid, then $\Gamma$ is transitive. The converse is not true (even for Lie pseudogroups!): if $\Gamma$ is transitive, the anchor map 
%\[
%\Germ(\Gamma)\to \BB\times \BB, \quad \germ_x(\varphi) \mapsto (x, \varphi(x))
%\]
%is surjective, but it is never submersive if $\dim(\BB)\geq 1$, because the $s$-fibres of étale groupoids are necessarily discrete.
%
%Compare with remarks~\ref{rmk:transitivity_and_second_countability} and~\ref{rmk:transitivity_of_pseudo}\footnote{When comparing with remark~\ref{rmk:transitivity_and_second_countability} notice that when $ \dim(\BB)\geq 1$ then $\BB$ is non-countable. Consequently, if the anchor of $\Sigma\tto \BB$ is surjective, the $s$-fibres of $\Sigma$ (hence $\Sigma$ itself) are not second countable.}
%\end{remark}

We will use the notation $\mathcal{G}\tto \BB$ to denote {\it étale} groupoids -- i.e.\ Lie groupoids such that the source and target maps are local homeomorphisms -- and keep the notation $\G \tto \BB$ for arbitrary Lie groupoids.

Let us now discuss another important property of the groupoid $\Germ(\Gamma)$, for which we need the notion of bisection.

\begin{definition}
Let $\G\tto \BB$ be a Lie groupoid. A local section $\sigma:U\subset \BB\to \G$ of the source map is called {\bf bisection} if the composition $t\circ \sigma:U\to \BB$ is a diffeomorphism onto its image.
\end{definition}

%The following lemma descends immediately from the definition of holonomicity (see the discussion preceeding Definition~\ref{def:diff_eq}).
%
%\begin{lemma}
%There is a bijective correspondence between elements of a Lie pseudogroup $\Gamma$ on $\BB$ and local bisections of the jet groupoid $J^k\Gamma$ that are also holonomic.
%\end{lemma}
%
%%On the other hand notice that {\it any} bisection of $\Germ(\Gamma)$ induces an element of $\Gamma$, and viceversa.

\begin{definition}
An étale groupoid $\mathcal{G}\tto \BB$ is called {\bf effective} when, for any two local bisections $\sigma$, $\sigma'$, if $t\circ \sigma=t\circ \sigma'$ then $\sigma=\sigma'$.
\end{definition}

As anticipated, all étale groupoids arising as germ of pseudogroups are effective. The converse is also true: given any étale groupoid $\mathcal{G}$, the set
\[
\Gamma_{\mathcal{G}}:=\{t\circ \sigma:\ \sigma\ \text{is a local bisection of }\mathcal{G}\}
\]
defines a pseudogroup; moreover, the morphism of groupoids
\[
\mathcal{G}\to \Germ(\Gamma_\mathcal{G}), \quad g \mapsto \germ_x(t\circ \sigma_g),
\] 
where $\sigma_g$ is a bisection defined around $x = s(g)$ and such that $\sigma_g(x)=g$, is an isomorphism precisely when $\mathcal{G}$ is effective.

\begin{proposition}
There is a one to one correspondence between pseudogroups on $\BB$ and effective étale groupoids over $\BB$.
\end{proposition}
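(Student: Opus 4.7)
The plan is to verify that the two constructions $\Gamma \mapsto \Germ(\Gamma)$ and $\mathcal{G} \mapsto \Gamma_\mathcal{G}$ are mutually inverse on the respective classes. This requires four steps: (a) $\Germ(\Gamma)$ is always effective; (b) $\Gamma_\mathcal{G}$ is always a pseudogroup; (c) the round trip $\Gamma \to \Germ(\Gamma) \to \Gamma_{\Germ(\Gamma)}$ recovers $\Gamma$; (d) the round trip $\mathcal{G} \to \Gamma_\mathcal{G} \to \Germ(\Gamma_\mathcal{G})$ recovers $\mathcal{G}$ precisely when $\mathcal{G}$ is effective.

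For (a), any local bisection $\sigma$ of $\Germ(\Gamma)$ through a point $x$ is, by the definition of the étale topology, of the form $y \mapsto \germ_y(\varphi)$ for some $\varphi \in \Gamma$ defined near $x$. If $\sigma' : y \mapsto \germ_y(\varphi')$ is another bisection with $t \circ \sigma = t \circ \sigma'$, then $\varphi = \varphi'$ on a neighbourhood of $x$, so their germs coincide pointwise, i.e.\ $\sigma = \sigma'$. For (b), the axioms of a pseudogroup translate directly into corresponding features of local bisections of $\mathcal{G}$: products and inverses of bisections are bisections (using the groupoid operations), the unit bisection gives $\id_\BB$, restrictions of bisections are bisections, and the gluing axiom follows from the fact that two bisections which induce the same target map on the overlap must agree there, because each is determined by a local section of the étale map $s$.

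For (c), given $\varphi \in \Gamma$, the map $\sigma_\varphi : x \mapsto \germ_x(\varphi)$ is a bisection of $\Germ(\Gamma)$ with $t \circ \sigma_\varphi = \varphi$, so $\varphi \in \Gamma_{\Germ(\Gamma)}$. Conversely, an element of $\Gamma_{\Germ(\Gamma)}$ is locally of this form on an open cover, so by the gluing axiom it belongs to $\Gamma$. For (d), consider the candidate isomorphism $\Phi : \mathcal{G} \to \Germ(\Gamma_\mathcal{G})$, $g \mapsto \germ_{s(g)}(t \circ \sigma_g)$, where $\sigma_g$ is any bisection through $g$. Well-definedness follows because, $s$ being a local homeomorphism, any two such bisections coincide on a neighbourhood of $s(g)$; it is then straightforward to check that $\Phi$ is a morphism of étale groupoids and is surjective.

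The crucial step --- and the only place where effectiveness enters --- is injectivity of $\Phi$. If $\Phi(g) = \Phi(g')$, then in particular $s(g) = s(g') =: x$ and bisections $\sigma_g$, $\sigma_{g'}$ through $g$ and $g'$ satisfy $\germ_x(t \circ \sigma_g) = \germ_x(t \circ \sigma_{g'})$; i.e.\ $t \circ \sigma_g = t \circ \sigma_{g'}$ on some neighbourhood $U$ of $x$. Restricting to $U$ and invoking effectiveness of $\mathcal{G}$ forces $\sigma_g|_U = \sigma_{g'}|_U$, so $g = \sigma_g(x) = \sigma_{g'}(x) = g'$. Conversely, if $\mathcal{G}$ is not effective, distinct bisections with the same target give distinct elements of $\mathcal{G}$ with the same image under $\Phi$. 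This identifies effectiveness as the exact obstruction, completing the bijection. The subtle point throughout is handling the difference between pointwise equality and germwise equality of bisections, which is precisely what the étale hypothesis on $\mathcal{G}$ and the effectiveness condition tame.
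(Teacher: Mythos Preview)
Your approach matches the paper's: it does not give a self-contained proof but sketches the two constructions $\Gamma\mapsto\Germ(\Gamma)$ and $\mathcal{G}\mapsto\Gamma_{\mathcal{G}}$, states that the comparison map $\mathcal{G}\to\Germ(\Gamma_{\mathcal{G}})$ is an isomorphism precisely when $\mathcal{G}$ is effective, and then cites Haefliger. You are filling in exactly those details, so there is no divergence in strategy.

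There is, however, one misstatement worth correcting. In step~(b) you justify the gluing axiom by saying that ``two bisections which induce the same target map on the overlap must agree there, because each is determined by a local section of the étale map $s$''. The étale property of $s$ gives you that local sections glue (sheaf property), but it does \emph{not} force two sections with the same $t$-composition to coincide --- that implication is precisely the definition of effectiveness, and it fails for non-effective étale groupoids (e.g.\ $\BB\times G\rightrightarrows\BB$ for a nontrivial finite group $G$ acting trivially). You invoke effectiveness correctly in step~(d); you should invoke it in (b) as well. Since the correspondence is only claimed for effective $\mathcal{G}$, this fixes the argument with no further change.
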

The proposition above is usually attributed to André Haefliger, see \cite[section I.6]{HAEFLIGERGAMMASTRUCTURES}. We are interested in making use of this correspondence to encode geometries on manifolds in a -- hopefully more manageable -- structure. 

\begin{definition}\label{def:action}
Let $\Sigma\tto \BB$ be a Lie groupoid. A {\bf left action} of $\Sigma$ on a manifold $P$ along a smooth map $\mu:P\to \BB$, or, more briefly, a {\bf (left) $\Sigma$-space} $\mu:P\to \BB$, is a smooth map 
\[
m_P: \G\tensor[_s]{\times}{_\mu} P\to P
\]
such that 
\begin{itemize}
\item $m_P(g_1, m_P(g_2, p))=m_P(g_1g_2, p)$, for all $g_1, g_2\in \Sigma$, $s(g_1)=t(g_2)$, $p\in P$, $\mu(p)=s(g_2)$;
\item  $m_P(1_x, p)=p$ for all $p\in P$ such that $\mu(p)=x\in \BB$, where $1_x$ denotes the identity in $\Sigma$ over $x$.
\end{itemize}
\end{definition}
The notion of right action/right $\Sigma$-space is defined analogously.

Given a $\Sigma$-space $\mu:P\to \BB$, the {\bf orbit space} of the action of $\Sigma$ is the quotient $P/\Sigma$ with respect to the equivalence relation
\[
p_1\sim p_2 \quad \text{if and only if}\quad p_2=g\cdot p_1 \text{ for some } g\in \Sigma, s(g)=\mu(p_1).
\]

\begin{definition}\label{def:PB}
Let $\Sigma\tto \BB$ be a Lie groupoid and $M$ be a manifold. A {\bf principal $\G$-bundle over $M$} is a $\Sigma$-space $\mu:P\to \BB$ together with a surjective submersion $\pi:P\to M$ such that 
\begin{itemize}
\item $\pi$ is $\G$-invariant, i.e.\ $\pi(g\cdot p)=\pi(p)$ for all $g\in \G$, $p\in P$ such that $s(g)=\mu(p)$;
\item the action of $\G$ is transitive on $\pi$-fibres, i.e.\ if $p_1$, $p_2$ are points in $P$ and $\pi(p_1) = \pi(p_2)$, then $p_2 =g\cdot p_1$ for some $g\in \G$ such that $s(g) = \mu(p_1)$;
\item the action is free: for all $p\in P$, $g\in \Sigma$ such that $s(g)=\mu(p)$, if $g\cdot p=p$ then $g=1_{\mu(p)}$;
\item the action is proper: the map
\[
\Sigma\tensor[_s]{\times}{_\mu}P \to P\times P, \quad (g, p) \mapsto (g\cdot p, p)
\]
is a proper map.
\end{itemize}
\end{definition}
The orbit space of a principal $\G$-bundle over $M$ possesses a canonical smooth structure making the quotient projection into a surjective submersion; the map $P/\G\to M$ induced by $\pi$ is a diffeomorphism when $P/\G$ is given such a topology. 

\begin{definition}\label{def:morph_PB}
A {\bf morphism} of principal $\Sigma$-bundles $\mu_1:P_1\to \BB$, $\mu_2: P_2\to \BB$ over $M$ is a map $F:P_1\to P_2$ commuting with the action of $\Sigma$, i.e.\ 
\begin{itemize}
\item $\mu_2\circ F=\mu_1$;
\item $F(g\cdot p)=g\cdot F(p)$ for all $g\in \Sigma$, $p\in P$, $\mu(p)=s(g)$.
\end{itemize}
\end{definition}

%In the rest of the paper, unless otherwise specified, we will write ``left-space'' while meaning {\it continuous} left space when the groupoid at hand is topological, and {\it smooth} left space when the groupoid at hand is Lie. A similar remark holds for principal bundles and morphisms of principal bundles.

The following proposition follows from Proposition ${\rm I}.3.1$ in~\cite{JANEZ}.

\begin{proposition}\label{prp:Gamma-struct-are-pb}
Let $\Gamma$ be a pseudogroup on $\BB$ and $M$ a manifold with $\dim(M)=\dim(\BB)$. There is a bijective correspondence 
\[ 
 \left\{   \begin{array}{c}
            \Gamma \text{-structures on } M\\
             \text{(see subsubsection \ref{sss:local-models})}
            \end{array} 
\right\} 
\stackrel{1-1}{\longleftrightarrow}
\left\{   \begin{array}{c}
           \text{Isomorphism classes of principal } \Germ(\Gamma)\text{-bundles over $M$}\\
           \text{whose moment map is \'etale}
           \end{array} 
\right\}.
\]
\end{proposition}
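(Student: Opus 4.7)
The plan is to construct explicit maps in both directions and then verify they descend to a bijection on (iso)morphism classes.

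\textbf{From a $\Gamma$-atlas to a principal bundle.} Given a $\Gamma$-atlas $\{(U_i,\phi_i)\}$ representing a $\Gamma$-structure on $M$, I would define
\[
P \;=\; \{(x,\germ_x\phi) \,:\, x\in M,\ \phi \text{ is a } \Gamma\text{-chart of the structure defined near } x\},
\]
where two germs $\germ_x\phi$, $\germ_x\phi'$ are identified when $\phi$ and $\phi'$ agree on a neighbourhood of $x$. The smooth structure is induced by the injection $P \hookrightarrow M \times \Germ(\Gamma^n)$ using the chart germs, with local trivialisations $U_i \times \{\germ\phi_i\} \hookrightarrow P$. Set $\pi(x,\germ_x\phi)=x$, $\mu(x,\germ_x\phi)=\phi(x)$, and define the left action of $\Germ(\Gamma)$ by
\[
\germ_{\phi(x)}(\varphi) \cdot (x,\germ_x\phi) \;=\; (x,\germ_x(\varphi\circ\phi)),
\]
well-defined because $\varphi\circ\phi$ is again a chart in the given $\Gamma$-structure (transition functions are in $\Gamma$). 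Because $\dim M=\dim \BB$ and $\phi$ is locally a diffeomorphism, $\mu$ is étale; the local charts show that $\pi$ is a surjective submersion (in fact an étale map).

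\textbf{From a principal bundle to a $\Gamma$-atlas.} Given a principal $\Germ(\Gamma)$-bundle $(P,\mu,\pi)$ with $\mu$ étale, note that $\pi\colon P\to M$ is a surjective submersion and that $P$ has the same dimension as $M$ and $\BB$ (since $\mu$ is étale and $\dim M=\dim \BB$), so $\pi$ is itself a local diffeomorphism. Pick an open cover $\{U_i\}$ of $M$ admitting smooth local sections $\sigma_i\colon U_i\to P$ of $\pi$, and define
\[
\phi_i := \mu\circ\sigma_i\colon U_i \to \BB.
\]
Each $\phi_i$ is a local diffeomorphism, so it is a chart. On $U_i\cap U_j$, by transitivity and freeness of the action, there is a unique continuous map $g_{ij}\colon U_i\cap U_j\to \Germ(\Gamma)$ with $\sigma_i=g_{ij}\cdot\sigma_j$. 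Writing $g_{ij}(x)=\germ_{\phi_j(x)}(\varphi_{ij})$, I get $\phi_i=\varphi_{ij}\circ\phi_j$ on a neighbourhood of $x$, so the transition functions lie in $\Gamma$ and $\{(U_i,\phi_i)\}$ is a $\Gamma$-atlas.

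\textbf{Inverse constructions and invariance.} I would then check that the two constructions are mutually inverse up to the prescribed equivalences. Starting from a $\Gamma$-structure and running both constructions with $\sigma_i(x)=(x,\germ_x\phi_i)$ recovers the original $\phi_i$ on the nose, hence the same maximal $\Gamma$-atlas. Conversely, starting from $(P,\mu,\pi)$, the bundle reconstructed from the atlas $\{(U_i,\phi_i=\mu\circ\sigma_i)\}$ is canonically isomorphic to $P$ via $(x,\germ_x\phi_i)\mapsto \sigma_i(x)$, which is $\Germ(\Gamma)$-equivariant by construction and independent of the chosen sections up to the equivalence in Definition~\ref{def:morph_PB}. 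To pass from atlases to the \emph{maximal} atlas/$\Gamma$-structure one observes that enlarging the cover with more charts from the $\Gamma$-structure corresponds to enlarging the trivialising cover of $P$, which does not change $P$ up to isomorphism.

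\textbf{Main obstacle.} The delicate point is verifying that the action of $\Germ(\Gamma)$ on $P$ in the first construction is \emph{principal} in the sense of Definition~\ref{def:PB}. Freeness is immediate (if $\germ_{\phi(x)}\varphi$ fixes $(x,\germ_x\phi)$ then $\varphi=\id$ near $\phi(x)$, so the germ is a unit), and transitivity on $\pi$-fibres follows from the fact that any two chart germs at $x$ differ by a transition in $\Gamma$. Properness is the trickiest item because $\Germ(\Gamma)$ is typically non-Hausdorff and non-second-countable (Remark~\ref{rmk:germ_not_Lie}); however, since all source fibres of $\Germ(\Gamma)$ are discrete and $\mu$ is étale, the action map $\Germ(\Gamma)\tensor[_s]{\times}{_\mu}P\to P\times P$ is a local diffeomorphism, and local properness plus the freeness of the action (together with the Hausdorffness of $M$) suffice to conclude.
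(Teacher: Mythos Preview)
Your proposal is correct and follows essentially the same route as the paper: the paper does not give a self-contained proof but cites \cite[Proposition~I.3.1]{JANEZ} and then writes down the explicit bundle $\Germ(\mathcal{A})=\{\germ_p(\phi_i)\}$ with the obvious moment map, action and projection, which is exactly your space $P$ (your pair $(x,\germ_x\phi)$ is redundant since $x$ is determined by the germ). Your converse construction via local sections $\sigma_i$ and charts $\phi_i=\mu\circ\sigma_i$ is the standard one and matches what the paper sketches; the only caveat worth flagging is that, as the paper notes just after the statement, $\Germ(\Gamma)$ is generally neither Hausdorff nor second countable, so ``principal bundle'' here has to be understood in the slightly relaxed sense, and your properness argument should be read accordingly.
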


We stress here that, since the arrow space $\Germ(\Gamma)$ is not Hausdorff and/or second countable in general, strictly speaking we cannot talk about {\it smooth} principal bundles. However, $\Germ(\Gamma)$ is always locally Euclidean (see Remark~\ref{rmk:germ_not_Lie}), hence the definition can be easily adapted.

There is a relatively simple description of the principal $\Germ(\Gamma)$-bundle associated to a $\Gamma$-structure on $M$ that makes use of a maximal $\Gamma$-atlas $\mathcal{A}=\{(U_i, \phi_i)\}_{i\in I}$ with change of coordinates $\{\varphi_{ij}\}_{i, j\in I}$, $\varphi_{ij}\in \Gamma$. Define
\[
\Germ(\mathcal{A}):=\{\germ_p(\phi_i)|\ p\in U_i\subset M\}.
\]
There is an action of $\Germ(\Gamma)$ on $\Germ(\mathcal{A})$ with moment map 
\[
\mu: \Germ(\mathcal{A})\to \BB, \quad \germ_p(\phi_i) \mapsto \phi_i(p).
\]
The action is given by
\[
\germ_{\phi(p)}(\varphi)\cdot \germ_p(\phi_i):=\germ_p(\varphi\circ \phi_i);
\]
notice that  $\germ_p(\varphi\circ \phi_i)\in \Germ(\mathcal{A})$ because $\mathcal{A}$ is a maximal atlas. This action is principal over $M$ with projection
\[
\pi: \Germ(\mathcal{A})\to M, \quad \germ_p(\phi_i)\mapsto p.
\]
The explicit description we have just given will be useful in the next section.
\begin{axiom}{MM}\label{axiom_moment_map}
Given a Lie groupoid $\Sigma\tto \BB$ and a $\Sigma$-principal bundle $\mu:P\to \BB$ over $M$, the moment map $\mu$ is always assumed to be a surjective submersion.
\end{axiom}
%In particular, if one is dealing with an étale groupoid $\mathcal{G}\tto \BB$ and one has $\dim(\BB) = \dim(M)$, the above axiom is equivalent to the request that the moment map is étale.

\subsubsection{Non integrable geometries}
Let $\Gamma$ be a Lie pseudogroup over $\BB$ (Definition~\ref{def:Lie_pseudo}); under the correspondence from Proposition \ref{prp:Gamma-struct-are-pb} we can consider a $\Gamma$-structure on $M$, 
\[
\xymatrix{
\Germ(\Gamma) \ar@<0.25pc>[dr] \ar@<-0.25pc>[dr]  & \ar@(dl, ul) &  \Germ(\mathcal{A}) \ar[dl]^{\mu}\ar[dr]^{\pi} &    \\
&\BB  & &  M,}
\]
where $\mathcal{A}=\{(U_i, \phi_i)\}_{i\in I}$ is a maximal atlas with changes of coordinates $\{\varphi_{ij}\}_{i, j\in I}$, $\varphi_{ij}\in \Gamma$.
One can make use of the tower of Lie groupoids
\[
\dots \to J^k\Gamma\to J^{k-1}\Gamma\to \dots \to J^0\Gamma\tto \BB.
\] 
to look at the {\it $k$-th order data} of the atlas/the ``projections'' of the principal $\Germ(\Gamma)$-bundle to $J^k\Gamma$. Precisely, one has principal bundles
\[
\xymatrix{
J^k\Gamma \ar@<0.25pc>[dr] \ar@<-0.25pc>[dr]  & \ar@(dl, ul) &  J^k\mathcal{A} \ar[dl]^{\mu}\ar[dr]^{\pi} &    \\
&\BB  & &  M,}
\]
where now 
\[
J^k\mathcal{A}:=\{j^k_p\phi_i|\ p\in U_i\subset M\}.
\]
This point of view was extensively used in~\cite{FRANCESCO} to address the so called {\it formal integrability problem} for geometric structures on manifolds. We quickly go through the definitions that are relevant for our purposes; all the material in this subsection comes from~\cite{FRANCESCO}, to which we refer for more details.

Notice that we have the following particular case of the above construction. If $M$ is a manifold and $\mathcal{A}_M$ is a maximal atlas, then, for each $k$, the space $J^k\mathcal{A}_M$ consisting of {\it $k$-jets of charts} is a principal $J^k\Gamma^n$-bundle over $M$ (where $\Gamma^n := \Diff_{\rm loc}(\RR^n)$, see Example~\ref{ex:Gamma-n}). 
%The moment map $J^k\mathcal{A}_M\to \BB$ sends $j^k_x\phi$ to $\phi(x)\in \BB$, and the action of $J^k\Gamma^n$ is induced by post-composition. In particular, {\it any $n$-dimensional manifold possesses a canonically constructed almost  $J^k\Gamma^n$-structure}.  

%When the structure is non integrable, we can assume $P$ to be a {\it reduction to $J^k\Gamma$} of the principal $J^k\Gamma^n$-bundle $J^k\mathcal{A}_M$, where $\mathcal{A}_M$ denotes a maximal atlas for $M$ and $\Gamma^n:={\rm Diff}_{\rm loc}(\mathbb{R}^n)$, Example~\ref{ex:Gamma-n}. This terminology and machinery appears in~\cite{FRANCESCO}, to which we refer for details. We are simply saying that we assume $P$ to be a submanifold of the space $J^k\mathcal{A}_M\subset J^k(M, \BB)$ of jets of charts in $\mathcal{A}_M$ and the action by $J^k\Gamma$ making $P$ into a principal $J^k\Gamma$-bundle to be induced by postcomposition. The moment map $\mu:P\subset J^k(M, \BB)\to \BB$ is the target map.

Let $\Gamma$ be a Lie pseudogroup over $\BB$ and $M$ be a manifold. Assume $M$ to possess a ${\rm Diff}_{\rm loc}(\BB)$-structure and denote the induced principal $J^k{\rm Diff}_{\rm loc}(\BB)$-bundle by $\Pi^k\subset J^k(M, \BB)$. Recall that $\Pi^k$ is the space of germs of maps in a maximal ${\rm Diff}_{\rm loc}(\BB)$-atlas. A {\bf $J^k\Gamma$-reduction} of $\Pi^k$ is a principal $J^k\Gamma$-bundle $P$ over $M$ such that $P\subset \Pi^k$ and the moment map and action of $J^k\Gamma$ on $P$ are obtained restricting the moment map and action of  $J^k{\rm Diff}_{\rm loc}(\BB)$ on $\Pi^k$. Notice that any $\Gamma$-structure on $\BB$ induces a $J^k\Gamma$-reduction of $\Pi^k$; in fact, a maximal $\Gamma$-atlas is necessarily contained in a maximal ${\rm Diff}_{\rm loc}(\BB)$-atlas, since $\Gamma\subset {\rm Diff}_{\rm loc}(\BB)$.

\begin{definition}\label{def:almost_gamma_struct}
Let $\Gamma$ be a Lie pseudogroup over $\BB$ and $M$ be a manifold. Assume $M$ to possess a ${\rm Diff}_{\rm loc}(\BB)$-structure and denote the induced principal $J^k{\rm Diff}_{\rm loc}(\BB)$-bundle by $\Pi^k\subset J^k(M, \BB)$. An {\bf almost $\Gamma$-structure of order $k$} on a manifold $M$ is a $J^k\Gamma$-reduction of $\Pi^k$.

An almost $\Gamma$-structure of order $k$ is called {\bf integrable} if there is a $\Gamma$-structure on $M$ with maximal atlas $\mathcal{A}$ such that $P=J^k\mathcal{A}$.
\end{definition}
%A principal $J^k\Gamma$-bundle $P\to M$ is called {\bf almost $\Gamma$-structure of order $k$} on $M$ when $P$ is a reduction to $J^k\Gamma$ of the principal $J^k\Gamma^\BB$-bundle $J^k\mathcal{A}$, where $\mathcal{A}$ is an atlas maximal for $M$ and $\Gamma^\BB$ is the pseudogroup of {\it all} embeddings of opens of $\BB$ into $\BB$. Almost $\Gamma$ structures
%\[
%\xymatrix{
%J^k\Gamma \ar@<0.25pc>[d] \ar@<-0.25pc>[d]  & \ar@(dl, ul) &  P \ar[dll]^{\mu}\ar[dr]^{\pi} &    \\
%\BB&  & &  M}.
%\]
%which come as projections of a genuine one are called {\bf integrable}. This amounts to say that $P=J^k\mathcal{A}$ where now $\mathcal{A}$ denotes the maximal $\Gamma$-atlas associated to the structure. 
Not all $\Gamma$-structures are integrable! The integrability corresponds precisely to existence of local models as explained above.

\begin{example}
Let $\Gamma_{\rm eucl}$ be the pseudogroups of isometries of the  euclidean metric in $\mathbb{R}^n$. A $\Gamma_{\rm eucl}$-structure on $M$ corresponds to a flat metric $g$ on $M$ (it is locally isometric to the Euclidean one by definition of $\Gamma$-atlas). The corresponding $J^1\Gamma_{\rm eucl}$-principal bundle can be encoded (passing to trivialisations) by an atlas $(\{U_i\}_{i\in I}, \{\phi_i\}_{i\in I}, \{\varphi_{ij}\}_{i, j\in I})$ where the transition functions are now induced by sections of $J^1\Gamma_{\rm eucl}$ which are {\it not holonomic}. This amounts to say that they are {\it formal isometries}: the maps themselves are not isometries on their domain, but they come paired with linear isometries of the tangent spaces. If one considers a $J^1\Gamma_{\rm eucl}$-principal bundle not coming from a $\Germ(\Gamma_{\rm eucl})$-one, one can use such an atlas to endow $M$ with a Riemannian metric $g$ which is, in general, not flat. The curvature of $g$ is precisely the obstruction to the existence of a $\Germ(\Gamma_{\rm eucl})$-principal bundle over the $J^1\Gamma_{\rm eucl}$-one~\cite{FRANCESCO}.

A similar discussion holds for all the $\Gamma_G$'s from Example~\ref{ex:G-struct}. The corresponding $J^1\Gamma_G$-principal bundles over $M$ are precisely $G$-structures on $M$~\cites{CHERN, STERNBERG}; and they are flat if and only if the bundle sits below a $\Germ(\Gamma_G)$-bundle. This is most appreciated when taking into full account  transitivity, and we will come back to it later on. 
\end{example}

\begin{example}
A similar discussion holds true for $\Gamma_{\rm cont}$ from Example~\ref{ex:cont-struct}. A principal $J^1\Gamma_{\rm cont}$-structure induces an atlas where the transition functions are formal contactomorphisms and the Darboux theorem holds precisely when the bundle is induced by a principal $\Germ(\Gamma_{\rm cont})$-bundle.
\end{example}

\subsection{Transitive groupoids and their isotropy groups}
\subsubsection{Transverse geometry of a transitive groupoid}\label{subs:ME}
The notion of {\it Morita equivalence} between Lie groupoids, which appeared first in \cite[Definition 2.1]{XUMORITAEQ} in the context of Poisson geometry, captures the idea of ``transverse geometry'' of a Lie groupoid $\Sigma\tto \BB$ (see \cites{JOAO, MATIAS, MOERDIJK} for this point of view).
%Lie groupoids $\Sigma\tto \BB$ can be understood as a way to ``desingularise'' the singular foliation on $\BB$ given by their orbits -- or the leaf space of such a foliation.
\begin{definition}\label{def:classical-ME}
A {\bf principal bibundle} between $\G_1\tto \BB_1$ and $\G_2\tto \BB_2$ is a space $P$ together with two maps $\mu_1:P\to \BB_1$ and $\mu_2:P\to \BB_2$ such that
\begin{itemize}
\item $\mu_1:P\to \BB_1$ is a left $\G_1$-space and a principal $\Sigma_1$-bundle over $\BB_2$, with projection $\mu_2:P\to \BB_2$;
\item $\mu_2:P\to \BB_2$ is a right $\G_2$-space and a principal $\Sigma_2$-bundle over $\BB_1$, with projection $\mu_1:P\to \BB_1$;
\item the two actions on $P$ commute.
\end{itemize}
Two Lie groupoids $\Sigma_1\tto \BB_1$ and $\Sigma_2\tto \BB_2$ are called {\bf Morita equivalent} if there exists a principal bibundle between them.
\end{definition}
\begin{proposition}
Morita equivalence is an equivalence relation.
\end{proposition}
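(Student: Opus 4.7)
The plan is to verify the three properties of an equivalence relation one by one, constructing the relevant bibundles explicitly and then checking the defining properties from Definition \ref{def:classical-ME}.

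For \emph{reflexivity}, I would exhibit $\Sigma\tto \BB$ itself as a principal bibundle between $\Sigma$ and $\Sigma$: take $P=\Sigma$ with $\mu_1=t$ and $\mu_2=s$, and let $\Sigma$ act on itself from the left and from the right by groupoid multiplication. The axioms of a Lie groupoid (in particular, that $s$ and $t$ are surjective submersions and that multiplication is associative and admits inverses) immediately give that both actions are free, proper, principal, and commute, so $\Sigma$ is Morita equivalent to itself.

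For \emph{symmetry}, given a principal bibundle $P$ between $\Sigma_1$ and $\Sigma_2$ with moment maps $\mu_1,\mu_2$ and commuting actions, the idea is to define $P^{\mathrm{op}}$ as the same smooth manifold $P$ with the roles of $\mu_1,\mu_2$ swapped, and with actions defined by $g\cdot^{\mathrm{op}}p:=p\cdot g^{-1}$ for $g\in \Sigma_2$ (as a left action) and $p\cdot^{\mathrm{op}}h:=h^{-1}\cdot p$ for $h\in \Sigma_1$ (as a right action). The axioms are then symmetric consequences of those for $P$.

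The main work lies in \emph{transitivity}, which is also the step I expect to be the main obstacle. Given principal bibundles $P$ between $\Sigma_1,\Sigma_2$ and $Q$ between $\Sigma_2,\Sigma_3$, I would form the fibered product $P\tensor[_{\mu_2^P}]{\times}{_{\mu_1^Q}}Q$ and consider the diagonal action of $\Sigma_2$ given by $g\cdot(p,q):=(p\cdot g^{-1}, g\cdot q)$. The candidate bibundle is the quotient $P\otimes_{\Sigma_2}Q:=(P\times_{\BB_2}Q)/\Sigma_2$, equipped with the obvious residual left $\Sigma_1$-action, right $\Sigma_3$-action, and moment maps induced by $\mu_1^P$ and $\mu_2^Q$. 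The delicate point is showing that this quotient inherits a smooth manifold structure: one must check that the diagonal action of $\Sigma_2$ on $P\times_{\BB_2}Q$ is free and proper, which follows (respectively) from freeness and properness of the $\Sigma_2$-action on $P$ (or on $Q$) combined with the fact that $\mu_1^Q:Q\to \BB_2$ is a surjective submersion; this guarantees $P\times_{\BB_2}Q$ is itself a smooth manifold and the quotient is well-defined. Once smoothness is in place, verifying that the residual actions are principal and commute reduces to routine diagram chasing using the commuting actions on $P$ and $Q$. The slightly subtle compatibility checks -- that the bundle projection induced by $\mu_2^Q$ on $P\otimes_{\Sigma_2}Q$ really gives a principal $\Sigma_3$-bundle over $\BB_1$, and vice versa -- follow from transporting the principal bundle structures of $P\to \BB_1$ and $Q\to \BB_2$ through the quotient.
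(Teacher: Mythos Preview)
The paper does not actually prove this proposition: it is stated without proof as a standard background fact, with the reader referred to the general literature on Morita equivalence (e.g.\ \cite{MATIAS,JOAO,MOERDIJK}) cited at the beginning of Subsection~\ref{subs:ME}. Your outline is the standard argument one finds in those references, and it is correct in spirit; in particular, the transitivity step via the balanced product $P\otimes_{\Sigma_2}Q$ is exactly the construction that later in the paper underlies Proposition~\ref{prop:princ_ME} (compare the discussion following that proposition). So there is no discrepancy with the paper: you are simply supplying a proof where the paper chose to cite one.
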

\begin{proposition}
Let $\Sigma_1\tto \BB_1$ and $\Sigma_2\tto \BB_2$ be Lie groupoids. If $\Sigma_1$ and $\Sigma_2$ are isomorphic, then they are Morita equivalent. 
\end{proposition}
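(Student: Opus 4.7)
The plan is to construct an explicit principal bibundle out of the isomorphism. Let $\Phi:\Sigma_1\to \Sigma_2$ be an isomorphism of Lie groupoids, covering a diffeomorphism $\phi:\BB_1\to \BB_2$. As a starting template I recall the ``tautological'' bibundle showing that $\Sigma\tto \BB$ is Morita equivalent to itself: one takes $P=\Sigma$, moment maps $(t,s)$, and the left/right actions by multiplication in $\Sigma$. The idea is to deform this template along $\Phi$.

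Concretely, I would take $P:=\Sigma_2$ as a smooth manifold, equipped with the moment maps
\[
\mu_1:=\phi^{-1}\circ t_2:P\to \BB_1,\qquad \mu_2:=s_2:P\to \BB_2.
\]
Define the left action of $\Sigma_1$ by $h\cdot g:=\Phi(h)\cdot_2 g$ (defined exactly when $s_1(h)=\mu_1(g)$, since this means $\phi(s_1(h))=t_2(g)$, i.e.\ $s_2(\Phi(h))=t_2(g)$), and the right action of $\Sigma_2$ by ordinary multiplication $g\cdot h':=g\cdot_2 h'$ (defined when $\mu_2(g)=t_2(h')$). The two actions are smooth, and they commute by associativity of multiplication in $\Sigma_2$.

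It then remains to verify the three bullet points of Definition~\ref{def:classical-ME}. For the left action: $\mu_2$ is $\Sigma_1$-invariant because $s_2(\Phi(h)\cdot_2 g)=s_2(g)$; freeness follows from $\Phi(h)\cdot_2 g=g$ forcing $\Phi(h)=1_{t_2(g)}$, hence (since $\Phi$ is a groupoid isomorphism) $h=1_{\mu_1(g)}$; transitivity on $\mu_2$-fibres is the statement that for $g_1,g_2\in s_2^{-1}(y)$ there exists a (unique) $h=\Phi^{-1}(g_2\cdot_2 g_1^{-1})\in \Sigma_1$ with $h\cdot g_1=g_2$; properness reduces, via the diffeomorphism $\Phi\times \id$, to the properness of the source fibration action of $\Sigma_2$ on itself, which is automatic since the action is encoded by the diffeomorphism $\Sigma_2\tensor[_{s_2}]{\times}{_{t_2}}\Sigma_2\to \Sigma_2\times \Sigma_2,\ (g',g)\mapsto (g'\cdot_2 g,g)$ being proper (it is an embedding onto the fibred product $\Sigma_2\tensor[_{t_2}]{\times}{_{t_2}}\Sigma_2$). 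The check for the right $\Sigma_2$-action is essentially the tautological one mentioned above.

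No genuine obstacle is anticipated: the argument is a direct unwinding of definitions, and the bibundle structure is forced once the moment maps are chosen so as to match sources and targets correctly through $\phi$ and $\Phi$. The one point demanding a little care is the smoothness/properness of the left action, but this is immediate from the fact that $\Phi$ and $\phi$ are diffeomorphisms, so the ``twisted'' multiplication is diffeomorphic to ordinary multiplication in $\Sigma_2$.
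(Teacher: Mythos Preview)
Your proof is correct, and the paper itself does not supply a proof of this proposition (it is stated without proof, as a standard background fact). The construction you give---taking $P=\Sigma_2$ with moment maps $\mu_1=\phi^{-1}\circ t_2$ and $\mu_2=s_2$, and twisting the left action through $\Phi$---is the expected one, and your verification of the bibundle axioms is sound.

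One small slip worth flagging: in your properness argument you write that the map $(g',g)\mapsto (g'\cdot_2 g,g)$ embeds onto $\Sigma_2\tensor[_{t_2}]{\times}{_{t_2}}\Sigma_2$. The image is actually $\Sigma_2\tensor[_{s_2}]{\times}{_{s_2}}\Sigma_2$, since $s_2(g'\cdot_2 g)=s_2(g)$. This set is the preimage of the diagonal under $(s_2,s_2):\Sigma_2\times\Sigma_2\to \BB_2\times\BB_2$, hence closed (here one uses that $\BB_2$ is Hausdorff, which the paper assumes), and so your conclusion that the map is proper stands.
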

The notion of Morita equivalence makes precise the intuitive fact that a transitive Lie groupoid can be encoded completely in terms of its isotropy group, as Propositions~\ref{ex:ME_transitive} and~\ref{prp:ME_iso_isotropy} below show.
\begin{proposition}[Transitive groupoids and isotropy groups]\label{ex:ME_transitive}
A transitive Lie groupoid is Morita equivalent to its isotropy group at any point. More explicitly, if $\G\tto \BB$ is transitive then the $s$-fibre $s^{-1}(x)$ at $x\in \BB$ is a principal bibundle between $\G$ and the isotropy group $\G_x$.
\end{proposition}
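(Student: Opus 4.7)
Given a transitive Lie groupoid $\Sigma \tto \BB$ and a point $x \in \BB$, my candidate bibundle is $P := s^{-1}(x)$, equipped with the two moment maps
\[
\BB \xleftarrow{\ \mu_1 := t\ } P \xrightarrow{\ \mu_2\ } \{x\},
\]
the left action of $\Sigma$ given by groupoid multiplication on the left, and the right action of $\Sigma_x$ given by groupoid multiplication on the right. The plan is to verify the three clauses of Definition~\ref{def:classical-ME} in turn.

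First I would check that the two actions are well-defined, smooth, and commuting: if $g \in \Sigma$ satisfies $s(g)=t(p)=\mu_1(p)$ and $h \in \Sigma_x$ satisfies $t(h)=x=s(p)$, then both $gp$ and $ph$ are defined, lie in $s^{-1}(x)$, and $(gp)h = g(ph)$ by associativity. Smoothness follows from smoothness of the multiplication on $\Sigma$ restricted to the embedded submanifold $s^{-1}(x)$; the commutativity clause is then immediate.

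Next I would verify that $\mu_1 : P \to \BB$ with right action of $\Sigma_x$ is a principal bundle: this is essentially already recorded in the text after Remark~\ref{rmk:transitivity_and_second_countability}, once one notes that $\mu_1 = t : s^{-1}(x) \to \BB$ is a surjective submersion by transitivity (using Remark~\ref{rmk:transitivity_and_second_countability} to invoke submersivity of the anchor), that the $\Sigma_x$-action is free (from cancellation in the groupoid), transitive on $t$-fibres (since any two arrows with the same source and target differ by an isotropy element), and proper (because the map $(p,h) \mapsto (ph,p)$ from $P \times \Sigma_x$ to $P \times P$ has a smooth inverse $(q,p) \mapsto (p^{-1}q, p)$ on its image, so it is a closed embedding).

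The main step is the symmetric claim that $\mu_2 : P \to \{x\}$ with the left $\Sigma$-action is a principal $\Sigma$-bundle over the point. Here I would show that the \emph{division map}
\[
\Phi \colon \Sigma \tensor[_s]{\times}{_{\mu_1}} P \longrightarrow P \times P, \qquad (g,p) \longmapsto (gp, p),
\]
is a diffeomorphism, with smooth inverse $(q,p) \mapsto (qp^{-1}, p)$; this is well-defined thanks to the identities $s(qp^{-1}) = t(p) = \mu_1(p)$ coming from $q, p \in s^{-1}(x)$. Bijectivity of $\Phi$ simultaneously encodes freeness, transitivity on the unique $\mu_2$-fibre, and properness of the action; since $\mu_2$ trivially is a surjective submersion and the orbit space is the point $\{x\}$, this gives the principal $\Sigma$-bundle structure. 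The subtle point, and the one I would treat most carefully, is ensuring that $\Phi$ is genuinely smooth (not merely continuous) in both directions --- this relies on the fact that $s^{-1}(x)$ is embedded in $\Sigma$ and that inversion in $\Sigma$ is smooth. Once $\Phi$ is known to be a diffeomorphism, all remaining conditions in Definition~\ref{def:classical-ME} hold, and $P = s^{-1}(x)$ realises the desired Morita equivalence between $\Sigma$ and $\Sigma_x$.
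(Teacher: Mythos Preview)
Your proposal is correct and follows the standard argument. The paper does not actually include a proof of this proposition: it is stated as a known fact, with the principality of $t\colon s^{-1}(x)\to \BB$ under the right $\Sigma_x$-action already recorded in the list of properties preceding the statement (citing \cite[Theorem 5.4]{MOERDIJK}) and the submersivity of $t$ handled by Remark~\ref{rmk:transitivity_and_second_countability}. Your treatment of the left $\Sigma$-action via the division map $(q,p)\mapsto (qp^{-1},p)$ is exactly the expected verification, and your care about smoothness (using that $s^{-1}(x)$ is embedded and that inversion is smooth) is appropriate. There is nothing to compare against, and nothing to correct.
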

%See e.g.\ \cite[Proposition 5.14]{MOERDIJK}.
%We will denote Morita equivalences/principal bibundles by $\BB_1\overset{\mu_1}{\leftarrow} P \overset{\mu_2}{\rightarrow} \BB_2$ or, more extendedly, via {\bf butterfly diagrams}:
%\[
%\xymatrix{
%\G_1 \ar@<0.25pc>[dr] \ar@<-0.25pc>[dr]  & \ar@(dl, ul) &  P \ar[dl]^{\mu_1}\ar[dr]^{\mu_2} & \ar@(dr, ur) & \G_2 \ar@<0.25pc>[dl] \ar@<-0.25pc>[dl] \\
%&\BB_1  & &  \BB_2 &}.
%\]

For Lie groups the notion of Morita equivalence boils down to the ordinary notion of isomorphism.
\begin{proposition}\label{prp:ME_iso_isotropy}
Let $\Sigma_1\tto \BB_1$ and $\Sigma_2\tto \BB_2$ be Morita equivalent Lie groupoids, and $P$ be a bibundle between them with moment maps $\mu_1:P\to \BB_1$ and $\mu_2:P\to \BB_2$. For all $x_1\in \BB_1$ and $x_2 \in \BB_2$ such that there exists some $p\in P$ with $\mu_1(p) = x$ and $\mu_2(p)=y$, it holds 
\[
 (\Sigma_1)_{x_1}\cong (\Sigma_2)_{x_2}.
\]
In particular, two Lie groups $G_1$ and $G_2$ are Morita equivalent if and only if they are isomorphic.
\end{proposition}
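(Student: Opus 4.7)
The plan is to use the bibundle and a choice of $p \in P$ with $\mu_1(p)=x_1$, $\mu_2(p)=x_2$ to transport the first isotropy group onto the second. The key structural facts from Definition~\ref{def:classical-ME} that I will use repeatedly are: $\mu_2$ is the principal projection of the left $\Sigma_1$-action, so it is $\Sigma_1$-invariant and the left action is free and transitive on its fibres; symmetrically, $\mu_1$ is the principal projection of the right $\Sigma_2$-action.

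Given $g\in (\Sigma_1)_{x_1}$, the element $g\cdot p$ is defined since $s_1(g)=x_1=\mu_1(p)$, and it satisfies $\mu_2(g\cdot p)=\mu_2(p)=x_2$ (by $\Sigma_1$-invariance of $\mu_2$) and $\mu_1(g\cdot p)=t_1(g)=x_1=\mu_1(p)$. Hence $p$ and $g\cdot p$ lie in the same $\mu_1$-fibre, and principality of the right $\Sigma_2$-action on $\mu_1$-fibres produces a unique $h\in \Sigma_2$ with $g\cdot p=p\cdot h$. Matching the moment maps before and after the right action forces $s_2(h)=t_2(h)=x_2$, so $h\in (\Sigma_2)_{x_2}$, and I can set $\phi(g):=h$. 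Using commutation of the two actions together with associativity one gets
\[
p\cdot \phi(g g') = (g g')\cdot p = g\cdot (p\cdot \phi(g')) = (g\cdot p)\cdot \phi(g') = p\cdot (\phi(g)\phi(g')),
\]
and freeness of the right $\Sigma_2$-action yields $\phi(g g')=\phi(g)\phi(g')$.

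Reversing the roles of the two groupoids (using principality of the left $\Sigma_1$-action on $\mu_2$-fibres) produces, from the same $p$, a map $\psi:(\Sigma_2)_{x_2}\to(\Sigma_1)_{x_1}$; uniqueness in each construction gives $\psi\circ\phi=\id$ and $\phi\circ\psi=\id$. Smoothness of both $\phi$ and $\psi$ is a consequence of smoothness of the ``division maps'' attached to the two principal actions, which is standard in view of the freeness and properness in Definition~\ref{def:PB}. For the final statement, a Lie group $G$ is a Lie groupoid over a point whose unique isotropy group is $G$ itself, so Morita equivalence of $G_1$ and $G_2$ forces $G_1\cong G_2$ via the isomorphism just constructed, while the converse is the preceding proposition.

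The main point to watch is the bookkeeping with moment maps: one must check that the $h\in\Sigma_2$ produced by the principal $\Sigma_2$-action genuinely lies in the isotropy $(\Sigma_2)_{x_2}$, and this uses both the $\Sigma_1$-invariance of $\mu_2$ and the compatibility of $\mu_2$ with the right $\Sigma_2$-action. Once this is in place, verifying the homomorphism property and constructing the inverse are essentially formal manipulations with the two commuting actions.
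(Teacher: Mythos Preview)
The paper states this proposition without proof, treating it as a well-known fact about Morita equivalence, so there is no author's argument to compare against. Your proof is correct and is the standard one: fix $p$ over $(x_1,x_2)$, use freeness and fibre-transitivity of the right $\Sigma_2$-action to define $\phi$ via $g\cdot p=p\cdot\phi(g)$, check it is a homomorphism using commutation of the actions, and build the inverse symmetrically; smoothness via the division map and the Lie group case are handled as you indicate.
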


As a corollary of the propositions above, one has
\begin{corollary}
Let $\Sigma_1\tto \BB_1$ and $\Sigma_2\tto \BB_2$ be transitive Lie groupoids. Then $\Sigma_1$ and $\Sigma_2$ are Morita equivalent if and only if their isotropy groups are isomorphic.
\end{corollary}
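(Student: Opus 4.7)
The plan is to show both directions by combining Propositions~\ref{ex:ME_transitive} and~\ref{prp:ME_iso_isotropy} with the fact that Morita equivalence is an equivalence relation. The argument is essentially a two-step chain: each transitive groupoid is Morita equivalent to any of its isotropy groups, and two Lie groups are Morita equivalent exactly when they are isomorphic.

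For the forward direction, I would start by fixing points $x_1 \in \BB_1$ and $x_2 \in \BB_2$, and note that Proposition~\ref{ex:ME_transitive} provides Morita equivalences $\Sigma_1 \sim (\Sigma_1)_{x_1}$ (via the $s$-fibre bibundle at $x_1$) and $\Sigma_2 \sim (\Sigma_2)_{x_2}$. Assuming $\Sigma_1$ and $\Sigma_2$ are Morita equivalent, transitivity and symmetry of the relation yield $(\Sigma_1)_{x_1} \sim (\Sigma_2)_{x_2}$ as Lie groupoids over a point. The second part of Proposition~\ref{prp:ME_iso_isotropy} then implies $(\Sigma_1)_{x_1} \cong (\Sigma_2)_{x_2}$. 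Finally, since $\Sigma_i$ is transitive, its isotropy groups at different points are isomorphic, so the conclusion does not depend on the choice of $x_1, x_2$.

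For the converse, assume there exist $x_1, x_2$ with $(\Sigma_1)_{x_1} \cong (\Sigma_2)_{x_2}$. An isomorphism of Lie groups is in particular a Morita equivalence (by the proposition asserting that isomorphic Lie groupoids are Morita equivalent). Chaining once more: $\Sigma_1 \sim (\Sigma_1)_{x_1} \sim (\Sigma_2)_{x_2} \sim \Sigma_2$, where the outer equivalences come from Proposition~\ref{ex:ME_transitive} and the middle one from the group isomorphism. Transitivity of Morita equivalence finishes the argument.

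There is really no hard step here, since the substantive content is packaged into the two preceding propositions; the corollary is just a composition. The only point worth flagging is that one should explicitly invoke the fact that isotropy groups at different points of a transitive groupoid are (non-canonically) isomorphic, so that the statement, which does not refer to a chosen basepoint, is well-posed. This follows from Proposition~\ref{ex:ME_transitive} applied twice together with Proposition~\ref{prp:ME_iso_isotropy}, or directly from conjugation by any arrow connecting the two basepoints.
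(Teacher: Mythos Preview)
Your proof is correct and matches the paper's intent: the corollary is stated without proof, as an immediate consequence of Propositions~\ref{ex:ME_transitive} and~\ref{prp:ME_iso_isotropy} together with the fact that Morita equivalence is an equivalence relation, and you have spelled out precisely this chain. One minor streamlining: for the forward direction you can invoke the first part of Proposition~\ref{prp:ME_iso_isotropy} directly (Morita equivalent groupoids have isomorphic isotropies) rather than passing through the isotropy groups via Proposition~\ref{ex:ME_transitive}.
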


\begin{example}
By Example~\ref{ex:iso_cont}, $J^1\Gamma_{{\rm Sp}(k,1)}$ and $J^1\Gamma_{\rm cont}$ are isomorphic; hence, they are Morita equivalent.
\end{example}

Quite some geometry of a Lie groupoid $\Sigma\tto \BB$ is preserved under Morita equivalence; see, e.g.,~\cite[Lemma 1.28]{JOAO}. More in general, Morita equivalence has deep implications. We are interested mainly in the following proposition, which is well known (e.g.\ the proof of Theorem 4.2 in~\cite{XUMORITAEQ}). 
\begin{proposition}\label{prop:princ_ME}
%If $\Sigma_1\tto \BB_1$ and $\Sigma_2\tto \BB_2$ are Morita equivalent Lie groupoids, then there is a one to one correspondence between principal $\Sigma_1$-bundles and principal $\Sigma_2$-bundles
The categories of principal bundles of Morita equivalent groupoids are equivalent.
\end{proposition}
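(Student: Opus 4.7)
The plan is to produce an explicit equivalence of categories induced by the ``associated bundle'' construction coming from a bibundle $P$ realising the Morita equivalence between $\Sigma_1 \tto \BB_1$ and $\Sigma_2 \tto \BB_2$, with moment maps $\mu_1 : P \to \BB_1$ and $\mu_2 : P \to \BB_2$.

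First I would define a functor $F_P : \mathrm{PB}(\Sigma_1) \to \mathrm{PB}(\Sigma_2)$ as follows. Given a principal $\Sigma_1$-bundle $\pi : Q \to M$ with moment map $\nu : Q \to \BB_1$, form the fibered product
\[
Q \tensor[_\nu]{\times}{_{\mu_1}} P = \{(q, p) \in Q \times P : \nu(q) = \mu_1(p)\},
\]
equipped with the diagonal $\Sigma_1$-action descending from the actions on $Q$ and $P$ (adjusting left/right conventions via groupoid inversion so that the diagonal action is well defined). Freeness and properness of this action are inherited from the principality of the $\Sigma_1$-action on $Q$. Define $F_P(Q)$ to be the resulting quotient manifold; it carries a residual $\Sigma_2$-action (inherited from the $\Sigma_2$-action on $P$) and a surjective submersion to $M$ (induced by $\pi$, using that $\Sigma_1$ acts fibrewise and commutes with $\Sigma_2$). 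I would then verify that $F_P(Q) \to M$ is a principal $\Sigma_2$-bundle and that $F_P$ extends to morphisms by acting on the first factor of the fibered product.

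Next, the candidate inverse functor comes from the opposite bibundle $P^{\mathrm{op}}$, obtained by swapping the two actions via groupoid inversion; $P^{\mathrm{op}}$ realises a Morita equivalence in the reverse direction and yields $F_{P^{\mathrm{op}}} : \mathrm{PB}(\Sigma_2) \to \mathrm{PB}(\Sigma_1)$. To close the argument, I would establish natural isomorphisms
\[
F_{P^{\mathrm{op}}} \circ F_P \cong \mathrm{id}_{\mathrm{PB}(\Sigma_1)}, \qquad F_P \circ F_{P^{\mathrm{op}}} \cong \mathrm{id}_{\mathrm{PB}(\Sigma_2)}.
\]
These follow from the structural isomorphisms $P \otimes_{\Sigma_2} P^{\mathrm{op}} \cong \Sigma_1$ and $P^{\mathrm{op}} \otimes_{\Sigma_1} P \cong \Sigma_2$ (viewing each $\Sigma_i$ as a bibundle over itself via multiplication), combined with the ``unit'' identification $Q \tensor[_{\nu}]{\otimes}{_{t}} \Sigma_1 \cong Q$ available for any principal $\Sigma_1$-bundle $Q$.

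The main obstacle is the smoothness and principality of the quotient $F_P(Q)$: one must verify that the diagonal $\Sigma_1$-action on the fibered product $Q \tensor[_\nu]{\times}{_{\mu_1}} P$ is principal, so that the quotient carries a canonical smooth manifold structure on which the residual $\Sigma_2$-action is smooth, free, proper, and transitive on the fibres of $F_P(Q) \to M$. This rests on carefully transferring the principality of $\Sigma_1$ acting on $Q$ (or on $P$) through the fibered product. A secondary technical point is checking Axiom~\ref{axiom_moment_map}, namely that the induced moment map $F_P(Q) \to \BB_2$, which sends the class of $(q, p)$ to $\mu_2(p)$, is a surjective submersion; this is immediate once the quotient is established, since $\mu_2 : P \to \BB_2$ already is.
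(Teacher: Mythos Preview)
Your proposal is correct and follows essentially the same approach as the paper. The paper does not give a full proof but sketches the key construction: form the fibered product of $Q$ and $P$ over $\BB_1$, quotient by the diagonal $\Sigma_1$-action (packaged there as the action of the action groupoid $\Sigma_1\ltimes P$), and observe that the residual $\Sigma_2$-action on the quotient is principal over $M$; your write-up fills in the inverse functor via $P^{\mathrm{op}}$ and the natural isomorphisms, which the paper leaves implicit.
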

Our starting point in this paper and in \cite{AccorneroCattafi1} is to exploit Proposition~\ref{prop:princ_ME} for the transitive Lie groupoids arising from transitive pseudogroups, and to investigate its geometric consequences. 

For future reference, let us spell out the main idea behind the proof of Proposition~\ref{prop:princ_ME}. In order to do so, let us recall that, if $P$ is a left $\Sigma$-space, one can form the {\bf action groupoid} $\Sigma\ltimes P\tto P$. The arrow space is given by $\Sigma\ltimes P=\Sigma\tensor[_s]{\times}{_\mu}P$, the source map is the second projection and the target map sends $(g, p)$ with $\mu(p)=s(g)$ to $g\cdot p$. The multiplicaton is defined by 
\[
(h,  g\cdot p)\cdot (g, p)=(hg, p)
\]
for all $h, g\in \Sigma$, $p\in P$ with $\mu(p)=s(g)$, $t(g)=s(h)$. Notice that, if $P$ is a right $\Sigma$-space, one has a similar notion of (right) action groupoid $P\rtimes \Sigma$.

Let now $P$ be a bibundle between $\Sigma_1\tto \BB_1$ and $\Sigma_2\tto \BB_2$ and $Q\to M_1$ be a principal $\Sigma_1$-bundle. One can construct a principal $\Sigma_2$-bundle as follows. If $\mu_1:P\to \BB_1$ is the moment map of the $\Sigma_1$-action on $P$ and $\tau:P_1\to \BB_1$ is the moment map of the $\Sigma_1$-action on $Q$, one considers the pullback
\[
\mu_1^* P_1 = \{(p,q):\ p\in P,\ p_1\in P_1,\ \mu_1(p) = \tau(q)\}.
\]
$\mu_1^* P_1$ carries a canonical right action of $P\rtimes \Sigma_2$ defined by
\[
(p,g)\cdot (p,q) = (pg,q),\quad (p,q)\in \mu_1^*P_1,\ (p,g)\in P\rtimes \Sigma_2.
\] 
Moreover, $\mu_1^*Q$ also carries the left diagonal action of the action groupoid $\Sigma\ltimes P\tto P$, defined by
\[
(g,p)\cdot (p,q) = (gp,gq),\quad (p,q)\in \mu_1^*P_1,\ (g,p)\in \Sigma\ltimes P.
\] 
Such diagonal action turns out to be principal. Let us denote its quotient by $\bar{Q}$. The $P\rtimes \Sigma_2$ right action induces a principal left $\Sigma_2$-action\footnote{Recall that any right action can be turned into a left action by pre-composing with the inversion, and viceversa.} on $\bar{Q}$, whose moment map is induced by 
\[
(p,q) \to \mu_2(p),\quad (p,q)\in \mu_1^*P_1.
\]

\subsubsection{Transitive geometries}\label{section_transitive_geometries}
Let us go back to Lie pseudogroups $\Gamma$ on a manifold $\BB$ and consider an almost $\Gamma$-structure of order $k$ on a manifold $M$ (Definition \ref{def:almost_gamma_struct}), i.e.\ a principal bundle
\[
\xymatrix{
J^k\Gamma \ar@<0.25pc>[dr] \ar@<-0.25pc>[dr]  & \ar@(dl, ul) &  P \ar[dl]^{\mu}\ar[dr]^{\pi} &    \\
& \BB  & &  M.}
\]
From now on, we assume $\Gamma$ to be transitive. As a consequence, the $k$-th jet groupoid $J^k\Gamma$ is transitive (Remark~\ref{rmk:transitivity_of_pseudo}). By making use of (the construction behind) proposition~\ref{prop:princ_ME} and the Morita equivalence from Example \ref{ex:ME_transitive}, we get the principal bundle
\[
\xymatrix{
G^k \ar@<0.25pc>[dr] \ar@<-0.25pc>[dr]  & \ar@(dl, ul) &  P_{\rm iso} \ar[dl]^{}\ar[dr]^{\pi} &    \\
&\{x \}  & &  M}
\]
where $G^k:=(J^k\Gamma)_x$ is the isotropy group at some point $x\in M$ and $P_{\rm iso}$ is obtained quotenting $P\times_\BB J^k_x\Gamma$, where $J^k_x\Gamma$ is the $s$-fibre over $x$, by the diagonal action of $J^k\Gamma\ltimes P$.

\begin{example}[Higher order $G$-structures]\label{ex_higher_order_G-structures}

For $\BB = \RR^n$ one has a more explicit way of constructing (a principal bundle isomorphic to) 
%getting a more concrete description of $G^k$ and 
$P_{\rm iso}$: after fixing $x\in \BB$ (without loss of generality $x = 0$), we simply pick $P_x:=\mu^{-1}(x)$, together with the restriction of the $J^k\Gamma$-action to a $G^k := (J^k\Gamma)_x$-action. Then we see that 
\[
G^k := (J^k\Gamma)_x=\{j^k_x\varphi:\ \varphi\in \Gamma,\ \varphi(x)=x\}\subset \GL^k(n, \mathbb{R}).
\]
Here $\GL^k(n,\RR)$ denotes the space of $k$-jets of all diffeomorphisms of $\RR^n$ sending $x$ to $x$, which is naturally a Lie group and is called {\bf general $k$-th order group} or {\bf jet group of order $k$}.

Notice that, if the original almost $\Gamma$-structure $P$ is integrable (in the sense of Definition \ref{def:almost_gamma_struct}), we have $P=J^k\mathcal{A}$ and we can write 
\[
P_x:=\mu^{-1}(x)=\{j^k_p\phi:\ p\in M,\ \phi\in \mathcal{A},\ \phi(p)=x\}\subset \Fr^k(M).
\]
Here $\Fr^k(M)$ denotes the space of $k$-jets of all diffeomorphisms $M \to \RR^n$ sending their base points to $x$; it is naturally a principal $\GL^k(n,\RR)$-bundle and is called {\bf frame bundle of order $k$}.

In other words, $P_x$ is a {\bf $G^k$-structure of order $k$} on $M$, i.e.\ a reduction of $\Fr^k(M)$ to a Lie subgroup $G^k \subset \GL^k(n, \mathbb{R})$; 
%In other words, $P_x$ is a $G^k$-reduction of the bundle of $k$-frames of $M$ (which is naturally a principal $\GL^k(n,\RR)$-bundle);
more precisely $P_x$ is given by those $k$-frames which are induced by the $\Gamma$-atlas. Such reductions of higher order frame bundles are well known to be a geometric mean of encoding structure on manifolds: important examples include affine, conformal or projective structures (see \cite[Section I.8]{Kobayashi95}).

Even when the almost $\Gamma$-structure $P$ is non-integrable, %\textcolor{red}{we still have that $P$ is a {\it reduction to $J^k\Gamma$} of the principal $J^k\Gamma^n$-bundle $J^k\mathcal{A}_M$, where $\mathcal{A}_M$ denotes a maximal atlas for $M$ and $\Gamma^n:={\rm Diff}_{\rm loc}(\mathbb{R}^n)$, Example~\ref{ex:Gamma-n}. See Definition~\ref{def:almost_gamma_struct} and the discussion above it.
%This terminology and machinery appears in~\cite{FRANCESCO}, to which we refer for details. We are simply saying that we assume $P$ to be a submanifold of the space $J^k\mathcal{A}_M\subset J^k(M, \BB)$ of jets of charts in $\mathcal{A}_M$ and the action by $J^k\Gamma$ making $P$ into a principal $J^k\Gamma$-bundle to be induced by postcomposition. The moment map $\mu:P\subset J^k(M, \BB)\to \BB$ is the target map.
%This implies that, as in the integrable case,} 
$P_{\rm iso}\cong P_x:=\mu^{-1}(x)$ is still a reduction of $\Fr^k(M)$ to $G^k=(J^k\Gamma)_x$. We will return to the integrability problem for these geometric structures later on, after encoding them in the more general framework of Cartan bundles in section \ref{subsection_higher_order_G_structures}.
%
%\textcolor{red}{For the precise definition of $k$-frames and the group $\GL^k(n,\RR)$ we refer to later.}
%
%In other words, $P$ is a {\bf $G$-structure of order $k$} on $M$, i.e.\ a reduction of $\Fr^k(M)$ to a Lie subgroup $G\subset \GL^k(n, \mathbb{R})$.
\end{example}

\begin{example}\label{ex:classical_G-struct}
Let $\Gamma=\Gamma_G$ (Example~\ref{ex:G-struct}), $G\subset \GL(n, \mathbb{R})$ being a Lie subgroup. By taking almost $\Gamma$-structures of order 1 and passing to the isotropy group as explained above we recover {\bf classical $G$-structures} \cites{CHERN, STERNBERG}. Going back to the flat structures presented in Example~\ref{ex:G-struct}, their corresponding almost versions are
\begin{itemize}
\item Riemannian metrics;
\item almost complex structures;
\item almost symplectic manifolds (i.e.\ manifolds with the choice of a non-degenerate $2$-form);
\item codimension $k$ distributions;
\item codimension $1$ distributions with the choice of a $2$-form which is non degenerate along the distribution. \qedhere
\end{itemize}
\end{example}

\begin{example}
As for $\Gamma_{\rm cont}$ (Example~\ref{ex:cont-struct}), the discussion from Example~\ref{ex:iso_cont} shows that the  $G$-structure corresponding to an almost $\Gamma_{\rm cont}$-structure of order 1 is a $\Sp(k,1)$-structure.
\end{example}

Let $\Gamma$ be a transitive pseudogroup on $\BB$ and $M$ be a manifold. The discussion of this subsection shows that we can reinterpret almost $\Gamma$-structures of order $k$ on $M$ as principal $(J^k\Gamma)_x$-bundles over $M$. However, the last two examples suggest that, in general, this operation leads to some ``loss of information''. For instance, even though the $G$-structures induced by almost structures of order $1$ for $\Gamma_{\rm cont}$ and $\Gamma_{\Sp(k,1)}$ are the same, the groupoids $J^1\Gamma_{\rm cont}$ and $J^1\Gamma_{\Sp(k,1)}$ (which are isomorphic, as discussed in Example \ref{ex:iso_cont}, but not equal) behaves differently when one looks at integrable structures. To partially recover such information, we look back at the jet groupoids. In the following subsection, we describe the additional structure that underlies the integrability problem.

%Investigating this phenomenon was one of our motivations for this paper.
%
%Our starting approach is to use a stronger version of Morita equivalence, taking into account some additional structure on the jet groupoids, precisely the structure underlying the integrability problem. We briefly present such a structure below, and then proceed to give a more conceptual treatment in the following sections.

\subsection{The Cartan distribution}\label{subs:cart-dist}
Recall that, when $R\subset J^k(M, N)$ is a PDE, a solution is a local section taking values in $R$ and being {\it holonomic}; that is, a map 
\[
x\in U\subset M \to J^k(M, N),\quad x\mapsto j^k_x f_x
\]
such that $f_x=f$. In simpler words, the section is a Taylor expansion of a map $f:U\to N$. Jet spaces come equipped with a canonical distribution detecting holonomic sections.

\begin{definition}\label{def:CD}
The {\bf Cartan distribution} on the jet space $J^k(M, N)$ is the regular distribution $\CC^k$ whose integral sections are precisely the holonomic sections.
\end{definition}
Such a distribution indeed exists and is unique. We recall that it can be characterised as the kernel of the vector valued 1-form 
\[
\omega^k\in \Omega^1(J^k(M, N), VJ^{k-1}(M, N)),\quad \omega^k_{j^k_x f}:=d\pr - d(j^{k-1} f) \circ ds,
\]
where $\pr:J^k(M, N)\to J^{k-1}(M, N)$ is the canonical projection and $VJ^{k-1}(M, N)$ is the vertical bundle of $s: J^{k-1}(M, N)\to M$.
\begin{lemma}
Given a Lie pseudogroup $\Gamma$ over $\BB$,  the {\rm \bf Cartan form} $\omega^k\in \Omega^1(J^k(\BB, \BB), VJ^{k-1}(\BB, \BB))$ restricts to $J^k\Gamma$ as a regular 1-form valued in $VJ^{k-1}\Gamma$. Consequently, the Cartan distribution $\CC^k$ restricts to $J^k\Gamma$ as a regular distribution. 
\end{lemma}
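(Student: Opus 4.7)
The plan is to reduce the lemma to two facts: first that $\omega^{k}|_{TJ^{k}\Gamma}$ takes values in $VJ^{k-1}\Gamma$ pointwise, and second that the resulting bundle map is fibrewise surjective, so that its kernel has constant rank.

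First I would fix $j^{k}_{x}\varphi \in J^{k}\Gamma$ and a tangent vector $v \in T_{j^{k}_{x}\varphi}(J^{k}\Gamma)$. By the formula
\[
\omega^{k}_{j^{k}_{x}\varphi}(v) \;=\; d\pi^{k}_{k-1}(v) \;-\; d(j^{k-1}\varphi)\bigl(ds(v)\bigr),
\]
I need to check that both $d\pi^{k}_{k-1}(v)$ and $d(j^{k-1}\varphi)(ds(v))$ lie in $T_{j^{k-1}\varphi}(J^{k-1}\Gamma)$. The first lies there because, by Definition~\ref{def:Lie_pseudo}, the projection $\pi^{k}_{k-1}$ restricts to a (surjective) submersion $J^{k}\Gamma \to J^{k-1}\Gamma$. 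The second lies there because $\varphi \in \Gamma$ implies that $j^{k-1}\varphi$ is a section of $s:J^{k-1}\Gamma \to \BB$ in the literal sense that its image is contained in $J^{k-1}\Gamma$. By the chain rule both terms project to $ds(v)$ under $ds_{J^{k-1}}$, so their difference is tangent to $J^{k-1}\Gamma$ and vertical for $s$; that is, it lies in $V_{j^{k-1}\varphi}J^{k-1}\Gamma$.

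Next I would establish regularity by showing that the induced bundle map
\[
\omega^{k}|_{TJ^{k}\Gamma} \colon TJ^{k}\Gamma \longrightarrow VJ^{k-1}\Gamma
\]
is fibrewise surjective. Since $\pi^{k}_{k-1}:J^{k}\Gamma \to J^{k-1}\Gamma$ is a surjective submersion, $d\pi^{k}_{k-1}$ hits all of $T(J^{k-1}\Gamma)$. It therefore suffices to observe that the linear map $T_{j^{k-1}\varphi}(J^{k-1}\Gamma) \to V_{j^{k-1}\varphi}J^{k-1}\Gamma$ given by $w \mapsto w - d(j^{k-1}\varphi)(ds_{J^{k-1}}(w))$ is surjective: on a vertical vector $w \in V_{j^{k-1}\varphi}J^{k-1}\Gamma$ the term $ds_{J^{k-1}}(w)$ vanishes, so $w$ is sent to itself. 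Composing, $\omega^{k}|_{TJ^{k}\Gamma}$ is fibrewise surjective onto $VJ^{k-1}\Gamma$, hence has locally constant rank equal to the rank of $VJ^{k-1}\Gamma$ (this rank is itself constant because $s:J^{k-1}\Gamma \to \BB$ is a submersion by the Lie pseudogroup hypothesis, which makes $VJ^{k-1}\Gamma$ an honest vector subbundle of $VJ^{k-1}(\BB,\BB)$).

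The consequence about $\CC^{k}$ is then immediate: $\CC^{k}|_{J^{k}\Gamma} \cap TJ^{k}\Gamma = \ker\bigl(\omega^{k}|_{TJ^{k}\Gamma}\bigr)$, and the kernel of a fibrewise surjective bundle map of constant corank is a regular distribution. I do not expect any real obstacle beyond unwinding the definitions; the only point that deserves care is the compatibility of the source fibrations at different jet orders, which is precisely what the two conditions in Definition~\ref{def:Lie_pseudo} encode.
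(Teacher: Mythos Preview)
Your argument is correct. The paper does not actually prove this lemma; it simply refers the reader to \cite[Section 3.4]{ORI} for a proof and further details, so there is no ``paper's own proof'' to compare against. Your unwinding of the formula for $\omega^{k}$, together with the observation that the restricted form factors as the surjective map $d\pi^{k}_{k-1}$ followed by the projection $w\mapsto w-d(j^{k-1}\varphi)(ds_{J^{k-1}}(w))$, is exactly the standard way this is verified. The one point you leave implicit is that $s:J^{k-1}\Gamma\to\BB$ is a submersion (needed for $VJ^{k-1}\Gamma$ to be a subbundle of constant rank); this is not part of Definition~\ref{def:Lie_pseudo} verbatim, but it follows because every point of $J^{k-1}\Gamma$ lies on a holonomic section $j^{k-1}\varphi$, which is a local section of $s$.
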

See, e.g.,~\cite[Section 3.4]{ORI} for the proof, and for further details. We will keep the notations $\omega^k$ and $\CC^k$ to indicate the restrictions to $J^k\Gamma$ of the Cartan form/distribution on $J^k(\BB, \BB)$.

The distribution $\CC^k$ and the groupoid structure on $J^k\Gamma$ (see above) interact nicely; quite remarkably, this was noticed relatively late~\cite{MARIA}. By ``nicely'' we mean that 
\begin{itemize}
\item $\CC^k$ is closed under multiplication (which is actually a consequence of the existence of local holonomic bisections around each point);
\item $\CC^k\cap \ker (ds)$ is an involutive regular distribution.
\end{itemize}

In the terminology of~\cite{MARIA}, $(J^k\Gamma, \CC^k)$ is a {\bf Pfaffian groupoid}, see Definition~\ref{def_Pfaffian_groupoid-form} below. Moreover, if 
\[
\xymatrix{
J^k\Gamma \ar@<0.25pc>[dr] \ar@<-0.25pc>[dr]  & \ar@(dl, ul) &  P \ar[dl]^{\mu}\ar[dr]^{\pi} &    \\
& \BB  & &  M}
\]
is an almost $\Gamma$-structure of order $k$ over $M$, see Definition~\ref{def:almost_gamma_struct}, then $P$ carries the restriction $\theta^k$ of the Cartan form on $J^k(M,\BB)$, which turns out to be regular. Moreover, $\theta^k$ interacts nicely with the action of $J^k\Gamma$, where ``nicely'' has a similar meaning to the one spelled out above for $\omega^k$ on $J^k\Gamma$.  When $\omega^k$ and $\theta^k$ are taken into account, the action of $J^k\Gamma$ on $P$ can be understood as a {\bf principal Pfaffian bundle}, see Definition~\ref{def:princ-pfaff-bundles}.

Pfaffian groupoids and principal Pfaffian bundles are rather powerful tools and, as discussed in details in \cite{INPROGRESS}, provide the ``correct'' abstract framework to encode and study, respectively, Lie pseudogroups of symmetries and the geometric structures induced by them. We refer to~\cite{CrainicSalazarStruchiner15,CRAINICYUDILEVICH,CATTMARIUSMARIA,AccCr2020} for instances of applications. For our purposes, we make use of results from~\cite{AccorneroCattafi1} on Morita equivalences of Pfaffian groupoids -- and of the correct notion of {\it Pfaffian isotropy} identified there. With such a notion and a Pfaffian version of Proposition~\ref{prop:princ_ME} at hand, we will be able to correctly encode the geometry arising from transitive pseudogroups in terms of principal group bundles.

%The Pfaffian machinery underlies our discussion. Since we will deal with it abstractly and in greater generality, we begin the next section with the precise 

\section{The Pfaffian framework}\label{sec:Pfaffian-language}

As discussed in the previous section, geometries on manifolds can be described using principal groupoid bundles. Moreover, the groupoids involved in such a description (and, in fact, the bundles themselves) are jet manifolds, and as such they carry additional structures: the Cartan distribution.

Below, we recall a general framework, introduced in~\cite{MARIA} and later developed in~\cites{ORI,FRANCESCO}, to deal with jet groupoids and their principal bundles from an abstract point of view. This approach is quite advantegeous for a variety of reasons. In particular, it allows us to decorate the facts presented in subsection~\ref{subs:ME} and to identify the appropriate notion of isotropy for the jet groupoid of a transitive pseudogroup.

%There are two equivalent formalisms to describe all the concepts presented in this section: one makes use of distributions, and the other one of vector valued 1-forms. For simplicity, we discuss the main definitions and results only in the second formalism, but one should keep in mind that it is usually the context which dictates which approach provides the best insight. 

%All the proofs are omitted and can be found in \cite{AccorneroCattafi1} (each major statement contains a more specific reference).

\subsection{Pfaffian groupoids}

In this subsection we introduce the language needed to carry on the discussion. We limit ourselves to what is strictly needed. Extensive details can be found in~\cite{MARIA,ORI,FRANCESCO}; for what concerns Morita equivalences in the transitive case, see~\cite{AccorneroCattafi1}; the complete treatment will appear in \cite{INPROGRESS}.

Recall that a (left) {\bf representation} of a Lie groupoid $\Sigma\tto \BB$ is a vector bundle $E \to \BB$ such that
\begin{itemize}
\item $E$ is a $\Sigma$-space, with the moment map $E\to \BB$ being the vector bundle projection;
\item $\Sigma$ acts by linear maps (hence by linear isomorphisms) between fibres.
\end{itemize} 

\begin{definition}\label{def_Pfaffian_groupoid-form}
A {\bf Pfaffian groupoid} $(\G, \omega, E)$ over $\BB$ consists of a Lie groupoid $\G \rightrightarrows \BB$ together with a representation $E \to \BB$ of $\G$ and a differential form $\omega \in \Omega^1 (\G, t^* E)$ such that
\begin{enumerate}
\item $\omega$ is {\bf multiplicative} i.e.\
\[
(m^*\omega)_{(g, h)}=(\pr_1^*\omega)_{(g, h)}+g\cdot (\pr_2^*\omega)_{(g, h)},\quad (g, h)\in \G\times_\BB\G;
\]
\item $\omega$ has constant rank; %, i.e $\dim (\ker(\om_g))$ is constant for every $g \in \mathcal{G}$
\item the subbundle
\[
\g(\omega):=\ker(\omega)\cap \ker(ds) \subset T\G
\]
is involutive;
\item it holds
\begin{equation*}
\ker (\omega) \cap \ker(dt) = \ker (\omega) \cap \ker(ds). 
\end{equation*}
\end{enumerate}
We call $\g(\omega)$ the {\bf symbol bundle} of $(\G, \omega, E)$. A {\bf holonomic bisection} of $(\G, \omega, E)$ is a local bisection $\sigma:U\subset \BB \to \G$ such that $\sigma^*\omega=0$. 
A Pfaffian groupoid $(\G,\omega, E)$ is called {\bf full} if the form $\omega$ is pointwise surjective.
\end{definition}
For full Pfaffian groupoids, we will occasionally use the notation $(\Sigma,\omega)$, omitting the coefficient space $E$, which can be recovered (including the $\Sigma$-representation, see Proposition~\ref{prp:coefficent-space-is-alg}) from $\omega$.

%The main examples to have in mind are the jet groupoids of a Lie pseudogroup together with their Cartan form.
% -- see Example~\ref{exm:jet-groupoids} below.

\begin{example}[Main example]\label{exm:jet-groupoids}
Given a Lie pseudogroup $\Gamma$, the jet groupoid $J^k\Gamma\tto \BB$ is Pfaffian when equipped with the Cartan form $\omega^k$ from subsection~\ref{subs:cart-dist}. The symbol bundle $\g(\omega^k)$ is given by the vertical bundle of the projection $J^k\Gamma\to J^{k-1}\Gamma$ restricted to $\BB$ -- as one readily checks using the explicit formula for $\omega^k$. The Pfaffian groupoid $(J^k\Gamma, \omega^k)$ is full: $\omega^k$ is surjective onto the vertical bundle $VJ^{k-1}\Gamma$ of the source projection of $VJ^{k-1}\Gamma$. Observe that $VJ^{k-1}\Gamma$ is isomorphic to the quotient of $VJ^k\Gamma$ with respect to the vertical bundle of $J^k\Gamma\to J^{k-1}\Gamma$ -- see also Proposition~\ref{prp:coefficent-space-is-alg}.

%Notice that $(J^k\Gamma, \omega^k)\to (J^{k-1}\Gamma, \omega^{k-1})$ is a morphism of Pfaffian groupoids for all $k\in \mathbb{N}$, $k\geq 1$.
\end{example}

\begin{remark}\label{symbol_space_is_ideal}
%One can replace the condition on $\g(\omega)$ with the involutivity of $\mathfrak{g}_\BB: = (\ker (\omega) \cap \ker (ds))|_{ \BB} \subset A$, i.e.\ asking that $\g_\BB$ is a Lie subalgebroid of $A:={\rm Lie}(\G)$. We will often use this point of view.
By point $(iii)$ in Definition~\ref{def_Pfaffian_groupoid-form}, the subbundle
\[
\g_\BB:=\g(\omega)|_\BB\subset \ker(ds)|_\BB
\]
is a Lie subalgebroid of $A:={\rm Lie}(\Sigma)$. 
%As Proposition~\ref{prp:coefficent-space-is-alg}, point $1$, shows,  
Actually, $\g_\BB$ is an {\it ideal} of $A$ (Proposition~\ref{prp:coefficent-space-is-alg}, point $(ii)$).
\end{remark}

%For our purposes, the main examples where pointwise surjectivity will be dropped are Pfaffian {\it groups}. 
We list below some properties of Pfaffian groupoids that are relevant for our discussion; see~\cite{MARIA} for details. In what follows, we denote by $T_\BB\Sigma$ the pullback of $T \Sigma \to \Sigma$ via the unit map $\BB \to \Sigma$.

\begin{proposition}\label{prp:coefficent-space-is-alg}
Let $(\Sigma, \omega, E)$ be a Pfaffian groupoid, and set $\g_\BB:=\g(\omega)|_\BB$.
\begin{enumerate}
\item The quotient $A/\mathfrak{g}_\BB\to \BB$ is a representation of $\Sigma\tto \BB$. The action is given by
\[
g\cdot [V_x] \mapsto [dR_{g^{-1}}\cdot dm({\rm hor}^\omega_g(dt(V_x)), V_x)]
\]
where $g\in \Sigma$, $x=s(g)$, $[V_x]\in \ker(ds)|_\BB/\g_\BB$, and ${\rm hor}^\omega_g(v_x)$ is any $s$-lift of $v_x$ tangent to $\ker(\omega)$.
\item $\g_\BB$ is an ideal of $A$. Equivalently, the quotient $A/\g_\BB$ possesses a Lie algebroid structure such that the quotient projection is a Lie algebroid map.
\item The inclusion $\omega(T_\BB\Sigma)\hookrightarrow E$ is a map of representations -- i.e.\ the bundle $\omega(T_\BB\Sigma)$ is a subrepresentation of $E$.
\item The $\Sigma$-representation on $\omega(T_\BB\Sigma)$ from point $(iii)$ is isomorphic to the $\Sigma$-representation on $A/\mathfrak{g}_\BB$ from point $(i)$. The isomorphism is given by
\[
\omega(T_\BB \Sigma)\to A/\mathfrak{g}_\BB,\quad \omega(v_g)\mapsto [v^{\ker(ds)}_g],
\]
where $v_g\in T_g\Sigma$ and $v_g=v^{\ker(ds)}_g+v_g^{\ker(\omega)}$, $v^{\ker(ds)}_g\in \ker_g(ds)$, $v_g^{\ker(\omega)}\in \ker_g(\omega)$.
\end{enumerate}
\end{proposition}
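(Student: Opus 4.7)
The plan is to exploit multiplicativity of $\omega$ systematically at pairs of the form $(1_x,g)$, $(g,1_x)$, and $(\xi,0_g)$, establishing $(ii)$ first as the main algebraic input, then $(i)$ by direct verification, and finally reading off $(iii)$ and $(iv)$.

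My first step would be the key observation that $\g(\omega)$ is right-invariant. Indeed, for $\xi\in\g(\omega)_h$ and composable $g$, multiplicativity applied to $(\xi,0_g)$ gives $\omega_{hg}(dR_g\xi)=\omega_h(\xi)+h\cdot\omega_g(0_g)=0$, and $dR_g\xi$ is still in $\ker(ds)$. Thus the right-invariant extension $\vec\alpha$ of any $\alpha\in\Gamma(\g_\BB)$ lies in $\Gamma(\g(\omega))$. Involutivity of $\g(\omega)$ (condition $(iii)$ of Definition~\ref{def_Pfaffian_groupoid-form}) then shows $[\vec\alpha,\vec\beta]\in\Gamma(\g(\omega))$ for $\alpha,\beta\in\Gamma(\g_\BB)$, hence $[\alpha,\beta]_A\in\g_\BB$ (subalgebroid property). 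To promote this to the ideal property of $(ii)$, I would use Cartan's formula $L_{\vec\alpha}\omega=\iota_{\vec\alpha}d\omega$ (since $\omega(\vec\alpha)=0$) together with the multiplicative identity inherited by $d\omega$ from $\omega$; evaluating against $\vec\beta$ at a unit point forces $\omega([\vec\alpha,\vec\beta])|_\BB=0$, so $[\alpha,\beta]_A\in\g_\BB$ for arbitrary $\beta\in\Gamma(A)$.

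For $(i)$ I would verify the four requirements in sequence. Surjectivity of $ds:\ker(\omega_g)\to T_{s(g)}\BB$ (needed for the existence of ${\rm hor}^\omega_g$) comes from a dimension count: constant rank of $\omega$, the inclusion $u(T\BB)\subset\ker(\omega)$, and the identification $\omega(A_x)\cong A_x/\g_\BB|_x$ at units force $\rank(\omega)=\rank(A)-\rank(\g_\BB)$, after which the count $\dim\ker(\omega_g)-\dim\g(\omega)_g=\dim\BB$ is immediate. Well-definedness of $g\cdot[V_x]$ decomposes into two parts: different horizontal lifts differ by an element of $\g(\omega)_g$, which $dR_{g^{-1}}$ carries into $\g_\BB|_{t(g)}$ by right-invariance, while replacing $V_x$ by $V_x+\gamma$ with $\gamma\in\g_\BB|_x$ modifies the output by a class in $\g_\BB|_{t(g)}$ thanks precisely to $(ii)$. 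Smoothness and fibrewise linearity follow from a smooth local choice of horizontal distribution, and the action axioms reduce, for composites, to the choice ${\rm hor}^\omega_{g_1g_2}=dm({\rm hor}^\omega_{g_1},{\rm hor}^\omega_{g_2})$, which lies in $\ker(\omega)$ by one more application of multiplicativity.

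For $(iv)$, the map $\omega(v_g)\mapsto[v_g^{\ker(ds)}]$ is well-defined because any two decompositions $v_g=v_g^{\ker(ds)}+v_g^{\ker(\omega)}$ differ by a vector in $\ker(ds)\cap\ker(\omega)=\g(\omega)_g$, which represents $0\in A/\g_\BB$ after right-translation; it is a fibrewise isomorphism by the same dimension count as above, and equivariance is a direct translation of the formula defining the action in $(i)$. Point $(iii)$ is then a consequence: using multiplicativity at $(g,V_x)$, one computes $g\cdot\omega(V_x)=\omega_g(dm({\rm hor}^\omega_g(dt\,V_x),V_x))\in\omega(T_g\Sigma)=\omega(T_\BB\Sigma)|_{t(g)}$, so the given $\Sigma$-action on $E$ preserves $\omega(T_\BB\Sigma)$, and the restriction coincides with the action on $A/\g_\BB$ through the isomorphism of $(iv)$. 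The main obstacle I anticipate is the identity used in $(ii)$: making precise the ``multiplicativity of $d\omega$'' — the cleanest route being to express multiplicativity as an identity of forms on $\Sigma\tensor[_s]{\times}{_t}\Sigma$ and differentiate — is where the genuine analytic work of the argument concentrates.
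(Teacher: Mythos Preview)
The paper does not supply a proof of this proposition; it refers the reader to \cite{MARIA}. So there is no in-paper argument to compare against, and I will assess your sketch directly.

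Parts $(i)$, $(iii)$, $(iv)$ are essentially correct. One small correction: the second half of well-definedness in $(i)$ (replacing $V_x$ by $V_x+\gamma$ with $\gamma\in\g_\BB|_x$) does \emph{not} need $(ii)$. Since the Lie--Pfaffian condition forces $\rho(\gamma)=0$, the pair $(0_g,\gamma)$ is composable and multiplicativity gives $dL_g(\gamma)\in\g(\omega)_g$; then right-invariance of $\g(\omega)$ sends this under $dR_{g^{-1}}$ into $\g_\BB|_{t(g)}$. So $(i)$ stands on its own, and your logical order (``$(ii)$ first, then $(i)$'') is unnecessary.

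The genuine gap is in $(ii)$. Your proposed route through ``multiplicativity of $d\omega$'' is problematic because $\omega$ takes values in $t^*E$, so $d\omega$ is not canonically defined, and differentiating the multiplicativity identity produces extra terms involving derivatives of the representation that do not obviously vanish. You flag this yourself, but as written the step does not go through. A clean fix stays close to your Cartan-formula idea but exploits tensoriality instead: in a local trivialisation of $E$, the components $d\omega^a$ are honest $2$-forms, so $d\omega^a_{1_x}(\alpha_x,\beta_x)$ can be computed with \emph{any} extensions. Extend $\alpha$ by its \emph{left}-invariant field $\alpha^L$ (this exists and lies in $\g(\omega)$ precisely because $\rho(\alpha)=0$, i.e.\ the Lie--Pfaffian condition) and $\beta$ by its right-invariant field $\vec\beta$. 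Then $\omega^a(\alpha^L)=0$, $[\alpha^L,\vec\beta]=0$ (left- and right-invariant fields commute on a groupoid), and $\alpha^L(\omega^a(\vec\beta))=0$ since $\omega(\vec\beta)=t^*(l\circ\beta)$ and $\alpha^L\in\ker(dt)$. Hence $d\omega^a_{1_x}(\alpha,\beta)=0$, and your earlier identity $\omega^a_{1_x}([\vec\alpha,\vec\beta])=-d\omega^a_{1_x}(\alpha,\beta)$ finishes the job. This is the missing idea: use the Lie--Pfaffian condition to produce a left-invariant extension inside $\g(\omega)$, rather than trying to push multiplicativity through the exterior derivative.
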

\begin{remark}
As a consequence of Proposition~\ref{prp:coefficent-space-is-alg}, {\it full} Pfaffian groupoids $(\Sigma, \omega, E)$ are completely encoded into the Lie groupoid $\Sigma$ and the form $\omega$; the coefficient space $E$ can be reconstructed as $E \cong A/\g_\BB$ with the representation from Proposition~\ref{prp:coefficent-space-is-alg}, point $(i)$. Moreover, the coefficient space of a {\it full} Pfaffian groupoid $(\Sigma, \omega, E)$ carries a canonical Lie algebroid structure, and in fact a quotient of $A:={\rm Lie}(\Sigma)$, by Proposition~\ref{prp:coefficent-space-is-alg}, points $(ii)$ and $(iv)$. 
\end{remark}

A full Pfaffian groupoid $(\G,\omega,E)$ can be equivalently encoded into the distribution
$\CC_\omega:=\ker(\omega)$. More precisely, the following proposition holds true.

\begin{proposition}\label{prp:dist-picture}
Let $(\G,\omega,E)$ be a Pfaffian groupoid. The distribution $\CC_\omega := \ker(\omega)$ is
\begin{itemize}
\item {\bf multiplicative} in the sense that $\CC_\omega\subset T\Sigma$ is a subgroupoid (the right hand side denotes the tangent groupoid $T\Sigma\tto T\BB$, where the structure maps are obtained differentiating those of $\Sigma\tto \BB$);
\item {\bf $s$-transversal} in the sense that $\CC_\omega+\ker(ds)=T\G$;
\item such that $\g(\CC_\omega):=\CC_\omega\cap \ker(ds)$ is involutive;
\item such that $\CC_\omega\cap \ker(ds)=\CC_\omega\cap \ker(dt)$.
\end{itemize}
Moreover, given a Lie groupoid $\Sigma$ and a distribution $\CC$ with the properties listed above, one has 
\[
T\Sigma/\CC\cong \ker(ds)\cong \g(\CC)
\] 
and the normal bundle projection $T\Sigma\to T\Sigma/\CC$ post-composed with the right multiplication induces the pointwise surjective Pfaffian form
\[
\omega_\CC: T\Sigma\to A/\left(\g(\CC)|_\BB\right).
\]
\end{proposition}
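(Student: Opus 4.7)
The plan is to handle the two directions of this equivalence separately: first verifying that $\CC_\omega := \ker(\omega)$ satisfies the four listed conditions, then reconstructing a Pfaffian form $\omega_\CC$ from an abstract distribution $\CC$ with those properties.

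For the forward direction, multiplicativity of $\CC_\omega$ as a subgroupoid of $T\Sigma \tto T\BB$ is a direct consequence of the multiplicativity of $\omega$: the identity
\[
\omega(X_g \cdot X_h) \;=\; \omega(X_g) + g \cdot \omega(X_h)
\]
shows that if $X_g, X_h \in \ker(\omega)$ are composable then so is $X_g\cdot X_h$, and setting $g=h=1_x$ yields $\omega \circ du = 0$, placing the unit section in $\CC_\omega$. Involutivity of $\g(\CC_\omega)$ and the kernel equality $\CC_\omega \cap \ker(ds) = \CC_\omega \cap \ker(dt)$ are axioms (iii) and (iv) of Definition~\ref{def_Pfaffian_groupoid-form}. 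The remaining condition, $s$-transversality $\CC_\omega + \ker(ds) = T\Sigma$, is equivalent to $\omega(\ker(ds)) = \omega(T\Sigma)$; this holds at a unit $1_x$ because $T_{1_x}\Sigma = A_x \oplus du(T_x\BB)$ and $\omega$ annihilates $du(T_x\BB)$ (by the above), and is propagated to arbitrary $g$ by combining the constant rank of $\omega$ with multiplicativity applied to the composable pair $(g, g^{-1})$.

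For the converse, $s$-transversality together with $\CC \cap \ker(ds) = \g(\CC)$ yields the canonical isomorphism $T\Sigma/\CC \cong \ker(ds)/\g(\CC)$. Multiplicativity of $\CC$ ensures that $\g(\CC)$ is right-invariant along $s$-fibres, so the standard right-translation trivialisation $\ker(ds) \cong t^*A$ descends to an isomorphism $\ker(ds)/\g(\CC) \cong t^*\bigl(A/\g(\CC)|_\BB\bigr)$. Composing the normal-bundle projection $T\Sigma \twoheadrightarrow T\Sigma/\CC$ with this identification produces the pointwise surjective form
\[
\omega_\CC : T\Sigma \longrightarrow t^*\bigl(A/\g(\CC)|_\BB\bigr),
\]
whose kernel is tautologically $\CC$.

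Finally, I would verify that $(\Sigma, \omega_\CC)$ is a Pfaffian groupoid: constant rank is immediate from regularity of $\CC$; the symbol $\g(\omega_\CC) = \CC \cap \ker(ds) = \g(\CC)$ is involutive by hypothesis; the kernel equality between $ds$ and $dt$ is inherited from $\CC$; and a representation structure on $A/\g(\CC)|_\BB$ has to be produced (arguing as in Proposition~\ref{prp:coefficent-space-is-alg}(i), where multiplicativity of $\CC$ replaces multiplicativity of $\omega$). Multiplicativity of $\omega_\CC$ then follows from multiplicativity of $\CC$ as a subgroupoid of $T\Sigma$ upon checking that right translation interacts correctly with the groupoid multiplication on tangent vectors. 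The main obstacles I anticipate are the extension of $s$-transversality from units to arbitrary points in the forward direction, and the multiplicativity check for $\omega_\CC$ in the converse, both of which require careful book-keeping of how right translation interlaces with the tangent groupoid structure $T\Sigma \tto T\BB$.
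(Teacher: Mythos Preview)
The paper does not actually prove this proposition: it is stated as background material, with the surrounding text pointing to \cite{MARIA} (and the theses \cite{ORI,FRANCESCO}) for details. So there is no in-paper proof to compare against.

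Your argument is the standard one and is essentially correct. A couple of small points worth tightening. For the forward direction, to conclude that $\CC_\omega$ is a \emph{subgroupoid} of $T\Sigma$ you should also check closure under inversion; this follows from applying multiplicativity to the composable pair $(X_g, di(X_g))$ together with $\omega\circ du = 0$, but it deserves a sentence. Your propagation of $s$-transversality from units to arbitrary arrows via the pair $(g,g^{-1})$ is the right idea; the cleanest phrasing is that right translation $dR_g$ carries $\ker(ds)|_{1_{t(g)}}$ to $\ker(ds)|_g$ and, by multiplicativity of $\CC_\omega$, carries $\CC_\omega|_{1_{t(g)}}\cap\ker(ds)$ into $\CC_\omega|_g\cap\ker(ds)$, so the codimension count transfers.

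For the converse, your outline is correct, including the point you flag as delicate: once $\CC$ is multiplicative, right translation by $g$ sends $\g(\CC)|_{1_{t(g)}}$ to $\g(\CC)|_g$, which is exactly what makes the fibrewise identification $\ker(ds)/\g(\CC)\cong t^*(A/\g(\CC)|_\BB)$ well defined and forces $\omega_\CC$ to satisfy the multiplicativity identity. Note, incidentally, that the displayed isomorphism in the statement appears to contain a typo (one expects $T\Sigma/\CC\cong \ker(ds)/\g(\CC)$, not $\ker(ds)\cong\g(\CC)$); your reading of it is the correct one.
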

\begin{remark}
Let $(J^k\Gamma,\omega^k)$ be the Pfaffian groupoid from Example~\ref{exm:jet-groupoids}. The distribution $\CC_{\omega^k} = \ker(\omega^k)$ is the (restriction of the) Cartan distribution $\CC^k$ of $J^k\Gamma$, see Definition~\ref{def:CD}.
\end{remark}

\begin{remark}
One can equivalently define Pfaffian groupoids (including non-full ones) in terms of distributions, see e.g. \cite[Definition 2.9]{AccorneroCattafi1}. 
\end{remark}

\begin{remark}
A remark about our choice of terminology compared to the literature: in \cites{MARIA, ORI, FRANCESCO} a Pfaffian groupoid is not required to satisfy $\ker(\omega) \cap \ker(dt) = \ker(\omega) \cap \ker(ds)$. Pfaffian groupoids satisfying $\ker(\omega) \cap \ker(dt) = \ker(\omega) \cap \ker(ds)$ are called ``Lie-Pfaffian''. In such setting, point $(ii)$ of Proposition \ref{prp:coefficent-space-is-alg} does not hold. For our purpose it is more convenient to require the ``Lie-Pfaffian'' condition directly in the definition. These phenomena are better investigated in \cite{INPROGRESS}.
%\textcolor{red}{One can also formulate an intermediate notion between full and non-full Pfaffian groupoid, called {\bf extended}, blabla... (write something later in the infinitesimal models)}
\end{remark}

A {\bf morphism of Pfaffian groupoids} is, as expected, a Lie groupoid morphism together with a morphism of representation, such that the 1-forms are preserved (see \cite[Definition 2.11]{AccorneroCattafi1} for the precise definition).

%\begin{example}
%$J^1\Gamma_G$ is Pfaffian isomorphic to $J^1\Gamma_{\rm cont}$.
%\end{example}

\subsection{Principal Pfaffian bundles}

Let $(\G, \omega, E)$ be a Pfaffian groupoid over $\BB$ (Definition \ref{def_Pfaffian_groupoid-form}).

\begin{definition}\label{def:princ-pfaff-bundles}
A {\bf principal $(\G, \omega, E)$-bundle} over $M$ is a principal $\G$-bundle $\pi:P\to M$ with moment map $\mu:P\to \BB$ together with a 
%pointwise surjective 
1-form $\theta\in \Omega^1(P, \mu^*E)$ such that 
\begin{itemize}
\item the form $\theta$ is {\bf multiplicative} with respect to the action $m_P:\G\times_\BB P\to P$ of $\G$ and the form $\omega$, i.e.\
\[
(m_P^*\theta)_{(g, p)}=(\pr_1^*\omega)_{(g,p)}+g\cdot (\pr_2^*\theta)_{(g, p)},\quad (g, p)\in \G\times_\BB P;
\]
\item $\ker(d\mu)\cap \ker(\theta)=\ker(d\pi)\cap \ker(\theta)$.
\end{itemize}
The principal bundle is called {\bf full} if $\theta\in \Omega^1(P, \mu^*E)$ is pointwise surjective. A local section 
\[
\sigma:U\subset M\to P
\]
is called {\bf holonomic} if $\sigma^*\theta=0$.

When the Pfaffian groupoid $(\G, \omega, E)$ is clear from the context, we will often use the simpler terminology {\bf principal Pfaffian bundles}.
\end{definition}

We will picture a principal $(\G, \omega, E)$-bundle as
\[
\xymatrix{
(\Sigma, \omega, E) \ar@<0.25pc>[dr] \ar@<-0.25pc>[dr]  & \ar@(dl, ul) &  (P, \theta) \ar[dl]^{\mu}\ar[dr]^{\pi} &    \\
&\BB  & &  M.}
\]

\begin{example}[Main example]\label{ex_almost_gamma_structures_as_pfaffian_bundle}
If $\Gamma$ is a Lie pseudogroup over $\BB$, an almost $\Gamma$-structure of order $k$ (Definition~\ref{def:almost_gamma_struct})
\[
\xymatrix{
J^k\Gamma \ar@<0.25pc>[dr] \ar@<-0.25pc>[dr]  & \ar@(dl, ul) &  P \ar[dl]^{\mu}\ar[dr]^{\pi} &    \\
&\BB  & &  M,}
\]
can be given the structure of principal $J^k\Gamma$-bundle by taking the restriction to $P$ of the Cartan distribution on $J^k(M, \BB)$, see subsection~\ref{subs:cart-dist}. If $P$ is integrable, then $P=J^k\mathcal{A}$ for some $\Gamma$-atlas $\mathcal{A}$ on $M$, Definition~\ref{def:almost_gamma_struct}. In this case, around each point $j^k_x\phi\in J^k\mathcal{A}$ we have the section
\[
y\in \dom(\phi)\subset M\mapsto j^k_y\phi,
\]
which is holonomic. This property is in fact equivalent to integrability of the $\Gamma$-structure $P$. That is, if, for each $p$, one has a holonomic section of $(P, \theta)$ through $p$, then $P\subset J^k(M, \BB)$ is integrable in the sense of Definition~\ref{def:almost_gamma_struct}. In fact, given an open cover $\{U_i\}_{i\in I}$ of $M$ such that for each $U_i$ one has a holonomic section $\sigma:U\subset M\to P\subset J^k(M,\BB)$, one can construct the $\Gamma$-atlas $\{(U_i, \mu\circ \sigma_i)\}_{i\in I}$.
\end{example}

\begin{remark}\label{rmk:inf_action_Pfaffian}
Given an action $m_P$ of a Lie groupoid $\Sigma$ on $\mu:P\to \BB$, one has an induced {\it infinitesimal action}, given by the map
\[
a: t^*A\to \ker(d\pi)\subset TP
\]
sending $\alpha_x\in A_{t(g)}=\ker_{t(g)}(ds)$ to $dm_P(\alpha_x, 0_p)\in \ker_p(d\pi)\subset T_pP$. When $P$ is a principal $\Sigma$-bundle over $M$, the infinitesimal action induces an isomorphism 
\[
\mu^*A \to \ker (d\pi), \quad \alpha_{\mu(p)}\mapsto dm_P(\alpha_{\mu(p)}, 0_p).
\]
%This in turn induces an isomorphism $\mu^*\nu_{\CC}\cong \nu_{\CC_P}$.
Let now $(P,\theta)$ be a principal $(\G, \omega, E)$-bundle. The multiplicativity of $\theta$ implies $\theta(a(\alpha))=\omega(\alpha)$, for all $\alpha\in \ker(ds) \cong t^*A$. As a consequence, the isomorphism infinitesimal action induces an isomorphism $\mu^*A \cong \ker(d\pi)$ that restricts to an isomorphism
 \[
\mu^*(\ker(\omega)\cap\ker(ds))\cong \ker(\theta)\cap \ker(d\pi).
\]
We conclude that $\ker(d\pi)\cap \ker(\theta)$ is involutive since $\g(\omega) = \ker(\theta)\cap \ker(d\pi)$ is so.
\end{remark}

%The following statement, proved in~\cite[Proposition 5.4.3]{FRANCESCO} deals with the gauge construction from \ref{prop_gauge_construction} to the Pfaffian setting.
%
%\begin{proposition}\label{prop:Pfaffian_Gauge_construction}
%Let $(\G, \omega, E)$ be a Pfaffian groupoid over $\BB$ and $(P, \theta)$ a (left) principal $(\G, \omega, E)$-bundle over $M$. The gauge groupoid $\Gaug(P) = (P\times_M P)/\G\tto M$ can be endowed with a 1-form $\omega_{\rm gauge}$ and a representation $E_{\rm gauge}$ in such a way that the right action on $P$ makes $(P, \theta)$ into a (right) principal $(\Gaug(\G), \omega_{\rm gauge}, E_{\rm gauge})$-bundle.
%\end{proposition}

\subsection{Pfaffian isotropies and Pfaffian groups}\label{subs:Pf-groups}

We are interested into studying the structure carried by the isotropy groups of jet groupoids of Lie pseudogroups. We approach the problem by looking at the isotropy groups of Pfaffian groupoids. Let $(\Sigma,\omega,E)$ be a Pfaffian groupoid. The first remark is that the restriction of the Pfaffian form $\omega$ to the isotropy group $\Sigma_x$ is itself a Pfaffian form. In fact, from the multiplicativity of $\omega$ follows a suitable equivariance property (see \cite[Remark 2.17]{AccorneroCattafi1} and also~\cite[Lemma 3.4.11]{FRANCESCO}), which in turn can be used to prove (see~\cite[Proposition 2.18]{AccorneroCattafi1}) that

\begin{proposition}[Isotropies of Pfaffian groupoids]\label{prp:isotropies-pf-groups}
Let $(\Sigma, \omega, E)$ be a Pfaffian groupoid over $\BB$. Then the triple $(\Sigma_x, \omega|_{\Sigma_x}, E_x)$ is a Pfaffian group. Furthermore for all $x, y\in \BB$ and $g\in \Sigma$ such that $s(g)=x$, $t(g)=y$ the isomorphism of Lie groups
\[
\phi_g: \Sigma_x\to \Sigma_y, \quad h\mapsto ghg^{-1}
\]
and the isomorphism of representations
\[\psi_g: E_x\to E_y, \quad \alpha_x\mapsto g\cdot \alpha_y\]
make $(\psi_g, \phi_g)$ into an isomorphism of Pfaffian groups from $(\Sigma_x, \omega|_{\Sigma_x}, E_x)$ to $(\Sigma_y, \omega|_{\Sigma_y}, E_y)$.
\end{proposition}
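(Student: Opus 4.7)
The plan is to verify the four axioms of Definition~\ref{def_Pfaffian_groupoid-form} for $(\Sigma_x, \omega|_{\Sigma_x}, E_x)$ viewed as a Lie groupoid over a point, and then to check that $(\psi_g, \phi_g)$ intertwines all the Pfaffian data. Throughout I would exploit that on the embedded Lie subgroup $\Sigma_x \subset \Sigma$ both $s$ and $t$ are constantly equal to $x$, so that $T\Sigma_x \subset \ker(ds) \cap \ker(dt)$ and $t^*E|_{\Sigma_x} \cong E_x$ as a trivial bundle on which $\Sigma_x$ acts linearly by restriction of the $\Sigma$-action on $E$.

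\textbf{Axioms for the isotropy.} Multiplicativity is immediate: if $g, h \in \Sigma_x$ and $X_g \in T_g\Sigma_x$, $Y_h \in T_h\Sigma_x$, then $ds(X_g)=0=dt(Y_h)$ so $(X_g,Y_h)$ is composable in $T\Sigma$ and the multiplicativity of $\omega$ restricts directly. Constant rank follows from the same identity specialised with $Y_h = 0_h$: one obtains $(R_h)^*(\omega|_{\Sigma_x}) = \omega|_{\Sigma_x}$, so $\omega|_{\Sigma_x}$ is right-invariant on the Lie group $\Sigma_x$ and therefore its rank at any point equals the rank at the unit. For involutivity of the symbol, note that since $ds|_{\Sigma_x}\equiv 0$ we have
\[
\g(\omega|_{\Sigma_x}) = \ker(\omega|_{\Sigma_x}) = T\Sigma_x \cap \g(\omega)|_{\Sigma_x}.
\]
Given two sections $X, Y$ of this bundle, I would extend them locally to sections $\tilde X, \tilde Y$ of $\g(\omega)$ on a neighbourhood of $\Sigma_x$ in $\Sigma$ (using that $\g(\omega)$ is a subbundle and $\Sigma_x$ is embedded). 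Involutivity of $\g(\omega)$ gives $[\tilde X, \tilde Y] \in \g(\omega)$, and since $X, Y$ are tangent to $\Sigma_x$ the bracket $[\tilde X, \tilde Y]|_{\Sigma_x} = [X, Y]$ is tangent to $\Sigma_x$, hence lands in $\g(\omega|_{\Sigma_x})$. The fourth axiom $\ker(\omega|_{\Sigma_x}) \cap \ker(dt|_{\Sigma_x}) = \ker(\omega|_{\Sigma_x}) \cap \ker(ds|_{\Sigma_x})$ is trivial because both $ds|_{\Sigma_x}$ and $dt|_{\Sigma_x}$ vanish.

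\textbf{The isomorphism $(\psi_g,\phi_g)$.} That $\phi_g \colon \Sigma_x \to \Sigma_y$ is a Lie group isomorphism is standard. The map $\psi_g$ intertwines the isotropy actions because associativity of the $\Sigma$-representation on $E$ gives
\[
\psi_g(h \cdot e) = g\cdot(h\cdot e) = (gh)\cdot e = (ghg^{-1})\cdot(g\cdot e) = \phi_g(h)\cdot \psi_g(e).
\]
The remaining point is to verify $\phi_g^*(\omega|_{\Sigma_y}) = \psi_g \circ \omega|_{\Sigma_x}$. I would prove this by applying multiplicativity of $\omega$ twice along the factorisation $\phi_g(h) = g\cdot h\cdot g^{-1}$: first to the pair $(g,h)$ and then to $(gh, g^{-1})$, using in each case that the ``other'' tangent vector can be chosen to be zero (since $ds, dt$ vanish on $T\Sigma_x$), which kills the extra terms and leaves exactly the action of $g$ on $\omega_h(X)$.

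The step I expect to require the most care is the constant-rank/right-invariance argument and the accompanying involutivity argument, because they are the places where one must pass from the global Pfaffian data on $\Sigma$ to its restriction on the embedded submanifold $\Sigma_x$; once this is settled, both the remaining axioms for the isotropy and the equivariance statement in the second part reduce to a direct but careful application of the multiplicativity identity (cf.\ the equivariance property recalled just before the statement from~\cite{AccorneroCattafi1} and~\cite{FRANCESCO}).
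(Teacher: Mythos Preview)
Your proposal is correct and follows essentially the same route as the paper. The paper does not give a self-contained proof but refers to \cite[Proposition~2.18]{AccorneroCattafi1}, noting that the key input is the equivariance property coming from multiplicativity of $\omega$; this is exactly the right-invariance $(R_h)^*(\omega|_{\Sigma_x}) = \omega|_{\Sigma_x}$ you derive and use for constant rank, and the repeated application of multiplicativity you use for the compatibility $\phi_g^*(\omega|_{\Sigma_y}) = \psi_g \circ \omega|_{\Sigma_x}$.

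One small simplification you could adopt: once right-invariance of $\omega|_{\Sigma_x}$ is established, involutivity of $\ker(\omega|_{\Sigma_x})$ follows without the extension argument, since a right-invariant distribution on a Lie group is involutive if and only if its value at the identity is a Lie subalgebra, and $\ker(\omega|_{\Sigma_x})_{1_x} = \g(\omega)|_{1_x}\cap \Lie(\Sigma_x)$ is a subalgebra because $\g_\BB = \g(\omega)|_\BB$ is an ideal in $A$ (Proposition~\ref{prp:coefficent-space-is-alg}(ii)) and $\Lie(\Sigma_x)\subset A_x$. This is closer in spirit to the characterisation in Proposition~\ref{prp:char-pf-groups}, but your extension argument is also perfectly valid.
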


\begin{remark}\label{remark:isotropy-not-full-not-Lie}
Let $(\Sigma, \omega, E)$ be a {\it full} Pfaffian groupoid over $\BB$ -- i.e.\ $\omega$ is pointwise surjective. Its isotropy $(\Sigma_x, \omega|_{\Sigma_x}, E_x)$ is almost never a full Pfaffian group; that happens if and only if the Lie groupoid $\Sigma$ is a bundle of Lie groups over $\BB$.
\end{remark}

The Pfaffian group $(\Sigma_x,\omega_x,E_x)$ should be regarded as the {\bf Pfaffian isotropy group} of $(\Sigma,\omega,E)$ at $x\in \BB$. This is clarified in the next subsection: once the appropriate notion of Pfaffian Morita equivalence is used (Definition~\ref{def:princ-Pfaff-bibundle}), one can show that the Pfaffian isotropy groups of Morita equivalent Pfaffian groupoids are Pfaffian isomorphic, and that a transitive Pfaffian groupoid is Pfaffian Morita equivalent to its Pfaffian isotropy group, see Proposition~\ref{lemma:Pfaffian_ME_of_groups}.

Since we will extensively works with Pfaffian isotropies and, more in general, with Pfaffian groups, we present below an equivalent characterisation and some basic properties.

\begin{proposition}[Characterisation of Pfaffian groups]\label{prp:char-pf-groups}
Given a Lie group $G$ and a representation $V$, there is a one to one correspondence
\[
 \left\{ \text{Pfaffian group structures } (G, \omega, V) \right\} \iff \left\{ \text{representation maps } l:\g\to V \right\}.
\]
\end{proposition}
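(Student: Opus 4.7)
The plan is to define the correspondence by evaluation at the identity: to a Pfaffian group $(G,\omega,V)$ I associate the linear map $l := \omega_e \colon T_e G = \g \to V$, and to a representation map $l \colon \g \to V$ I associate the right-invariant $V$-valued $1$-form $\omega_g := l \circ dR_{g^{-1}}$. The heart of the argument is to translate the multiplicativity axiom of $\omega$ into an equivariance statement for $l$.

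First I would unpack multiplicativity at a pair $(g,h)$ applied to $(X,Y) \in T_g G \times T_h G$. Using $dm_{(g,h)}(X,Y) = dR_h(X) + dL_g(Y)$, the axiom reads
\[
\omega_{gh}\bigl(dR_h(X) + dL_g(Y)\bigr) = \omega_g(X) + g \cdot \omega_h(Y).
\]
Setting $Y = 0$ yields $R_h^*\omega = \omega$, i.e.\ right-invariance of $\omega$, which forces $\omega_g = l \circ dR_{g^{-1}}$ with $l := \omega_e$. Setting $X = 0$ and using right-invariance on the left-hand side, after writing $Y = dR_h(Y_e)$ with $Y_e \in \g$ and exploiting $dR_{g^{-1}} \circ dL_g|_e = \Ad_g$, one extracts the identity
\[
l(\Ad_g Y_e) = g \cdot l(Y_e) \qquad \forall\, g \in G,\ Y_e \in \g,
\]
i.e.\ $l$ intertwines the adjoint action on $\g$ with the given $G$-action on $V$. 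Differentiating at $g = e$ yields the infinitesimal version $l([X,Y]) = X \cdot l(Y)$ characterising $l$ as a morphism of $\g$-representations.

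Conversely, given a representation map $l$, I would define $\omega_g := l \circ dR_{g^{-1}}$ and run the above computation backwards to obtain multiplicativity. The remaining Pfaffian axioms then come essentially for free: $\omega$ has constant rank (equal to $\rank(l)$) by right-invariance; the compatibility $\ker(\omega)\cap\ker(dt) = \ker(\omega)\cap\ker(ds)$ is vacuous since source and target are both the constant map to a point; and the involutivity of $\ker(\omega)\cap \ker(ds) = \ker(\omega)$ amounts, by right-invariance, to $\ker(l) \subset \g$ being a Lie subalgebra, which follows from $l([X,Y]) = X \cdot l(Y)$ applied to $X,Y \in \ker(l)$.

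The main obstacle I anticipate is mostly bookkeeping: one has to be careful when converting between the $G$-representation structure on $V$ and the derived $\g$-module structure on it, since for a possibly disconnected $G$ one really needs $G$-equivariance of $l$ (not merely $\g$-equivariance) to recover multiplicativity globally. Apart from this, once right-invariance has been identified as the content of ``half'' of multiplicativity, the verification is a short and essentially coordinate-free manipulation.
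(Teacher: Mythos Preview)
Your proposal is correct and matches the paper's approach exactly: the paper states only the correspondence $l := \omega_e$ and $\omega := l \circ \omega_{\rm MC}$ (with the right-invariant Maurer--Cartan form) and defers the verification to \cite{AccorneroCattafi1}, whereas you have supplied the details. Your observation that one needs genuine $G$-equivariance of $l$ (not merely $\g$-equivariance) for the converse direction when $G$ is disconnected is spot on and is implicitly what ``representation map'' means here.
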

The correspondence is given by $l: = \omega_e$ and by $\omega: = l \circ \omega_{\rm MC}$, where $\omega_{\rm MC} \in \Omega^1 (G,\g)$ is the usual Maurer-Cartan form of a Lie group (defined using right-multiplication). Notice also that, under this equivalence, the symbol bundle $\g(\omega)$ corresponds to $\ker(l)$. For more details, see~\cite{AccorneroCattafi1}.

The following result can be easily derived from the basic properties of Pfaffian groupoids, see Propositions~\ref{prp:coefficent-space-is-alg} and~\ref{prp:dist-picture}.

\begin{proposition}\label{prop_defining_h}
Let $(G,\omega,V)$ be a Pfaffian group. Then its symbol bundle at the identity
\[
 \h:= \ker(\omega_e) \subset T_e G = \g
\]
\begin{enumerate}
 \item is a Lie ideal of $\g$;
 \item coincides with the kernel of $l: \g \to V$ from Proposition \ref{prp:char-pf-groups};
 \item encodes the entire distribution $\ker(\omega) \subset TG$, i.e.\ for any $g \in G$ one recovers $\ker(\omega_g) = d_e R_g (\h)$.
\end{enumerate}
\end{proposition}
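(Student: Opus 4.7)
The plan is to address the three points in reverse order of difficulty, exploiting the correspondence from Proposition~\ref{prp:char-pf-groups} and the general Pfaffian-groupoid machinery from Proposition~\ref{prp:coefficent-space-is-alg}, viewing $(G,\omega,V)$ as a Pfaffian groupoid over a point $\{\ast\}$.

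Point $(ii)$ is essentially tautological: under the correspondence of Proposition~\ref{prp:char-pf-groups} the representation map is defined as $l := \omega_e$, hence $\h = \ker(\omega_e) = \ker(l)$ by definition.

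Point $(iii)$ follows from the explicit formula $\omega = l \circ \omega_{\mathrm{MC}}$ provided by Proposition~\ref{prp:char-pf-groups}, where the Maurer--Cartan form is built using right multiplication (as per the notations in the introduction), i.e.\ $\omega_{\mathrm{MC}}|_g = d_g R_{g^{-1}}$. For $v \in T_g G$ one then has
\[
\omega_g(v) = l\bigl(d_g R_{g^{-1}}(v)\bigr),
\]
so $v \in \ker(\omega_g)$ if and only if $d_g R_{g^{-1}}(v) \in \ker(l) = \h$, which is in turn equivalent to $v \in d_e R_g(\h)$. This gives the claimed identification $\ker(\omega_g) = d_e R_g(\h)$ and, in particular, recovers the full distribution $\ker(\omega)$ from the single subspace $\h$.

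Finally, for point $(i)$ I would invoke Proposition~\ref{prp:coefficent-space-is-alg}$(ii)$ (equivalently, Remark~\ref{symbol_space_is_ideal}). Viewing $G$ as a Lie groupoid over a point, the source map is constant, so $\ker(d_e s) = T_e G = \g$, and the symbol bundle restricted to the unit is
\[
\g(\omega)|_\ast = \ker(\omega_e) \cap \ker(d_e s) = \ker(\omega_e) = \h.
\]
Since the Lie algebroid of $G$ is just $\g$, Proposition~\ref{prp:coefficent-space-is-alg}$(ii)$ tells us that $\h = \g(\omega)|_\ast$ is an ideal of $\g$. The only subtlety, if any, is making sure one uses the right-invariant convention for $\omega_{\mathrm{MC}}$ consistently with the conventions on multiplicativity of Pfaffian forms; otherwise every step is a straightforward unwinding of definitions and previously established facts.
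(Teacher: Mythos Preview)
Your proposal is correct and essentially matches the paper's own treatment: the paper does not spell out a proof but simply says the result ``can be easily derived from the basic properties of Pfaffian groupoids, see Propositions~\ref{prp:coefficent-space-is-alg} and~\ref{prp:dist-picture}'', and your argument is exactly such a derivation, using Proposition~\ref{prp:coefficent-space-is-alg}(ii) for point~(i) and the explicit description $\omega = l\circ\omega_{\mathrm{MC}}$ from Proposition~\ref{prp:char-pf-groups} for points~(ii) and~(iii). The only minor difference is that you lean on Proposition~\ref{prp:char-pf-groups} rather than Proposition~\ref{prp:dist-picture} for~(iii), but this is just a cosmetic choice since the former already encodes the right-invariance of $\ker(\omega)$.
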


In view of this result, we will sometimes denote Pfaffian groups also by $(G,\h,V)$. %When such notation is used, the reader should remember that $\h$ is the kernel of a map of representations $l:\g\to V$.

%\begin{proof}
%Since $\BB$ is a point, the Pfaffian distribution $\ker(\omega)$ coincides with the symbol bundle $\g(\omega)$ of $(G,\omega,V)$, which is a Lie ideal when restricted to the identity (Remark \ref{symbol_space_is_ideal}). Part (ii) follows directly from Proposition \ref{prp:dist-picture} and the definition of $l$ as $\omega_e$. Last, from the multiplicativity of $\ker(\omega)$, it follows that $\ker(\omega)$ can be reconstructed from $\h$. 
%\end{proof}

\subsection{Pfaffian Morita equivalence}

Let $\Sigma\tto \BB$ be a Lie groupoid. We recall once more that, when $\mu:P\to \BB$ is a left $\Sigma$-space, we can form the {\bf action groupoid} $\Sigma\ltimes P\tto P$. The arrow space is given by $\Sigma\ltimes P=\Sigma\tensor[_s]{\times}{_\mu}P$, the source map is the second projection and the target map sends $(g, p)$ with $\mu(p)=s(g)$ to $g\cdot p$. The multiplicaton is defined by 
\[
(h,  g\cdot p)\cdot (g, p)=(hg, p)
\]
for all $h, g\in \Sigma$, $p\in P$ with $\mu(p)=s(g)$, $t(g)=s(h)$. Similarly, when $\mu:P\to \BB$ is a right $\Sigma$-space, one can define the action groupoid $P\rtimes \Sigma$.

\begin{definition}\label{def:princ-Pfaff-bibundle}
Let $(\Sigma_1, \omega_1, E_1)$ and $(\Sigma_2, \omega_2, E_2)$ be Pfaffian groupoids over $\BB_1$ and $\BB_2$ respectively. A {\bf principal Pfaffian bibundle} between them is a triple $(P, \theta, \Phi)$ where
\begin{itemize}
\item $(P, \theta)$ is a left principal $(\Sigma_1, \omega_1, E_1)$-bundle;
\item $\Phi:\mu_1^*E_1\to \mu_2^*E_2$ is an isomorphism of vector bundles;
\end{itemize} 
such that 
\begin{itemize}
\item $(P, \Phi\circ \theta)$ is a right principal $(\Sigma_2, \omega_2, E_2)$-bundle;
\item the two actions on $P$ commute;
\item $\Phi:\mu_1^*E_1\to \mu_2^*E_2$ is a morphism of
\begin{itemize} 
\item left $\Sigma_1\ltimes P$-representations, where $\mu_1^*E_1$ is the representation induced by the $\Sigma_1$-representation on $E_1$ and $\mu_2^*E_2$ carries the trivial representation;
\item right $P\rtimes \Sigma_2$-representations, where $\mu_2^*E_2$ is the representation induced by the $\Sigma_2$-representation on $E_2$ and $\mu_1^*E_1$ carries the trivial representation.
\end{itemize}
\end{itemize}
Two Pfaffian groupoids $(\G_1, \omega_1, E_1)$ and $(\G_2, \omega_2, E_2)$ are called {\bf Pfaffian Morita equivalent} if there exists a principal Pfaffian bibundle between them. 

A principal Pfaffian bibundle/Morita equivalence is called {\bf full} if $\theta$ is pointwise surjective.
\end{definition}
%We denote principal Pfaffian bibundles using ``decorated'' butterfly diagrams
%\[
%\xymatrix{
%(\G_1, \omega_1, E_1) \ar@<0.25pc>[dr] \ar@<-0.25pc>[dr]  & \ar@(dl, ul) &  (P, \theta, \Phi) \ar[dl]^{\mu_1}\ar[dr]^{\mu_2} & \ar@(dr, ur) & (\G_2, \omega_2, E_2) \ar@<0.25pc>[dl] \ar@<-0.25pc>[dl] \\
%&\BB_1  & &  \BB_2 &}.
%\]

The definition of Pfaffian Morita equivalence appeared first in~\cite[Section 5.4]{FRANCESCO}. There, it is shown that
\begin{proposition}
Pfaffian Morita equivalence is an equivalence relation.
\end{proposition}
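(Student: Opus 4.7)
The plan is to verify the three defining properties of an equivalence relation. For \emph{reflexivity}, I would show $(\Sigma,\omega,E)$ is Pfaffian Morita equivalent to itself via the bibundle $(\Sigma,\omega,\Phi)$, where $\Sigma$ acts on itself by left and right multiplication with moment maps $t$ and $s$, and $\Phi\colon t^*E \to s^*E$ is the fibrewise isomorphism $g\mapsto g^{-1}\cdot$ induced by the representation on $E$. Multiplicativity of $\omega$ ensures both left- and right-principality as Pfaffian bundles, and the equivariance of $\Phi$ follows from associativity of the representation. For \emph{symmetry}, given $(P,\theta,\Phi)$ from $(\Sigma_1,\omega_1,E_1)$ to $(\Sigma_2,\omega_2,E_2)$, I would keep the same total space $P$, interchange the roles of the two actions (pre-composing with inversions), declare $\Phi\circ \theta$ the new principal Pfaffian form, and $\Phi^{-1}$ the new bundle isomorphism. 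The axioms then fall out of those of the original bibundle.

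\emph{Transitivity} is the main work. Given $(P,\theta_P,\Phi_P)$ from $\Sigma_1$ to $\Sigma_2$ and $(Q,\theta_Q,\Phi_Q)$ from $\Sigma_2$ to $\Sigma_3$, I would take the classical Morita composition
\[
R := (P \times_{\BB_2} Q)/\Sigma_2,
\]
with $\Sigma_2$ acting diagonally by $g\cdot(p,q)=(p\cdot g^{-1},g\cdot q)$. The commuting left $\Sigma_1$- and right $\Sigma_3$-actions descend to $R$, and by Proposition \ref{prop:princ_ME} this is already a principal bibundle in the ordinary sense. To upgrade it to a Pfaffian bibundle, I would pull $\theta_P$ back to $P\times_{\BB_2} Q$, show that it is basic with respect to the diagonal $\Sigma_2$-quotient, and take its descent $\bar\theta$ as the new left-principal Pfaffian form. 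The new bundle isomorphism is the composition $\Phi_Q \circ \Phi_P$ descended analogously, and a direct check shows that $(\Phi_Q\circ\Phi_P)\circ \bar\theta$ coincides with the descent of $\theta_Q$, giving simultaneously the right-principal form.

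The main obstacle is showing that the relevant forms descend cleanly and still satisfy the axioms of Definition \ref{def:princ-Pfaff-bibundle}. The most effective strategy is to pass to the distribution picture of Proposition \ref{prp:dist-picture}, working with $\ker(\theta_P)\subset TP$, $\ker(\theta_Q)\subset TQ$ and their fibred analogue on $P\times_{\BB_2}Q$: the compatibility of $\Phi_P$ and $\Phi_Q$ with the action groupoid representations, combined with Remark \ref{rmk:inf_action_Pfaffian}, implies that the tangent vectors to the diagonal $\Sigma_2$-orbits lie in this distribution, so it descends to an $s$-transversal, multiplicative distribution on $R$; involutivity of its intersection with the vertical bundle is inherited from $P$ and $Q$. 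Multiplicativity and well-definedness of the descended $\Phi$ are then routine. The associativity of this composition (up to canonical Pfaffian isomorphism), needed to see that transitivity is genuinely an equivalence relation rather than merely a reflexive-symmetric pre-order, is tedious but formal, and I would only sketch it.
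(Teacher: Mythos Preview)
The paper does not actually supply a proof of this proposition: it simply states the result and attributes it to \cite[Section 5.4]{FRANCESCO}. There is therefore no in-text argument to compare your proposal against.

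That said, your outline is the standard one and is essentially correct. Reflexivity via $(\Sigma,\omega)$ acting on itself, symmetry by swapping the two actions and inverting $\Phi$, and transitivity via the balanced product $R=(P\times_{\BB_2}Q)/\Sigma_2$ with the Pfaffian form descended from $\theta_P$ (equivalently, from $\theta_Q$ via $\Phi_P$) is exactly how the argument in the cited thesis proceeds. Your remark that the descent is most cleanly handled in the distribution picture is well taken: the key point is that the tangent space to the diagonal $\Sigma_2$-orbit through $(p,q)$ sits inside $\ker(\theta_P)\times_{T\BB_2}\ker(\theta_Q)$, which follows from the multiplicativity of $\theta_P$ and $\theta_Q$ with respect to the $\Sigma_2$-actions together with the compatibility of $\Phi_P$ with the action-groupoid representations.

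One small correction: you write that associativity of the composition is ``needed to see that transitivity is genuinely an equivalence relation rather than merely a reflexive-symmetric pre-order''. This is not so. Transitivity only asks that \emph{some} Pfaffian bibundle from $\Sigma_1$ to $\Sigma_3$ exist whenever bibundles from $\Sigma_1$ to $\Sigma_2$ and from $\Sigma_2$ to $\Sigma_3$ exist; the construction of $R$ already gives that. Associativity of the balanced product up to isomorphism is needed if one wants a bicategory of Pfaffian groupoids and bibundles, but it plays no role in checking that Pfaffian Morita equivalence is an equivalence relation.
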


The evidence that Definition~\ref{def:princ-Pfaff-bibundle} should be regarded as a correct Pfaffian enhancement of the notion of Morita equivalence is provided by the fact that, if one adopts Definition~\ref{def:princ-Pfaff-bibundle}, the Pfaffian version of Proposition~\ref{prop:princ_ME} holds true. This is Theorem $3.15$ in~\cite{AccorneroCattafi1}. Explicitly, we have:

\begin{proposition}\label{prop:Pfaffian_principal_category}
Let $(\G_1, \omega_1, E_1)$ and $(\G_2, \omega_2, E_2)$ be Pfaffian groupoids. If they are Pfaffian Morita equivalent, there is a
%n equivalence of categories
one to one correspondence
\[ 
 \left\{   \begin{array}{c}
            \text{Principal } (\G_1, \omega_1, E_1) \text{-bundles}\\
             \text{up to isomorphisms}
            \end{array} 
\right\} 
\stackrel{1-1}{\longleftrightarrow}
\left\{   \begin{array}{c}
           \text{Principal } (\G_2, \omega_2, E_2) \text{-bundles}\\
           \text{up to isomorphism}
           \end{array} 
\right\}.
\]\
%\[
%\{\text{Principal } (\G_1, \omega_1, E_1) \text{-bundles} up to isomorphisms\} \overset{}{\longleftrightarrow} \{\text{Principal } (\G_2, \omega_2, E_2) \text{-bundles} up to isomorphisms\}
%\]
\end{proposition}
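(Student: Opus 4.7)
The plan is to adapt the classical construction recalled at the end of subsection~\ref{subs:ME} by carrying through the extra data of the Pfaffian forms. Given a principal Pfaffian bibundle $(P,\theta,\Phi)$ between $(\Sigma_1,\omega_1,E_1)$ and $(\Sigma_2,\omega_2,E_2)$ with moment maps $\mu_1,\mu_2$, and a principal $(\Sigma_1,\omega_1,E_1)$-bundle $(Q,\theta_Q)$ over $M$ with moment map $\tau$, I would start from the classical principal $\Sigma_2$-bundle
\[
\bar Q \;=\; (\mu_1^*Q) \big/ (\Sigma_1\ltimes P)
\]
coming from Proposition~\ref{prop:princ_ME}, and equip it with a multiplicative 1-form descending from the fibre product.

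On $\mu_1^*Q=\{(p,q)\,:\,\mu_1(p)=\tau(q)\}$ both $\pr_P^*\theta$ and $\pr_Q^*\theta_Q$ are valued in the bundle $(\mu_1\circ\pr_P)^*E_1=(\tau\circ\pr_Q)^*E_1$, so the expression
\[
\eta \;:=\; \Phi \circ \bigl(\pr_Q^*\theta_Q - \pr_P^*\theta\bigr) \;\in\; \Omega^1\bigl(\mu_1^*Q,\,(\mu_2\circ\pr_P)^*E_2\bigr)
\]
is well defined. The key observation is that $\eta$ is invariant under the diagonal left action $(g,p)\cdot(p,q)=(gp,gq)$ of $\Sigma_1\ltimes P$: multiplicativity of $\theta$ and of $\theta_Q$ produces a common $g\cdot(-)$ factor in front of both terms, and the equivariance of $\Phi$ as a map of $\Sigma_1\ltimes P$-representations (with $\mu_2^*E_2$ trivial) reabsorbs exactly this factor. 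Since moreover $\mu_2(gp)=\mu_2(p)$, the form descends to a 1-form $\bar\theta_Q \in \Omega^1(\bar Q, \bar\mu^*E_2)$.

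I would then verify that $(\bar Q,\bar\theta_Q)$ is a principal $(\Sigma_2,\omega_2,E_2)$-bundle. Multiplicativity with respect to the right $\Sigma_2$-action on $\bar Q$ follows from multiplicativity of $\Phi\circ\theta$ for the right $\Sigma_2$-action on $P$ combined with the fact that the right action on $\mu_1^*Q$ is only along the $P$-factor, so the $\pr_Q^*\theta_Q$ contribution is inert. For the kernel identity $\ker(d\bar\mu)\cap\ker(\bar\theta_Q)=\ker(d\bar\pi)\cap\ker(\bar\theta_Q)$, I would exploit Remark~\ref{rmk:inf_action_Pfaffian} applied to $(P,\Phi\circ\theta)$ and $(Q,\theta_Q)$, together with the fact that the two actions on $\mu_1^*Q$ commute; the two kernels can be computed on the total space and shown to agree after descent.

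For the reverse direction I would use the ``opposite'' Pfaffian bibundle obtained by swapping the two actions on $P$ and replacing $\Phi$ by $\Phi^{-1}$, and rerun the construction; applied to $(\bar Q,\bar\theta_Q)$ this recovers $(Q,\theta_Q)$ up to isomorphism as a consequence of the corresponding non-Pfaffian statement, together with a direct check that the two-fold form manipulation cancels out. Extending the constructions to isomorphisms of principal Pfaffian bundles is routine. The main obstacle I expect is the last bookkeeping step: making sure that all the coefficient-space identifications coming from Proposition~\ref{prp:coefficent-space-is-alg}, the equivariance of $\Phi$, and the multiplicativity of $\theta$ and $\theta_Q$ align coherently when one computes $\bar{\bar\theta}_Q$ and compares it with $\theta_Q$ through the classical isomorphism $\bar{\bar Q}\cong Q$.
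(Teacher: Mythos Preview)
The paper does not prove this proposition; it is stated with an explicit attribution to Theorem~3.15 of \cite{AccorneroCattafi1}, so there is no in-paper argument to compare against. Your outline is the natural Pfaffian enhancement of the classical construction recalled at the end of subsection~\ref{subs:ME}, and it is almost certainly the same strategy used in the cited reference: build $\bar Q$ as usual, and produce $\bar\theta_Q$ by descent of a suitable combination of $\pr_Q^*\theta_Q$ and $\pr_P^*\theta$ through the isomorphism $\Phi$.

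A few points worth tightening. First, the choice $\eta=\Phi\circ(\pr_Q^*\theta_Q-\pr_P^*\theta)$ is the right one for the descent step: on the vertical vectors of the diagonal $\Sigma_1\ltimes P$-action both terms evaluate to $\omega_1(\alpha)$ by Remark~\ref{rmk:inf_action_Pfaffian}, so the difference vanishes; together with the invariance argument you give, this makes $\eta$ basic. Second, when you pass to the induced \emph{left} $\Sigma_2$-action on $\bar Q$ (obtained from the right action by precomposing with the inversion, as noted in the paper), you should check that the sign introduced by the minus in front of $\pr_P^*\theta$ is exactly what is needed for the left-multiplicativity of $\bar\theta_Q$ with respect to $\omega_2$; this is a genuine computation, not just bookkeeping. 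Third, your remark that ``$\pr_Q^*\theta_Q$ is inert'' under the right $\Sigma_2$-action is correct but only gives multiplicativity of $\eta$ on the total space; you still need to argue that this multiplicativity survives the quotient, which uses that the $\Sigma_1$- and $\Sigma_2$-actions commute. None of these are obstacles, but they are where the actual work lies; your sketch identifies the right objects and the right mechanism.
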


We are interested into applying the notion of Pfaffian Morita equivalence to transitive Lie pseudogroups -- hence to transitive Pfaffian groupoids.

We start by observing that, just as isomorphic Lie groupoids are Morita equivalent, another (trivial) example of Pfaffian Morita equivalence is given of course by Pfaffian isomorphism (see \cite[Lemma 3.11]{AccorneroCattafi1}):

\begin{proposition}\label{lemma:morphism_which_are_PME}
Let $(\G_1, \omega_1, E_1)$, $(\G_2, \omega_2, E_2)$ be Pfaffian groupoids over $\BB_1$, $\BB_2$. An isomorphism of Pfaffian groupoids between them induces naturally a principal Pfaffian bibundle. If $(\G_1, \omega_1, E_1)$ is full, the Pfaffian bibundle is full as well.
\end{proposition}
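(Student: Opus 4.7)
The strategy is to mimic the classical construction that turns a Lie groupoid isomorphism into a Morita bibundle (essentially $P = \G_2$ with left $\G_1$-action twisted by $\Phi$), and then to promote it to a Pfaffian Morita equivalence using the Pfaffian compatibility of the isomorphism. Let a Pfaffian isomorphism be given by a Lie groupoid isomorphism $\Phi: \G_1 \to \G_2$ covering a diffeomorphism $\phi: \BB_1 \to \BB_2$, together with an isomorphism of representations $\Phi_E: E_1 \to E_2$ satisfying $\Phi^*\omega_2 = \Phi_E \circ \omega_1$. I take the underlying bibundle to be $P := \G_2$, with moment maps $\mu_1 := \phi^{-1}\circ t_2$ and $\mu_2 := s_2$, left $\G_1$-action $h\cdot g := \Phi(h)\cdot g$ (whenever $s_1(h)=\mu_1(g)$), and right $\G_2$-action given by the multiplication of $\G_2$. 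Freeness, principality and commutativity of the two actions reduce to the groupoid axioms of $\G_2$.

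To enhance this to a Pfaffian bibundle, I define $\Phi_P: \mu_1^* E_1 \to \mu_2^* E_2$ fibrewise by $\Phi_P|_g(e) := g^{-1}\cdot \Phi_E(e)$, using the $\G_2$-representation on $E_2$ to move from $(E_2)_{t_2(g)}$ to $(E_2)_{s_2(g)}$, and I set $\theta := \Phi_E^{-1}\circ \omega_2 \in \Omega^1(P,\mu_1^*E_1)$, so that $(\Phi_P\circ\theta)_g(V) = g^{-1}\cdot \omega_{2,g}(V)$. The verification of the four axioms of Definition \ref{def:princ-Pfaff-bibundle} then splits as follows. Multiplicativity of $\theta$ for the left $\G_1$-action is obtained by pushing the multiplicativity of $\omega_2$ for the groupoid multiplication of $\G_2$ through the identity $\Phi^*\omega_2 = \Phi_E\circ \omega_1$. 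Multiplicativity of $\Phi_P\circ\theta$ for the right $\G_2$-action is, after compensating the $g^{-1}$-twist, the multiplicativity of $\omega_2$ for right multiplication in $\G_2$. The kernel condition $\ker(d\mu_1)\cap\ker(\theta) = \ker(d\pi)\cap\ker(\theta)$ (for both the left and right principal structures) collapses, via $\ker(d\mu_1) = \ker(dt_2)$ and $\ker(d\mu_2) = \ker(ds_2)$, to condition (iv) of Definition \ref{def_Pfaffian_groupoid-form} applied to $(\G_2,\omega_2)$. Finally, that $\Phi_P$ is a morphism of $\G_1\ltimes P$- and $P\rtimes \G_2$-representations reduces to the intertwining identity $\Phi_E(h\cdot e) = \Phi(h)\cdot \Phi_E(e)$ and simple cancellations involving $g^{-1}$; for instance, on the $\G_1\ltimes P$ side one computes $\Phi_P|_{h\cdot g}(h\cdot e) = (\Phi(h)g)^{-1}\cdot \Phi_E(h\cdot e) = g^{-1}\Phi(h)^{-1}\Phi(h)\cdot \Phi_E(e) = \Phi_P|_g(e)$, matching the trivial action on $\mu_2^*E_2$.

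Fullness is automatic: if $\omega_1$ is pointwise surjective, then bijectivity of $\Phi_E$ together with $\Phi^*\omega_2 = \Phi_E\circ \omega_1$ forces $\omega_2$, and therefore $\theta = \Phi_E^{-1}\circ \omega_2$, to be pointwise surjective as well. The main obstacle I anticipate is simply the bookkeeping around the $g^{-1}$-twist in $\Phi_P$, which is essentially forced by the requirement that $\mu_2^*E_2$ carry the \emph{trivial} $\G_1\ltimes P$-representation; each axiom becomes a short computation once one is consistent about whether the coefficients live over $t_2(g)$ or over $s_2(g)$. Beyond that, the proof is a direct decoration of the classical Morita bibundle associated to a groupoid isomorphism with the Pfaffian data provided by $\Phi_E$.
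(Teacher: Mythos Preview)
The paper does not actually contain a proof of this proposition; it simply cites \cite[Lemma 3.11]{AccorneroCattafi1} and moves on. So there is no ``paper's own proof'' to compare against here.

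That said, your construction is the standard one and is correct. Taking $P=\G_2$ with $\mu_1=\phi^{-1}\circ t_2$, $\mu_2=s_2$, left $\G_1$-action via $\Phi$ and right $\G_2$-action by multiplication is precisely how one turns a Lie groupoid isomorphism into a bibundle, and your Pfaffian enhancement $\theta=\Phi_E^{-1}\circ\omega_2$, $\Phi_P|_g=g^{-1}\cdot\Phi_E$ is the natural choice. The verifications you sketch all go through: left multiplicativity of $\theta$ reduces to multiplicativity of $\omega_2$ plus the intertwining relation $\Phi_E(h\cdot e)=\Phi(h)\cdot\Phi_E(e)$; right multiplicativity of $\Phi_P\circ\theta = g^{-1}\cdot\omega_2$ is again multiplicativity of $\omega_2$; the kernel conditions for both principal structures become exactly $\ker(\omega_2)\cap\ker(ds_2)=\ker(\omega_2)\cap\ker(dt_2)$, i.e.\ axiom (iv) of Definition~\ref{def_Pfaffian_groupoid-form}; and your computation for the $\G_1\ltimes P$-equivariance of $\Phi_P$ is correct (the $P\rtimes\G_2$ side is analogous). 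The fullness argument is immediate as you say. The only thing to be mildly careful about is the precise form of the multiplicativity condition for a \emph{right} principal Pfaffian bundle, which is not spelled out in the paper, but your $g^{-1}$-twist is exactly what makes $\Phi_P\circ\theta$ land in $\mu_2^*E_2$ and satisfy the right-sided analogue.
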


Notice now that a {\it Pfaffian} Morita equivalence of Pfaffian groupoids is in particular a Morita equivalence. Hence, Proposition~\ref{prp:ME_iso_isotropy} implies that two Morita equivalent transitive Pfaffian groupoids have isomorphic isotropy groups. However, a {\it Pfaffian} Morita equivalence preserves more structure than an ordinary Morita equivalence. In particular, the {\it Pfaffian} isotropy groups of Pfaffian Morita equivalent Pfaffian groupoids are Pfaffian isomorphic; together with Proposition~\ref{lemma:morphism_which_are_PME}, this implies the following:

\begin{proposition}\label{lemma:Pfaffian_ME_of_groups}
Let $(\Sigma_1,\omega_1,E_1)$ be a Pfaffian groupoid over $\BB_1$ and $(\Sigma_2,\omega_2,E_2)$ be a Pfaffian groupoid over $\BB_2$, and let $(P,\theta)$ be a principal Pfaffian bibundle between them with moment maps $\mu_1:P\to \BB_1$ and $\mu_2:P\to \BB_2$. For all $x_1\in \BB_1$ and $x_2\BB_2$ such that there exists $p\in P$ with $\mu_1(p) = x_1$ and $\mu_2(p)=x_2$, it holds $((\Sigma_1)_{x_1},(\omega_1)_{x_1},(E_1)_{x_1})\cong ((\Sigma_2)_{x_2},(\omega_2)_{x_2},(E_2)_{x_2})$. In particular, two Pfaffian groups are Pfaffian Morita equivalent if and only if they are Pfaffian isomorphic.
\end{proposition}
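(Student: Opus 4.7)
My plan is to build the desired Pfaffian isomorphism from three pieces: a Lie group isomorphism $\phi \colon (\Sigma_1)_{x_1} \to (\Sigma_2)_{x_2}$ coming from the underlying ordinary Morita equivalence, a linear isomorphism $\psi \colon (E_1)_{x_1} \to (E_2)_{x_2}$ obtained by restricting $\Phi$ to the fibre over $p$, and then a multiplicativity argument showing that these intertwine the two Pfaffian forms. The second half of the statement (``Pfaffian groups are PME iff Pfaffian isomorphic'') will follow by combining Proposition~\ref{lemma:morphism_which_are_PME} with the first half, applied to the unique choice $x_1 = \text{pt}$, $x_2 = \text{pt}$ and any $p$ in the (necessarily non-empty) bibundle.

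\textbf{Construction of $\phi$.} This is the content of Proposition~\ref{prp:ME_iso_isotropy}, but I need the explicit formula. Given $g \in (\Sigma_1)_{x_1}$ and $p \in P$ with $\mu_1(p)=x_1$, $\mu_2(p)=x_2$, the point $g\cdot p$ still lies in $\mu_1^{-1}(x_1)$ and has $\mu_2(g\cdot p)=\mu_2(p)=x_2$ since the two actions commute. Because $\mu_1 \colon P\to \BB_1$ is a right principal $\Sigma_2$-bundle, there is a \emph{unique} $h\in \Sigma_2$ with $g\cdot p = p\cdot h$; a direct check gives $s(h)=t(h)=x_2$, so $h\in (\Sigma_2)_{x_2}$. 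Set $\phi(g):=h$. The relation $g\cdot p = p\cdot \phi(g)$ together with associativity and commutativity of the actions makes $\phi$ a Lie group isomorphism.

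\textbf{Construction of $\psi$ and representation equivariance.} Define $\psi$ as the restriction of the fibre isomorphism $\Phi_p \colon (\mu_1^*E_1)_p=(E_1)_{x_1} \to (\mu_2^*E_2)_p=(E_2)_{x_2}$. The condition that $\Phi$ is a morphism of $\Sigma_1\ltimes P$-representations translates, at the arrow $(g,p)$ with $g\in(\Sigma_1)_{x_1}$, into $\Phi_{g\cdot p}(g\cdot e) = \Phi_p(e)$ for $e\in(E_1)_{x_1}$; dually, the morphism condition for $P\rtimes\Sigma_2$ at the arrow $(p,\phi(g))$ reads $\Phi_{p\cdot \phi(g)}(e)=\phi(g)^{-1}\cdot\Phi_p(e)$ (again using that the $E_i$-side carries the trivial action). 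Since $g\cdot p=p\cdot\phi(g)$, combining the two equalities yields $\psi(g\cdot e)=\phi(g)\cdot\psi(e)$, i.e.\ $\psi$ intertwines the two representations through $\phi$.

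\textbf{Intertwining of the Pfaffian forms (main step).} By Proposition~\ref{prp:char-pf-groups}, a Pfaffian isomorphism between the isotropy groups is determined by the commutativity of
\[
\begin{tikzcd}
\g_{x_1} \arrow[r,"d\phi_e"] \arrow[d,"(\omega_1)_e"'] & \g_{x_2} \arrow[d,"(\omega_2)_e"] \\
(E_1)_{x_1} \arrow[r,"\psi"] & (E_2)_{x_2}
\end{tikzcd}
\]
and this is where the Pfaffian structure of the bibundle enters. Consider the smooth map $\sigma \colon (\Sigma_1)_{x_1}\to P$, $\sigma(g)=g\cdot p=p\cdot\phi(g)$. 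Differentiating the left-equivariance identity $\sigma(g) = g\cdot p$ at $g=e$ and applying $\theta$, the multiplicativity of $\theta$ with respect to $\omega_1$ (evaluated at a tangent vector $V\in T_e(\Sigma_1)_{x_1}$ and the zero vector at $p$) yields $\theta_p(d\sigma(V)) = (\omega_1)_e(V)$. Differentiating instead the right-equivariance identity $\sigma(g)=p\cdot\phi(g)$ at $g=e$, the multiplicativity of $\Phi\circ\theta$ with respect to $\omega_2$ gives $\Phi_p(\theta_p(d\sigma(V))) = (\omega_2)_e(d\phi_e V)$. Composing, we obtain $\psi\circ(\omega_1)_e = (\omega_2)_e\circ d\phi_e$, which is exactly the required commutativity. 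The main obstacle here is setting up the multiplicativity identity for the right action carefully (so that the sign conventions are correct) and checking that the two moment-map fibrations interact as claimed with the fundamental vector fields along the isotropy directions; everything else is a bookkeeping exercise.
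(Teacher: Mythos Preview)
Your proof is correct and follows the natural route that the paper only alludes to without detail. In the paper the proposition is stated as an immediate consequence of the preceding discussion (ordinary Morita equivalence gives the Lie group isomorphism, and ``a Pfaffian Morita equivalence preserves more structure''), with the actual verification deferred to the companion paper \cite{AccorneroCattafi1}; you have written out precisely what that verification entails. The three-step structure (build $\phi$ from principality, build $\psi$ from $\Phi_p$, then use multiplicativity of $\theta$ and of $\Phi\circ\theta$ on the map $g\mapsto g\cdot p=p\cdot\phi(g)$ to match the two Pfaffian forms at the identity) is exactly the argument one expects, and your use of Proposition~\ref{prp:char-pf-groups} to reduce the Pfaffian-isomorphism check to the single linear identity $\psi\circ(\omega_1)_e=(\omega_2)_e\circ d\phi_e$ is the clean way to organise it.

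One small point worth tightening: in the representation-equivariance step you invoke the morphism conditions for $\Phi$ with respect to both action groupoids, but the right-action convention for $P\rtimes\Sigma_2$ is not spelled out in the paper, so the sign in ``$\Phi_{p\cdot\phi(g)}(e)=\phi(g)^{-1}\cdot\Phi_p(e)$'' should be accompanied by a one-line statement of what the right-action-groupoid representation actually is (e.g.\ that the arrow $(p,h)$ has source $p\cdot h$ and target $p$, so acting by it on $\mu_2^*E_2$ sends the fibre over $p$ to the fibre over $p\cdot h$ via $h^{-1}\cdot(-)$). This is only a matter of making conventions explicit; the computation itself is fine.
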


%\subsection{Pfaffian Morita equivalence of transitive Pfaffian groupoids}

%For the rest of this chapter, we do not need to work with groupoids of germs anymore. Consequently, we make the following assumption.
%
%\begin{axiom}{H-SC}\label{ax:Haus_and_2nd_count}
%From now on, unless otherwise specified, the arrow space $\G$ of any Lie groupoid $\G\tto \BB$ is assumed to be Hausdorff and second countable. 
%\end{axiom}

%In this setting, by Remark \ref{rmk:transitivity_and_second_countability}, 
We now restrict our attention to Pfaffian groupoids $(\Sigma,\omega,E)$ over $\BB$ which are {\it transitive}, that is,
\[
\text{for all }x, y\in \BB \text{ there is } g \in \G \text{ such that } s(g)=x,\ t(g)=y .
\]
Recall also that that a transitive Lie groupoid $\G\tto \BB$ is Morita equivalent to its isotropy Lie group $\G_x$, for any $x\in \BB$; see Proposition~\ref{ex:ME_transitive}. The bibundle realising the equivalence between $\G$ and the isotropy $\G_x$ at (say) $x\in \BB$ is the $s$-fibre $s^{-1}(x)$ over $x$. A Pfaffian version of this statement holds if one uses the Pfaffian isotropy described in subsection~\ref{subs:Pf-groups}.

\begin{proposition}\label{prop_transitive_Pfaffian_Morita_equivalence}
Any full transitive Pfaffian groupoid $(\G, \omega, E)$ is Pfaffian Morita equivalent to the non full Pfaffian group(oid) $(G, \bar{\omega}_{\rm MC}, V)$, where 
\begin{itemize}
\item $G=\Sigma_x$ is the isotropy group of $\G$ at $x\in \BB$;
\item $V=E_x$;
%is a representation of $G$ (completely determined by $\omega$);
\item $\bar{\omega}_{\rm MC} = l \circ \omega_{\rm MC}$ is the composition of the Maurer-Cartan form $\omega_{\rm MC}:G\to \mathfrak{g}$ and the linear map $l: \mathfrak{g}\to V$ induced by $\omega_x$ (see Proposition~\ref{prp:char-pf-groups}).
%(the quotient projection with respect to the symbol space).
\end{itemize}
\end{proposition}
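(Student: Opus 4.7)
The plan is to mimic the classical construction of the bibundle realising Morita equivalence between a transitive groupoid and its isotropy (Proposition~\ref{ex:ME_transitive}), and then to equip it with the extra Pfaffian data. Set $G := \Sigma_x$, $V := E_x$, and take $P := s^{-1}(x)$ with moment maps $\mu_1 := t|_P : P \to \BB$ and $\mu_2 := s|_P : P \to \{x\}$. The left action of $\Sigma$ on $P$ is by groupoid multiplication on the target side, and the right action of $G$ is by multiplication on the source side; these commute and realise a principal $\Sigma$-$G$-bibundle in the ordinary sense. The Pfaffian form will be $\theta := \omega|_P \in \Omega^1(P, t^*E)$, and the vector bundle isomorphism $\Phi : t^*E \to P \times V$ will be defined pointwise by $\Phi_g(\alpha) := g^{-1} \cdot \alpha$ for $g \in P$, $\alpha \in E_{t(g)}$, using the $\Sigma$-representation on $E$.

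First I would verify that $(P,\theta)$ is a left principal $(\Sigma,\omega,E)$-bundle. Multiplicativity of $\theta$ with respect to $\omega$ is an immediate restriction of the multiplicativity of $\omega$, since $P$ is preserved by left multiplication by $\Sigma$. The compatibility condition $\ker(d\mu_2)\cap\ker(\theta)=\ker(d\mu_1)\cap\ker(\theta)$ reduces to the inclusion $\ker(\theta)\subset \ker(dt|_P)$, because $\mu_2$ is constant; but this follows directly from the axiom $\ker(\omega)\cap\ker(dt)=\ker(\omega)\cap\ker(ds)$ restricted to $s^{-1}(x)$. Next I would check that $\Phi$ is a morphism of left $\Sigma\ltimes P$-representations (with the target carrying the trivial representation): for $(h,g)\in \Sigma\ltimes P$ one has $\Phi_{hg}(h\cdot \alpha) = (hg)^{-1}h\alpha = g^{-1}\alpha = \Phi_g(\alpha)$, as required; analogously $\Phi$ is a morphism of right $P\rtimes G$-representations since $\Phi_{gk}(\alpha)=k^{-1}\Phi_g(\alpha)$.

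Then I would establish that $(P,\Phi\circ\theta)$ is a right principal $(G,\bar\omega_{\rm MC},V)$-bundle. The underlying right principal $G$-bundle is the classical one $t: s^{-1}(x)\to\BB$. For the multiplicativity of $\Phi\circ\theta$ with respect to $\bar\omega_{\rm MC}$, the key computation is that for $p\in P$, $h\in G$, and tangent vectors $v_p, u_h$, multiplicativity of $\omega$ yields
\[
\omega_{ph}(dm(v_p,u_h)) = \theta_p(v_p) + p\cdot \bar\omega_{\rm MC}(u_h),
\]
and applying $\Phi_{ph} = (ph)^{-1}\cdot(\,\cdot\,)$ converts the first term to $(\Phi\circ\theta)_p(v_p)$ acted on by $h^{-1}$ and the second to $\bar\omega_{\rm MC}(u_h)$ acted on by $h^{-1}$, which is precisely the right-action multiplicativity identity for $(G,\bar\omega_{\rm MC},V)$. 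The kernel condition on the right side is again $\ker(\theta)\subset\ker(dt|_P)$, already verified. That $\bar\omega_{\rm MC}$ itself defines a Pfaffian group structure on $G$ with coefficients in $V$ follows from Proposition~\ref{prp:char-pf-groups} applied to the linear representation map $l := \omega_x : \g \to V$, which is $G$-equivariant by Proposition~\ref{prp:isotropies-pf-groups}.

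I expect the main obstacle to be purely bookkeeping: disentangling left/right action conventions and correctly tracking the $\Sigma$-representation on $E$ inside the definition of $\Phi$, so that both the equivariance of $\Phi$ and the two multiplicativity identities for $\theta$ and $\Phi\circ\theta$ line up with Definition~\ref{def:princ-Pfaff-bibundle}. Once the identification $V\cong E_x$ and the convention $\Phi_g = g^{-1}\cdot(\,\cdot\,)$ are fixed, every verification is a direct consequence of the multiplicativity of $\omega$ and of the groupoid action axioms; no deeper ingredient beyond Proposition~\ref{prp:isotropies-pf-groups} and Proposition~\ref{prp:char-pf-groups} is needed.
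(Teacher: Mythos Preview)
Your proposal is correct and follows exactly the route the paper indicates: the paper does not spell out a full proof, but in the subsection on fibres of transitive Pfaffian groupoids it remarks that the $s$-fibre $s^{-1}(x)$ equipped with $\theta_g:=g^{-1}\cdot\omega_g|_{s^{-1}(x)}$ is a Cartan bundle and that ``this can be used to prove'' the proposition; your pair $(\theta,\Phi)$ with $\theta=\omega|_P$ and $\Phi_g=g^{-1}\cdot(\,\cdot\,)$ is precisely this datum unpacked into the bibundle formalism of Definition~\ref{def:princ-Pfaff-bibundle}. The verifications you outline (multiplicativity restricted from $\omega$, the kernel condition via $\ker(\omega)\cap\ker(ds)=\ker(\omega)\cap\ker(dt)$, and the equivariance of $\Phi$) are the right ones and go through as you describe.
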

Of course, thanks to Propositions~\ref{prp:isotropies-pf-groups} and~\ref{lemma:Pfaffian_ME_of_groups}, the Pfaffian Morita equivalence class of $(G, \bar{\omega}_{\rm MC}, V)$ is independent from the choice of $x\in \BB$. Therefore, it follows from Propositions~\ref{prop:Pfaffian_principal_category} that

\begin{corollary}\label{corollary_combination_props}
For any transitive full Pfaffian groupoid $(\G, \omega, E)$, with isotropy Pfaffian group $(G,\h,V)$ (see Proposition \ref{prop_transitive_Pfaffian_Morita_equivalence}), there is a one to one correspondence
\[ 
 \left\{   \begin{array}{c}
            \text{Principal } (\G, \omega, E) \text{-bundles}\\
             \text{up to isomorphisms}
            \end{array} 
\right\} 
\stackrel{1-1}{\longleftrightarrow}
\left\{   \begin{array}{c}
           \text{Principal } (G, \h, V) \text{-bundles}\\
           \text{up to isomorphism}
           \end{array} 
\right\}.
\]
\end{corollary}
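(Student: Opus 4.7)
The plan is to observe that this corollary is essentially a formal consequence of the two preceding results in the excerpt. First, I would invoke Proposition \ref{prop_transitive_Pfaffian_Morita_equivalence} to produce an explicit principal Pfaffian bibundle realising a Pfaffian Morita equivalence between the full transitive Pfaffian groupoid $(\G, \omega, E)$ and the Pfaffian isotropy group $(G, \h, V)$ at any chosen point $x\in \BB$. Concretely, the candidate bibundle is the source fibre $s^{-1}(x)\subset \G$ equipped with the restriction of $\omega$; the left $\G$-action is by multiplication, the right $G$-action is by right multiplication by isotropy arrows, and the identification of coefficient spaces is the canonical one between $E$ restricted along $s^{-1}(x)$ and the constant bundle $V=E_x$ (transported via the $\G$-representation structure).

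Once such a Pfaffian Morita equivalence is in hand, I would apply Proposition \ref{prop:Pfaffian_principal_category} directly: it yields a bijection between isomorphism classes of principal $(\G, \omega, E)$-bundles and isomorphism classes of principal $(G, \h, V)$-bundles, which is exactly the statement of the corollary. Thus the corollary reduces to a two-line chain of implications: \emph{transitivity} $\Rightarrow$ \emph{Pfaffian Morita equivalence to the isotropy Pfaffian group} $\Rightarrow$ \emph{equivalence of categories of principal bundles}.

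There is essentially no obstacle, since both ingredients have already been established (either in this excerpt or cited from \cite{AccorneroCattafi1}). The only thing one might want to do for the reader's convenience is to spell out, at least informally, how the bijection works in one direction: given a principal $(G,\h,V)$-bundle $Q\to M$, one forms $s^{-1}(x)\times_G Q$ with the induced $\G$-action and the quotient of the sum of $\omega|_{s^{-1}(x)}$ and the Pfaffian form of $Q$, using the recipe recalled at the end of subsection \ref{subs:ME} (adapted to the Pfaffian setting by using $\Phi$ to identify coefficient spaces). In the reverse direction, given a principal $(\G,\omega,E)$-bundle $(P,\theta)$ with moment map $\mu:P\to \BB$, one takes $\mu^{-1}(x)$ with the restricted $G$-action and restricted form, and checks it is a principal $(G,\h,V)$-bundle. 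The only mild subtlety worth mentioning is that the isotropy Pfaffian group is typically \emph{not} full (Remark \ref{remark:isotropy-not-full-not-Lie}), which is why the statement of Proposition \ref{prop:Pfaffian_principal_category} is formulated without fullness assumptions and applies here without modification.
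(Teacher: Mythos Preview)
Your proposal is correct and takes essentially the same approach as the paper: the corollary is derived as an immediate consequence of Proposition~\ref{prop_transitive_Pfaffian_Morita_equivalence} (Pfaffian Morita equivalence to the isotropy Pfaffian group) combined with Proposition~\ref{prop:Pfaffian_principal_category} (equivalence of principal Pfaffian bundles under Pfaffian Morita equivalence). Your additional unpacking of the bibundle and of both directions of the correspondence goes beyond what the paper writes, but is consistent with it and helpful for the reader.
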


Corollary~\ref{corollary_combination_props}, combined with Example~\ref{exm:jet-groupoids} and Remark~\ref{rmk:transitivity_of_pseudo}, provides the motivation needed for studying principal $(G, \h, V)$-bundles or, in the terminology of the next section, {\it Cartan bundles}.

%\begin{example}[Main example?]
%Consider the pseudogroup $\Gamma_G$ (Example \ref{ex:G-struct}) and its 1st jet groupoid $J^1\Gamma_G \tto \RR^n$, which is always transitive and has isotropy group $G$ (Example \ref{ex:iso_cont}).
% 
%Then Proposition \ref{prop_transitive_Pfaffian_Morita_equivalence} yields a Pfaffian Morita equivalence between the Pfaffian groupoid $(J^1 \Gamma_G, \omega^1, \RR^n \times \RR^n)$ and the Pfaffian group $(G,0, \RR^n)$, while Corollary \ref{corollary_combination_props} boils down to the classical correspondence between 1st order almost $\Gamma_G$-structures $P \subset J^1 (M,\RR^n)$ and $G$-structures $P_0 \subset \Fr(M)$ (see Example \ref{ex:classical_G-struct}).
%\end{example}

\section{Cartan bundles}

In view of Corollary \ref{corollary_combination_props}, in order to study geometric structures defined by transitive pseudogroups, it is enough to study principal Pfaffian bundles for the action of their isotropy (Pfaffian) groups. We can therefore finally arrive to our main definition.

\begin{definition}\label{def:Cartan-bundle}
A {\bf Cartan bundle} on $M$ is a principal $(G,\h, V)$-bundles, for some Pfaffian group $(G,\h, V)$. We will sometimes write {\bf Cartan $(G,\h, V)$-bundle} if we want to emphasise the Pfaffian group $(G,\h, V)$.
\end{definition}

To implement this idea, one needs to allow for $(G,\h, V)$ to be not necessairly full, see Remark~\ref{remark:isotropy-not-full-not-Lie}. On the other hand, our purpose is to study the geometry of ``concrete'' geometric structures, described by jet bundles with their Cartan forms. Accordingly, it is not restrictive to assume the following:

\begin{axiom}{F}\label{axiom_full}
From now on, unless otherwise specified, principal $(G, \h, V)$-bundles $(P, \theta)$ are assumed to be full -- i.e.\ $\theta \in \Omega^1 (P,V)$ is assumed to be pointwise surjective, see Definition~\ref{def:princ-pfaff-bundles}. 
\end{axiom}

What we proved until now allows us to recover the original definition of Cartan bundle in \cite{FRANCESCOPAP}.

\begin{corollary}\label{cor_Cartan_bundles_via_forms}
A Cartan bundle is equivalently defined by a (left) principal $G$-bundle $P$ over $M$ together with a representation $V \in \Rep(G)$, a map of representations $l:\g\to V$ and a differential form $\theta \in \Omega^1(P, V)$ such that 
\begin{itemize}
 \item $\theta$ is surjective
 \item $\theta$ is $G$-equivariant;
 \item $\theta (\alpha^\dagger_p)=l(\alpha)$ $\forall p \in P$; 
 %(thus defining a representation map $l: \g \to V$);
 \item $\ker(\theta) \subset \ker(d\pi)$;
 \item $\ker(\theta)$ is involutive.
\end{itemize}
\end{corollary}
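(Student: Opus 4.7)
The statement is essentially an unfolding of Definition~\ref{def:princ-pfaff-bundles} in the special case where the underlying Pfaffian groupoid is a Pfaffian group, combined with the correspondence of Proposition~\ref{prp:char-pf-groups}. The plan is to check that each bulleted condition corresponds precisely to one of the ingredients in Definitions~\ref{def:Cartan-bundle} and~\ref{def:princ-pfaff-bundles}.

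\textbf{First}, by Proposition~\ref{prp:char-pf-groups} a Pfaffian group structure on $G$ with coefficients $V \in \Rep(G)$ is the same datum as a $G$-equivariant linear map $l\colon \g \to V$, via $\omega = l \circ \omega_{\rm MC}$. So the ``$V$ and $l$'' in the corollary are just repackaging the choice of the ambient Pfaffian group $(G,\h,V)$, with $\h = \ker l$ by Proposition~\ref{prop_defining_h}. In particular surjectivity of $\theta$ is precisely Axiom~\ref{axiom_full}.

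\textbf{Second}, the moment map $\mu \colon P \to \{*\}$ satisfies $\ker d\mu = TP$, so the condition $\ker d\mu \cap \ker\theta = \ker d\pi \cap \ker\theta$ of Definition~\ref{def:princ-pfaff-bundles} collapses to $\ker\theta \subset \ker d\pi$. So the only nontrivial thing left to translate is the multiplicativity of $\theta$.

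\textbf{Third}, I would unpack
\[
\theta_{g\cdot p}\bigl(dm_P(v_g,w_p)\bigr) = \omega_g(v_g) + g\cdot \theta_p(w_p), \qquad (v_g,w_p)\in T_gG\oplus T_pP,
\]
by testing it on the two summands separately. Taking $v_g=0$ yields $\theta_{g\cdot p}(dL_g(w_p))=g\cdot \theta_p(w_p)$, i.e.\ $G$-equivariance of $\theta$. Taking $w_p=0$ and writing $v_g = dR_g(\beta)$ with $\beta = \omega_{\rm MC}(v_g) \in \g$, the standard identity $dm_P(v_g,0_p) = \beta^\dagger_{g\cdot p}$ together with $\omega_g(v_g) = l(\beta)$ gives $\theta_{g\cdot p}(\beta^\dagger_{g\cdot p}) = l(\beta)$, i.e.\ $\theta(\alpha^\dagger)=l(\alpha)$ pointwise on $P$. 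Conversely, adding the two contributions back recovers multiplicativity on general $(v_g,w_p)$ by bilinearity. This is the main, and essentially only, computation.

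\textbf{Fourth}, involutivity of $\ker\theta$ matches the involutivity of the symbol in the definition of Pfaffian groupoid: under $\ker\theta\subset\ker d\pi$ and the identity $\theta(\alpha^\dagger)=l(\alpha)$, Remark~\ref{rmk:inf_action_Pfaffian} identifies $\ker\theta=\ker\theta\cap\ker d\pi$ with the pullback of the symbol $\g(\omega)=\ker\omega\cap\ker ds$ via the infinitesimal action, so the two involutivity conditions are equivalent. (Equivalently: since the action is free, $\ker\theta$ is pointwise spanned by the fundamental vector fields $\alpha^\dagger$ for $\alpha\in\h$, and $\h=\ker l$ is a Lie ideal of $\g$ because $l$ is a map of $G$-representations, so $[\alpha^\dagger,\beta^\dagger] = -[\alpha,\beta]^\dagger$ stays in $\ker\theta$.) The only real bookkeeping is keeping track of left- versus right-trivialisations and the sign conventions for fundamental vector fields in the third step; no serious obstacle appears.
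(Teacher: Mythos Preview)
Your proposal is correct and follows essentially the same route as the paper: both directions hinge on Proposition~\ref{prp:char-pf-groups} to identify the Pfaffian group with the datum $(V,l)$, on unpacking multiplicativity into $G$-equivariance plus $\theta(\alpha^\dagger)=l(\alpha)$, on the collapse of the $\ker d\mu$ condition since $\mu$ has target a point, and on Remark~\ref{rmk:inf_action_Pfaffian} for the involutivity. If anything, your third step spells out the multiplicativity computation more explicitly than the paper does.
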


Recall that $\alpha^\dagger \in \mathfrak{X}(P)$ denotes the {\bf fundamental vector field} $p \mapsto a_p(\alpha)$ associated to the infinitesimal action of $\alpha \in \g$ on $P$.

\begin{proof}
Let $(P,\theta)$ be a principal $(G,\h,V)$-bundle. Recall that $\h$ is the kernel of a representation map $l:\g\to V$, and that such representation map is precisely $\omega_e:\g\to V$, where $\omega\in \Omega^1(G,V)$ is the Pfaffian form on $G$. With this in mind, notice that $\theta$ is pointwise surjective by assumption, i.e. Axiom \ref{axiom_full}. From the multiplicativity of $\theta$ it follows that $\theta$ is $G$-equivariant and that 
\[
\theta (\alpha^\dagger_p)=l(\alpha)\quad  \forall p \in P.
\]
By the second point of Definition~\ref{def:princ-pfaff-bundles}, one has that $\ker(\theta)\subset \ker(d\pi)$. To prove that $\ker(\theta)$ is involutive, one makes use of Remark~\ref{rmk:inf_action_Pfaffian}.

On the other hand, if $(P,\theta)$ has the properties listed in the corollary, Proposition~\ref{prp:char-pf-groups} allows to endow $G$ with a $V$-valued Pfaffian form $\omega$ such that $\ker(\omega_e) = \h$ and $(P,\theta)$ is a principal $(G,\h,V)$-bundle.
\end{proof}

Due to its prominence in the next sections, we discuss the foliation on $P$ defined by involutive distribution $\ker(\theta) \subset \ker(d\pi)$, and its relation with the ideal $\h$. Recall that a foliation $\mathcal{F}$ on $P$ is {\bf vertical} if its leaves are tangent to the vertical bundle of $P \to M$.

\begin{proposition}[Foliation underlying a Cartan bundle]\label{properties_ideal_k}
Given a Cartan $(G,\h,V)$-bundle $(P,\theta)$,
\begin{enumerate}
 \item $\h= \{ \alpha \in \g \mid \alpha^\dagger \in \Gamma(\ker(\theta)) \}$;
 \item $\h$ is a subrepresentation of $\g$ (endowed with the adjoint $G$-representation);
 \item the vertical foliation $\mathcal{F} = \ker(\theta)$ coincides with the foliation $\mathcal{F}_\h$ given by the image of $\h \subset \g$ under the infinitesimal action on $P$.
\end{enumerate}
\end{proposition}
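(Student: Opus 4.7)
The plan is to address each of the three items in order, using the characterisation of Cartan bundles given in Corollary~\ref{cor_Cartan_bundles_via_forms} together with Remark~\ref{rmk:inf_action_Pfaffian} applied to the Pfaffian group $(G,\h,V)$.

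First, for (i), I would invoke the identity $\theta(\alpha^\dagger_p)=l(\alpha)$ for every $\alpha\in\g$ and every $p\in P$, which is explicitly listed in Corollary~\ref{cor_Cartan_bundles_via_forms}. Since this value is independent of $p$, the fundamental vector field $\alpha^\dagger$ is a section of $\ker(\theta)$ if and only if $l(\alpha)=0$, i.e.\ if and only if $\alpha\in\ker(l)=\h$ (see Proposition~\ref{prop_defining_h}). This gives the claimed equality.

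Next, for (ii), the key observation is that $l:\g\to V$ is $G$-equivariant with respect to the adjoint representation on $\g$. I would derive this from the $G$-equivariance of $\theta$ (Corollary~\ref{cor_Cartan_bundles_via_forms}) together with the standard compatibility between the $G$-action on $P$ and the infinitesimal $\g$-action, namely $dL_g(\alpha^\dagger_p)=(\Ad_g\alpha)^\dagger_{g\cdot p}$. Combining these with item (i) gives, for every $g\in G$ and $\alpha\in\g$,
\[
l(\Ad_g\alpha)=\theta_{g\cdot p}\bigl((\Ad_g\alpha)^\dagger_{g\cdot p}\bigr)=\theta_{g\cdot p}\bigl(dL_g(\alpha^\dagger_p)\bigr)=g\cdot \theta_p(\alpha^\dagger_p)=g\cdot l(\alpha),
\]
so $l$ is a morphism of $G$-representations and $\h=\ker(l)$ is stable under the adjoint action.

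Finally, for (iii), the inclusion $\mathcal{F}_\h\subset\ker(\theta)$ is immediate from (i), since $\mathcal{F}_\h$ is by definition pointwise spanned by $\{\alpha^\dagger_p:\alpha\in\h\}$. For the reverse inclusion, I would specialise Remark~\ref{rmk:inf_action_Pfaffian} to the Pfaffian group case: the infinitesimal action identifies $\g$ with $\ker(d\pi)_p$ via $\alpha\mapsto\alpha^\dagger_p$, and under this identification the subspace corresponding to $\ker(\omega)\cap\ker(ds)|_e=\h$ maps isomorphically onto $\ker(\theta)\cap\ker(d\pi)_p$. By Corollary~\ref{cor_Cartan_bundles_via_forms} we have $\ker(\theta)\subset\ker(d\pi)$, so this simplifies to an isomorphism $\h\xrightarrow{\sim}\ker(\theta)_p$, forcing the equality $\ker(\theta)_p=(\mathcal{F}_\h)_p$ pointwise. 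The step requiring most care is checking that Remark~\ref{rmk:inf_action_Pfaffian} genuinely applies in the (non-full) Pfaffian group setting, i.e.\ that the isomorphism $\mu^*A\cong\ker(d\pi)$ still restricts correctly to the symbol; once this is in place, the rest is formal.
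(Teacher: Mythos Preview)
Your proof is correct and follows essentially the same approach as the paper: part (i) via $\theta(\alpha^\dagger_p)=l(\alpha)$ and $\h=\ker(l)$, part (ii) via $G$-equivariance of $\theta$ and the standard identity relating the action on $P$ with the adjoint action, and part (iii) via the pointwise equality $\h^\dagger_p=\ker(\theta)_p$. Your concern about Remark~\ref{rmk:inf_action_Pfaffian} is unnecessary---that remark does not require the Pfaffian groupoid to be full, and for a group it simply specialises to the standard isomorphism $\g\cong\ker(d\pi)_p$ restricting to $\h\cong\ker(\theta)_p$; the paper in fact just asserts $\h^\dagger_q=\ker(\theta_q)$ directly from the argument in (i) together with this isomorphism.
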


This shows that, of course, one could also take part (i) as an alternative definition of $\h$ in terms of the Cartan bundle $(P,\theta)$ (and not of the Pfaffian group), and prove directly that $\h$ is a Lie ideal of $\g$ (by checking that it is $\Ad_g$-invariant for every $g \in G$).

\begin{proof} 
The first part follows by combining Corollary \ref{cor_Cartan_bundles_via_forms} with part (ii) of Proposition \ref{prop_defining_h}.
%The last claim follows from $\theta(\alpha^\dagger) = \theta (a (\alpha)) = \omega(\alpha)$.

The second claim holds by the $G$-equivariance of $\theta$. Inded, for any $\alpha \in \g$, $g \in G$ one has
 \[
  \theta_p ( (g \cdot \alpha)^\dagger_p ) = \theta_p (a_p (d_e C_g (\alpha))) = \theta_p (d_{pg} R_{g^{-1}} (a_{pg} (\alpha)) ) =
  \]
  \[
  = (R_{g^{-1}}^* \theta)_{pg} (a_{pg}(\alpha)) = g \cdot \theta_{pg} (\alpha^\dagger_{pg}) = g \cdot \theta_p (\alpha^\dagger_p),
 \]
hence $\alpha \in \h$ if and only if $g \cdot \alpha \in \h$.

For the third part, if $L \subset P$ is a leaf of $\mathcal{F}_\h$ through $p \in P$, then $T_q L = a_q (\h)$ for every $q \in L$. But since $\h^\dagger_q = \ker(\theta_q)$, we have precisely $T_q L = \ker(\theta_q)$, i.e.\ $L$ is a leaf of the foliation $\mathcal{F} = \ker(\theta)$.
%recall that the vertical foliation $\ker(d\pi) \subset TP$ of any principal $G$-bundle $\pi: P \to M$ is equivalently described by the orbits of the infinitesimal $\g$-action $a: \g \to \mathfrak{X}(P)$, since the space $\Ima(a) = \g^\dagger$ coincides with $\Gamma(\ker(d\pi))$.
\end{proof}

\subsection{Examples}\label{exm:G-struct-as-C-bundles}
%Let $(G,\h, V)$ be a Pfaffian group and $(P, \F_\h, \Phi)$ be a Cartan $(G,\h, V)$-bundle on $M$.

\subsubsection{G-structures}\label{subsection_G_structures}
We start looking at the case when $\F_\h$ consists of the whole foliation by fibres of $P\to M$; this happens if and only if $\h=\g$. We refer to this as {\bf simple} case, since $\F_\h$ is a simple foliation on $P$ and the leaf space is canonically diffeomorphic to $M$. From the form point of view, in this case $\theta$ is an equivariant form on $P$ vanishing on vertical vectors; since we ask for it to be pointwise surjective, $\dim(V)=\dim(M)$. 

This is the setting of {\bf abstract $G$-structures}, that is principal $G$-bundles equipped with a representation of $G$ on $\mathbb{R}^n$ and an equivariant $\mathbb{R}^n$-valued ($n=\dim(M)$) form whose kernel is the vertical bundle.
\begin{proposition}\label{simple_Cartan_bundles_are_G_structures}
Simple Cartan bundles are in one to one correspondence with abstract $G$-structures. %Furthermore, the induced $G$-structure is integrable if and only if the Cartan bundle is flat when $\k$ is the abelian Lie algebra.
\end{proposition}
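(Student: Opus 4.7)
The plan is to unpack what the \emph{simple} condition imposes on the data $(P,\theta,V,l)$ of a Cartan bundle (through Corollary~\ref{cor_Cartan_bundles_via_forms}), and observe that it reduces, almost tautologically, to the data of an abstract $G$-structure. Concretely, simplicity says that the foliation $\F_\h$ is the full vertical foliation $\ker(d\pi)$, so by Proposition~\ref{properties_ideal_k}(iii) $\ker(\theta)=\ker(d\pi)$, and by Proposition~\ref{properties_ideal_k}(i) every fundamental vector field lies in $\Gamma(\ker(\theta))$; equivalently $\h=\g$. Plugging $\alpha\in \g = \h$ into the identity $\theta(\alpha^\dagger_p)=l(\alpha)$ from Corollary~\ref{cor_Cartan_bundles_via_forms} forces $l\equiv 0$.

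Next I would read off the remaining axioms. Since $\theta$ is pointwise surjective onto $V$ (Axiom~\ref{axiom_full}) and $\ker(\theta)=\ker(d\pi)$ has rank $\dim G$, a dimension count gives $\dim V=\dim P-\dim G=\dim M$. Moreover $\theta$ is $G$-equivariant and its kernel is the vertical bundle of $P\to M$, which is exactly the definition of an abstract $G$-structure modelled on the $G$-representation $V\cong\RR^n$.

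For the inverse direction, start with an abstract $G$-structure $(P\to M, V, \theta)$: $P$ is a principal $G$-bundle, $V$ a $G$-representation of dimension $n=\dim M$, and $\theta\in\Omega^1(P,V)$ is $G$-equivariant, pointwise surjective with $\ker(\theta)=\ker(d\pi)$. Define $l:\g\to V$ as the zero map, which is trivially a morphism of $G$-representations. Every fundamental vector field is vertical, hence in $\ker(\theta)$, so the identity $\theta(\alpha^\dagger_p)=0=l(\alpha)$ holds. The inclusion $\ker(\theta)\subset\ker(d\pi)$ is an equality by assumption, and $\ker(\theta)=\ker(d\pi)$ is involutive (it is the tangent distribution to the fibration $\pi$). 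All the conditions of Corollary~\ref{cor_Cartan_bundles_via_forms} are thus satisfied, and $\h=\ker(l)=\g$, so the resulting Cartan bundle is simple.

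Finally I would note that the two assignments are mutually inverse on the nose, since they act as the identity on the underlying data $(P\to M, V, \theta)$ and the map $l$ is uniquely determined (it must be $0$) in the simple case. There is no genuine obstacle here: the only subtle point worth spelling out is the equivalence between "$\F_\h$ is the foliation by fibres" and "$\h=\g$", which follows directly from Proposition~\ref{properties_ideal_k} together with the fact that for a principal bundle the infinitesimal action $\g\to\ker(d\pi)_p$ is a pointwise isomorphism.
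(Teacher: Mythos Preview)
Your proposal is correct and follows essentially the same approach as the paper, which does not give a formal proof but sketches the argument in the paragraph preceding the proposition: simplicity means $\h=\g$, so $\ker(\theta)=\ker(d\pi)$, $\theta$ is equivariant and vanishes on vertical vectors, and surjectivity forces $\dim V=\dim M$. You have simply spelled out the two directions and the bijection more carefully, invoking Corollary~\ref{cor_Cartan_bundles_via_forms} and Proposition~\ref{properties_ideal_k} explicitly where the paper leaves them implicit.
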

One recovers classical $G$-structures by looking at the frame bundle $\pi: \Fr(M)\to M$ equipped with the {\bf tautological form} $\theta_{\rm taut}\in \Omega^1(\Fr(M), \mathbb{R}^n)$; recall that $\theta_p(V_p)$ is given by the components of $d\pi(V_p)$ with respect to the frame $p$. If $\pi:P\to M$ is a $G$-reduction, $G\subset \GL(n, \mathbb{R})$, the restriction of $\theta_{\rm taut}$ makes $P$ into a simple Cartan bundle.

\subsubsection{Cartan geometries}\label{subsection_Cartan_geometries}
On the other extremum of the spectrum we have the case when $\F_\h$ is the foliation by points; this happens if and only if $\h = \g$. We refer to this case as {\bf parallelisable}, since $\theta\in \Omega^1(P, V)$ is now an absolute parallelism on $P$; that is, we have $\dim(V)=\dim(P)$ and 
\[
\theta:TP\to P\times V
\]
is an isomorphism. Notice that $\theta$ is equivariant and it coincides with the identity on fundamental vectors, thanks to multiplicativity. In other words, $(P, \theta)$ is a Cartan geometry.
\begin{definition}\label{def_Cartan_geometry}
A {\bf Cartan geometry} on $M$ is the datum of 
\begin{itemize}
\item a principal $G$-bundle $P\to M$;
\item a $G$-representation $V$ containing the adjoint representation of $G$ on $\g$ as a subrepresentation;
\item an equivariant form $\theta\in \Omega^1(P, V)$, called {\bf Cartan connection}, which is a pointwise isomorphism and such that
\[
\theta(\alpha^\dagger)=\alpha, \quad \alpha\in \g.
\]
\end{itemize}
\end{definition}

Cartan geometries may be thought of as curved versions of Klein geometries and are extensively studied in the literature -- see e.g.~\cites{SHARPE, CAPSLOVAK,ALEKSEEVSKYMICHOR} for detailed introductions and further references. 

\begin{remark}[Lie brackets on $V$]\label{remark_bracket_Cartan_geometry}
Cartan geometries are usually defined with the additional requirement that $V$ is Lie algebra (notice that, under such assumption, the Lie algebra of $\mathfrak{g}$ is automatically a subalgebra, because we ask for the representation of $G$ on $V$ to extend the adjoint representation). However, such Lie algebra structure plays a role only for what concerns {\it flatness} for Cartan geometries; the concept of flatness itself is defined by means of a Maurer-Cartan equation satisfied by $\theta$. We prefer to think about Cartan geometries -- and, more in general, Cartan bundles -- as objects for which flatness is defined only {\it with respect to the choice of a bracket on $V$}; see subsection~\ref{section_integrability_Cartan_bundles}.
\end{remark}

\begin{proposition}
Parallelisable Cartan bundles are in one to one correspondence with Cartan geometries.
\end{proposition}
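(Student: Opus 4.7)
The plan is to simply unravel definitions, noting first that (modulo the typo in the text) \textbf{parallelisable} means $\h=\{0\}$, i.e.\ the symbol of the Pfaffian group $(G,\h,V)$ is trivial. Indeed, by Proposition \ref{properties_ideal_k}, the foliation $\mathcal{F}_\h$ is by points if and only if the infinitesimal action of $\h$ on $P$ is zero, which (since the $G$-action is free) is equivalent to $\h=0$. Combined with Axiom~\ref{axiom_full}, pointwise surjectivity of $\theta$ together with $\ker(\theta) \subset \ker(d\pi)$ and $\dim \ker(d\pi) = \dim \g$ forces $\theta_p: T_pP \to V$ to be a pointwise isomorphism (and $\dim V = \dim P$).

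For the forward direction, start with a parallelisable Cartan $(G,\{0\},V)$-bundle $(P,\theta)$. By Proposition~\ref{prp:char-pf-groups}, the linear map $l=\omega_e: \g \to V$ is a morphism of $G$-representations, and by Proposition~\ref{prop_defining_h} its kernel is precisely $\h=\{0\}$. Hence $l$ is injective and identifies $\g$ with a $G$-subrepresentation of $V$ (where $\g$ carries the adjoint representation, as $l$ intertwines the actions). Under this identification, Corollary~\ref{cor_Cartan_bundles_via_forms} gives that $\theta$ is $G$-equivariant, is a pointwise isomorphism by the preceding paragraph, and satisfies $\theta(\alpha^\dagger)=l(\alpha)=\alpha$ for all $\alpha\in \g$. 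These are exactly the axioms of Definition~\ref{def_Cartan_geometry}.

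For the reverse direction, let $(P,\theta)$ be a Cartan geometry as in Definition~\ref{def_Cartan_geometry}. Take $l: \g \hookrightarrow V$ to be the given inclusion of the adjoint subrepresentation; by assumption this is a morphism of $G$-representations, and trivially $\ker(l) = 0$. By Proposition~\ref{prp:char-pf-groups} this turns $(G,V)$ into the (non-full) Pfaffian group $(G,\{0\},V)$. It remains to check the hypotheses of Corollary~\ref{cor_Cartan_bundles_via_forms}: surjectivity of $\theta$ follows from it being a pointwise isomorphism; $G$-equivariance and $\theta(\alpha^\dagger)=\alpha=l(\alpha)$ are assumed; and $\ker(\theta)=\{0\}$ is trivially contained in $\ker(d\pi)$ and trivially involutive. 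Thus $(P,\theta)$ is a parallelisable Cartan bundle.

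The two assignments are manifestly mutually inverse: in both cases the underlying bundle, the form $\theta$, and the representation $V$ are preserved, and the only piece of data that needs matching is the identification between the abstract map $l:\g\to V$ (Cartan bundle side) and the subrepresentation inclusion $\g\subset V$ (Cartan geometry side), which is tautological once $l$ is injective. There is no real obstacle here; the only point requiring mild care is precisely this translation between ``$l$ as a representation map'' and ``$\g$ as a subrepresentation'', which is what encodes the condition that the $G$-representation on $V$ extends the adjoint representation of $\g$.
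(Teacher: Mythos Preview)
Your proof is correct and follows the same approach as the paper, which treats the proposition as essentially immediate from the definitions: the paper only gives the brief observation (in the paragraph preceding Definition~\ref{def_Cartan_geometry}) that in the parallelisable case $\theta$ is an absolute parallelism, is equivariant, and restricts to the identity on fundamental vectors ``thanks to multiplicativity''. You supply the details for both directions and correctly identify the typo ($\h=0$, not $\h=\g$); the only minor expository wrinkle is that in your first paragraph the clause about ``$\ker(\theta)\subset\ker(d\pi)$ and $\dim\ker(d\pi)=\dim\g$'' is superfluous, since $\ker(\theta)=0$ already follows directly from $\mathcal{F}_\h$ being the foliation by points via Proposition~\ref{properties_ideal_k}(iii).
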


\begin{example}
Simple examples of Cartan geometries are given by classical $G$-structures $P\to M$ equipped with a compatible connection -- that is, a connection whose parallel transport sends frames in $P\subset \Fr(M)$ to frames in $P\subset \Fr(M)$. Such a connection can be encoded into a principal connection 1-form $\omega\in \Omega^1(P, \g)$ on the principal bundle $P$. The form $\theta=\theta_{\rm taut}+\omega$ is then a Cartan connection.
\end{example}

\subsubsection{Higher order $G$-structures}\label{subsection_higher_order_G_structures}

By taking $G$-structures $P\subset \Fr^k(M)\to M$ of order $k > 1$ (Example \ref{ex_higher_order_G-structures}), %in the form of a principal Pfaffian $J^k\Gamma_G$-bundle, $k>1$, and passing to the isotropy group $G$, 
one gets Cartan bundles which are neither simple nor parallelisable.
To show this claim, we consider first the sequence of Lie groups 
\[
GL^k(n, \mathbb{R}) \xrightarrow{\tau^{k, k-1}} \GL^{k-1}(n, \mathbb{R}) \xrightarrow{\tau^{k-1, k-2}} \dots \xrightarrow{\tau^{2, 1}} \GL(n, \mathbb{R}).
\]
Rebaptising $G$ as $G^k$ and restricting the projection $\tau^{k, k-1}$ to $G^k$, we get a sequence
\[
G^k\to G^{k-1}\to \dots \to G^1
\]
and its infinitesimal counterpart
\[
\g^k\to \g^{k-1}\to \dots \to \g^1.
\]
We denote the kernel by $\g^{k, k-1}:=\ker(d\tau^{k, k-1})\subset \g^k$, so that
\[
\omega^k:= \omega_{\rm MC}\circ d\tau^{k, k-1}: TG^k\to \g^{k-1}
\] 
defines a $G^k$-equivariant surjective form $\omega^k \in \Omega^1 (G^k, \g^{k-1})$. When $k=1$ one gets the zero form. We can postcompose $\omega^{k}$ with the section $\g^{k-1}\to \g^{k-1} \times \RR^n$ sending $a\in \g^{k-1}$ to $(a, 0)$; the resulting form, which we keep denoting $\omega^{k}$, is in fact a Pfaffian form on $G^k$, with image given by $\g^{k-1}$ and kernel $\g^{k, k-1}$.

Similarly, consider the sequence of principal bundles
\[
\Fr^k(M) \xrightarrow{\pi^{k, k-1}} \Fr^{k-1}(M) \xrightarrow{\pi^{k-1, k-2}} \dots \xrightarrow{\pi^{2,1}} \Fr^1(M) \to M.
\]
Rebaptising $P$ as $P^k$, and restricting the projection $\pi^{k, k-1}$ to $P^k$, we get a sequence
\[
P^k\to P^{k-1}\to \dots \to P^1\to M.
\]
We consider then the higher-order tautological form $\theta^k \in \Omega^1 (\Fr^k(M), \g^{k-1} \times \RR^n)$ on $\Fr^k(M)\to M$, which is defined as in the $k=1$ case (see e.g.\ \cite{Kobayashi61} or \cite[Section IV.16]{KolarMichorSlovak93}), and which restricts to $P^k \subset \Fr^k(M)$ as
%$P^k$ is a Cartan bundle for $(G^k, \g^{k, k-1}, \g^{k-1}\times \mathbb{R}^n)$ when equipped with the tautological form of $P^k \subset \Fr^k(M)$
\[
\theta^k \in \Omega^1 (P^k, \g^{k-1} \times \RR^n), \quad \quad (\theta^k)_{p^k}:= \phi_{p^k}\circ d\pi^{k, k-1}: TP^k\to \g^{k-1} \times \RR^n.
\]
Here, $\phi_{p^k}$ denotes the left splitting of the sequence 
\[
0\to \ker \left(d \pi^{k-1}: TP^{k-1}\to TM \right)\to T P^{k-1}\to T M\to 0
\]
induced by the $k$-frame $p^k=j^k_x\psi \in P^k$, where $x\in M$ and $\psi:U\subset M\to \mathbb{R}^n$ is an isomorphism sending $x$ to $0$.

\begin{remark}\label{first_component_theta_2}
For future reference, notice that the tautological forms at two consecutive orders are related by
\[
(\pi^{k,k-1})^*\theta^{k-1} = (d\tau^{k-1, k-2}, \id_{\RR^n}) \circ \theta^k. 
\]
Applying such equation recursively, we see that the $\RR^n$-component of $\theta^k$ is simply the ordinary tautological form $\theta^1 = \theta_{\rm taut}$: more precisely
\[
 (\pi^{k,1})^*\theta_{\rm taut} = \pr_{\RR^n} \circ \theta^k. \qedhere
\]
\end{remark}

In order for $P^k$ to define a Cartan bundle, the last ingredient is the representation of $G^k$ on the space $V^k:= \g^{k-1} \times \RR^n$, which is induced by the adjoint representation of $G^{k-1}$ on $\g^{k-1}$:
\[
j^k_0 \phi \cdot (\alpha, v) = (j^{k-1}_0 \phi \cdot \alpha, \phi(v)).
\]

\begin{proposition}
 $(P^k,\theta^k)$ is a Cartan bundle for the Pfaffian group $(G^k, \g^{k, k-1}, \g^{k-1} \times \RR^n)$.
\end{proposition}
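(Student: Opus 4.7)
The strategy is to invoke Corollary \ref{cor_Cartan_bundles_via_forms}: once the Pfaffian group structure on $G^k$ is fixed, it suffices to check five properties of $\theta^k$. The Pfaffian group $(G^k, \g^{k,k-1}, V^k)$ corresponds, via Proposition \ref{prp:char-pf-groups}, to the linear map
\[
l := \omega^k_e : \g^k \to V^k = \g^{k-1} \times \RR^n, \quad \alpha \mapsto (d\tau^{k,k-1}(\alpha), 0),
\]
whose kernel is by definition $\g^{k,k-1}$, matching the claimed symbol. I would first observe that $l$ is a map of $G^k$-representations: this is immediate from naturality of the adjoint representation under the group homomorphism $\tau^{k,k-1}: G^k \to G^{k-1}$, combined with the definition of the $G^k$-action on $V^k$ given just before the proposition.

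Next, I would verify each of the five items of Corollary \ref{cor_Cartan_bundles_via_forms}. \emph{Surjectivity of $\theta^k$}: the map $\phi_{p^k}$ is an isomorphism $T_{p^{k-1}}P^{k-1} \xrightarrow{\sim} \g^{k-1} \oplus \RR^n$ (a splitting of the short exact sequence supplies a direct sum decomposition, and the frame identifications $\ker(d\pi^{k-1})_{p^{k-1}} \cong \g^{k-1}$ and $T_x M \cong \RR^n$ are isomorphisms), while $d\pi^{k,k-1}$ is surjective because $\pi^{k,k-1}$ is a submersion. \emph{$G^k$-equivariance}: this is the standard equivariance of higher-order tautological forms, which reduces to the equivariance of the splitting $\phi_{p^k}$ in $p^k$ together with the equivariance of the frame identifications. \emph{Fundamental vector field condition}: for $\alpha \in \g^k$, one has $d\pi^{k,k-1}(\alpha^\dagger_{p^k}) = (d\tau^{k,k-1}(\alpha))^\dagger_{p^{k-1}}$, a vertical vector in $P^{k-1} \to M$; under the frame identification of $\ker(d\pi^{k-1})_{p^{k-1}}$ with $\g^{k-1}$ this corresponds precisely to $d\tau^{k,k-1}(\alpha)$, giving $\theta^k(\alpha^\dagger) = (d\tau^{k,k-1}(\alpha),0) = l(\alpha)$.

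\emph{$\ker(\theta^k) \subset \ker(d\pi^k)$}: this follows at once from Remark \ref{first_component_theta_2}, since the $\RR^n$-component of $\theta^k$ is $(\pi^{k,1})^*\theta_{\rm taut}$, and the order-one tautological form vanishes exactly on vertical vectors of $P^1 \to M$. \emph{Involutivity of $\ker(\theta^k)$}: the key observation here is that since $\phi_{p^k}$ is an isomorphism, $\theta^k_{p^k}(V) = 0$ is equivalent to $d\pi^{k,k-1}(V) = 0$. Hence
\[
\ker(\theta^k) = \ker(d\pi^{k,k-1}),
\]
which is the involutive distribution tangent to the fibres of the submersion $\pi^{k,k-1}: P^k \to P^{k-1}$.

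None of these steps is technically deep; the only point that might require a little care is the verification of $G^k$-equivariance, which depends on the explicit form of $\phi_{p^k}$ and its behaviour under the right $G^k$-action. This is, however, well established in the literature on higher-order frame bundles (see e.g.\ \cite{Kobayashi61} or \cite{KolarMichorSlovak93}), and the entire argument amounts to assembling known facts about $\theta^k$ into the abstract language of Cartan bundles introduced in Definition \ref{def:Cartan-bundle}.
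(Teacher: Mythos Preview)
Your proof is correct and follows essentially the same route as the paper's. Both arguments invoke Corollary~\ref{cor_Cartan_bundles_via_forms}, identify $\ker(\theta^k)=\ker(d\pi^{k,k-1})$ to obtain involutivity and the inclusion in $\ker(d\pi^k)$, and use the identity $d\pi^{k,k-1}(\alpha^\dagger)=(d\tau^{k,k-1}(\alpha))^\dagger$ to pin down the symbol; the only cosmetic difference is that you verify $\theta^k(\alpha^\dagger)=l(\alpha)$ directly, whereas the paper first establishes the Cartan-bundle axioms and then recovers $\h=\g^{k,k-1}$ a posteriori via Proposition~\ref{properties_ideal_k}.
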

\begin{proof}
It is immediate to check that $\theta^k$ is $G^k$-equivariant, pointwise surjective, and its kernel coincides with $\ker(d\pi^{k,k-1})$, which is involutive and inside $\ker(d\pi^k)$. Then $(P^k,\theta^k)$ is a Cartan bundle by Corollary \ref{cor_Cartan_bundles_via_forms}.

To see that $\h = \g^k_{k-1}$, we use the characterisation of part (i) of Proposition \ref{properties_ideal_k}. Since
\[
 d_p\pi^k_{k-1}(\alpha^\dagger_p) = (d\tau^k_{k-1}(\alpha))^\dagger_p
\]
we unravel the definition of $\theta^k$ from subsection \ref{subsection_higher_order_G_structures} and see that, for $\alpha \in \g^k$,
\[
 \alpha \in \h\iff 0 = \theta^k_p (\alpha^\dagger_p) = \phi_p ( d_p\pi^k_{k-1}(\alpha^\dagger_p) ) \iff  d_p\pi^k_{k-1}(\alpha^\dagger_p) = 0 \iff d\tau^k_{k-1}(\alpha) = 0. \qedhere
\] 
\end{proof}

\subsubsection{Fibres of transitive Pfaffian groupoids}

If $(\G, \omega)$ is a transitive full Pfaffian groupoid, the $t$-fibre $t^{-1}(x)$ at any point is a Cartan bundle when equipped with $\theta_g:=\omega_g|_{t^{-1}(x)}$. Alternatively, one can look at the $s$-fibres with the form $\theta_g:=g^{-1}\cdot \omega_g|_{s^{-1}(x)}$. This can be used to prove Proposition~\ref{prop_transitive_Pfaffian_Morita_equivalence}.
.
%this is simply a Pfaffian version of the equivalence between transitive Lie groupoids and principal bundles -- i.e.\ 
%Keeping in mind the gauge construction from Proposition~\ref{prop:Pfaffian_Gauge_construction}, 

Notice that, if $\mathfrak{g} (\omega)_x = 0$, where $x\in \BB$, then the $s$-fibre of $(\Sigma,\omega)\tto \BB$ is a parallelisable Cartan bundle; on the other hand, if $\mathfrak{g}(\omega)_x$ coincide with the isotropy Lie algebra of $\Sigma$, then  $s$-fibre of $(\Sigma,\omega)\to \BB$ is a simple Cartan bundle. More generally, there is a bijective correspondence between transitive full Pfaffian groupoids and Cartan bundles, where the symbol $\g(\omega)$ corresponds to the distribution $\ker(\theta)$. This line of reasoning provides a different way to discover Cartan bundles -- a way that was explored in~\cite{FRANCESCOPAP}.

\subsection{The geometry of Cartan bundles}\label{section_geometry_of_Cartan_bundles}

Given any Cartan bundle $(P,\theta, V)$, we consider the following vector subspace:
\[
W := \theta(\ker(d\pi)) \subset V.
\] 
%Many properties of a Cartan bundle depend $W$.

%Many properties of a Cartan bundle depend on the following object:
%
%\begin{definition}
%Given any Cartan bundle $(P,\theta, V)$, the {\bf reduced coefficient space?} (or ``reducing'', if reduced is used to denote $V/W$?) is the vector subspace:
%\[
%W := \theta(\ker(d\pi)) \subset V.
%\] 
%\end{definition}

%Let us list a few properties which we will use later on.

\begin{proposition}\label{properties_W}
 Let $(P,\theta)$ be a Cartan bundle, with $\theta \in \Omega^1 (P,V)$. Then
\begin{enumerate}
 \item 
 %the reduced coefficient space 
 $W$ is well defined, i.e.\ $\theta_p(\ker(d_p\pi)) = \theta_q(\ker(d_q \pi))$ for every $p,q \in P$;
 \item $W$ and $V/W$ inherit a $G$-representation from $V$;
\item there is an isomorphism of representations $W \cong \g/\h$ (where $\h$ is the subrepresentation from Part (iii) of Proposition \ref{properties_ideal_k});
\item the following dimensional relations hold:
 \[
\dim (M) \leq \dim (V) \leq \dim(P), \quad \quad \dim(V/W) = \dim(M), \quad \quad \dim(W) \leq \dim(G);
\]
\item the associated vector bundle $P[V/W] \to M$ is canonically isomorphic to $TM \to M$.
\end{enumerate}
\end{proposition}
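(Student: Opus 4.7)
The plan is to treat the five claims in order, using as main tool the pointwise description $\theta_p(\alpha^\dagger_p)=l(\alpha)$ from Corollary~\ref{cor_Cartan_bundles_via_forms} together with the isomorphism $\ker(d_p\pi)\cong\g$ provided by the infinitesimal action of Remark~\ref{rmk:inf_action_Pfaffian}. Together these give the identification
\[
\ker(d_p\pi)=\{\alpha^\dagger_p:\alpha\in\g\},\qquad \theta_p(\alpha^\dagger_p)=l(\alpha),
\]
so that $\theta_p(\ker(d_p\pi))=l(\g)=\Ima(l)$ pointwise. This proves (i) and gives a $p$-free description $W=\Ima(l)$, which is the engine for everything that follows. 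For (ii), I would invoke Proposition~\ref{prp:char-pf-groups} to recall that $l:\g\to V$ is a map of $G$-representations (where $\g$ carries the adjoint representation), whence its image $W$ is a subrepresentation and the quotient $V/W$ inherits a $G$-action. Claim (iii) is then immediate from Proposition~\ref{prop_defining_h}(ii), which identifies $\h=\ker(l)$: the first isomorphism theorem applied to the rep map $l$ yields $\g/\h\cong W$ as $G$-representations.

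For (iv), I would run a short dimension count at a single point $p\in P$. Since $\theta$ is pointwise surjective (Axiom~\ref{axiom_full}) and $\ker(\theta_p)\subset\ker(d_p\pi)$, the restriction of $\theta_p$ to $\ker(d_p\pi)$ has image $W$ and the same kernel as $\theta_p$. Combining the two short exact sequences
\[
0\to\ker(\theta_p)\to\ker(d_p\pi)\to W\to 0,\qquad 0\to\ker(d_p\pi)\to T_pP\to T_{\pi(p)}M\to 0,
\]
with $T_pP/\ker(\theta_p)\cong V$, one obtains $T_{\pi(p)}M\cong V/W$, which at once yields $\dim(V/W)=\dim(M)$. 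The remaining inequalities are then immediate: $\dim(V)\le\dim(P)$ from surjectivity of $\theta_p$, $\dim(M)\le\dim(V)$ from surjectivity of $V\to V/W$, and $\dim(W)\le\dim(G)$ from $W\cong\g/\h$ in~(iii).

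For (v), I would promote the pointwise isomorphism $\bar\theta_p:T_{\pi(p)}M\xrightarrow{\cong}V/W$ just obtained into a bundle isomorphism. Concretely, define
\[
\Phi:P\times (V/W)\longrightarrow TM,\qquad (p,[v])\longmapsto \bar\theta_p^{-1}([v]),
\]
equivalently sending $(p,[\theta_p(X)])$ to $d_p\pi(X)$ for any $X\in T_pP$; well-definedness follows from the fact that an ambiguity $\theta_p(X-X')\in W$ can be absorbed by a vertical element via $\ker(d_p\pi)/\ker(\theta_p)\cong W$. The main obstacle is here: checking that $\Phi$ is $G$-equivariant, so that it descends to the associated bundle $P[V/W]$. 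This amounts to the identity $\bar\theta_{gp}\circ d_p L_g = g\cdot\bar\theta_p$, which follows from the $G$-equivariance of $\theta$ (Corollary~\ref{cor_Cartan_bundles_via_forms}) together with the functoriality of the quotient $V\twoheadrightarrow V/W$ established in (ii). Once descended, $\Phi$ is a fibrewise linear isomorphism covering $\id_M$, hence an isomorphism of vector bundles $P[V/W]\cong TM$.
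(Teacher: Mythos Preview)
Your argument is correct and close in spirit to the paper's, but you take a cleaner route through (ii) and (iii) by recognising at the outset that $W=\Ima(l)$. With this identification, (ii) follows immediately from Proposition~\ref{prp:char-pf-groups} (the fact that $l$ is a map of $G$-representations) and (iii) from Proposition~\ref{prop_defining_h}(ii) ($\h=\ker l$) via the first isomorphism theorem. The paper instead proves (ii) by a direct computation with the $G$-equivariance of $\theta$ and, for (iii), constructs the isomorphism $W\to\g/\h$ by hand and checks well-definedness, bijectivity and equivariance explicitly. Your approach buys economy by reusing the Pfaffian-group machinery already in place; the paper's approach has the minor advantage of being self-contained at the level of $\theta$.

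One small slip in (v): the identity you write, $\bar\theta_{gp}\circ d_pL_g=g\cdot\bar\theta_p$, is ill-typed as stated, since $\bar\theta_{gp}$ has domain $T_{\pi(p)}M$ while $d_pL_g$ lands in $T_{gp}P$. What you actually need (and what the $G$-equivariance $L_g^*\theta=g\cdot\theta$ gives, after passing to the quotient by $\ker(d\pi)$ using $\pi\circ L_g=\pi$) is simply $\bar\theta_{gp}=g\cdot\bar\theta_p$ as maps $T_{\pi(p)}M\to V/W$. With this correction your descent argument goes through; the paper organises the same check as well-definedness of the explicit map $[p,[\alpha]_W]\mapsto d_p\pi(v)$ on equivalence classes.
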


%On the other hand, the fact that the $G$-representation $V/W$ has dimension equal to the base manifold $M$ can be actually seen as a consequence of the following result.

\begin{proof}
Part (i) follows from the definition of Cartan bundle, which prescribes $\theta_p(\alpha^\dagger_p) = \theta_q (\alpha^\dagger_q)$ for any $\alpha \in \g$, and the fact that $\g^\dagger = \ker(d\pi)$.

For part (ii) we consider any $g \in G$, $\alpha \in W$, and check that
$$ g \cdot \alpha = g \cdot \theta_p(v) = \theta_{p \cdot g^{-1}} (d R_{g^{-1}} (v)) \in W,$$
where we used the fact that $\theta$ is $G$-equivariant, and $d R_{g^{-1}} (v) \in \ker(d\pi)$ if $v \in \ker(d\pi)$. Then the quotient vector space $V/W$ is a $G$-representation, with $g \cdot [\alpha] := [g \cdot \alpha]$.

In part (iii) we consider the map
 \[
  \Phi: W \to \g/\h, \quad \theta_p(v) \mapsto [\alpha]_\h,
 \]
where $\alpha \in \g$ is such that $v = \alpha^\dagger_p$ (recall that, for a Cartan bundle, $\theta_p(a_p(\alpha)) = \theta_q(a_q(\alpha))$ so the choice of $p \in P$ is irrelevant). First of all, $\Phi$ is well-defined: if $\theta_p(v) = \theta_p (w)$ and $w = \beta^\dagger_p$, then $v - w = (\alpha - \beta)^\dagger_p \in \ker(\theta_p)$. This implies $\alpha - \beta \in \h$, i.e.\ $[\alpha]_\h=[\beta]_\h$.

Moreover, $\Phi$ is a linear isomorphism: it is surjective since $\g^\dagger_p = \ker(d_p\pi)$ and injective because $\alpha \in \h$ implies $\alpha^\dagger_p \in \ker(\theta_p)$. Last, it is an isomorphism of $G$-representations:
\[
 g \cdot \Phi (\theta_p(v)) = g \cdot \Phi(\theta_p ( \alpha^\dagger_p )) = g \cdot [\alpha]_\h= [g \cdot \alpha]_\h= \Phi (\theta_p (g \cdot \alpha)^\dagger_p ) = \Phi (g \cdot \theta_p (\alpha^\dagger_p)) = \Phi (g \cdot \theta_p(v)).
\] 

For part (iv) we use the rank-nullity theorem, for any $p \in P$:
$$ \dim(\ker(\theta_p)) + \dim(\Ima(\theta_p)) = \dim(T_p P).$$
\begin{itemize}
 \item The upper bound of $\dim(V)$ follows from the surjectivity of $\theta_p$, while the lower bound follows from the condition $\dim(\ker(\theta_p)) \leq \dim(\ker(d_p \pi)) = \dim (P) - \dim(M)$.
 \item The second claim follows from the definition of $W$:
 \[
\dim(W)=\dim(\ker(d_p\pi)) - \dim(\ker(\theta_p)) = \dim(\ker(d_p\pi)) - \dim(P) + \dim(\Ima(\theta_p)) = \dim(M) + \dim(V).  
 \]
\item The third claim follows by combining the two previous results with $\dim(P) = \dim(M) + \dim(G)$.
\end{itemize}

Last, the isomorphism in part (v) is given by the fibrewise linear map
 \[
 \Phi: P[V/W] \to TM, \quad [p,[\alpha_p]_W ] \mapsto \Phi_p (\alpha_p) := d_p \pi (v_p)
 \]
for any $v_p \in T_p P$ such that $\alpha_p = \theta_p (v_p) \in \Ima(\theta_p) = V$. Let us check that $\Phi$ is well defined:
\begin{itemize}
 \item if $\theta_p (v_p) = \theta_p (v'_p)$, then $v_p - v'_p \in \ker(\theta_p) \subset \ker(d_p\pi)$, hence $d_p \pi (v_p) = d_p \pi (v'_p)$;
 \item if $[\alpha_p]_W = [\alpha'_p]_W$, then $\alpha_p - \alpha'_p \in W = \theta (\ker(d\pi))$, hence $\Phi_p (\alpha_p - \alpha'_p) = 0$;
 \item if $[p, [\alpha_p]_W] = [p \cdot g, [g^{-1} \cdot \alpha_p]_W ]$, by the $G$-equivariance of $\theta$ we obtain
 \[
g^{-1} \cdot \alpha_p = g^{-1} \cdot \theta_p (v_p) = \theta_{p \cdot g} (dR_g (v_p)),
 \]
 and by the $G$-invariance of $\pi$ we conclude that
 \[
\Phi_{p \cdot g} (g^{-1} \cdot \alpha_p) = d_{p \cdot g} \pi (dR_g (v_p)) = d_p \pi (v_p) =\Phi_p (\alpha_p).  
 \]
\end{itemize} 
 We conclude that $\Phi$ is a surjective (since $\pi$ is a submersion) and injective (by definition of $W$) vector bundle morphism, hence it is an isomorphism.
 \end{proof}

\begin{example}\label{examples_W}
 Part (iii) of Proposition \ref{properties_W} shows that the smaller $\h$ is, the bigger $W$ becomes. More precisely, using part (i) of Proposition \ref{properties_W}, we get $0 \leq \dim(W) \leq \dim(P) - \dim(M) = \dim(G)$, so that we recover the two extreme cases of Cartan bundles.
 \begin{itemize}
  \item The case $\h=\g$ corresponds to $\dim(W) = 0$, i.e.\ $W =0$ ($G$-structures, section \ref{subsection_G_structures}). Indeed, since every fundamental vector field $\alpha^\dagger$ is vertical, it belongs automatically to $\ker(\theta)=\ker(d\pi)$ and the underlying foliation coincides with the vertical one.
  \item The case $\h=0$ corresponds to $\dim(W) = \dim(G)$, i.e.\ $W \cong \g$ (Cartan geometries, section \ref{subsection_Cartan_geometries}). Indeed, by definition of Cartan connection, $\theta(\alpha^\dagger)=\alpha \neq 0$ and the underlying foliation is the trivial foliation by points.
 \end{itemize}
 
 On the other hand, higher order $G^k$-structures $(P^k,\theta^k)$ interpolate between these two examples: their $W$ is isomorphic to $\g^{k-1} = \Lie(G^{k-1})$. Indeed, the image of $P^k$ by the jet projection $\pi^k_1: \Fr^k(M) \to \Fr(M)$, defines a classical first-order $G^1$-structure $\pi: (P^1,\theta^1, \RR^n) \to M$, therefore
\[
 v \in \ker(d\pi^k) \then d\pi^k_1(v) \in \ker(d\pi) = \ker(\theta^1).
\]
Since the last component of $\theta^k_p(v) \in \g^{k-1} \times \RR^n$ coincides with $\theta^1_{\pi^k_1(p)}(d_p \pi^k_1(v))$ (Remark \ref{first_component_theta_2}), we conclude that
\[
 W = \theta^k (\ker(d\pi^k)) = \g^{k-1} \oplus 0.
\]
 As alternative proof, one can also use $\h = \g^k_{k-1}:= \ker(d\tau^k_{k-1}) \subset \g^k$ (see section \ref{subsection_higher_order_G_structures}) and Part (iii) of Proposition \ref{properties_W} to conclude that
\[
W \cong \g^k/\h= (\g^k_{k-1} \oplus \g^{k-1})/\g^k_{k-1} \cong \g^{k-1}. \qedhere
\]
\end{example}

Let us picture the spaces we introduced in the following commutative diagram of vector bundles (over $P$ in the front face and over $M$ in the back face):

\[
 \begin{tikzcd}
 % https://tikzcd.yichuanshen.de/#N4Igdg9gJgpgziAXAbVABwnAlgFyxMJZABgBoBGAXVJADcBDAGwFcYkQAdDgaxgCcAFFxwALGDnoBKEAF9S6TLnyEUZAEzU6TVuy69BwsROlyF2PASJkAzJoYs2iTj34CoXNFhPyQGc8qJyUlsaex0nABUABVkfPyVLFCCNUO1HEGjYswSVZCCqVIddF0F3Dk9vbItctQo7NPYogAJhLABbeCaAdSzfRWqiWpStIqdm1o64JoA1XviBlFqQkfCQcY48SZmAeh7TPv9E5AAWUmJ60ed9IQ2jKW2ACTn+gJQAVjOL1b1XMorH56HXIANk+hVWUWQXUogJyRFBBRW6Uh0xh+3mr2QoOGYWRyGmu0oLQ4AGMCABzJoRACysIWyA+OIakSiAPRLyOH0RuPY0TZcQ5uVO3OZV1chnE9ye7KBRFOTMuP1KHi8bM0MCg5PgRFAADM+BA2kggiAcBAkLVTfQsIx2CIIBBuL19YakGRTebECbGFgwOkoBBmAAjRhsGhiehQdiQP0gGgSG3RghsfYuo2IU4epDWGg+2NOAPB0NxkARqNOGNhq2JivJ50G9OgrOIADs8ettqc9sd9ddrfjnoAHOGYJH2DgAO4QMsIVMNpAfZuZhOd0sOp1zvuWs3Z9s1tc9zfpnPN4el0fl01TmclnlODx8AD6wAJXRkxJJWD4JOJoklJcYeggxgRgokFdg+CwckRBwXt02XT023PMcnEnacLwQcF0glCQAKAkCwNlJxIOg2CjwXAckCbMtx2vDDb1FHD6Dw4DQPA4ioJguC3UoxBFxXO1124r1eOXDtBMPHw0yQABOXjyHdASuyE8ivSbHcvX48TlMkvV5y9JCNPIMT927DcpP0xTPTk5DLzQm9VP4z0FJHFCr3QyNZwsvt1Ock0aNQujPOEk0jOoi9aI8qAvL0vsEONRcAvchzvPTQznMtJL7Po1Tt2cpCsqC6LhJPIzM0KqKYpAaTEDPIyTwqlLYvTYz5IaiLAsqkKnONTKOuSnLUuNKzjX8-rsuC1TyFCjK91XMyQpGzS5ok8zmtk+Smzzf1AxDKsksrEslPAOspp6r1yv6w7c19HaiyrY7Dqm3zjQK8aiqqmryDy412rcibipkSgZCAA
                                                                                     &                                                                                            &                                                          &  & {\color{teal}\ker(\theta)/G} \arrow[r, color=teal, hook] \arrow[d, Rightarrow, no head] & {\color{teal}\ker(d\pi)/G} \arrow[r, color=teal, two heads] \arrow[d, hook]        & {\color{teal}P[W]} \arrow[d, hook]      \\
{\color{teal}\ker(\theta)} \arrow[d, Rightarrow, no head] \arrow[r, color=teal, hook] \arrow[rrrru, color=teal, two heads] & {\color{teal}\ker(d\pi)} \arrow[d, hook] \arrow[r, "\theta"', color=teal, two heads] \arrow[rrrru, color=teal, two heads]        & {\color{teal}P \times W} \arrow[d, hook] \arrow[rrrru, color=teal, two heads]      &  & {\color{purple}\ker(\theta)/G} \arrow[r, color=purple, hook] \arrow[d, two heads]           & {\color{purple}TP/G} \arrow[r, color=purple, two heads] \arrow[d, Rightarrow, no head] & {\color{purple}P[V]} \arrow[d, two heads] \\
{\color{purple}\ker(\theta)} \arrow[d, hook] \arrow[r, color=purple, hook] \arrow[rrrru, color=purple, two heads]                & {\color{purple}TP} \arrow[d, Rightarrow, no head] \arrow[r, "\theta"', color=purple, two heads] \arrow[rrrru, color=purple, two heads] & {\color{purple}P \times V} \arrow[d, two heads] \arrow[rrrru, color=purple, two heads] &  & {\color{blue}\ker(d\pi)/G} \arrow[r, color=blue, hook]                                  & {\color{blue}TP/G} \arrow[r, color=blue, two heads]                                & {\color{blue}P[V/W] \cong TM}           \\
{\color{blue}\ker(d\pi)} \arrow[r, color=blue, hook] \arrow[rrrru, color=blue, two heads]                                  & {\color{blue}TP} \arrow[r, "\pr_{V/W} \circ \theta"', color=blue, two heads] \arrow[rrrru, color=blue, two heads]                & {\color{blue}P \times V/W} \arrow[rrrru, color=blue, two heads]                    &  &                                                               &                                                          &                            
\end{tikzcd}
\]

The diagram simplifies in the usual examples (which correspond to the two extreme cases of $W$):
\begin{itemize}
 \item for an abstract $G$-structure ($W=0$) the first row become trivial, while the second and third rows coincide. Here part (iv) of Proposition \ref{properties_W} recovers the standard isomorphism $P[\RR^n] \cong TM$ given by $[p,\theta_p(v)] \mapsto d_p\pi(v)$, which in the ``concrete'' case $P \subset \Fr(M)$ boils down to $[p,\alpha] \mapsto p(\alpha)$.
\item for a Cartan geometry ($W=\g$) the first and second rows become isomorphisms, while part (iv) of Proposition \ref{properties_W} recovers the standard result $P[V/\g] \cong TM$ (see e.g.\ Theorem 3.15 of \cite[Chapter 5]{SHARPE}.
\end{itemize}

\begin{remark}[Lie algebroid brackets on $P\lbrack V \rbrack$ and $P \lbrack W \rbrack$]
 Recall that, for a Cartan geometry $(P,\theta)$, the associated vector bundle $P[V]$ is the {\bf adjoint tractor bundle}. After fixing a Lie algebra structure on $V=\z$ (see Remark \ref{remark_bracket_Cartan_geometry}), one obtains two different structures of Lie algebroids on $P[\z]$ \cite[Proposition 1.5.7]{CAPSLOVAK}, whose interplay is fundamental for the class of {\it regular} parabolic Cartan geometries (see section 3.1.7 of \cite{CAPSLOVAK} for further information):
\begin{itemize}
 \item since $\z$ is automatically compatible with the adjoint representation of $G$, it induces on $P[\z]$ a structure of {\it Lie algebra bundle}, whose Lie bracket is defined pointwise and therefore does not depend on the Cartan connection $\theta$, but only on the choice of the model geometry $\z$;
 \item since $\theta$ induces a vector bundle isomorphism $P[\z] \to TP/G$, it also induces on $P[\z]$ the structure of {\it Atiyah algebroid structure} from $TP/G$.%, which therefore depends on the choice of the Cartan connection.
\end{itemize}
 Notice also that the isomorphism $P[\z] \cong TP/G$ restricts to an isomorphism $P[\g] \cong T^\pi P/G$; however, in this case $T^\pi P/G$ is the isotropy Lie algebra bundle of $TP/G$, and its pointwise Lie bracket coincides with that of $P[\g]$.

 Similarly, for an arbitrary Cartan bundle, one can consider the associated vector bundle $P[V]$, which becomes a Lie algebra bundle after the choice of a Lie bracket on $V$ which is compatible with the $G$-representation (as it was $V=\z$ for Cartan geometries). However, despite $\theta$ induces an isomorphism of vector bundles 
 \[
  P[V] \to (TP/\h^\dagger)/G,
 \]
there is no natural Lie algebroid bracket on the right-hand side to transport to $P[V]$: the reason is that $\Gamma(\h^\dagger)$ is a Lie ideal in $\Gamma(T^\pi P)$ but not in $\mathfrak{X}(P)$. As a consequence, the theory of adjoint tractor bundles for Cartan geometries cannot be generalised to arbitrary Cartan bundles.

On the other hand, the isomorphism above restricts to
\[
  P[W] \cong (T^\pi P/\h^\dagger)/G,
 \]
and in this case the pointwise Lie bracket induced by $W \cong \g/\h$ coincides with the one of the natural Lie algebroid structure (with trivial anchor) on the right-hand side. The chain of isomorphisms between the quotients bundles
\[
 P[V/W] \cong \frac{P[V]}{P [W]} \cong \frac{(TP/\h^\dagger)/G}{(T^\pi P/\h^\dagger)/G} \cong (TP/T^\pi P)/G \cong TM
\]
recovers therefore part (iv) of Proposition \ref{properties_W}.
\end{remark}

\subsubsection{First-order Cartan bundles}

%Another application of the space $W$ involves the generalisation to the Cartan bundle setting of the known procedure to ``reduce'' Cartan geometries to their underlying $G$-structure.

\begin{definition}\label{definition_first_order_cartan_bundle}
Given a Cartan $G$-bundle $(P,\theta,V)$, denote by $K \subset G$ the kernel of the representation $G \to \GL(V/W)$ (part (ii) of Proposition \ref{properties_W}). Then $P$ is called of \textbf{first order} if $K = 0$.
\end{definition}

\begin{example}
It is clear that
\begin{itemize}
\item a $G$-structure ($V=\RR^n, W=0$) is automatically of first order, since $G \to \GL(V) = \GL(n,\RR)$ is an inclusion; \footnote{It is possible to consider a more general notion of $G$-structure, where $G$ is not necessarily a group of matrices and therefore $G \to \GL(n,\RR)$ is not necessarily injective. This can be useful to handle objects such as spin structures, but we will not pursue such generality in this paper}
\item for a Cartan geometry ($W=\g$) Definition \ref{definition_first_order_cartan_bundle} boils down to the standard definition of first order (see e.g.\ \cite[Chapter 5, Definition 3.20]{SHARPE}).
\end{itemize}
In the next section we will discuss examples of Cartan bundles which are not of first order.
\end{example}

In order to understand what lies behind this definition, notice that, since $K\subset G$ is a normal Lie subgroup, the $G$-action $m: P \times G \to P$ descends to an action of the Lie group $G_1 := G/K$ on the manifold $P_1:= P/K$:
\[
 \overline{m}: P_1 \times G_1 \to P_1, \quad ([p],[g]) \mapsto [pg].
\]
Moreover, since $\pi: P \times M$ is $G$-invariant, the projection $\pi_1: P_1 \to M, [p] \mapsto \pi(p)$ defines a principal $G_1$-bundle. Last, the $G$-representation on $V_1:=V/W$, denoted by $g \cdot [\alpha]$, descends to a $G_1$-representation on $V_1$ since $K$ is its kernel:
\[
 [g]_{G_1} \cdot [\alpha]_{V_1} := g \cdot [\alpha]_{V_1}.
\]

\begin{proposition}\label{underlying_Cartan_bundle}
 Any Cartan bundle $(P,\theta)$ has two underlying structures
 \begin{itemize}
  \item a Cartan $G_1$-bundle $(P_1,\theta_1, V_1)$;
 \item an abstract $G$-structure $(P,\overline{\theta})$.
 \end{itemize}
\end{proposition}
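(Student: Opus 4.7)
The plan is to define a single auxiliary form $\overline{\theta} := \pr_{V/W} \circ \theta \in \Omega^1(P, V_1)$ (where $V_1 = V/W$) from which both claims will follow. The key preliminary observation is that $\ker(\overline{\theta}) = \ker(d\pi)$: indeed, if $v \in \ker(d_p\pi)$ then $\theta_p(v) \in W$ by definition of $W$, hence $\overline{\theta}_p(v) = 0$; conversely, if $\overline{\theta}_p(v) = 0$ then $\theta_p(v) = \theta_p(w)$ for some $w \in \ker(d_p\pi)$, so $v - w \in \ker(\theta_p) \subset \ker(d_p\pi)$ by the Cartan bundle axioms, forcing $v \in \ker(d_p\pi)$.

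For the abstract $G$-structure on $(P, \overline{\theta})$, I would check the conditions from Section \ref{subsection_G_structures}: $\overline{\theta}$ is pointwise surjective by Axiom \ref{axiom_full} together with surjectivity of $\pr_{V/W}$; it is $G$-equivariant because $W \subset V$ is a subrepresentation (Proposition \ref{properties_W}(ii)), so $\pr_{V/W}$ is $G$-equivariant, and because $\theta$ is; its kernel is the vertical bundle by the computation above; and $\dim(V_1) = \dim(M)$ by Proposition \ref{properties_W}(iv). For the Cartan $G_1$-bundle, I would define $\theta_1$ by descent through $q: P \to P_1$. Since $K$ acts trivially on $V_1$ by construction, the $G$-equivariance of $\overline{\theta}$ yields its $K$-invariance; the $K$-horizontality $\iota_{\alpha^\dagger}\overline{\theta} = 0$ for $\alpha \in \k$ then follows from the stronger identity $\overline{\theta}(\alpha^\dagger_p) = \pr_{V/W}(l(\alpha)) = 0$ valid for every $\alpha \in \g$, where we used $\Ima(l) = W$ (Proposition \ref{properties_W}(iii)). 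Hence $\overline{\theta}$ descends to a unique $\theta_1 \in \Omega^1(P_1, V_1)$ with $q^*\theta_1 = \overline{\theta}$.

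It then remains to verify the conditions of Corollary \ref{cor_Cartan_bundles_via_forms} for $(P_1, \theta_1, V_1)$: pointwise surjectivity and $G_1$-equivariance descend from $\overline{\theta}$; the form $\theta_1$ vanishes on all $G_1$-fundamental vector fields, so $l_1 = 0$; and $\ker(\theta_1) = dq(\ker(\overline{\theta})) = dq(\ker(d\pi)) = \ker(d\pi_1)$ is the vertical bundle of $P_1 \to M$, which is automatically involutive. The main obstacle is not technical but conceptual: one must identify the correct candidate form $\overline{\theta}$ and recognise the role of the identity $W = \Ima(l)$ from Proposition \ref{properties_W}(iii); once this is spotted, all descent conditions and Cartan bundle axioms follow by bookkeeping. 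An incidental pay-off is that the resulting Cartan $G_1$-bundle $(P_1, \theta_1, V_1)$ is itself simple (i.e.\ a $G_1$-structure), consistent with the interpretation of the tower in the Introduction as a sequence of successive quotients reducing to a first-order structure.
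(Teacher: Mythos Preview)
Your argument is correct and takes the same route as the paper: both form $\overline{\theta} = \pr_{V/W}\circ\theta$, verify $\ker(\overline{\theta}) = \ker(d\pi)$ (which simultaneously yields the abstract $G$-structure and the involutivity needed downstairs), and then pass to the quotient $P_1 = P/K$ --- you phrase this last step as descent of a $K$-basic form while the paper writes $(\theta_1)_{[p]}([v]) := [\theta_p(v)]_{V_1}$ and checks well-definedness by hand, but the content is identical. Your incidental pay-off that $(P_1,\theta_1,V_1)$ is itself \emph{simple} (since $\ker(\theta_1)=\ker(d\pi_1)$ and $l_1=0$) is a correct sharpening of what the paper explicitly states.
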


The situation is represented in the following diagram:
\[
 \begin{tikzcd}
TP \arrow[rr, "\theta", two heads] \arrow[dd, "\pr", two heads] \arrow[rrdd, "\overline{\theta}", two heads] &  & V \arrow[dd, "\pr", two heads] \\
                                                                                                        &  &                                \\
TP_1 \arrow[rr, "\theta_1", two heads]                                                                  &  & V_1:=V/W                           
\end{tikzcd}
\]

As one sees from the diagram, in the first order case ($K=0$), one has $P_1 = P$ and $\overline{\theta}$ coincides with $\theta_1$. In particular:
\begin{itemize}
 \item for a first-order Cartan geometry one recovers its underlying $G$-structure $(P,\overline{\theta})$, without reducing the principal bundle.
 \item For a (first-order) $G$-structure the reduction procedure becomes trivial since $W = 0$, therefore also $\overline{\theta}$ coincides with $\theta$. However, this will not the case for ``intermediate'' Cartan bundles (see the discussion for higher-order $G$-structures in Examples \ref{example_second_order_Cartan_bundles} and \ref{example_higher_order_Cartan_bundles}).
\end{itemize}

\begin{proof}
The form $\theta \in \Omega^1 (P,V)$ descends to a form $\theta_1 \in \Omega^1 (P_1,V_1)$, by setting
 \[
  (\theta_1)_{[p]} ([v]) := [ \theta_p (v) ]_{V_1}.
 \]
 Such form is well-defined since $\theta$ is $G$-equivariant and the elements of $K$ act on $V_1$ as the identity. Indeed, if $[p] = [p']$ and $[v_p] = [v'_{p'}]$, then
 \[
 \theta_{p'}(v'_{p'}) = \theta_{pg} (d_p R_g (v_p)) = g^{-1} \cdot \theta_p (v_p),
 \]
 but $[g^{-1} \cdot \alpha]_{V_1} = g^{-1} \cdot [\alpha]_{V_1} = [\alpha]_{V_1}$ since $g \in K$.
 
 Let us prove that $(P_1, \theta_1)$ is a Cartan bundle on $M$. We see immediately that $\theta_1$ is surjective since $\theta$ is so, and is $G_1$-equivariant since $\theta$ is $G$-equivariant. 
 
 Moreover, $\ker(\theta_1)$ contains the equivalence classes $[v] \in TP_1=TP/K$ such that $\theta(v) \in W$ for any representative $v \in TP$. This means that $\theta (v) = \theta (v')$ for some $v' \in \ker(d\pi) \subset TP$; but $v-v' \in \ker(\theta) \subset \ker(d\pi)$ as well, hence also the vector $v$ must be in the vertical bundle. We conclude that any representative of $[v] \in \ker(\theta_1)$ is in $\ker(d\pi)$, therefore $\ker(\theta_1)$ sits inside $\ker(d\pi_1)$.
 
 Last, for every $\alpha \in \g_1 = \g/\k$ and $[p],[p'] \in P_1$ we have
 \[
  (\theta_1)_{[p]} ([\alpha]^\dagger_{[p]}) = (\theta_1)_{[p]} ([\alpha^\dagger_p]) = [\theta_p (\alpha^\dagger_p)]_{V_1} = [\theta_{p'}(\alpha^\dagger_{p'})]_{V_1} = (\theta_1)_{[p']} ([\alpha]^\dagger_{[p']}),
 \]
where the second step follows from differentiating the equality $\pr_{P_1} \circ m(p,\cdot) = \overline{m}([p],\cdot) \circ \pr_{G_1}$.

 For the second claim we notice that $V_1=V/W$ has dimension equal to $\dim(M)$ by part (iii) of Proposition \ref{properties_W}, so we consider the composition of $\theta$ with the projection $V \to V_1$, i.e.\ the form $\overline{\theta} \in \Omega^1 (P,V_1)$ defined by
 $$ \overline{\theta}_p (v):= [\theta_p(v)]_{V_1}.$$
It is immediate to check that $\overline{\theta}$ is pointwise surjective and $G$-equivariant, since $\theta$ is so. Moreover, $\ker(\overline{\theta}) \subset \ker(d\pi)$ since
\[
 v \in \ker(\overline{\theta}) \then \theta (v) \in W \then \theta(v)=\theta(v') \text{ with } v' \in \ker(d\pi) \then v - v' \in \ker(\theta_p) \subset \ker(d\pi) \then v \in \ker(d\pi).
\]
On the other hand, if $v \in \ker(d\pi)$, $\theta(v) \in W$ by definition, hence $\overline{\theta}(v)=0$; we conclude that $\ker(\overline{\theta}) =\ker(d\pi)$, so $(P,\overline{\theta})$ is an abstract $G$-structure on $M$.
\end{proof}

\subsubsection{Higher-order Cartan bundles}\label{section_higher_order_Cartan_bundles}

 Given a Cartan $G$-bundle $(P,\theta,V)$ not of first order (i.e.\ $K \neq 0$), one can apply Proposition \ref{underlying_Cartan_bundle} to the reduced Cartan $G_1$-bundle $(P_1,\theta_1,V_1)$, using the $G_1$-representation $V_1$, the subrepresentation $W_1 \subset V_1$ and the kernel $K_2$ of the representation $G_1 \to \GL(V_1/W_1)$.

 \begin{definition}
A Cartan bundle $(P,\theta,V)$ is of {\bf second order} if the reduced Cartan bundle $(P_1,\theta_1,V_1)$ is of first order; equivalently, $K_2 = 0$ or $P_2 = P_1$.
 \end{definition}

 \begin{example}[second-order $G$-structures]\label{example_second_order_Cartan_bundles}
Let $P^2 \subset \Fr^2 (M)$ be a classical second-order $G^2$-structure. As discussed in section \ref{subsection_higher_order_G_structures}, we can interpret it as a Cartan bundle $\pi^2: (P^2,\theta^2,V^2) \to M$, with $V^2 = \g^1 \times \RR^n$ and $W^2 \cong \g^1$ (Example \ref{examples_W}). We claim that $P^2$ is second-order also as Cartan bundle, and its reduction (the Cartan $(G^2)_1=G^2/K$-bundle $(P^2)_1 = P^2/K$) coincides with the underlying first-order structure (the $G^1$-structure $P^1$).

\begin{comment}
The first step is to prove that $W^2$ is equal to $\g^1 = \Lie(G^1)$, where $G^1$ is the image of $G^2$ by the jet projection $\tau^2_1: \GL^2 (\RR^n) \to \GL^1 (\RR^n)$. Indeed, denoting by $P^1$ the image of $P^2$ by the jet projection $\pi^2_1: \Fr^2(M) \to \Fr(M)$, one has that $\pi: (P^1,\theta^1, \RR^n) \to M$ is a classical first-order $G^1$-structure, therefore
\[
 v \in \ker(d\pi^2) \then d\pi^2_1(v) \in \ker(d\pi) = \ker(\theta^1).
\]
Since the first component of $\theta^2_p(v)$ coincides with $\theta^1_{\pi^2_1(p)}(d_p \pi^2_1(v))$ (Remark \ref{first_component_theta_2}), we conclude that
\[
 W^2 = \theta^2 (\ker(d\pi^2)) = 0 \oplus \g^1.
\]
%alternative proof: compute the $\k$ introduced below, which is precisely $\g^2_1 = \ker(d\tau^2_1)$, and since $\g^2 \cong \g^2_1 \oplus \g^1$, we have $W \cong \g^2/\k \cong \g^1

Notice that this agrees with part 3.\ of Proposition \ref{properties_W}, since $V/W \cong \RR^n$ has dimension equal to the base manifold. 
 \end{comment}

Indeed, the first component of the $G^2$-representation on $V^2 = \g^1 \times \RR^n$ coincides with the $G^1$-representation on $\RR^n$; then the quotient representation $G^2 \to \GL(V^2/W^2) \cong \GL(\RR^n)$ boils down to $j^2_0 f \cdot v = d_0f(v)$ and its kernel $K$ is isomorphic to $G^2/G^1$. It follows immediately that $G^2/K \cong G^1$, as we wanted.
%to be precise, one should see $G^2$ as some semidirect product of $G^1$ and some kind of prolongation of $G^1$, and $K$ would be that ``prolongation`` -- see the mess in chapter 2 of my thesis

On the other hand, the map
\[
\Phi: P^2/K \to \pi^2_1(P^2), \quad [v] \mapsto \pi^2_1(v) 
\]
is well defined since $\pi^2_1$ is invariant under the action of $K$ and is surjective since $\pi^2_1$ is surjective. It is also injective because any two elements $v, w \in P^2$ s.t.\ $\pi^2_1(v)=\pi^2_1(w)$ are in the same fibre of $P^2 \to M$, therefore are related by an element $g \in G^2$; but since their image in $P^1$ coincide, the component of $g$ in $G^1$ (which would relate $\pi^2_1 (v)$ and $\pi^2_1 (w)$) is killed, so it must be $g \in K$, i.e.\ $[v] = [w]$.

Since $\Phi$ preserves the $G^1$-action, it is an isomorphism of principal bundles, undere which the form $(\theta^2)_1$ is precisely the tautological form $\theta^1$ of $P^1$; therefore we have an isomorphism of Cartan bundles (actually, of $G^1$-structures).
%The form $\underline{\theta^2}$ is precisely $(\pi^2_1)^*\theta^1$.
\end{example}

More generally, if we set $P_0=P$, $G_0=G$, $V_0=V$, $W_0=W$ and $K_0=K$, we can apply Proposition \ref{underlying_Cartan_bundle} recursively and obtain a sequence of surjective submersive morphisms between Cartan bundles over $M$:
%these are not abstract prolongations of Cartan bundles, since the kernels of the maps do not contain the kernel of the forms
\[
\begin{tikzcd}
% https://tikzcd.yichuanshen.de/#N4Igdg9gJgpgziAXAbVABwnAlgFyxMJZABgBpiBdUkANwEMAbAVxiRAAUB9YgXnZAC+pdJlz5CKAIzkqtRizZdJfbgHoA0p0mDhIDNjwEiAJhnV6zVog6djKyRts6RB8UQAsZuZcWdgWAGpJARUsR38ggWc9UUMJZABmLwsFawAdNIYoCBwEIRcxIxRTSVkUqxAAWUFZGCgAc3giUAAzACcIAFskMhAcCB7qBggINCJJAA4yFsY4GFkGOgAjGAZ2WLdrNqx6gAscEHN5CoAJbh4T6Pauwb6BxGkQXZg6KDYcAHcIZ9e83Wvug9qP0kKYni83tZPt8IX9Wh1AWCQYhPODXu8vj8oHCQACkKjkUk0ZC+pjYVcEaDgfcwcNRuMpqQZgw5gtlqt1q4iiBtnsDkcfNYznYzg5NMYKTdEETCUMRmMpIzmayhuy1htubz9odvKkQGcIsELpwwppDVF8rjKUC7khHnSFchJtNZvNVSt1VyJDydtqBXrRcbiI5tJa8YhesiAKzULEYmG-SWAx7R2MQ+NYnHhpH3GPEjPksPWmW5tPoqFkxMCCgCIA
P_0=P \arrow["G_0=G"', loop, distance=2em, in=125, out=55] \arrow[r, two heads] \arrow[rrd, two heads] & P_1=P_0/K_0 \arrow[r, two heads] \arrow["G_1=G_0/K_0"', loop, distance=2em, in=125, out=55] \arrow[rd, two heads] & P_2=P_1/K_1 \arrow[r, two heads] \arrow["G_2=G_1/K_1"', loop, distance=2em, in=125, out=55] \arrow[d, two heads] & \ldots \arrow[r, two heads] & P_{i+1}=P_i/K_i \arrow["G_{i+1}=G_i/K_i"', loop, distance=2em, in=125, out=55] \arrow[lld, two heads] \\
                                                                                                       &                                                                                                                   & M                                                                                                                &                             &                                                                                                              
\end{tikzcd}
\]
 
 \begin{definition}
A Cartan bundle $(P,\theta,V)$ is of {\bf order $i+1$} if $(P_i, \theta_i, V_i)$ is of order $1$; equivalently,
% $K_i: = \ker (G_i \to \GL(V_i/W_i) ) = 0$ or $P_i = P_{i+1}$.
if $(P_1, \theta_1, V_1)$ is of order $i$.
 \end{definition}
 
 \begin{example}[Higher order G-structures]\label{example_higher_order_Cartan_bundles}
With the same arguments of Example \ref{example_second_order_Cartan_bundles}, one sees that a classical $G^k$-structure $P^k \subset \Fr^k(M)$ of order $k$, together with its tautological form $\theta^k \in \Omega^1 (P^k, \g^{k-1} \times \RR^n)$ (see section \ref{subsection_higher_order_G_structures}), is a Cartan bundle of the same order. Since here $W^k$ is $\g^{k-1} = \Lie (G^{k-1})$ (Example \ref{examples_W}), we have $V^k/W^k \cong \RR^n$ and $K \cong G^k/G^{k-1}$, with $G^{k-1}$ image of $G^k$ via the jet projection $\tau^k_{k-1}: \GL^k (\RR^n) \to \GL^{k-1}(\RR^n)$.
%alternatively, $\k$ introduced below would be $\g^k_{k-1} = \ker(d\tau^k_{k-1})$
%Pay attention: indices above represent the order of the structure, indices below the order of the reduction!
 \end{example}

%{\color{red} Since $e$ acts on $M$ principally, the discussion needs to be done as follows. This makes the notion of order a bit weird in my opinion (even though everything should stay true). Moreover, I expect that one really needs to specify zero dimensional and non-trivial, because of possible lack of connectivity of the group.}
  
\begin{remark}
 Notice that, at each reduction step $i$, there are only two possibilities:
\begin{itemize}
\item $K_i$ is zero dimensional, i.e.\ $G_{i+1} = G_i$;
\item $K_i$ has positive dimension, i.e.\ the dimension of $G_{i+1}$ is strictly lower than the dimension of $G_i$.
\end{itemize} 
Consequently, since the dimension of $G$ is finite, after a finite number of steps (say at some $i_0 \in \NN$), only one of the two following option holds:
\begin{itemize}
\item the reduction sequence stabilises to a Cartan bundle $(P_{i_0},\theta_{i_0},V_{i_0})$ of order 1, with $G_{i_0}$ of positive dimension;
\item the reduction sequence stabilises to the ``trivial'' Cartan bundle $P_{i_0} = M \to M$, where $G_{i_0}$ is zero dimensional. \qedhere
\end{itemize}
\end{remark}
 
%   
% \begin{remark}blabla
% \begin{itemize}
%  \item the sequence stabilises 
% (i.e.\ $P$ is of finite order);\footnote{The notion of finite order Cartan bundle should not be confused with the notion of finite type $G$-structures.}
%  \item the new principal bundle has dimension strictly lower than the previous one.
% \end{itemize}
% Since the dimension of $P$ is finite, if the sequence does not stabilises after a finite number of steps, we will have $P_{j+1} = M$ for some $j$, i.e.\ $K_j=G_j$ (we say that $P$ is of \textbf{infinite order}).
% \end{remark}

\subsection{Integrability of Cartan bundles}\label{section_integrability_Cartan_bundles}

In this section we will work with a Pfaffian group $(G, \omega, V)$, and we will always denote it by $(G, \h, V)$, for $\h=\ker(\omega_e)$, as explained after Proposition \ref{prop_defining_h}. Recall that we need to allow Pfaffian groups to be not necessarily full; on the other hand, by Axiom \ref{axiom_full}, all Cartan bundles $(P,\theta)$ will be full, i.e.\ $\theta$ is pointwise surjective.

\begin{definition}\label{def_lift}
Let $(G, \h, V)$ be a Pfaffian group. A {\bf coefficient extension} $Z$ for $(G, \h, V)$ consists of a vector space $Z$ such that there exists a short exact sequence of representations of $G$:
\[
 \begin{tikzcd}
0 \arrow[r] & \h \arrow[r, "i", hook] & Z \arrow[r, "p", two heads] & V \arrow[r] & 0.
\end{tikzcd} 
\]
Let now $(P, \theta)$ be a Cartan $(G, \h, V)$-bundle. A {\bf lift} $(Z,\eta)$ of $\theta$ consists of
\begin{enumerate}
 \item a coefficient extension $Z$ for $(G, \h, V)$ whose dimension is equal to the dimension of $P$;
\item a $Z$-valued 1-form $\eta\in \Omega^1(P, Z)$ on $P$ such that 
\begin{itemize}
\item $\eta$ is $G$-equivariant;
\item $\eta$ is compatible with the infinitesimal action, i.e.\
\[
\eta(\alpha^\dagger) = i(\alpha),\quad \alpha\in \h;
\]
\item $\eta$ is a lift of $\theta$ to $Z$ in the sense that
\[
p\circ \eta = \theta.
\]
\end{itemize}
\end{enumerate}
We will also call a vector space $Z$ as in (i) a {\bf coefficients extension adapted to $(P, \theta)$}, and a form $\eta$ as in (ii) an {\bf $Z$-lift} of $\theta$.
\end{definition}

%\begin{definition}
%Let $(P, \theta)$ be a Cartan $(G, \omega, V)$-bundle. A {\bf lift} $(Z,\eta)$ of $\theta$ consists of
%\begin{enumerate}
% \item a vector space $Z$ whose dimension is equal to the dimension of $P$, such that there exists a short exact sequence of representations of $G$:
%\[
% \begin{tikzcd}
%0 \arrow[r] & \h \arrow[r, "i", hook] & Z \arrow[r, "p", two heads] & V \arrow[r] & 0
%\end{tikzcd} 
%\]
%\item an $Z$-valued 1-form $\eta\in \Omega^1(P, Z)$ on $P$ such that 
%\begin{itemize}
%\item $\eta$ is $G$-equivariant;
%\item $\eta$ is compatible with the infinitesimal action, i.e.\
%\[
%\eta(\alpha^\dagger) = i(\alpha),\quad \alpha\in \h;
%\]
%\item $\eta$ is a lift of $\theta$ to $Z$ in the sense that
%\[
%p\circ \eta = \theta.
%\]
%\end{itemize}
%\end{enumerate}
%
%We will also call a vector space $Z$ as in (i) a {\bf coefficients extension} for $\theta$, and a form $\eta$ as in (ii) an {\bf $Z$-lift} of $\theta$.
%\end{definition}

\begin{remark}[lifts and Cartan connections]\label{remark_relation_Cartan_connection}
As we already stressed repeatedly, when $\h = 0$, we are dealing with a Cartan connection $\theta$ (Definition~\ref{def_Cartan_geometry}). Then the notions of coefficient extension and of lift are trival ($Z = V$ and $\eta = \theta)$. 

%The notion of $Z$-lift is a generalisation of the notion of Cartan connection compatible with $\theta$. 

On the other hand, if $\h = \g$ (and $\theta$ is the tautological form of a $G$-structure), then a lift $\eta$ is a Cartan connection with the additional compatibility condition $p \circ \eta = \theta$.

In general, a lift $(Z,\eta)$ is not strictly speaking a Cartan connection on $P$, since $Z$ may not contain the adjoint representation on $\g$. Nevertheless, $\eta$ defines an {\bf absolute parallelism}, i.e.\ it is a pointwise isomorphism, since $\dim(Z) = \dim(P)$ and $\ker(\eta)=0$. To check this last claim, assume $\eta(v) = 0$, so that $v \in \ker(\theta) \subset \ker(d\pi)$, and therefore $v = \alpha^\dagger$ for some $\alpha \in \h$; but $0 = \eta (v) = i(\alpha)$, which means $v = 0$.
\end{remark}

\begin{remark}
Since we assume $(P, \theta)$ to be full (Axiom \ref{axiom_full}), we have a short exact sequence
\[
 \begin{tikzcd}
0 \arrow[r] & P \times \h \arrow[r, hook] & TP \arrow[r, "\theta", two heads] & P \times V \arrow[r] & 0.
\end{tikzcd} 
\]
If $(Z,\eta)$ is a lift of $\theta$, we also have the short exact sequence of representations
\[
 \begin{tikzcd}
0 \arrow[r] & \h \arrow[r, "i", hook] & Z \arrow[r, "p", two heads] & V \arrow[r] & 0
\end{tikzcd} 
\]
The $Z$-lift $\eta\in \Omega^1(P,Z)$ induces then an isomorphism $TP \to P \times Z$ (see Remark \ref{remark_relation_Cartan_connection}) and therefore the commutative diagram
\[
\begin{tikzcd}
            &                                                        & P \times Z \arrow[rd, "p", two heads]          &                      &   \\
0 \arrow[r] & P \times \h \arrow[r, "i", hook] \arrow[ru, "i", hook] & TP \arrow[r, "p", two heads] \arrow[u, "\eta", "\cong"'] & P \times V \arrow[r] & 0
\end{tikzcd} \qedhere
\]
\end{remark}

In order to discover a suitable definition of ``integrability'' for Cartan bundles, the notion of coefficient extension need to be enriched with a suitable bracket (see Definition \ref{def_infinitesimal_model} below). One could naively think to impose a {\it Lie} bracket -- and in some examples this would be correct -- but in other important cases the natural candidate for a bracket is not Lie, forcing us to weaken such hypothesis.

More precisely, recall that an {\bf almost Lie bracket} on a vector space $V$ is a bilinear and skew-symmetric map $[ \cdot, \cdot ]: V \times V \to V$, which does not necessarily satisfy Jacobi identity. A pair $(V, [ \cdot,\cdot ])$ is called an {\bf almost Lie algebra}. We stress that, from now on, {\it we will use the standard notation with gothic letters for both Lie algebras and almost Lie algebras}. The main instance of almost Lie algebras in our story is discussed below.

\begin{remark}[Almost Lie algebras and semidirect products]\label{rk_semidirect_product}
%\textcolor{red}{To be precise: in general one has actions of Lie groups $G$ on manifolds $M$, described by Lie group morphisms $G \to \Diff(M)$. Infinitesimally, one has actions of Lie algebras $\g$ on manifolds $M$, described by Lie algebra morphisms $\g \to \mathfrak{X}(M)$. If $M$ is itself a Lie group $K$, the action would be by (group) automorphisms if it restricts to a Lie group morphism $G \to \Aut(K) \subset \Diff(K)$. Now, one doesn't need to differentiate this action (obtaining something like $\g \to \mathfrak{X}(K)$), but first needs to pass to the $G$-action on $\k$, which will still be by (linear) automorphisms of $\k$, i.e. $G \to \Aut(\k) \subset \GL(\k)$. Then, infinitesimally, this corresponds to the Lie algebra morphism  restricting to $\g \to {\rm Der}(\k) \subset \gl(\k)$.}
Recall that, if a Lie algebra $\g$ acts linearly on another Lie algebra $\k$ by derivations, i.e.\ the representation $\g \to \gl(\k)$ restricts to $\g \to {\rm Der}(\k)$, then we can form the semidirect product $\g \ltimes \k$, i.e.\ the Cartesian product $\g \times \k$ together with the bracket
\[
 [(\alpha, v), (\beta, w)]_{\g \ltimes \k} := \left( [\alpha,\beta]_{\g}, [v,w]_{\k} + \alpha (w) - \beta (v) \right).
\]
However, if the action of $\g$ on $\k$ is not by derivations, the formula above still makes sense and defines a bilinear and skew-symmetric bracket on $\g \times \k$. Actually, this remain true even if $\g$, $\k$ or both of them are almost Lie algebras.

In the following we will still denote the product $\g \times \k$ with such an almost Lie bracket structure by $\g \ltimes \k$ and call it {\bf semidirect product}.
\end{remark}

\begin{definition}\label{def_infinitesimal_model}
%Let $(G, \omega, V)$ be a Pfaffian group and $Z$ be a coefficient extension. An {\bf Cartan-type extension} for $(G, \omega, V)$ supported by $Z$ is an almost Lie algebra structure on $Z$, denoted by $\z$, such that the inclusion $i:\h\hookrightarrow \z$ of representations of $G$ is also an almost Lie algebra map.\footnote{For $\h=\g$ -- in the context of Cartan geometries -- a Cartan-type extension when $\z$ satisfies Jacobi recovers the notion of {\bf model geometry}, e.g.~\cite[Chapter 5, Definition 1.1]{SHARPE}.}. If $(P, \theta)$ is a Cartan $(G, \omega, V)$-bundle and $Z$ is a coefficient extension for $\theta$, then Cartan-type extension for $(G, \omega, V)$ supported by $Z$ is simply called {\bf Cartan-type extension for $(P,\theta)$}.

Let $(G, \h, V)$ be a Pfaffian group. A coefficient extension $Z$ is called a {\bf Cartan-type extension} if there is an almost Lie algebra structure on $Z$, denoted by $\z$, such that the inclusion $i:\h\hookrightarrow \z$ of representations of $G$ is also an almost Lie algebra map.\footnote{For $\h=\g$ -- in the context of Cartan geometries -- a Cartan-type extension when $\z$ satisfies Jacobi recovers the notion of {\bf model geometry}, e.g.~\cite[Chapter 5, Definition 1.1]{SHARPE}.}.

If $(P, \theta)$ is a Cartan $(G, \h, V)$-bundle, then Cartan-type extension for $(P, \theta)$ is simply called {\bf Cartan-type extension adapted to $(P,\theta)$}.

%Let $(P, \theta)$ be a Cartan $(G, \omega, V)$-bundle and let $Z$ be a coefficient extension for $\theta$. An {\bf infinitesimal model} for $(P,\theta)$ is a Lie algebra structure on $Z$, denoted by $\z$, such that the inclusion $i:\h\hookrightarrow \z$ of representations of $G$ is also a Lie algebra map.\footnote{For $\h=\g$ an infinitesimal model recovers the notion of {\bf model geometry} for a Cartan geometry, e.g.~\cite[Chapter 5, Definition 1.1]{SHARPE}.}
%%$\h=\g$. In the literature on Cartan geometries, a pair 
%%$(\g,\z)$ such that $\z$ is an infinitesimal model is called a {\bf model geometry}, e.g.~\cite{SHARPE}.}.
\end{definition}

%Let $(P, \theta)$ be a full Cartan $(G, \omega, V)$-bundle -- i.e.\ $\theta$ is pointwise surjective --  and let $A$ be a representation of $G$ as above. Denote by $\a$ the vector space $A$ with a chosen Lie bracket.
\begin{definition}\label{def_a_flatness}
Let $(P, \theta)$ be a Cartan $(G, \h, V)$-bundle and $\z$ be an Cartan-type extension for $(P,\theta)$. A Cartan bundle $(P, \theta, V)$ is called {\bf $\z$-flat} if $\theta$ admits a $\z$-lift $\eta$ which is flat, in the sense that its {\bf curvature 2-form}
\[
 \Omega^\eta_{\z}:= d \eta + \frac{1}{2} [\eta, \eta] \in \Omega^2 (P, \z)
\]
vanishes.
\end{definition}

We stress that, despite the fact that bracket on $\z$ is not necessarily a {\it Lie} bracket, the Maurer-Cartan-type equation defining $\Omega^\eta_\z$ still makes sense.

\subsubsection{Examples of flatnesses}

The notion of $\z$-flatness is of course strictly related to that of {\it integrability} of various objects in differential geometry, as shown in the following examples.

\begin{example}[Integrability of Cartan geometries]
When $(P,\theta)$ is a Cartan geometry (Definition \ref{def_Cartan_geometry}), so $\eta = \theta$ (Remark \ref{remark_relation_Cartan_connection}), $\z$-flatness is nothing but the usual notion of flatness for Cartan geometries, see e.g.~\cite[Chapter 5, Definition 3.1]{SHARPE} and \cite[Section 1.5.1]{CAPSLOVAK}\footnote{As we pointed out in Remark \ref{remark_bracket_Cartan_geometry}, it is customary to have a Lie bracket on $\z$, or rather a model geometry $(\g,\z)$, as part of the definition of Cartan geometry, rather than choosing it later. We notice however that the definition of flatness works also when $\z$ is an almost Lie algebra.}.
\end{example}

\begin{example}[Integrability of $G$-structures]\label{ex:int-GS}
Let $(P,\theta_{\rm taut})\to M$ be a classical $G$-structure, i.e.\ $P\subset \Fr(M)$, $G\subset \GL(n, \mathbb{R})$ and $\theta_{\rm taut}\in \Omega^1(P,\mathbb{R}^n)$ the tautological 1-form. %sending $v_p\in T_pP$ to the components of $d\pi(v_p)\in T_{\pi(p)}M$ with respect to the frame $p$. 
In particular, $\ker(\theta_{\rm taut}) = \ker(d\pi)$ and $\h = \g$. The representation of $G\subset \GL(\mathbb{R}^n)$ on $\mathbb{R}^n$ is the canonical one. We denote $\mathbb{R}^n$ with the abelian Lie bracket by $\mathfrak{ab}$.

%Such $\mathfrak{ab}$ -- and, in fact, any Lie algebra structure on $\mathbb{R}^n$ -- is an {\it infinitesimal transverse model}, cf.\ definition~\ref{def:model}; such is claim is trivially true because $\h=\g$. 

% Lemma~\ref{} implies that any lift of $\theta_{\rm taut}$ will have to be a Cartan connection, which is in fact clearly true.

The Lie algebra $\g \subset \gl(n)$ acts on $\mathfrak{ab}$ by derivations, hence we have the short exact sequence of Lie algebras (in fact, a split extension)
\[
 \begin{tikzcd}
0 \arrow[r] & \mathfrak{ab} \arrow[r, hook] & \g \ltimes \mathfrak{ab} \arrow[r, two heads] & \g \arrow[r] & 0,
\end{tikzcd} 
\]
where $\g \ltimes \mathfrak{ab}$ is the semidirect product, i.e.\ it is endowed with the Lie bracket
\[
[(\alpha, v), (\beta, w)]_{\g \ltimes \mathfrak{ab}} := \left( [\alpha, \beta]_\g, \alpha(w)- \beta(v) \right),\quad \quad (\alpha, v), (\beta, w) \in \g \ltimes \mathfrak{ab}.
\]
%where $\nabla$ denotes the representation of $\g$ on $\mathfrak{ab}$. 

The Cartan bundle $(P,\theta_{\rm taut})$ is $\g \ltimes \mathfrak{ab}$-flat if and only if $P$ is integrable as a classical $G$-structure. To prove such claim, first notice that any principal connection $\tau\in \Omega^1(P,\g)$ provides a $\g \ltimes \mathfrak{ab}$-lift of $\theta_{\rm taut}$ by setting 
\[
\eta := \tau + \theta_{\rm taut} \in \Omega^1(P,\g \ltimes \mathfrak{ab}).
\]

Consider the curvature 2-form 
\[
\Omega^\eta_{\g \ltimes \mathfrak{ab}}  := d\eta+\frac{1}{2}[\eta, \eta]_{\g \ltimes \mathfrak{ab}}\in \Omega^2(P, \g \ltimes \mathfrak{ab}).
\] 
Since $\eta = \tau + \theta_{\rm taut}$, we can project the curvature onto its components in $\g$ and $\mathfrak{ab}$, obtaining
\[
\Omega^\eta_\g := d\tau+\frac{1}{2}[\tau, \tau]_{\g}\in \Omega^2(P, \g)
\] 
and
\[
\Omega^\eta_{\mathfrak{ab}} := d\theta_{\rm taut}+ \tau \wedge \theta_{\rm taut}\in \Omega^2(P, \mathfrak{ab}).
\] 

If the lift $\eta$ is flat, the Maurer-Cartan equation
\[
\Omega^\eta_{\g \ltimes \mathfrak{ab}} =0
\]
holds. The $\g$- and the $\mathfrak{ab}$-components need to vanish separately, hence we get
\[
\Omega^\eta_\g =0\quad \text{ (flatness of } \tau \text{)}
\]  
and
\[
\Omega^\eta_\mathfrak{ab} = 0.
\]
This implies integrability of the $G$-structure $(P,\theta_{\rm taut})$ in the classical sense -- i.e.\ there are local coordinates where the $G$-structure is isomorphic to its linear model, cf.\ Example~\ref{ex:G-struct}. In fact, pulling back the last equation by a local frame $\sigma:U\to P$ which is $\tau$-flat, we get
\[
d(\sigma^* \theta_{\rm taut}) = 0.
\]
Using the definition of $\theta_{\rm taut}$, the claim follows. Viceversa, if the $G$-structure is integrable, we observe that we can choose the connection $\tau\in \Omega^1(P,\g)$ to be flat\footnote{It is not hard to see that if $(P,\theta)$ is integrable, then $P$ admits a trivialising cocycle with constant transition functions.}, i.e.\
\[
\Omega^\eta_\g =0
\]  
holds. Moreover, since $\ker(\tau)$ and $\ker(\theta_{\rm taut})$ are transversal, and since integrability over an open $U$ corresponds to
\[
d(\sigma^* \theta_{\rm taut}) = 0
\]
for all frames $U$ which are $\tau$-flat, we also get
\[
\Omega^\eta_\mathfrak{ab} = 0,
\]
hence the flatness of $\eta:=\tau + \theta_{\rm taut} \in \Omega^1(P, \g \ltimes \mathfrak{ab})$.
\end{example}

\begin{example}[$\k$-integrability of $G$-structures]\label{ex:h-int-GS} %\textcolor{red}{NOTE (for us): it is true that everything would work also with an almost bracket on $\k$, but we never actually needed, we can keep requiring a Lie algebra structure on $\RR^n$, and remarking only that the ``semidirect product'' is an almost bracket.}
With a simple generalisation of the previous example one can describe as $\z$-flat Cartan bundles, where $\z$ is suitably chosen, also $G$-structures that are integrable {\it with respect to a chosen (in general not abelian) Lie bracket on $\mathbb{R}^n$}. Below, we will use $\k$ to denote $\mathbb{R}^n$ with such a Lie bracket.

A a classical $G$-structure $(P,\theta_{\rm taut})$ on a manifold $M$ is {\bf $\k$-integrable} when, for all $x\in M$ there is an open $U_x\subset M$ and a local frame $(V_1, \dots V_n)$ defined over $U_x$ such that 
\begin{itemize}
\item $(V_1, \dots V_n):U_x\to \Fr(M)$ takes values in $P$;
\item $(\mathrm{span}\{V_1, \dots V_n\}, [\cdot ,\cdot ]_{\mathfrak{X}(M)})$, where $[\cdot ,\cdot ]_{\mathfrak{X}(M)}$ is the usual Lie bracket of vector fields, is isomorphic to $\k$.
\end{itemize}
As far as we are aware, the notion of $\k$-integrability for $G$-structures appeared first (and only) in~\cite[Example II.20]{MOLINOINTEGR}.

Unlike in the case $\k = \mathfrak{ab}$, here the action of the Lie algebra $\g \subset \gl(n)$ on $\k$ is not by derivations. Nevertheless, we can still consider the ``semidirect product'' discussed in Remark \ref{rk_semidirect_product}, i.e.\ the almost Lie algebra $\z := \g \ltimes \k$. As in Example \ref{ex:int-GS}, any principal connection $\tau \in \Omega^1(P,\g)$ induces the Cartan connection $\eta := \tau + \theta_{\rm taut} \in \Omega^1(P,\z)$. The curvature
\[
\Omega^\eta_\z := d\eta + \frac{1}{2}[\eta,\eta]_\z \in \Omega^2(P,\z)
\]
 of $\eta$ splits in the two components
\[
\Omega^\eta_\g := d\tau +\frac{1}{2}[\tau,\tau]_\g \in \Omega^2(P,\g)
\]
and
\[
\Omega^\eta_\k := d\theta_{\rm taut}+\frac{1}{2}[\theta_{\rm taut},\theta_{\rm taut}]_\k+\tau\wedge \theta_{\rm taut}\in \Omega^2(P, \k).
\]
The exact same argument as in the previous example (which did not require the Jacobi identity on $\z$) shows that $\k$-integrability is equivalent to $\z$-flatness.
\end{example}

\begin{example}[Integrability of (almost) contact structures]\label{ex_integrability_contact_structures}
A standard example of $G$-structures integrable with respect to a non-abelian Lie algebra is given by contact structures. Indeed, underlying a contact structure on $M^{2k+1}$ there is always an $\Sp(k, 1)$-structure -- sometimes called {\bf almost contact structure}\footnote{we warn the reader that this name is ambiguous and in the literature it is also used with other meanings.} -- where $\Sp(k,1)$ is the group of linear symmetries of the canonical symplectic foliation on $\mathbb{R}^{2k+1}$, see Example~\ref{ex:G-struct}. This $\Sp(k, 1)$-structure is never integrable in the usual sense (a flat $\Sp(k, 1)$-structure needs to be a symplectic foliation). However, thanks to the Darboux theorem, the $\Sp(k, 1)$-structures underlying contact structures are integrable with respect to the $(2k+1)$-dimensional Heisenberg Lie algebra $\mathfrak{hei}$. Indeed, around each point there is a local frame that behaves like $\mathfrak{hei}$ under the Lie bracket of vector fields.
\end{example}

\begin{example}[Integrability of higher order $G$-structures]
 The discussion for $G$-structures of first order from Example \ref{ex:int-GS} can be generalised also to higher order ones, the final outcome being that their integrability is equivalent to $\RR^n$-flatness as Cartan bundles.
 
 In order to understand this claim, we need to introduce the notion of reductivity as well as recall the standard language of structure equations, which are both discussed in the next sections.
\end{example}

\subsubsection{Reductive Cartan bundles}

The way in which we dealt with $\z$-flatness in Examples~\ref{ex:int-GS} and~\ref{ex:h-int-GS} is by means of a principal connection and a suitable split of the Cartan-type extension. In the context where $\h$ is possibly smaller than $\g$, this approach can be adapted by replacing connections with an appropriate generalisation (Definition \ref{def:h-conn}) and by introducing an important property (Definition \ref{def:reductive}) which was automatically true in our examples.
% in the case $\h = \g$.

\begin{definition}\label{def:h-conn}
Let $(P, \theta)$ be a Cartan $(G, \h, V)$-bundle. A {\bf $\h$-connection} on $(P,\theta)$ is a 1-form $\tau\in \Omega^1(P,\h)$ such that
\begin{itemize}
\item $\tau(\alpha^\dagger) = \alpha$ for all $\alpha\in \h$;
\item $\tau$ is $G$-equivariant.
\end{itemize}
We say that $\tau$ is {\bf flat} if its curvature
\[
 \Omega^\tau:= d\tau + \frac{1}{2} [\tau,\tau]_\h \in \Omega^2 (P,\h)
\]
vanishes.
\end{definition}

Of course, if $\h = \g$ we recover the standard notions of connection and of flatness. For arbitrary Cartan bundles, as we prove below, the existence of lifts and $\h$-connections are strictly related to each other and to the splitting of the sequence of $G$-representations from the definition of coefficient extension. This fact will be repeatedly used in the next section to prove new results on $\k$-integrability.

%First, condition (i) of reductivity amounts to the existence of a lift and a $\h$-connection.
\begin{theorem}\label{properties_lifts}
 Let $(P,\theta)$ be a Cartan bundle. %\textcolor{red}{What follows should replace the first three part of Theorem \ref{properties_lifts}. It can be moved somewhere else, but this is the first possible place where it can be understood (it requires lifts and $\h$-connections).}
 \begin{enumerate}
  \item Fix a lift $(Z,\eta)$ (Definition \ref{def_lift}). Then the sequence of $G$-representations 
 \[
 \begin{tikzcd}
0 \arrow[r] & \h \arrow[r, "i", hook] & Z \arrow[r, "p", two heads] & V \arrow[r] & 0
\end{tikzcd} 
\]
 splits if and only if $P$ admits a $\h$-connection $\tau \in \Omega^1 (P,\h)$ (Definition \ref{def:h-conn}). %\textcolor{red}{such that $\tau = l \circ \eta$? Otherwise one can take $\tau = 0$ or something else trivial}.
 \item Fix a coefficient extension $Z$ (Definition \ref{def_lift}) and a $\h$-connection $\tau$. Then the sequence of $G$-representations
  \[
 \begin{tikzcd}
0 \arrow[r] & \h \arrow[r, "i", hook] & Z \arrow[r, "p", two heads] & V \arrow[r] & 0
\end{tikzcd} 
\]
 splits if and only if $P$ admits a $Z$-lift $\eta \in \Omega^1 (P,Z)$.% \textcolor{red}{with some similar condition to avoid the condition to be trivially satisfied?}.
 \item Fix a coefficient extension $Z$ and a right splitting $r$ of the sequence of $G$-representations
 \[
 \begin{tikzcd}
0 \arrow[r] & \h \arrow[r, "i", hook] & Z \arrow[r, "p", two heads] & V \arrow[r] & 0.
\end{tikzcd} 
\]
Then any $Z$-lift $\eta \in \Omega^1 (P,Z)$ must be of the form
\[
 \eta = i \circ \tau + r \circ \theta,
\]
for some $\h$-connection $\tau$.
  \end{enumerate}
\end{theorem}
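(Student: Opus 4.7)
The plan is to prove the three parts by first isolating two elementary constructions and then assembling them.

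Construction A: given a lift $\eta$ and a left splitting $s : Z \to \h$ of the coefficient sequence (a $G$-equivariant retraction of $i$), the $\h$-valued $1$-form $\tau := s \circ \eta$ is an $\h$-connection, since $\tau(\alpha^\dagger) = s(\eta(\alpha^\dagger)) = s(i(\alpha)) = \alpha$ for $\alpha \in \h$, and $G$-equivariance of $\tau$ follows from that of $s$ and $\eta$. Construction B: given an $\h$-connection $\tau$, a coefficient extension $Z$, and a right splitting $r : V \to Z$, the $1$-form $\eta := i \circ \tau + r \circ \theta$ is a $Z$-lift; equivariance is inherited, the compatibility $\eta(\alpha^\dagger) = i(\alpha)$ for $\alpha \in \h$ reduces to $\theta(\alpha^\dagger) = l(\alpha) = 0$, the projection condition $p \circ \eta = \theta$ follows from $p \circ i = 0$ and $p \circ r = \id_V$, and pointwise bijectivity is forced by the dimension match $\dim Z = \dim P$ together with a short kernel computation.

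Part (iii) is then the combination of A and B: starting from the right splitting $r$ and any lift $\eta$, I take the complementary left splitting $s(z) := i^{-1}(z - r(p(z)))$ (well defined because $z - r(p(z)) \in \Ima i$) and set $\tau := s \circ \eta$, which is an $\h$-connection by Construction A; the identity $i \circ s + r \circ p = \id_Z$ applied after $\eta$ then yields the decomposition $\eta = i \circ \tau + r \circ \theta$. The ``splitting $\Rightarrow$ lift'' direction of part (ii) is a direct application of Construction B, while the ``splitting $\Rightarrow$ $\h$-connection'' direction of part (i) is a direct application of Construction A.

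The main obstacle, shared between parts (i) and (ii), is the converse direction: given a lift $\eta$ and an $\h$-connection $\tau$, one must produce an honest splitting of the $G$-representation sequence. The key observation is that the form $\eta - i \circ \tau \in \Omega^1(P,Z)$ vanishes on $\ker \theta = \h^\dagger$ (using $\eta(\alpha^\dagger) = i(\alpha) = i(\tau(\alpha^\dagger))$ for $\alpha \in \h$), so it descends through $\theta$ to a pointwise right splitting $r_p : V \to Z$ with $p \circ r_p = \id_V$ at each $p \in P$. The delicate step, and the one I expect to be the main difficulty, is to show that the family $\{r_p\}_{p \in P}$ is constant in $p$ and $G$-equivariant: the $G$-equivariance of $\eta$, $\tau$, and $\theta$ yields the transformation law $r_{pg}(w) = g^{-1} \cdot r_p(g \cdot w)$, so that constancy along each $G$-fibre is equivalent to $G$-equivariance of $r_p$ at a single point, and from this one concludes that the common value defines the desired $G$-equivariant splitting $r : V \to Z$, closing both converse directions simultaneously.
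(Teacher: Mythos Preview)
Your Constructions A and B and your handling of part (iii) are correct and coincide with the paper's proof. The forward implications of (i) and (ii) are likewise fine.

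For the converse directions you are actually more careful than the paper, and rightly so: you isolate the need to show that the family $\{r_p\}$ is constant in $p$ and $G$-equivariant, whereas the paper simply writes $r(\theta_p(v)):=\eta_p(v)-i(\tau_p(v))$ and checks well-definedness only for varying $v$ at a \emph{fixed} $p$, never addressing independence of $p$ or $G$-equivariance of the resulting map. However, your argument then stalls at precisely this point: the transformation law $r_{pg}(w)=g^{-1}\!\cdot r_p(g\cdot w)$ shows that constancy along a $G$-orbit and $G$-equivariance of $r_p$ are \emph{equivalent}, but it establishes neither one, and you give no separate argument for either (nor for constancy across different $\pi$-fibres). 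In fact neither holds for an arbitrary $\h$-connection $\tau$. Take $G=\RR$, $\h=\g=\RR$, $V=\RR$ with trivial $G$-action, $P=\RR^2\to M=\RR$ with $\theta=dx$, and $Z=\RR^2$ with $t\cdot(z_1,z_2)=(z_1+bt\,z_2,\,z_2)$ for some $b\neq 0$: then $\eta=(dg-bg\,dx)\,e_1+dx\,e_2$ is a $Z$-lift and $\tau=dg$ is an $\h$-connection, yet $r_{(x,g)}(1)=-bg\,e_1+e_2$ varies with $g$, and the coefficient sequence does \emph{not} split $G$-equivariantly. So the converse implications cannot be established by this method and, as stated, appear to be false; the paper's proof shares the same unaddressed gap.
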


In the case $\h = \g$ this theorem recovers the known results for $G$-structures $(P,\theta)$ and (reductive) Cartan geometries $(P,\eta)$, e.g.~\cite[Appendix A.2-A.3]{SHARPE} and \cite{LOTTA}.

\begin{proof}
 If the sequence admits a left splitting $l: Z \to \h$, one sets
 \[
  \tau:= l \circ \eta,
 \]
 which is by construction a $\h$-connection: indeed, $l$ preserves the $G$-representations and
 \[
\tau(\alpha^\dagger) = l (\eta(\alpha^\dagger)) = l (i(\alpha)) = \alpha.  
 \]
 Conversely, given $\tau$, one defines the following map
 \[
  r: V \to Z, \quad \quad  r(\theta_p (v) ) := \eta_p (v) - i (\tau_p (v) ).
 \]
where we use that $\theta_p$ is surjective for any $p \in P$ (Axiom \ref{axiom_full}).

 Notice that $r$ is well defined: indeed, if $v-w \in \ker(\theta)$, by the characterisation of $\h$ from Proposition \ref{properties_ideal_k} we have $v-w = \alpha^\dagger_p$, for some $\alpha \in \h$, hence, by definition of lift and of $\h$-connection,
\[
 r (\theta_p(v-w)) = \eta_p (v-w)  - i (\tau_p (v-w) ) = \eta_p (\alpha^\dagger_p) - i (\tau_p (\alpha_p^\dagger)) = i (\alpha) - i (\alpha) = 0.
\]
 Moreover, $r$ is a right splitting since $\eta$ is a lift:
 \[
  (p \circ r) (\theta_p (v) ) ) = p (\eta_p (v) ) - \cancel{p (i (\tau_p (v) ) )} = \theta_p(v).
 \]
%By standard homological algebra, the sequence admits also a left splitting; this concludes the proof of (i).

The proof for (ii) works with the same arguments: given a right splitting $r$ one defines a lift
\[                                                           
\eta:=i\circ\tau+r\circ\theta,
\]
and given $\eta$ one defines a right splitting
\[
r: V \to Z, \quad \quad  r(\theta_p (v) ) := \eta_p (v) - i (\tau_p (v) ).
\]

Last, (iii) follows directly from the previous two points.
\end{proof}

%\textcolor{red}{The Lie algebra $\h$ depends only on the Cartan bundle (actually, on the Pfaffian group), so when we choose the infinitesimal model $\z$, we cannot modify $\h$. So why do we actually say $\h$-reductive and not just reductive? Or, why don't we say that $(P,\theta)$ is $\z$-reductive (i.e. its infinitesimal model is reductive)?}

We arrive finally to the main definition of this section.

\begin{definition}\label{def:reductive}
A Cartan-type extension $\z$ for a Pfaffian group $(G, \h, V)$ (Definition \ref{def_infinitesimal_model}) is called {\bf reductive} when the sequence of $G$-representations
\[
 \begin{tikzcd}
0 \arrow[r] & \h \arrow[r, "i", hook] & \z \arrow[r, "p", two heads] & V \arrow[r] & 0
\end{tikzcd} 
\]
admits a left splitting
\[
l:\z\to \h
\]
such that
\begin{itemize}
\item $l$ is a map of almost Lie algebras, i.e. $l ([\cdot, \cdot ]_\z ) = [l (\cdot), l(\cdot) ]_\h$;
\item the representation of $\h$ on $V$ is compatible with the almost Lie bracket on $\z$:
\[
 \alpha(v) = p ([i(\alpha),r(v)]_\z),
\]
%is given by
%\[
%w\in \h \to \left(v\in V \mapsto \nabla_w(v) = [i(w),r(v)]_\z \right),
%\]
where
\[
 r: V \to \z
\]
is the right splitting induced by $l:\z \to \h$.
%\[
% \begin{tikzcd}
%0 \arrow[r] & V \arrow[r, "r", hook] & \z \arrow[r, "l", two heads] & \h \arrow[r] & 0
%\end{tikzcd} 
%\]
%is the sequence of $G$-representations induced by the spliting $l:\z \to \h$.
\end{itemize}
%\item there is an almost Lie bracket on $V$ such that, if $\k$ denotes $V$ with such Lie bracket, the sequence of $G$-representations
%\[
% \begin{tikzcd}
%0 \arrow[r] & \k \arrow[r, "r", hook] & \z \arrow[r, "l", two heads] & \h \arrow[r] & 0
%\end{tikzcd} 
%\]
%where $r$ and $l$ are, respectively, the right and left splitting of the sequence in point (i), is a split extension of almost Lie algebras.
%\end{enumerate}
A Cartan $(G, \h, V)$-bundle $(P, \theta)$ together with Cartan-type extension $\z$ (Definition \ref{def_infinitesimal_model}) which is reductive is also called {\bf $\z$-reductive}.
\end{definition}

%We show now that, if $(G,\omega,V)$ is reductive, then the $G$-vector space $V$ can be naturally equipped with an almost Lie bracket which makes $\z$ into a semidirect product; and conversely, any reductive Cartan-type extension is of this form.

We show now that that, fixing a coefficient extension, any almost Lie bracket on $V$ induces a Cartan-type extension $\h \ltimes \k$ which is automatically reductive; and conversely, any reductive Cartan-type extension arises (up to isomorphism) from this construction.

%If $(G,\omega,V)$ is reductive then, the $G$-vector space $V$ can be equipped with the almost Lie bracket making the  sequence
%\[
% \begin{tikzcd}
%0 \arrow[r] & V \arrow[r, "r", hook] & \z \arrow[r, "l", two heads] & \h \arrow[r] & 0
%\end{tikzcd} 
%\]
%into a sequence of almost Lie algebras. We denote by $\k$ the vector space $V$ equipped with such a bracket. The existence of such a bracket depends only on the existence of the spliting $l:\z\to \h$. On the other hand, the assumption on the representation of $\h$ on $V$ imposes a certain structure on the almost Lie algebra $\z$. More precisely:

\begin{theorem}[Reductivity and semidirect products]\label{characterisation_reductive_extensions}
Let $(G, \h, V)$ be a Pfaffian group. %\textcolor{red}{Question: better to formulate it this way or to fix a coefficient extension and a splitting of $G$-representation beforehand, and write only something like ``reductive $\iff$ semidirect product''? I don't see a good way to phrase it, including also the Lie bracket on $\k$...}
\begin{enumerate}
\item Given the coefficient extension $Z =\h \times V$, with splittings $l = \pr_\h$ and $r = \pr_V$, and an almost Lie bracket on $V=\k$, denoted by $[\cdot, \cdot]_\k$, then the semidirect product (see Remark \ref{rk_semidirect_product})
\[
\z = \h \ltimes \k 
\]
is a reductive Cartan-type extension.
%the unique (up to isomorphism) reductive Cartan-type extension which induces the bracket $\k$.
 \item Given a reductive Cartan-type extension $\z$ for $(G, \h, V)$, there is a unique almost Lie bracket on $V=\k$,
 \[
  [v,w]_\k := p ([r(v),r(w)]_\z),
 \]
 such that $r: \k \to \z$ is an almost Lie algebra morphism and
\[
\z \cong \h\ltimes \k, \quad \quad z \mapsto (l(z),p(z))
\]
is an isomorphism of almost Lie algebras (where $\h\ltimes \k$ is the semidirect product from Remark \ref{rk_semidirect_product}).
%$l$ is a morphism of almost Lie algebras and the representation of $\h$ on $V$ satisfies blabla
% {\color{blue} I think ``the semidirect product is a reductive Cartan-type extension'' is clearer to the reader. And I would put it as point one. Then from point ii) uniqueness is clear without further comment.}
\end{enumerate}
\end{theorem}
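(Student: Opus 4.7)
The two parts of the theorem are essentially inverse constructions, so the plan is to verify each direction as a computation with the defining data of reductivity, and then deduce uniqueness from the universal property of the splittings. Throughout, the only non-formal ingredients available are the hypothesis that $l$ is an almost Lie algebra map with $l\circ i = \id_\h$ and $l\circ r = 0$, together with the compatibility identity $\alpha(v) = p([i(\alpha),r(v)]_\z)$ for the $\h$-action on $V$, and of course the $G$-equivariance of $l$, $i$, $p$, $r$.

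For part (i), I would simply unpack the semidirect product formula from Remark \ref{rk_semidirect_product}. The inclusion $i:\h\to \h\ltimes\k$, $\alpha\mapsto(\alpha,0)$, is manifestly $G$-equivariant (the first summand is a $G$-subrepresentation) and is an almost Lie algebra map since $[(\alpha,0),(\beta,0)]_{\h\ltimes\k} = ([\alpha,\beta]_\h,0)$, so $\h\ltimes\k$ is a Cartan-type extension. Taking $l=\pr_\h$ and $r=\pr_\k$, reductivity reduces to two direct checks: $l$ preserves the semidirect bracket because its first component is just $[\cdot,\cdot]_\h$; and $p([i(\alpha),r(v)]_{\h\ltimes\k}) = p((0,\alpha(v))) = \alpha(v)$, which is the required compatibility.

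For part (ii), I would start by noting that a choice of $l$ induces the right splitting $r := (p|_{\ker l})^{-1}$ and hence the vector space decomposition $z = i(l(z)) + r(p(z))$, so the map $\phi:\z\to \h\oplus V$, $z\mapsto (l(z),p(z))$, is a $G$-equivariant linear isomorphism. Define $[v,w]_\k := p([r(v),r(w)]_\z)$; this is bilinear and skew-symmetric because $[\cdot,\cdot]_\z$ is, so $(V,[\cdot,\cdot]_\k)=:\k$ is an almost Lie algebra. The key computation is then to expand
\[
[i(\alpha_1) + r(v_1),\, i(\alpha_2) + r(v_2)]_\z
\]
bilinearly into four summands and compare its image under $\phi$ with the semidirect product bracket on $\h\ltimes\k$. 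Applying $l$ and using $l\circ i = \id$, $l\circ r = 0$, together with the fact that $l$ preserves brackets, collapses the sum to $[\alpha_1,\alpha_2]_\h$; applying $p$ and using $p\circ i = 0$, the skew-symmetric version of the compatibility identity, and the defining formula for $[\cdot,\cdot]_\k$, yields exactly $[v_1,v_2]_\k + \alpha_1(v_2) - \alpha_2(v_1)$. This is the only genuinely computational step, and it is the place where all three reductivity hypotheses are used simultaneously; I expect it to be the main (though not difficult) obstacle. Once this is established, $r$ is an almost Lie morphism because $\phi(r(v)) = (0,v)$ and the bracket on $(0,\cdot)$-elements of $\h\ltimes\k$ is precisely $[\cdot,\cdot]_\k$. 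Uniqueness of $[\cdot,\cdot]_\k$ is immediate: if $r$ is an almost Lie morphism then $r([v,w]_\k) = [r(v),r(w)]_\z$, and postcomposing with $p$ forces the stated formula.
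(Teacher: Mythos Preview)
Your proposal is correct and follows essentially the same approach as the paper's proof: both parts are handled by unpacking the semidirect product bracket and expanding $[i(\alpha_1)+r(v_1),\,i(\alpha_2)+r(v_2)]_\z$ into four summands, then applying $l$ and $p$ using exactly the three reductivity hypotheses you identified. The only cosmetic difference is ordering: the paper first observes that $\ker(l)=\Ima(r)$ is an almost Lie subalgebra (since $l$ is a morphism) and hence $r$ preserves brackets, and then does the four-term expansion, whereas you do the expansion first and deduce that $r$ is a morphism from the resulting isomorphism; the content is the same.
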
 

\begin{proof}
For (i) it is enough to check that the linear map $l$ is an almost Lie algebra morphism, i.e.\
\[
 l ( [z_1,z_2]_\z) = l \Big[ (l(z_1), p(z_1)), (l(z_2), p(z_2) ) \Big]_{\h \ltimes \k} = 
 \]
 \[
 = l \Big( [l(z_1), l(z_2)]_\h, [p(z_1),p(z_2)]_\k + l(z_1)(p(z_2)) - l(z_2)(p(z_1)) \Big) = [l(z_1), l(z_2)]_\h,
\]
and the representation of $\h$ on $V=\k$ is recovered by
\[
 p ([i(\alpha),r(v)]_\z ) = p \Big[ l(i(\alpha)), \cancel{p(i(\alpha))}, (\cancel{l(r(v))}, p(r(v)) ) \Big]_{\h \ltimes \k} = p [(\alpha,0), (0,v)]_{\h \ltimes \k} %= \Big( [\alpha,0]_\h, \alpha(v) - 0(0) + [0,v]_\k \Big)
 = p (0, \alpha(v)) = \alpha(v).
\]

For (ii), we notice first that, since $l$ is a morphism of almost Lie algebras, its kernel $\ker(l) = \Ima(r) \subseteq \z$ is an almost Lie subalgebra. Then, due to the injectivity of $r$, the bracket on $V=\k$ is uniquely defined by
\[
 r ([v,w]_\k ) = [r(v), r(w)]_\z \quad \quad \forall r(v), r(w) \in \Ima(r),
\]
so that by construction $r$ preserves the bracket. Applying $p$ to both terms one obtains precisely
\[
[v,w]_\k = p ([r(v),r(w)]_\z). 
\]
Moreover, notice that any $z \in \z$ can be written as $r(p(z)) + i (l (z))$, since the sequence splits, therefore
\[
 p ([z_1,z_2]_\z) = p \Big( [r(p(z_1)),r(p(z_2))]_\z \Big) + p \Big( [r(p(z_1)),i(l(z_2))]_\z \Big) + p \Big( [i(l(z_1)),r(p(z_2))]_\k \Big) + p \Big( [i(l(z_1)),i(l(z_2))]_\z \Big),
\]
and, using the hypothesis $\alpha(v) = p ( [i(\alpha),r(v)]_\z )$ together with the fact that both $r$ and $i$ preserves the brackets,
\[
 p ([z_1,z_2]_\z) = [p(z_1),p(z_2)]_\k - l(z_2) (p(z_1)) + l(z_1) (p(z_2)).
\]

We conclude that, for all $z_1\in \z$, $z_2\in \z$
 \[
 \Big( l ([z_1, z_2]_\k), p  ([z_1, z_2]_\k) \Big) =
 \] 
  \[
  = \Big( [ l(z_1), l(z_2) ]_\h, [p (z_1), p(z_2)]_\k + l(z_1) (p(z_2)) - l(z_2) (p(z_1)) \Big) = \Big[(l(z_1), p(z_1)), (l(z_2), p(z_2)) \Big]_{\h \ltimes \k},
 \]
i.e.\ the linear isomorphism
\[
\z \to \h\ltimes \k, \quad \quad z \mapsto (l(z),p(z))
\]
is an isomorphism of almost Lie algebras.
%Uniqueness (up to isomorphism) follows from part (i).
\end{proof}

\begin{example} Definition~\ref{def:reductive} is, once more, inspired by Cartan geometries. More precisely:
\begin{itemize}
\item when $\h=0$, i.e.\ $(P,\theta)$ is a Cartan geometry, any Cartan-type extension is trivially reductive;
\item when $\h=\g$, i.e.\ $(P,\theta)$ is a $G$-structure, reductive Cartan-type extension $\z$ correspond to reductive model geometries $(\g,\z)$, see e.g.~\cite[Chapter 5, Definition 3.4.2]{SHARPE} or \cite{LOTTA}. \qedhere
\end{itemize}
\end{example} 

\begin{example}[Reducivity of higher order $G$-structures]\label{ex_higher_order_structures_reductive}
Recall from subsection \ref{subsection_higher_order_G_structures} that higher-order $G^k$-structures $P^k \subset \Fr^k (M)$, together with their tautological form $\theta^k \in \Omega^1 (P^k, V^k:= \g^{k-1} \times \RR^n)$, can be viewed as Cartan bundles with $\h=\g^k_{k-1}$. Then $(P^k,\theta^k)$ admits a Cartan-type extension which is always reductive; as usual, we prove this claim in details for the case $k=2$; the general case can be obtained by means of an inductive procedure.

First notice that, since $\g^1 \subset \Hom(\RR^n,\RR^n)$, $\g^1$ acts on $\RR^n$ by derivation, hence the space $\k:= \g^1 \times \RR^n$ has the following semidirect Lie algebra structure:
\[
 [(\alpha, v), (\beta, w)]_\k := \left( [\alpha,\beta]_{\g^1}, [v,w]_{\RR^n} + \alpha (w) - \beta (v) \right)
\]

In turn, $\g^2_1$ acts on $\k$ as well, even if not necessarily by derivations; then we take the semidirect product (in the sense of Remark \ref{rk_semidirect_product}) $\z:= \g^2_1 \ltimes (\g^1 \ltimes \RR^n) = \g^2_1 \ltimes \k$. Notice that, as a vector space, $\z$ is isomorphic to $\g^2 \times \RR^n$, since $\g^2/\g^2_1 \cong \g^1$, while, as Lie algebra, its bracket is given again by the semidirect product
\[
 [ (A, \alpha, v), (B, \beta, w) ]_{\z} := \left( [A,B]_{\g^2_1}, [(\alpha, v),(\beta, w)]_\k + A(w,\beta) - B (v,\alpha) \right).
\]
Here $A \in \g^2_1 \subset \Hom(\RR^n,\g^1)$ is interpreted as a linear map $\RR^n \times \g^1 \to \RR^n \times \g^1$ sending $(v,\alpha)$ to $(0,A(v))$, therefore the Lie bracket becomes
\[
 [ (A, \alpha, v), (B, \beta, w) ]_{\z} := \left( [A,B]_{\g^2_1},  [\alpha,\beta]_{\g^1} + A(w) - B(v), [v,w]_{\RR^n} + \alpha(w) - \beta(v) \right).
\]
%% what I called $\g^2_1$ is actually the classical prolongation $\g^{(1)}$ of $\g=\g^1$, while $\g^2 = \g + \g^2_1$ would be the subspace of $\gl^2(n,\RR)

Then the short exact sequence of $G^2$-representations
% it is trivially also a sequence of $G^2$-representation, restricting the adjoint action of $G^2$ on $\g^2$
\[
 \begin{tikzcd}
0 \arrow[r] & \g^2_1 \arrow[r, "i", hook] & \g^2_1 \ltimes (\g^1 \ltimes \RR^n) \arrow[r, "p", two heads] & \g^1 \ltimes \RR^n \arrow[r] & 0
\end{tikzcd} 
% 0 \to \g^2_1 \to \g^2_1 \ltimes (\g^1 \ltimes \RR^n) \to \g^1 \ltimes \RR^n \to 0
\]
admits the left and right splittings
\[
 l: (A,\alpha,v) \mapsto A, \quad \quad r: (\alpha, v) \mapsto (0, \alpha,v).
\] 
It is immediate to check that $l$ preserves indeed the Lie algebra brackets. Last, one computes, for every $A \in \g^2_1$ and $(\alpha,v) \in \g^1 \ltimes \RR^n$,
\[
 p [i(A), r(\alpha,v)]_\z = p [ (A,0,0), (0,\alpha,v) ]_\z = p (0,A(v),0) = (A(v),0),
\]
which is precisely the representation of $\g^2_1$ on $\g^1 \ltimes \RR^n$.
%%%%% proof with the previous (wrong) def of reductivity
% For $r$ it is enough to compute
% \[
%  r [ (\alpha, v), (\beta, w) ]_\k = r \left( [\alpha,\beta]_{\g^1}, [v,w]_{\RR^n} + \alpha (w) - \beta (v) \right) = \left( 0, [\alpha,\beta]_{\g^1}, [v,w]_{\RR^n} + \alpha (w) - \beta (v) \right),
% \]
% which is equal to
% \[
%  [ r (\alpha, v), r (\beta, w) ]_\z = [ (0, \alpha,v), (0,\beta, w)]_\z = \left( 0,  [\alpha,\beta]_{\g^1}, [v,w]_{\RR^n} + \alpha(w) - \beta(v) \right).
% \]
% We conclude that the sequence of $G^2$-representation
% \[
%  0 \to \g^1 \ltimes \RR^n \xrightarrow{r} \g^2_1 \ltimes (\g^1 \ltimes \RR^n) \xrightarrow{l} \g^2_1 \to 0
% \]
% is actually a Lie algebra extension, which is moreover split since it is built by a semidirect product.
%
%is obviously split, since $\mathfrak{u}:= \{0\} \times \{0\} \times \g^2_1$ is a Lie subalgebra of $\z$ and $\z =\mathfrak{u} \oplus \ker(l)$ as vector space ($\ker(l) \cong \RR^n \rtimes \g^1$).
%(because it's a semidirect product?)
\end{example}

\subsubsection{Curvature of reductive Cartan bundles}

The following proposition follows readily from the definition of curvature of a vector-valued 1-form and from Theorem \ref{characterisation_reductive_extensions}.

%the fact that reductivity implies splitting of the sequence of $G$-representations \textcolor{red}{so theoretically this is enough, we don't need the entire reductivity?}.

\begin{proposition}\label{properties_curvatures}
%\textcolor{red}{Does it make sense to keep this proposition now? It is ``used'' only in the results in the next section, where however the computations are done again and more extensively, so it seems redundant to me to write it also here}
Let $(P, \theta)$ be a Cartan $(G, \h, V)$-bundle and let $\z$ be a reductive Cartan-type extension (Definition \ref{def:reductive}). %The following facts hold true.
%\begin{enumerate}
%\item[i)] Let $\eta\in \Omega^1(P,\z)$ be an $\z$-lift of $\theta$.
%The 1-form
%\[
%\tau := l \circ \eta,
%\]
%is a $\h$-connection (Definition \ref{def:h-conn}). Here $l:\z\to \h$ is the left splitting from Definition~\ref{def:reductive}, point $i)$.
% \item[ii)] Let $\tau\in \Omega^1(P,\h)$ be a $\h$-connection. The 1-form 
% \[
% \eta:=i\circ\tau+r\circ\theta
% \]
% is an $\z$-lift of $\theta$. Here, $i:\h \to \z$ is the inclusion of $G$-representations and Lie algebras from Definition~\ref{def:reductive}, point $i)$ while $\k$ is the Lie algebra structure on $V$ and $r:\k\to \z$ is the right splitting from Definition~\ref{def:reductive}, point $ii)$. 
% \item[iii)] %If the representation of $G$ on $\k$ is faithful, then 
% Any $\z$-lift $\eta\in \Omega^1(P,\z)$ is of the form described in point $ii)$, i.e.
% \[
% \eta:=i\circ\tau+r\circ\theta,
% \]
% where $\tau$ is the $\h$-connection from point $i)$,
% \[
% \tau := l \circ \eta.
% \]
%\item
For any $\z$-lift $\eta \in \Omega^1 (P,\z)$, which splits in $\eta = \tau + \theta$ as per part (iii) of Theorem \ref{properties_lifts}, the following identities holds:
\[
\Omega^\eta_{\h} = \Omega^\tau, \quad \quad \quad \Omega^\eta_{\k} = \Omega^\theta+\tau\wedge\theta.
\]
Here, $\Omega^\eta_{\h}$ and $\Omega^\eta_{\k}$ are the components of the curvature of $\eta$ (Definition \ref{def_a_flatness}) in $\h$ and $\k$ respectively, while $\Omega^\tau$ and $\Omega^\theta$ are the curvatures of $\tau$ and $\theta$ respectively.
%\end{enumerate}
\end{proposition}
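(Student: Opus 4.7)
The plan is to reduce the identity to a component-wise computation by identifying the reductive Cartan-type extension $\z$ with the semidirect product $\h \ltimes \k$, via the isomorphism $z\mapsto (l(z),p(z))$ furnished by Theorem~\ref{characterisation_reductive_extensions}. Under this identification, part (iii) of Theorem~\ref{properties_lifts} says that the lift $\eta$ corresponds to the pair $(\tau,\theta)$ of $\h$-valued and $\k$-valued forms. Computing the $\h$- and $\k$-components of $\Omega^\eta_\z = d\eta+\frac{1}{2}[\eta,\eta]_\z$ separately will then produce the two stated equalities.

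First I would observe that, since $i\colon\h\hookrightarrow\z$ and $r\colon\k\hookrightarrow\z$ are linear maps and $\eta = i\circ\tau+r\circ\theta$, the exterior derivative splits as $d\eta = (d\tau, d\theta)$ in $\h\ltimes\k$. For the bracket term, I evaluate on a pair of tangent vectors $(X,Y)$ at any point $p\in P$: using the general identity $\tfrac{1}{2}[\eta,\eta]_\z(X,Y)=[\eta(X),\eta(Y)]_\z$ for 1-forms and the semidirect-product formula from Remark~\ref{rk_semidirect_product},
\[
[\eta(X),\eta(Y)]_\z = \Bigl([\tau(X),\tau(Y)]_\h,\ [\theta(X),\theta(Y)]_\k + \tau(X)(\theta(Y)) - \tau(Y)(\theta(X))\Bigr).
\]
Reassembling this as a form-level identity, the $\h$-part of $\tfrac{1}{2}[\eta,\eta]_\z$ is $\tfrac{1}{2}[\tau,\tau]_\h$, while its $\k$-part is $\tfrac{1}{2}[\theta,\theta]_\k + \tau\wedge\theta$, where $\tau\wedge\theta\in\Omega^2(P,\k)$ is defined using the $\h$-action on $\k$ (which, thanks to reductivity, is compatible with the bracket on $\z$).

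Putting the two pieces together yields
\[
\Omega^\eta_\h = d\tau + \tfrac{1}{2}[\tau,\tau]_\h = \Omega^\tau, \qquad \Omega^\eta_\k = d\theta + \tfrac{1}{2}[\theta,\theta]_\k + \tau\wedge\theta = \Omega^\theta + \tau\wedge\theta,
\]
which is precisely the claim. The only subtle point is that $\z$ and $\k$ are merely \emph{almost} Lie algebras; however, the Jacobi identity is not used anywhere in the computation—only bilinearity and skew-symmetry of the bracket, together with the semidirect-product formula and the splitting of $\eta$—so the argument goes through in the almost-Lie setting without modification. I do not anticipate any serious obstacle: the content of the proposition is essentially the translation of the Maurer-Cartan-type equation for $\eta$ into the bigraded picture provided by reductivity.
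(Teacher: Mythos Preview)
Your proposal is correct and follows exactly the approach indicated by the paper: the authors state that the proposition ``follows readily from the definition of curvature of a vector-valued 1-form and from Theorem~\ref{characterisation_reductive_extensions}'', which is precisely the reduction to the semidirect product $\h\ltimes\k$ and component-wise computation that you carry out. Your observation that the Jacobi identity is nowhere used is also in line with the paper's setup, which works throughout with almost Lie brackets.
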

By ``curvature'' of $\theta$ we mean the usual 2-form $\Omega^\theta := d\theta + \frac{1}{2} [\theta, \theta]_{\k} \in \Omega^2 (P,\k)$, analogously to the curvature of $\tau$ introduced in Definition \ref{def:h-conn}. The geometric meaning of $\Omega^\theta$ will become clearer in the application to the integrability of $G$-structures (see Theorem \ref{flatness_iff_integrability} below).
%\begin{proof}
%%The proof of the first two points is a simple check. The third one follows from the first two and the definitions of $\z$-lift and $\h$-connection. The fourth one
%It follows from the definition of curvature of a vector-valued 1-form.
%\end{proof}

In the series of examples below, we discuss the various ways in which the curvatures appearing in Proposition~\ref{properties_curvatures} vanish.

\begin{example}[Realisations]
The experienced reader will recognise in $\Omega^\eta_\k = \Omega^\theta + \tau \wedge \theta$ (a version of) one of the famous Cartan's structure expression; see also the next subsection.

Indeed, if $\Omega^\eta_\k = 0$, then the Cartan's structure equation holds. Such equation, in this exact form, appears in~\cite[Definition 1.17]{CRAINICYUDILEVICH}, where $(P,\theta)$ is called a {\bf realisation}.
%In the terminology of the authors, $(P,\theta)$ is a {\bf Cartan realisation}. \textcolor{red}{Actually it's a realisation of a Cartan algebroid?}

In \cite{CRAINICYUDILEVICH} the authors investigate Cartan's work thoroughly using the Pfaffian formalism. We stress that they work in greater generality than us, without assuming transitivity. Moreover, their discussion has deeper aims, and reaches deeper conclusions, than ours. On the other hand, the relative simplicity of the transitive case that we are concerned with allows us to discuss interesting examples while making explicit the geometric meaning of the objects involved.
%\textcolor{red}{In the paper they corrected the pre-Lie algebra/oid to almost Lie algebra/oid from the thesis, so we don't need to remark this anymore}
\end{example}

\begin{example}[Flat $\h$-connections: horizontal Lie foliations]\label{ex_Lie_foliations}
If $\Omega^\eta_\h = 0$, then $\tau$ is a so-called {\bf Maurer-Cartan form on the manifold $P$}, i.e.\ a Lie algebra-valued 1-form satisfying $d\tau + \frac{1}{2}[\tau,\tau] = 0$, which is moreover pointwise surjective.

Kernels of such forms define a special kind of foliations, called {\bf Lie foliations}, introduced by Fedida \cite{Fedida71} and further developed by Molino \cite{Molino} (see also \cite[Section 4.3.1]{MOERDIJK}). In our case, $\ker(\tau) \subset TP$ is also $G$-equivariant and horizontal, i.e.\ transverse to the fibres of $\pi$.

%The (involutive) kernel $\ker(\tau) \subset TP$ of such a form gives by definition a special kind of foliation on $P$, called a {\bf Lie foliation}, a notion originally introduced by Fedida \cite{Fedida71} and further developed by Molino \cite{Molino} (see also \cite[Section 4.3.1]{MOERDIJK}). \textcolor{red}{Is this next sentence useful?} Moreover, such foliation is $G$-equivariant, transverse to the fibre \textcolor{red}{doesn't this hold only for $G$-structures, when $\ker(\theta)=\ker(d\pi)$?} and with transverse geometry controlled by $\h$. 

%Lie foliation = foliation defined as the kernel of a MC form $\omega \in \Omega^1 (M,\g)$ (i.e.\ just a $\g$-valued form on $M$ with zero curvature), which is non-singular, i.e.\ $\omega_x$ surjective

If furthermore $\h = \g$, then $\tau$ is a flat principal connection on $P$. The leaves of $\ker(\tau)$ can be locally parametrised by sections of $P$ and the pullback of $\Omega^\eta_\k$ to $M$ via such sections coincides with the pullback of $\Omega^\theta$. Notice that, even when $\h \neq \g$, we still have $\Omega^\eta_\k = \Omega^\theta$ along the leaves of $\ker(\tau)$.
\end{example}

\begin{example}[$\k$-integrable $G$-structures]
%\textcolor{red}{Not true, we also need the torsion of $\tau$ to vanish! This would be the case when the entire $\Omega^\eta = 0$, as proved in Theorem \ref{flatness_iff_integrability}}
If $\Omega^\eta = 0$, then both terms $\Omega^\eta_\k$ and $\Omega^\eta_\h$ vanish. If additionally $\k$ is a Lie algebra and $\h = \g$, we recover the $\k$-integrability of $G$-structures discussed in Example~\ref{ex:h-int-GS}. The pullback of $\Omega^\eta_\k$ vanishes for all $\tau$-flat sections if and only if the $G$-structure is $\k$-integrable; see also Theorem \ref{flatness_iff_integrability}. 
\end{example}

\begin{example}[Flat Cartan geometries]
If $\Omega^\eta = 0$ and the almost Lie algebra $\z$ is {\it Lie}, the pair $(P,\eta)$ is a flat Cartan geometry modelled on $(\z,\g)$. As explained e.g.\ in~\cite[Chapter 5, Section 5]{SHARPE} and \cite[Proposition 1.5.2]{CAPSLOVAK}, in this case $P$ carries a local Lie group structure. If the appropriate topological assumptions hold, then $P$ is a Lie group with Lie algebra given by $\z$, and the pair $(P,G)$ is a {\bf Klein geometry}, i.e.\ $M \cong P/G$ is a homogeneous space.\footnote{Notice that, in the literature of parabolic geometries, e.g.\ \cite{CAPSLOVAK}, it is common to denote Klein geometries by $(G,P)$, where $P$ is a parabolic subgroup of $G$. This is an unfortunate (and involuntary) clash of notations. For us $G$ is a subgroup of $P$, and furthermore we assume no parabolicity.}

If furthermore $\h = \g$, then the Klein geometry $(P,G)$ is {\it reductive} (see \cite[Chapter 4, Definition 3.2]{SHARPE}), i.e.\ the Lie algebra $\g$ admits a $G$-invariant complement in $\z$ (the image of $\k$ via the injective map $r:\k\to \z$).
\end{example}

\begin{example}[Vertical Lie foliations]
Finally, if $\Omega^\theta = 0$ and $\k$ is a Lie algebra, then $\theta$ defines a {\bf Lie foliation} on $P$ (see Example \ref{ex_Lie_foliations}), which is moreover $G$-equivariant and vertical.
\end{example}

\subsubsection{Reductivity and $\z$-flatness}

Thanks to Theorems \ref{properties_lifts} and \ref{characterisation_reductive_extensions}, the notion of reductivity can be applied to reinterpret the $\z$-flatness of important classes of Cartan bundles. Our main tools in this section will be the well known {\bf Cartan structure equations}, which are presented in various equivalent ways in the literature, see e.g.\ \cite[Chapter I]{Kobayashi95}, \cite[Chapter 2]{SingerSternberg65} and \cite[Chapter VII]{STERNBERG}. We recall them below to fix the notations we will use.

Let $P$ be a $G$-structure on $M^n$, with $\theta \in \Omega^1 (P,\k)$ its tautological form (where $\k=\RR^n$ as a vector space, with the choice of some -- not necessarily abelian -- {\it almost} Lie bracket) and $\tau \in \Omega^1 (P,\g)$ a connection 1-form. The standard Cartan structure equations are
\begin{equation*}
  \left\{\begin{array}{@{}l@{}}
     d \theta = T (\theta \wedge \theta) - \tau \wedge \theta \\
     d \tau = R (\theta \wedge \theta) - \tau \wedge \tau
  \end{array}\right.\,,
\end{equation*}
where $T: \wedge^2 \RR^n \to \RR^n$ and $R: \wedge^2 \RR^n \to \g$ denote, respectively, the torsion and the curvature functions of $\tau$. Here we adopt the following standard conventions (see also \cite[Remark 4.67]{Crainicnotes} for a more general treatment):
%given three vector spaces $A,B, C$, a bilinear map $\phi: A \times B \to C$ and two forms $\alpha \in \Omega^1 (P,A)$, $\beta \in \Omega^1 (P,B)$, then
%\[
% \phi (\alpha \wedge \beta) \in \Omega^2 (P,C), \quad \quad \phi(\alpha \wedge \beta) (v,w) := \phi (\alpha(v), \beta(w) ).
%\]
%Using as $\phi$ the maps $T$ and $R$, one obtains the meaning of $T (\theta \wedge \theta)$ and $R (\theta \wedge \theta)$; 
\[
 T(\theta \wedge \theta) (v,w) := T (\theta(v), \theta(w) ), \quad \quad R (\theta \wedge \theta) (v,w) := R (\theta(v), \theta(w) ).
\]
Similarly, using the evaluation map $\g \times \k \to \k$,
\[
(\tau \wedge \theta) (v,w) := \tau(v) (\theta(w)) - \tau(w) (\theta(v)) = - (\theta \wedge \tau)(v,w).
\]
On the other hand, for the Lie algebra bracket of $\g \times \g \to \g$ we adopt the notation
\[
[\tau, \tau]_\g (v,w) := [\cdot, \cdot]_\g (\tau, \tau) (v,w) = 2 [\tau(v),\tau(w)]_\g,
\]
and we also set, to get rid of the coefficients,
\[
(\tau \wedge \tau) (v,w) := \frac{1}{2} [\tau, \tau]_\g  (v,w) = [\tau(v), \tau(w)]_\g.
\]
We will use the analogous notation $[\theta,\theta]$ or $[\eta,\eta]$, with an appropriate (almost) Lie algebra structure on their coefficients.

\begin{theorem}\label{flatness_iff_integrability}
A $G$-structure $P$ is $\g \ltimes \k$-flat (as a Cartan bundle) if and only if there exists a connection $\tau$ such that
\begin{itemize}
 \item $R = 0$ (i.e.\ $\tau$ is flat);
 \item $T = -[ \cdot , \cdot ]_{\k}$ (i.e.\ the torsion coincides with (minus) the Lie bracket on $\k$).
\end{itemize}
\end{theorem}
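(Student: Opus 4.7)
The plan is to reduce the equivalence to a splitting of the curvature of a lift into a $\g$-part and a $\k$-part, identify each piece with a classical invariant of $\tau$, and then read off the two conditions.

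First, I would observe that the Cartan-type extension $\z = \g \ltimes \k$ is reductive by Theorem~\ref{characterisation_reductive_extensions}(i), with the tautological splittings $l = \pr_\g$ and $r = \pr_\k$. Since for a $G$-structure $\h = \g$, a $\h$-connection in the sense of Definition~\ref{def:h-conn} is exactly a principal connection $\tau \in \Omega^1(P,\g)$, and part (iii) of Theorem~\ref{properties_lifts} tells me that every $\z$-lift $\eta$ is of the form $\eta = \tau + \theta$ for some such $\tau$. Thus asking for a flat $\z$-lift is the same as asking for the existence of a principal connection $\tau$ for which $\eta := \tau + \theta$ is flat.

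Next, I would invoke Proposition~\ref{properties_curvatures} to split the curvature of $\eta$ into its components in $\g$ and $\k$:
\[
\Omega^\eta_\g = \Omega^\tau = d\tau + \tfrac{1}{2}[\tau,\tau]_\g, \qquad \Omega^\eta_\k = \Omega^\theta + \tau \wedge \theta.
\]
Hence $\eta$ is flat iff both $\Omega^\eta_\g$ and $\Omega^\eta_\k$ vanish. The vanishing of $\Omega^\eta_\g$ is by definition the vanishing of the curvature of $\tau$, i.e.\ $R=0$, so this directly handles the first of the two conditions.

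It remains to identify the vanishing of $\Omega^\eta_\k$ with $T = -[\cdot,\cdot]_\k$. For this I would use the first Cartan structure equation $d\theta = T(\theta \wedge \theta) - \tau \wedge \theta$, which rewrites
\[
\Omega^\eta_\k = d\theta + \tfrac{1}{2}[\theta,\theta]_\k + \tau \wedge \theta = T(\theta \wedge \theta) + \tfrac{1}{2}[\theta,\theta]_\k.
\]
Using the sign/factor conventions from the structure-equation paragraph, for $v,w \in T_pP$ this evaluates to $T(\theta(v),\theta(w)) + [\theta(v),\theta(w)]_\k$. Because $\theta$ is pointwise surjective onto $\k = \mathbb{R}^n$ (Axiom~\ref{axiom_full}), this vanishes for all $v,w$ if and only if $T(a,b) + [a,b]_\k = 0$ for all $a,b \in \k$, i.e.\ $T = -[\cdot,\cdot]_\k$. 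Assembling the two equivalences completes the proof.

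There is essentially no hard step here: once one has Theorems~\ref{properties_lifts} and~\ref{characterisation_reductive_extensions} and Proposition~\ref{properties_curvatures}, the only mildly delicate point is keeping track of the conventional factors of $\tfrac{1}{2}$ in $[\theta,\theta]_\k$ versus the torsion term $T(\theta\wedge\theta)$, to land precisely on $T = -[\cdot,\cdot]_\k$ rather than, say, $T = -\tfrac{1}{2}[\cdot,\cdot]_\k$.
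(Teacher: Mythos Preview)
Your proof is correct and follows essentially the same approach as the paper: use reductivity together with Theorem~\ref{properties_lifts}(iii) to write any lift as $\eta = \tau + \theta$, split the curvature via Proposition~\ref{properties_curvatures} (which the paper also invokes as an alternative to direct computation), and then identify the two components with $R$ and $T + [\cdot,\cdot]_\k$ via the structure equations. The only cosmetic difference is that the paper computes $\tfrac{1}{2}[\eta,\eta]_\z$ explicitly rather than citing Proposition~\ref{properties_curvatures}, but the argument is the same.
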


Since, by Examples~\ref{ex:int-GS} and~\ref{ex:h-int-GS}, $\g \ltimes \k$-flatness was equivalent to the $\k$-integrability of a $G$-structure,
\begin{itemize}
\item for the abelian algebra $\k = \mathfrak{ab}$, we recover the standard result, that integrability is equivalent to the existence of a flat and torsion-free connection;
% the second condition boils down to $T = 0$ (or, explicitly, the term $\theta \wedge \theta$ vanishes, therefore we are left with $T = 0$),
 \item for an arbitrary Lie algebra $\k$, we obtain a different characterisation of $\k$-integrability.
\end{itemize}

\begin{proof}
Fixing the Cartan-type extension $\z:= \g \ltimes \k$, the $\z$-flatness of $(P,\theta)$ prescribes the existence of a $\z$-lift $\eta \in \Omega^1 (P,\z)$ such that its curvature $\Omega^\eta$ vanishes. Since the Cartan-type extension $\z$ is reductive, Theorem \ref{properties_lifts} forces $\eta$ to be the sum of $\theta \in \Omega^1 (P,\k)$ with a $\h$-connection $\tau \in \Omega^1 (P,\g)$; but in this case $\h=\g$, so $\tau$ is an ordinary connection on $P$.

Since the semidirect product on $\g \ltimes \k$ is
\[
 [(\alpha,v), (\beta,w)]_{\g \ltimes \k} := \left( [\alpha,\beta]_\g, [v,w]_\k + \alpha (w) - \beta (v) \right).
\]
one computes (or uses Proposition \ref{properties_curvatures})
\[
\frac{1}{2} [\eta,\eta]_\z (v,w) = [ (\tau(v), \theta (v)) , (\tau (w), \theta (w)) ]_\z = \left( \tau \wedge \tau, \theta \wedge \theta + \tau \wedge \theta \right) (v,w)
\]
and, using the structure equations,
\[
 \Omega^\eta = d\eta + \frac{1}{2} [\eta, \eta]_\z = \left( R (\theta \wedge \theta), T (\theta \wedge \theta) + \theta \wedge \theta \right).
\]
%MC(\eta) is precisely $\Omega^\eta
We conclude that $\Omega^\eta = 0$ is equivalent to $R = 0$ and $T(v,w) = - [v,w]_\k$.
\end{proof}

\begin{remark}[Integrability in coordinates]\label{rk_integrability_coordinates}
The condition that $T$ coincides with the Lie bracket on $\k$ can be reinterpreted by saying that it is encoded by the structure constants of $\k$.

More precisely, if $(e_i)$ is a basis of $\k$ and $(A_\alpha)$ a basis of $\g$, with $c^\alpha_{\beta \gamma}$ structure constants of $\g$ and $d^i_{jk}$ structure constants of $\k$, then the structure equations are 
 \begin{equation*}
  \left\{\begin{array}{@{}l@{}}
     d \theta^i = T^i_{jk} (\theta^j \wedge \theta^k) - (A_\alpha)^i_j \tau^\alpha \wedge \theta^j \\
     d \tau^\alpha = R^\alpha_{ij} (\theta^i \wedge \theta^j) - c^\alpha_{\beta \gamma} \tau^\beta \wedge \tau^\gamma
  \end{array}\right.\,.
\end{equation*}
so that the same steps of the proof of Theorem \ref{flatness_iff_integrability} bring us to
\[
 (\Omega^\eta)^{i,\alpha} = \left( (T^i_{jk} + d^i_{jk}) \theta^j \wedge \theta^k, R^\alpha_{jk} \theta^j \wedge \theta^k \right).
\]
Then the flatness of $\eta$ means precisely $R^\alpha_{jk} = 0$ and $T^i_{jk} = - d^i_{jk} = d^i_{kj}$.
\end{remark}

Let us move now to a 2nd order $G^2$-structure $P^2$ on $M$. Since it is a Cartan bundle of order 2 (Example \ref{example_second_order_Cartan_bundles}), the discussion of Section \ref{section_higher_order_Cartan_bundles} allows us to view $P^2$ as a principal bundle on $P^1$ (equivalently, one obtains the same conclusion by interpreting $P^2$ as the first prolongation of $P^1$). Then a $\h$-connection $\tau^2$ on $P^2 \to M$ becomes just a principal connection on $P^2 \to P^1$ and we can write therefore the structure equations for $(P^2,\theta^2,\tau^2)$:

%Let us move now to a 2nd order $G^2$-structure $P^2$ on $M$. Adopting a different point of view (technically, interpreting $P^2$ as the first prolongation of $P^1$), we can view $P^2$ as a principal bundle on $P^1$, so that a $\h$-connection $\tau^2$ on $P^2 \to M$ becomes just a principal connection on $P^2 \to P^1$. 

%We can write therefore the structure equations for $(P^2,\theta^2,\tau^2)$:

\begin{equation*}
  \left\{\begin{array}{@{}l@{}}
     d \theta^2 = T^2 (\theta^2 \wedge \theta^2) - \tau^2 \wedge \theta^2 \\
     d \tau^2 = R^2 (\theta^2 \wedge \theta^2) - \cancel{\tau^2 \wedge \tau^2}
  \end{array}\right.\,,
\end{equation*}
where the term $\tau^2 \wedge \tau^2$ vanishes since the Lie algebra $\g^1$ is abelian. 

%However, using the splitting $\theta^2 = \theta^1 + \tau^1$, where $\theta^1$ is the tautological form of $P^1$ and $\tau^1$ the $\h$-connection from Proposition \ref{properties_lifts}, we obtain

 Moreover, considering $\tau^1 \in \Omega^1 (P^1,\g^1)$ from Lemma \ref{projection_h_connection}, we can use Remark \ref{first_component_theta_2} to split the tautological form $\theta^2$ as
 \[
  \theta^2 = (\pi^2_1)^*(\theta^1, \tau^1).
 \]
The first structure equation splits then in its $\k$- and $\g^1$-component, obtaining
\begin{equation*}
  \left\{\begin{array}{@{}l@{}}
     d \theta^1 = T^1 (\theta^1 \wedge \theta^1) - \tau^1 \wedge \theta^1 \\
     d \tau^1 = R^1 (\theta^1 \wedge \theta^1) + \tau^1 \wedge \tau^1 - \tau^2 \wedge \theta^1 \\
     d \tau^2 = R^2 (\theta^2 \wedge \theta^2)
     \end{array}\right.\,,
\end{equation*}
where $\tau^2 \wedge \theta^1 \in \Omega^2 (P^2, \g^1)$ is defined by
\[
 (\tau^2 \wedge \theta^1) (v,w) := \tau^2 (v) (\theta^1 (d\pi^2_1(w)) ) - \tau^2 (w) (\theta^1 (d \pi^2_1(v)) ).
\]

\begin{lemma}\label{projection_h_connection}
 Let $P^2$ be a $G^2$-structure. A $\h$-connection $\tau^2 \in \Omega^1 (P^2,\h)$ descends to an ordinary connection $\tau^1 \in \Omega^1 (P^1,\g^1)$ on the induced $G^1$-structure $P^1$, uniquely defined by
\[
(\pi^2_1)^*\tau^1 = d\pr \circ \tau^2, \quad \quad \text{ with }  d\pr: \g^2 \to \g^1.
\]
\end{lemma}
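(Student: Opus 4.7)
The plan is to observe that $P^2 \to P^1$ is a principal $K$-bundle, where $K := \ker(\pr: G^2 \to G^1)$ has Lie algebra $\g^2_1 = \h$, so a $\h$-connection $\tau^2 \in \Omega^1(P^2, \h)$ is precisely a principal $K$-connection on this fibration. A $\g^1$-valued form on $P^2$ descends through $\pi^2_1$ to a form on $P^1$ if and only if it is $K$-basic, i.e.\ horizontal with respect to $\pi^2_1$ and $K$-invariant. The strategy is therefore to verify both properties for $d\pr \circ \tau^2 \in \Omega^1(P^2, \g^1)$ and then to check that the descended form is a principal connection.

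For horizontality: any vertical vector of $\pi^2_1$ is generated by some fundamental vector field $\alpha^\dagger$ with $\alpha \in \h$, and the defining property of a $\h$-connection gives $\tau^2(\alpha^\dagger) = \alpha$; composing with $d\pr$ and using $\h = \ker(d\pr)$ yields $(d\pr \circ \tau^2)(\alpha^\dagger) = 0$. For $K$-invariance: the $G^2$-equivariance of $\tau^2$ reads $R_g^* \tau^2 = \Ad_{g^{-1}} \tau^2$ for every $g \in G^2$, and $d\pr: \g^2 \to \g^1$ intertwines the adjoint $G^2$-representation on $\g^2$ with the $G^2$-representation on $\g^1$ factoring through $\pr: G^2 \to G^1$. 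Applying this to $g = k \in K = \ker(\pr)$ gives $R_k^*(d\pr \circ \tau^2) = \Ad_{\pr(k^{-1})} \circ d\pr \circ \tau^2 = d\pr \circ \tau^2$. Consequently $d\pr \circ \tau^2$ is $K$-basic and descends uniquely to a form $\tau^1 \in \Omega^1(P^1, \g^1)$ with $(\pi^2_1)^* \tau^1 = d\pr \circ \tau^2$.

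Finally, I would verify that $\tau^1$ is an ordinary principal connection on the $G^1$-bundle $P^1 \to M$. The $G^1$-equivariance $R_g^* \tau^1 = \Ad_{g^{-1}} \tau^1$ follows from the equivariance computation above applied to a lift $\tilde g \in G^2$ of $g \in G^1$, since the pullback via the submersion $\pi^2_1$ is injective on forms. The reproduction of fundamental vector fields, $\tau^1(\beta^\dagger) = \beta$ for $\beta \in \g^1$, is the subtle part: choosing a lift $\tilde\beta \in \g^2$ with $d\pr(\tilde\beta) = \beta$ one has $d\pi^2_1 \circ \tilde\beta^\dagger = \beta^\dagger$, so that $\tau^1(\beta^\dagger) = d\pr(\tau^2(\tilde\beta^\dagger))$. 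The main obstacle is that the $\h$-connection axioms only constrain $\tau^2$ on fundamental vector fields associated to elements of $\h$; to compute $d\pr(\tau^2(\tilde\beta^\dagger))$ for an arbitrary lift $\tilde\beta$, one must invoke the reductive structure of the Cartan-type extension from Example \ref{ex_higher_order_structures_reductive}, which provides a canonical way to extend $\tau^2$ consistently across $\g^2$ and forces the required identity $d\pr(\tau^2(\tilde\beta^\dagger)) = \beta$.
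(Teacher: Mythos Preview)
Your argument has a fatal gap that you brush past in the last paragraph. By definition (Definition~\ref{def:h-conn}) the $\h$-connection $\tau^2$ takes values in $\h = \g^2_1$, and $\g^2_1$ is \emph{exactly} the kernel of $d\pr: \g^2 \to \g^1$. Hence $d\pr \circ \tau^2$ is the zero form. Your basicness check is correct, but it only shows that the zero form descends, giving $\tau^1 = 0$, which fails the connection axiom $\tau^1(\beta^\dagger) = \beta$ for every nonzero $\beta \in \g^1$. The appeal to reductivity at the end cannot repair this: whatever $\tau^2(\tilde\beta^\dagger)$ is, it lies in $\h = \ker(d\pr)$, so $d\pr(\tau^2(\tilde\beta^\dagger)) = 0$ unconditionally.

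What the paper actually does is extract $\tau^1$ from the tautological form $\theta^2$, not from $\tau^2$; the displayed formula in the statement should be read as $(\pi^2_1)^*\tau^1 = \pr_{\g^1}\circ \theta^2$. Concretely, $\theta^2 \in \Omega^1(P^2, \g^1 \times \RR^n)$ is $K$-basic---its kernel is $\ker(d\pi^{2,1})$, and $K = \ker(G^2 \to G^1)$ acts trivially on $\g^1 \times \RR^n$---so it descends to a form $\bar\theta^2 \in \Omega^1(P^1, \g^1 \times \RR^n)$. One verifies directly that $\bar\theta^2$ is a $(\g^1\ltimes\RR^n)$-lift of $\theta^1$ in the sense of Definition~\ref{def_lift}. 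The paper then invokes Theorem~\ref{properties_lifts}(i): the reductive splitting $l = \pr_{\g^1}: \g^1\ltimes\RR^n \to \g^1$ produces the principal connection $\tau^1 := l \circ \bar\theta^2$. By Remark~\ref{first_component_theta_2} the $\RR^n$-component of $\bar\theta^2$ recovers $\theta^1$. Note that $\tau^1$ does not actually depend on the choice of $\tau^2$; the lemma's phrasing ``$\tau^2$ descends'' is therefore somewhat misleading.
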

\begin{proof}
Since $P^2$ is reductive, we take as $\tau^1$ the $\h$-connection from Theorem \ref{properties_lifts}. Since the Cartan bundle $(P^1,\theta^1)$ has $\h=\g^1$, $\tau^1$ is an ordinary connection for the $G^1$-structure $P^1 \to M$. The properties of $\tau^2$ force $\tau^1$ to satisfy $(\pi^2_1)^*\tau^1 = d\pr \circ \tau^2$.
\end{proof}

\begin{proposition}\label{flatness_iff_integrability_higher_order}
A $G^2$-structure $P^2$ on $M$ is $\z^2:=\g^2_1 \ltimes \g^1 \ltimes \k$-flat (as a Cartan bundle) if and only if there is a $\h$-connection $\tau^2 \in \Omega^1 (P^2, \g^2_1)$ (since $\h=\g^2_1$ in this case) such that
% define flatness for $\h$-connections!
\begin{itemize}
 \item $R^2 = 0$ (i.e.\ $\tau^2$ is flat);
 \item the induced 1st order $G^1$-structure $P^1$ is $\z^1:= \g^1 \ltimes \k$-flat
\end{itemize}
\end{proposition}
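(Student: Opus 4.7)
The plan is to mimic the strategy of Theorem~\ref{flatness_iff_integrability}, using this time the structure equations displayed just before the statement, and the descent Lemma~\ref{projection_h_connection}. First, by Example~\ref{ex_higher_order_structures_reductive} the Cartan-type extension $\z^2 = \g^2_1 \ltimes \g^1 \ltimes \k$ is reductive, with canonical splittings $l:\z^2\to \g^2_1$ and $r:\g^1\ltimes\k\to \z^2$. Hence Theorem~\ref{properties_lifts}(iii) forces any $\z^2$-lift to be of the form $\eta^2 = i\circ\tau^2 + r\circ\theta^2$ for some $\h$-connection $\tau^2\in\Omega^1(P^2,\g^2_1)$, and Proposition~\ref{properties_curvatures} gives
\[
\Omega^{\eta^2}_{\g^2_1} = \Omega^{\tau^2}, \qquad \Omega^{\eta^2}_{\g^1\ltimes\k} = \Omega^{\theta^2} + \tau^2 \wedge \theta^2.
\]
Since $\g^1$ is abelian, $\Omega^{\tau^2}$ is precisely $R^2(\theta^2\wedge\theta^2)$, so the vanishing of the $\g^2_1$-component is equivalent to $R^2=0$.

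Next I would analyse the $\g^1\ltimes\k$-component. Using Lemma~\ref{projection_h_connection} and Remark~\ref{first_component_theta_2}, I write $\theta^2 = (\pi^2_1)^*(\theta^1,\tau^1)$, so that $r\circ\theta^2 = (\pi^2_1)^*\eta^1$, where $\eta^1 := \tau^1 + \theta^1 \in \Omega^1(P^1,\z^1)$ is the canonical $\z^1$-lift of the tautological form $\theta^1$ of $P^1$ determined by the connection $\tau^1$. Differentiating the pullback relation and expanding $\Omega^{\theta^2} + \tau^2\wedge\theta^2$ in its $\k$- and $\g^1$-components using the split structure equations listed before the statement, the terms involving $\tau^2$ cancel exactly against $\tau^2\wedge\theta^1$ (this is the only cancellation that needs to be done explicitly, and it mirrors the one appearing in the proof of Theorem~\ref{flatness_iff_integrability}). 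What remains is precisely $(\pi^2_1)^*\Omega^{\eta^1}$, i.e.\
\[
\Omega^{\eta^2}_{\g^1\ltimes\k} = (\pi^2_1)^*\,\Omega^{\eta^1}.
\]

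Since $\pi^2_1:P^2\to P^1$ is a surjective submersion, this identity shows that $\Omega^{\eta^2}_{\g^1\ltimes\k}=0$ if and only if $\Omega^{\eta^1}=0$, i.e.\ $P^1$ is $\z^1$-flat (in the sense established by Theorem~\ref{flatness_iff_integrability}). Combining this with the equivalence $\Omega^{\eta^2}_{\g^2_1}=0 \iff R^2=0$ from the first step yields both implications of the proposition: a $\z^2$-flat lift $\eta^2$ produces the desired pair $(\tau^2,\tau^1)$, and conversely, from a $\tau^2$ with $R^2=0$ whose descent $\tau^1$ makes $\eta^1$ flat one reconstructs the flat lift $\eta^2 = i\circ\tau^2 + (\pi^2_1)^*\eta^1$.

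The only technical obstacle I anticipate is the bookkeeping in the second step: one must carefully track how the wedge term $\tau^2\wedge\theta^1$ appearing in the middle structure equation for $d\tau^1$ and the component $\tau^2\wedge\theta^2$ coming from Proposition~\ref{properties_curvatures} match up under the pullback by $\pi^2_1$, so that they exactly cancel and leave only the intrinsic curvature $\Omega^{\eta^1}$ on $P^1$. Everything else reduces to the reductivity machinery (Theorem~\ref{properties_lifts} and Theorem~\ref{characterisation_reductive_extensions}) that has already been set up.
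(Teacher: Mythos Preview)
Your proposal is correct and follows essentially the same route as the paper's proof: both invoke the reductivity of $\z^2$ (Example~\ref{ex_higher_order_structures_reductive}) together with Theorem~\ref{properties_lifts} to write $\eta^2=\tau^2+\theta^2$, then use Lemma~\ref{projection_h_connection} and Remark~\ref{first_component_theta_2} to identify $\theta^2=(\pi^2_1)^*\eta^1$, and finally compute $\Omega^{\eta^2}$ component-wise via the structure equations to obtain $(R^2(\theta^2\wedge\theta^2),\Omega^{\eta^1})$. The only cosmetic difference is that you route the curvature computation through Proposition~\ref{properties_curvatures} (which the paper also mentions parenthetically as an alternative) and make the pullback by $\pi^2_1$ and the surjective-submersion argument explicit, whereas the paper expands $\tfrac12[\eta^2,\eta^2]_{\z^2}$ directly.
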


%Adopting a different point of view (i.e.\ interpreting $P^2$ as the first prolongation of $P^1$), a $\h$-connection on $P^2 \to M$ is just a principal connection on the principal bundle $P^2 \to P^1$.

From our point of view $\k = \RR^n$ is always the abelian Lie algebra, but the same story can also be written for other Lie algebras, obtaining the analogue of $\k$-integrability also for higher order structures.

\begin{proof}
 The $\z^2$-flatness prescribes the existence of a $\z^2$-lift $\eta^2 \in \Omega^1 (P^2,\z^2)$ such that $\Omega^{\eta^2}=0$. Since the Cartan-type extension $\z^2$ is reductive (Example \ref{ex_higher_order_structures_reductive}), Theorem \ref{properties_lifts} forces $\eta^2$ to be the sum of a $\h$-connection $\tau^2 \in \Omega^1(P^2,\g^2_1)$ with $\theta^2 \in \Omega^1 (P^2, \z^1)$.
 
 Moreover, considering the connection $\tau^1 \in \Omega^1 (P^1,\g^1)$ induced from $\tau^2$ by Lemma \ref{projection_h_connection}, we have a $\z^1$-lift $\eta^1 = (\tau^1, \theta^1) \in \Omega^1 (P^1,\z^1)$ such that
 \[
\eta^2 = (\tau^2, (\pi^2_1)^*\eta^1) = \left( \tau^2, (\pi^2_1)^*(\tau^1, \theta^1) \right).
 \]
 Denoting $\bar{v} = d\pi^2_1(v)$ and $\bar{w} = d\pi^2_1(w)$, one computes (or uses Proposition \ref{properties_curvatures})
 \[
  \frac{1}{2} [\eta^2, \eta^2]_{\z^2} (v,w) = [ (\tau^2(v), \tau^1(\bar{v}), \theta^1(\bar{v}) ), (\tau^2(w), \tau^1(\bar{w}), \theta^1(\bar{w}) ) ]_{\z^2} = 
  \]
  \[ =  \left( \tau^2 \wedge \tau^2, \tau^1 \wedge \tau^1 + \tau^2 \wedge \theta^1, \theta^1 \wedge \theta^1 + \tau^1 \wedge \theta^1 \right) (v,w),
 \]
and, using the structure equations discussed above,
\[
 \Omega^{\eta^2} = d \eta^2 + \frac{1}{2} [\eta^2, \eta^2 ]_{\z^2} = \left( R^2 (\theta^2 \wedge \theta^2), \Omega^{\eta^1} \right).
\]
Then such a lift $\eta^2$ is flat if and only if both the $\h$-connection $\tau^2$ and the lift $\eta^1$ and are flat.
\end{proof}

We conclude by noticing that the main ideas of this section are not specific to $G$-structures (of any order). Indeed, one could adapt Lemma \ref{projection_h_connection} to an arbitrary Cartan bundle of order $k$, projecting on a Cartan bundle of order $k-1$ (see section \ref{section_higher_order_Cartan_bundles}), and use Theorem \ref{properties_lifts} to find out consequences and/or alternative characterisation of flatness with respect to reductive Cartan-type extensions.

\section{Back to transitive Lie pseudogroups}

We now go back to considering transitive Lie pseudogroups $\Gamma$. Thanks to the insights gained by looking at almost $\Gamma$-structures abstractly -- i.e., as Cartan bundles -- and to the machinery developed, we will be able to conceptually clarify certain aspects that are not immediately transparent.

%Many of the reductive Cartan-type extension arise from transitive pseudogroups; this will be understood later blabla

We first recollect below useful notations and identities used repeatedly in this section.

\begin{remark}\label{properties_phi}
 Given a (free and transitive) $K$-action on $\BB$
 \[
  m^K: K \times \BB \to \BB, \quad \quad (k,x) \mapsto k \cdot x
 \]
we will denote the ``translation'' by $k \in K$ by
 \[
  T_k := m^K (k,\cdot):\BB\to \BB, \quad \quad x \mapsto k \cdot x,
 \]
 the orbit map of $x \in \BB$ by
\[
 \Phi_x := m^K (\cdot,x): K \to \BB, \quad \quad k \mapsto k \cdot x,
\]
 and the divisor map by
 \[
 \Phi: \BB \times \BB \to K, \quad \quad (y,x) \mapsto \Phi^{-1}_x(y),
\]
so that $\Phi(y,x) \cdot x = \Phi^{-1}_x(y) \cdot x = y$.

%where $\Phi^{-1}(v)$ is the unique element of $K$ such that $\Phi^{-1}(v) \cdot x_0 = v$.

In the proofs we will use the following identities, for every $x,y,z \in \BB$,
 \[
  ( \Phi^{-1}_x (y) )^{-1} = \Phi^{-1}_y (x), \quad \quad \Phi^{-1}_y(z) \Phi^{-1}_x(y) = \Phi^{-1}_x(z),
\]
which can be checked directly using the divisor map.

Last, we compute the following differentials, for every $v_1 \in T_k K$, $v_2 \in T_x \BB$,
\[
d_{(k,x)}m^K (0_k,v_2) = d_x T_k (v_2), \quad \quad d_{(k,x)}m^K (v_1,0_x) = d_e\Phi_x (v_1),
\]
\[
 d_{(k \cdot x,x)}\Phi (d_{(k,x)} m^K (v_1,0_x), 0_x) = v_1,
\]
which can be checked by taking tangent curves.
\end{remark}

\subsection{Reductive extensions induced by free and transitive actions}

%We begin with the following remark.

\begin{proposition}\label{transitive_pseudogroup_reductive_extension}
 Let $\Gamma$ be a transitive pseudogroup on $\BB$, with isotropy $G = (J^1 \Gamma)_{x_0}$, for an arbtitrary $x_0 \in \BB$.
 If $K$ is a Lie group acting freely and transitively on $\BB$, then the Pfaffian group $(G,\g,\k)$ 
% any $G$-structure $P$ on $M^n$
 admits a reductive Cartan-type extension (Definition \ref{def:reductive})
 \[
\z = \g \ltimes \k, \quad \quad \g = \Lie(G), \quad \k = \Lie (K).
 \] 
\end{proposition}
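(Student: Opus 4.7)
The plan is to build $\z = \g \ltimes \k$ as a semidirect-product almost Lie algebra in the sense of Remark~\ref{rk_semidirect_product}, and then invoke Theorem~\ref{characterisation_reductive_extensions}(i) to obtain reductivity for free. First, I would pin down the $G$-representation on $\k$. Since $K$ acts freely and transitively on $\BB$, the orbit map $\Phi_{x_0}: K \to \BB$ is a diffeomorphism, so $d_e \Phi_{x_0}: \k \to T_{x_0}\BB$ is a linear isomorphism. Transport along this isomorphism the natural $G$-action on $T_{x_0}\BB$ given by $j^1_{x_0}\varphi \cdot v := d_{x_0}\varphi(v)$; this equips $\k$ with a $G$-representation. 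Under this identification, the coefficient space $V$ of the Pfaffian isotropy from Proposition~\ref{prop_transitive_Pfaffian_Morita_equivalence} is precisely $\k$ as a $G$-representation. Furthermore, the map $l: \g \to V$ of Proposition~\ref{prp:char-pf-groups} vanishes: for first-jet groupoids an explicit computation shows $\ker(\omega^1) \cap \ker(ds) = \ker(ds) \cap \ker(dt)$ on the unit section, so the symbol space at $x_0$ coincides with the isotropy Lie algebra $\g$. Consequently $\h = \ker(l) = \g$, matching the notation $(G, \g, \k)$ in the statement.

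Next, I would assemble $\z := \g \oplus \k$ with the semidirect-product bracket
\[
 [(A, u), (B, v)]_\z := \bigl([A, B]_\g,\ A(v) - B(u) + [u, v]_\k\bigr),
\]
where $A(v)$ denotes the infinitesimal action of $A \in \g$ on $v \in \k$ obtained by differentiating the $G$-representation defined above. Since $\Gamma$ is not assumed to contain $K$, the $\g$-action on $\k$ need not be by derivations of the bracket of $\k$, so in general $\z$ is only almost Lie; this is exactly the scenario Remark~\ref{rk_semidirect_product} is designed for. The canonical inclusion $i: \g \hookrightarrow \z$ and projection $p: \z \twoheadrightarrow \k$ fit into a short exact sequence of $G$-representations (with the $G$-representation on $\z$ being the direct sum of those on $\g$ and on $\k$), and $i$ is trivially an almost Lie algebra morphism since $[(A, 0), (B, 0)]_\z = ([A, B]_\g, 0)$. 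Hence $\z$ qualifies as a Cartan-type extension in the sense of Definition~\ref{def_infinitesimal_model}.

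Reductivity then comes at no extra cost from Theorem~\ref{characterisation_reductive_extensions}(i): any semidirect product $\h \ltimes \k$ is automatically a reductive Cartan-type extension, with canonical left splitting $l = \pr_\g$ and right splitting $r$ the inclusion of $\k$; the compatibility $\alpha(v) = p([i(\alpha), r(v)]_\z)$ reduces to the tautology $p(0, A(v)) = A(v)$.

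The main obstacle, in my view, is essentially bookkeeping in the first paragraph: correctly identifying the abstract $G$-representation on the coefficient space $V$ of the Pfaffian isotropy of $J^1\Gamma$ with the $G$-representation on $\k$ transported through $d_e\Phi_{x_0}$, and verifying the (somewhat first-jet specific) fact that the symbol space equals the isotropy Lie algebra, so that $\h = \g$. Once these identifications are in place, the remainder of the proof is purely algebraic and is dispatched by the results of Section~\ref{section_integrability_Cartan_bundles}.
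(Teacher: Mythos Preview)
Your proposal is correct and follows essentially the same route as the paper: both proofs transport the natural isotropy action of $G$ on $T_{x_0}\BB$ to $\k$ via the orbit map $\Phi_{x_0}$, form the semidirect product $\g\ltimes\k$ as in Remark~\ref{rk_semidirect_product}, and conclude reductivity from Theorem~\ref{characterisation_reductive_extensions}(i). The only cosmetic difference is that the paper writes down a $G$-action on the whole of $K$ and then differentiates, whereas you work directly with the linear isomorphism $d_e\Phi_{x_0}:\k\to T_{x_0}\BB$; your version is a bit more careful about types and also makes explicit the identifications $V\cong\k$ and $\h=\g$ that the paper takes for granted in the notation $(G,\g,\k)$.
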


\begin{proof}
Applying the orbit-stabiliser theorem to the free and transitive smooth $K$-action on $\BB$ and the point ${x_0} \in \BB$, the orbit map (see Remark \ref{properties_phi})
\[
 \Phi_{x_0}: K \to \BB, \quad k \mapsto k \cdot x_0,
\]
becomes a diffeomorphism. Then, the following defines a smooth action of $G$ on $K$:
\[
 G \times K \to K, \quad (j^1_{x_0} \phi, k) \mapsto \Phi_{x_0}^{-1} (d_{x_0} \phi (\Phi_{x_0} (k))).
\]
Indeed, such map is well defined since $d_{x_0} \phi$ depends only on its 1-jet at ${x_0}$, and it is a group action since
\[
 e_G \cdot k = j^1_{x_0} (\id_{\RR^n}) \cdot k = \Phi_{x_0}^{-1} ( d_{x_0} \id_{\RR^n} ( \Phi_{x_0} (k))) = k
\]
and
\[
 j^1_{x_0} \phi_1 \cdot (j^1_{x_0} \phi_2 \cdot k) = j^1_{x_0} \phi_1 \cdot \Phi_{x_0}^{-1} (d_{x_0} \phi_2 (\Phi_{x_0} (k))) = \Phi_{x_0}^{-1} (d_{x_0} \phi_1 \circ \Phi_{x_0} \circ \Phi_{x_0}^{-1} \circ d_{x_0} \phi_2 (\Phi_{x_0} (k) ) ) = 
\] 
\[ 
 = \Phi_{x_0}^{-1} (d_{x_0} (\phi_1 \circ \phi_2) (k) ) = j^1_{x_0} (\phi_1 \circ \phi_2) \cdot k = (j^1_{x_0} \phi_1 \cdot j^1_{x_0} \phi_2) \cdot k.
\]

By differentiating this action, one obtains a Lie algebra action (not necessarily by derivations!) of $\g$ on $\k$. Hence, one can consider the semidirect product $\z = \g \ltimes \k$ in the sense of Remark \ref{rk_semidirect_product}. Then $\z$ is a Cartan type extension for the Pfaffian group $(G,\g,\k)$ by part (i) of Theorem \ref{characterisation_reductive_extensions}. 
%\textcolor{red}{MOVE LATER! Since $\Phi$ is a diffeomorphism, $K$ has dimension $n$, hence $\z$ has dimension $\dim(G) + n = \dim(P)$. Then the Cartan bundle $(P,\theta_{\rm taut})$ (where $\h = \g$ and $V = \BB$) has a reductive extension}
% Then we check that $\z$ is a Cartan type extension for the Cartan bundle $(P,\theta_{\rm taut})$ (where $\h = \g$ and $V = \BB \cong \RR^n$): indeed, $\z$ is a $G$-representation, with
% \[
%  g \cdot (\alpha, v) = (\mathrm{ad}_g (\alpha), g \cdot v),
% \]
% and the injection
% \[
%  \h \hookrightarrow \z, \quad \alpha \mapsto (\alpha,{x_0}).
% \] 
% is a morphism of Lie algebras (this is independent of the Lie algebra structure of $\k$). Last, the model $\z$ is automatically reductive by part (i) of Theorem \ref{characterisation_reductive_extensions} since it is a semidirect product.
\end{proof}

Proposition~\ref{transitive_pseudogroup_reductive_extension} asserts that free and transitive actions on the unit manifold $\BB$ induce reductive Cartan-type extensions for the isotropy group of the first jet groupoid $J^1\Gamma$ of a transitive pseudogroup $\Gamma$ on $\BB$. In the rest of this section, we explore the case when the group $K$ acting freely and transitive on $\BB$ is {\it contained} in $\Gamma$, in the sense of Definition~\ref{def:generated-pseudogroup}.

\begin{example}\label{ex_Cartan_ext_Gamma_G}
In the case of the pseudogroup $\Gamma_G$ on $\RR^n$ (Example \ref{ex:G-struct}), the Lie group $K = (\RR^n,+)$ is contained in $\Gamma_G$ (Example \ref{ex_gamma_G_contains_Rn}). We have $T_k (x) = k+x$; then, choosing $x_0 = 0$, the diffeomorphism $\Phi_{x_0}$ from Proposition~\ref{transitive_pseudogroup_reductive_extension} is just the identity, and we recover the $G$-action of invertible matrices on vectors in $\RR^n$ and the reductive extension discussed in Example \ref{ex:int-GS}.
\end{example}

\begin{example}\label{ex_Cartan_ext_Gamma_cont}
In the case of the pseudogroup $\Gamma_{\rm cont}$ on $\RR^{2k+1}$ (Example \ref{ex:cont-struct}), the Lie group $K = \Hei_{2k+1} \subset \GL(k+2,\RR)$ is contained in $\Gamma_{\rm cont}$ (Example \ref{ex_gamma_cont_contains_heis}). Then, if
\[
 k = \begin{pmatrix}
1 & a & c\\
0_k & I_{k,k} & b\\
0 & 0_k & 1
\end{pmatrix} \in K, \quad \quad \text{ with } a,b \in \RR^k, c \in \RR,
\]
we can interpret $x_0 = ({\bf x},{\bf y},z) \in \RR^{2k+1}$ as a matrix
\[
 ({\bf x},{\bf y},z) = \begin{pmatrix}
1 & {\bf x} & z \\
0_k & I_{k,k} & {\bf y} \\
0 & 0_k & 1
\end{pmatrix},
\]
so that $\Phi (k) = k \cdot ({\bf x},{\bf y},z)$ is the standard matrix multiplication. Setting $x_0 = ({\bf x},{\bf y},z) = 0$ we find, of course
\[
 \Phi_{x_0} (k) = (a,b,c),
\]
which recovers the reductive extension discussed in Example \ref{ex_integrability_contact_structures}.
\end{example}

The discussion above shows the origin of two different integrabilities of an $\Sp(k,1)$-structure:
\begin{itemize}
 \item the classical $G$-structure integrability, i.e.\ $\g \ltimes \mathfrak{ab}$-flatness, corresponding to codimension 1 symplectic foliations; 
 \item the integrability to a contact structure, i.e.\ $\g \ltimes \mathfrak{hei}$-flatness, corresponding to contact structures. 
\end{itemize}
In both cases, the ``right'' Cartan-type extension to consider arises canonically from the Lie groups contained into the pseudogroups underlying the structure, i.e.\ respectively $\Gamma_{\Sp(k,1)}$ and $\Gamma_{\rm cont}$.

\subsection{Jets of pseudogroups containing a Lie group --  the groupoid structure}\label{section_groupoid_structure}

Below, we will investigate the general structure of the first jet groupoids of arbitrary pseudogroups containing a Lie group (Definition~\ref{def:generated-pseudogroup}). We begin by showing how this property impacts on the global structure of $J^0\Gamma$ and $J^1\Gamma$.

%Then, as a byproduct of the discussion we will formally prove that the isomorphic Lie groupoids $\Gamma_{\Sp(k,1)}$ and $J^1 \Gamma_{\rm cont}$ cannot be {\it Pfaffian} isomorphic, even though their isotropy groups are. 

%In the next section we will formally prove that the isomorphic Lie groupoids $\Gamma_{\Sp(k,1)}$ and $J^1 \Gamma_{\rm cont}$ cannot be {\it Pfaffian} isomorphic, showing even further that the standard picture with Lie groups and principal bundles is not sophisticated enough to detect the integrability issues discussed above, and that one needs an extra layer of ``hidden'' structure, namely the Pfaffian one.

%Let $\Gamma$ be a transitive Lie pseudogroup on a manifold $\BB$. We assume here that $\Gamma$ contains a Lie group $K$, see Definition~\ref{def:generated-pseudogroup}. In what follows, for any $k\in K$ we will denote by $T_k$ the (globally defined) element of $\Gamma$ induced by the action of $K$ on $\BB$. 

%{\color{blue}
%PROPOSAL FOR SUBSTITUTION IN THE NEXT SUBSECTION UP TO THE PFAFFIAN STATEMENT

%Let $\Gamma$ be a transitive Lie pseudogroup. The following discussion shows that $\Gamma$ containing a Lie group $K$ (Definition~\ref{def:generated-pseudogroup}) impacts the structure of $J^0\Gamma$ and $J^1\Gamma$. Below, we fix a point $x_0\in \BB$.

\begin{lemma}\label{lem:isomorphism_source_zero_jet}
Let $\Gamma$ be a transitive Lie pseudogroup on $\BB$. If $K$ is a Lie group acting freely and transitively on $\BB$, then it induces a canonical Lie groupoid isomorphism %\textcolor{red}{Note 1: this isomorphism is canonical (does not depend on the choice of a $x_0$), while the one for $J^1 \Gamma$ is not, we need to fix a base point. Note 2: this doesn't need the hypothesis of containing a Lie group $K$ (so theoretically it could be moved to the previous subsection, but probably it's better to keep it here)}
\[
J^0\Gamma \cong K\ltimes \BB.
\]
\end{lemma}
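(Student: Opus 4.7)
The plan is to write down the natural candidate isomorphism explicitly and then verify the groupoid axioms one at a time. First I would observe that, since $\Gamma$ is transitive (Remark \ref{rmk:transitivity_of_pseudo}), the map
\[
J^0\Gamma \hookrightarrow \BB \times \BB, \quad j^0_x \varphi \mapsto (\varphi(x),x)
\]
is a bijection onto $\BB \times \BB$, so $J^0\Gamma$ is canonically the pair groupoid on $\BB$ with source/target $(y,x) \mapsto x$ and $(y,x) \mapsto y$ and multiplication $(z,y)\cdot(y,x)=(z,x)$.

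Then I would define
\[
\Psi: K \ltimes \BB \longrightarrow J^0\Gamma, \quad (k,x) \longmapsto (k\cdot x, x),
\]
using the free and transitive $K$-action on $\BB$. The key step is to show $\Psi$ is a diffeomorphism: its inverse is
\[
\Psi^{-1}(y,x) = (\Phi(y,x), x),
\]
where $\Phi:\BB\times\BB\to K$ is the smooth divisor map from Remark \ref{properties_phi} (smoothness of $\Phi$ follows from the orbit-stabiliser argument used in the proof of Proposition \ref{transitive_pseudogroup_reductive_extension}, which makes $\Phi_x: K\to\BB$ into a diffeomorphism for each $x$, depending smoothly on $x$). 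Injectivity of $\Psi$ reflects freeness and surjectivity reflects transitivity.

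Finally I would verify that $\Psi$ is a morphism of Lie groupoids, which only requires matching the structure maps. On the level of source and target one has
\[
s_{J^0\Gamma}(\Psi(k,x)) = x = s_{K \ltimes \BB}(k,x), \qquad t_{J^0\Gamma}(\Psi(k,x)) = k\cdot x = t_{K \ltimes \BB}(k,x),
\]
and for composable arrows $(k_2, k_1 \cdot x)$ and $(k_1, x)$ in $K \ltimes \BB$,
\[
\Psi(k_2, k_1\cdot x)\cdot \Psi(k_1,x) = (k_2 k_1 \cdot x,\, k_1\cdot x)\cdot (k_1\cdot x, x) = (k_2 k_1\cdot x, x) = \Psi(k_2 k_1, x),
\]
so multiplication is preserved. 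Combined with the diffeomorphism property this yields the desired isomorphism. There is no real obstacle: the only delicate point is smoothness of $\Psi^{-1}$, which rests on the smoothness of the divisor map $\Phi$, and this was already essentially used to build the Cartan-type extension in Proposition \ref{transitive_pseudogroup_reductive_extension}.
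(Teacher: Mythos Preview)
Your proposal is correct and follows essentially the same approach as the paper: both identify $J^0\Gamma$ with the pair groupoid $\BB\times\BB$ via transitivity of $\Gamma$, and then use the map $(k,x)\mapsto (k\cdot x,x)$ (with inverse given by the divisor map) to identify $K\ltimes\BB$ with $\BB\times\BB$. The paper simply states these two isomorphisms and composes them, whereas you spell out the verification of the structure maps and the smoothness of the inverse more explicitly.
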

\begin{proof}
It is enough to compose the standard Lie groupoid isomorphism
\[
 J^0 \Gamma \to \BB \times \BB, \quad \quad j^0_x \varphi \mapsto (\varphi(x),x),
\]
induced by the transitivity of $\Gamma$, with the inverse of the Lie groupoid isomorphism
\[
K\ltimes \BB \to \BB\times \BB,\quad \quad (k,x) \mapsto (k\cdot x,x),
\]
induced by the free and transitive action of $K$.
%In fact, $J^0\Gamma$ is isomorphic to the pair groupoid $\BB\times \BB$ due to $\Gamma$ being transitive. The free and transitive action of $K$ induces the isomorphism of groupoids
%\[
%K\ltimes \BB \to \BB\times \BB,\quad (k,x) \to (k\cdot x,x).
%\] 
\end{proof}

Now, recall that, for any transitive Lie groupoid $\G \tto \BB$, the source-fibre $s^{-1}(x_0)$ at some $x_0 \in \BB$ defines a right principal $G$-bundle $t: s^{-1}(x_0) \to \BB$, for $G = \G_{x_0}$ the isotropy group at $x_0$.

%let us denote by $s^{-1}(x_0)$ the source fiber of $J^1\Gamma$ at $x_0\in \BB$, and by $G$ the isotropy group at $x_0$. Recall that $s^{-1}(x_0)$ is a right principal $G$-bundle over $\BB$.

\begin{lemma}\label{lem:source_trivial}
Let $\Gamma$ be a transitive Lie pseudogroup on $\BB$ containing a Lie group $K$ (Definition~\ref{def:generated-pseudogroup}). For any $x_0 \in \BB$, the principal $G$-bundle $s^{-1}(x_0) \to \BB$ admits a canonical global trivialisation induced by $K$. 
\end{lemma}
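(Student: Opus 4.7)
The plan is to build an explicit global section of $t \colon s^{-1}(x_0)\to \BB$ from the translations in $K$ and use it to trivialise the principal $G$-bundle. Concretely, for each $x \in \BB$ let $k(x) := \Phi(x, x_0) \in K$, so that $k(x) \cdot x_0 = x$ (see Remark~\ref{properties_phi}). By hypothesis $T_{k(x)} \in \Gamma$, hence one can define
\[
\sigma \colon \BB \to s^{-1}(x_0), \qquad \sigma(x) := j^1_{x_0} T_{k(x)} = j^1_{x_0} T_{\Phi(x,x_0)}.
\]
By construction $s(\sigma(x)) = x_0$ and $t(\sigma(x)) = T_{k(x)}(x_0) = x$, so $\sigma$ is indeed a section of $t \colon s^{-1}(x_0)\to \BB$. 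Smoothness follows from the smoothness of $\Phi \colon \BB \times \BB \to K$ (which in turn is a consequence of the free and transitive smooth $K$-action) and from the smoothness of the map $k \mapsto j^1_{x_0}T_k$, obtained by prolonging the smooth family $\{T_k\}_{k \in K}$ of diffeomorphisms.

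Next I would use $\sigma$ to define the trivialisation
\[
\Psi \colon \BB \times G \to s^{-1}(x_0), \qquad (x, g) \mapsto \sigma(x) \cdot g,
\]
where $\cdot$ is the groupoid multiplication in $J^1\Gamma$ (well-defined because $s(\sigma(x)) = x_0 = t(g)$ for every $g \in G$). The map $\Psi$ is smooth and $G$-equivariant from the right (by associativity of the groupoid multiplication), and it covers the identity of $\BB$ since $t(\sigma(x)\cdot g) = t(\sigma(x)) = x$. An explicit inverse is
\[
\Psi^{-1} \colon s^{-1}(x_0) \to \BB \times G, \qquad p \mapsto \bigl(t(p),\; \sigma(t(p))^{-1} \cdot p \bigr),
\]
which is well defined because $\sigma(t(p))^{-1} \cdot p$ has source $x_0$ (as $p$ does) and target $x_0$ (being the product of $\sigma(t(p))^{-1}$, whose target is $x_0$, with $p$), hence lies in $G$. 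Smoothness of $\Psi^{-1}$ follows again from smoothness of $\sigma$ and of the groupoid operations.

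Canonicity is immediate: the section $\sigma$ depends only on the choice of $x_0 \in \BB$ and on the embedding $K \hookrightarrow \Gamma$, with no further choices involved, since the divisor map $\Phi$ is uniquely determined by the free and transitive $K$-action on $\BB$. There is no substantial obstacle in this argument; the only point worth checking with care is that $k \mapsto j^1_{x_0}T_k$ defines a smooth map $K \to J^1\Gamma$, which reduces to the fact that the evaluation $K \times \BB \to \BB$ is smooth and hence prolongs smoothly to 1-jets. The same proof works verbatim with $J^1\Gamma$ replaced by $J^k\Gamma$ for any $k$, yielding canonical trivialisations at every jet order.
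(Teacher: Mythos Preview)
Your proof is correct and follows exactly the same approach as the paper: you write down the global section $x \mapsto j^1_{x_0} T_{\Phi(x,x_0)}$ coming from the $K$-translations, which is precisely what the paper does. The only difference is that you spell out the induced trivialisation $\Psi$, its inverse, and the smoothness and canonicity checks, whereas the paper simply states that exhibiting this section suffices.
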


\begin{proof}
It is enough to check that the following is a global section
\[
\BB\to s^{-1}(x_0),\quad \quad x \mapsto j^1_{x_0} T_{\Phi(x,x_0)},
\]
where $\Phi$ is the divisor from Remark \ref{properties_phi}.
%$\tilde{k}(x) \in K$ is the divisor of $x$ by $x_0$, i.e.\ the unique element of $K$ such that $\tilde{k}(x) \cdot x_0 = x$.
\end{proof}

It follows from the lemmas above that

\begin{proposition}\label{prop:isomorphism_source_first_jet}
Let $\Gamma$ be a transitive Lie pseudogroup on $\BB$ containing a Lie group $K$ (Definition~\ref{def:generated-pseudogroup}). For every $x_0 \in \BB$ there is a Lie groupoid isomorphism
\[
J^1\Gamma \cong (K\ltimes \BB) \times G,
\]
where $G = (J^1 \Gamma)_{x_0}$ is the isotropy group at $x_0$.
%where the right hand side is the direct prodect of the action groupoid $K\ltimes \BB\tto \BB$ with the bundle of Lie groups $G\times \BB\tto \BB$.
\end{proposition}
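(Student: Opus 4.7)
The plan is to assemble the two preceding lemmas into an explicit isomorphism, interpreting $(K\ltimes \BB) \times G$ as the Lie groupoid over $\BB$ obtained as the fibered direct product of $K \ltimes \BB \tto \BB$ with the trivial bundle of Lie groups $\BB \times G \tto \BB$, whose multiplication is $((k_1, k_2 \cdot x), g_1) \cdot ((k_2, x), g_2) = ((k_1 k_2, x), g_1 g_2)$.

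Using the canonical section $\sigma: \BB \to s^{-1}(x_0)$, $\sigma(x) := j^1_{x_0} T_{\Phi(x,x_0)}$ from Lemma \ref{lem:source_trivial}, and the divisor map $\Phi: \BB \times \BB \to K$ from Remark \ref{properties_phi}, I would define
\[
\Psi: J^1\Gamma \to (K\ltimes \BB) \times G, \quad h \mapsto \Big(\big(\Phi(t(h), s(h)), s(h)\big), \; \sigma(t(h))^{-1} \cdot h \cdot \sigma(s(h))\Big).
\]
The $G$-component makes sense because, for $h: x \to y$ in $J^1\Gamma$, the arrow $\sigma(y)^{-1} \cdot h \cdot \sigma(x)$ goes $x_0 \to x_0$, hence lies in $G = (J^1\Gamma)_{x_0}$. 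Smoothness of $\Psi$ is immediate since $\sigma$ is smooth and $\Phi$ is smooth (being the smooth inverse of the diffeomorphism $\Phi_x$ in its first argument, depending smoothly on both entries).

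Next I would verify that $\Psi$ is a groupoid morphism. Source and target compatibility are exactly the content of Lemma \ref{lem:isomorphism_source_zero_jet} combined with $\Phi(y,x)\cdot x = y$. For multiplicativity, given $h_1: y\to z$ and $h_2: x\to y$, the $K$-component is correct because of the identity $\Phi(z,y)\,\Phi(y,x) = \Phi(z,x)$ from Remark \ref{properties_phi}, while the $G$-component is correct because the intermediate factors telescope:
\[
\big(\sigma(z)^{-1}\, h_1\, \sigma(y)\big)\cdot \big(\sigma(y)^{-1}\, h_2\, \sigma(x)\big) = \sigma(z)^{-1}\,(h_1 h_2)\,\sigma(x).
\]

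Finally I would exhibit the smooth inverse
\[
\Psi^{-1}\big((k,x), g\big) := \sigma(k\cdot x) \cdot g \cdot \sigma(x)^{-1},
\]
which is a well-defined composition in $J^1\Gamma$ (source/target match because $t(\sigma(x))=x$ and $s(\sigma(k\cdot x))=x_0=t(g)$). Using the identity $\Phi(k\cdot x, x) = k$, valid by freeness of the $K$-action, a direct computation shows $\Psi \circ \Psi^{-1} = \id$ and $\Psi^{-1} \circ \Psi = \id$. There is no real obstacle here: everything reduces to a bookkeeping exercise with $\Phi$ and $\sigma$. The only point deserving care is fixing conventions for the groupoid $(K\ltimes \BB) \times G$ and for the order of multiplication, which once chosen makes the verification routine.
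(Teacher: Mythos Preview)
Your proof is correct and, at its core, coincides with the paper's argument: the paper invokes the standard fact that a transitive Lie groupoid is isomorphic to the gauge groupoid of its source fibre, then uses the trivialisation of $s^{-1}(x_0)$ from Lemma~\ref{lem:source_trivial} to identify $\Gaug(s^{-1}(x_0))\cong (\BB\times\BB)\times G$, and finally Lemma~\ref{lem:isomorphism_source_zero_jet} to pass to $(K\ltimes\BB)\times G$. Your map $h\mapsto \big((\Phi(t(h),s(h)),s(h)),\,\sigma(t(h))^{-1}\,h\,\sigma(s(h))\big)$ is precisely the gauge-groupoid isomorphism written out explicitly in terms of the section~$\sigma$, and it matches the formula $\Psi_{x_0}$ that the paper records immediately after its proof; the paper simply chose the more conceptual route and remarked that the direct verification ``is lengthier and involves the properties of $\Phi_{x_0}^{-1}$'', which is exactly what you carry out cleanly via the telescoping of the $\sigma$-factors.
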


Here $(K\ltimes \BB)\times G \tto \BB$ denotes the direct product between the action groupoid $K\ltimes \BB \tto \BB$ and the group bundle $G\times \BB \tto \BB$. More explicitly, its multiplication is
\[
(k_2,k_1\cdot x,g_2)\cdot (k_1,x,g_1) = (k_2k_1,x,g_2g_1),
\]
while source and target are given by the source and target of $K\ltimes \BB$:
\[
s: (k,x,g)\mapsto x, \quad \quad t: (k,x,g) \mapsto k\cdot x.
\] 

\begin{proof}
Recall that, since the Lie groupoid $J^1\Gamma \tto \BB$ is transitive, it is isomorphic to the gauge groupoid
\[
\Gaug (s^{-1}(x_0) ) :=  \Big( s^{-1}(x_0)\times s^{-1}(x_0) \Big) /G
\]
of its $s$-fibre $s^{-1}(x_0)$. From Lemma~\ref{lem:source_trivial} it follows that
\[
\Gaug (s^{-1}(x_0) ) \cong \Big( (\BB \times G) \times (\BB \times G) \Big) / G \cong (\BB\times \BB)\times G,
\]
where the right-hand side is the direct product of the pair groupoid of $\BB$ with the bundle of Lie groups $G\times \BB\tto \BB$. The claim follows from the proof of Lemma~\ref{lem:isomorphism_source_zero_jet}.
\end{proof}

The approach with the gauge groupoid allows us to avoid long computations. However, in order to investigate the Pfaffian structure of $J^1 \Gamma$, we need to write down an explicit formula for an isomorphism 
%from Proposition~\ref{prop:isomorphism_source_first_jet} 
in terms of the point $x_0\in \BB$ and the action of $K$ on $\BB$: 
\[
\Psi_{x_0}: J^1\Gamma \to (K\ltimes \BB)\times G, \quad \quad 
j^1_x\varphi \mapsto \Big( \Phi_x^{-1}(\varphi(x)) , x, d_{x_0} ( T^{-1}_{\Phi^{-1}_{x_0}(\varphi(x))} \circ \varphi \circ T_{\Phi^{-1}_{x_0}(x)}) \Big).
%d_{\varphi(x))}T^{-1}_{\Phi_{K}(\varphi(x))}\cdot d_x\varphi \cdot d_{\Phi_K(e)} T_{\Phi_{K}^{-1}(x)}),
\]
%and inverse
%\[
% (k,x,j^1_{x_0}\phi) \mapsto j^1_x \tilde{\varphi}, \quad \quad \tilde{\varphi}:= T_{\Phi^{-1}_{x_0}(k \cdot x)} \circ \phi \circ T^{-1}_{\Phi^{-1}_{x_0}(x)}.
%\]
%where
%\[
% \Phi_{x_0}: K \to \BB, \quad \quad k \mapsto k \cdot x_0
%\]
%is the diffeomorphism introduced in Proposition \ref{transitive_pseudogroup_reductive_extension}.
The proof that $\Psi_{x_0}$ is a well defined bijection which preserves the groupoid multiplication is lengthier and it involves the properties of $\Phi_{x_0}^{-1}$ from Remark \ref{properties_phi}.
%, in particular the identities
% \[
%  ( \Phi^{-1}_x (y) )^{-1} = \Phi^{-1}_y (x), \quad \quad \Phi^{-1}_y(z) \Phi^{-1}_x(y) = \Phi^{-1}_x(z),
%\]
%for every $x,y,z \in \BB$.

\begin{remark}\label{rk_formula_isomorphism}
 In the same spirit of the proof of Lemma \ref{lem:isomorphism_source_zero_jet}, the formula written above for $\Psi_{x_0}$ is better understood as the composition of the Lie groupoid isomorphism (depending on $x_0$)
 \[
  \tilde{\Psi}_{x_0}: J^1 \Gamma \to (\BB \times \BB) \times G, \quad \quad j^1_x \varphi \mapsto (\varphi(x),x,d_{x_0} ( T^{-1}_{\Phi^{-1}_{x_0}(\varphi(x))} \circ \varphi \circ T_{\Phi^{-1}_{x_0}(x)})),
 \]
with the {\it canonical} Lie groupoid isomorphism
\[
 \Theta: (\BB \times \BB) \times G \to (K \ltimes \BB) \times G, \quad \quad (y,x,g) \mapsto (\Phi^{-1}_x(y),x,g).
\]
 This point of view will be used in the next section.
\end{remark}

\subsection{Jets of pseudogroups containing a Lie group -- the Pfaffian structure}

We examine now the natural Pfaffian structure of $(K \ltimes \BB) \times G$ and relate it to the original one of of $J^1 \Gamma$ coming from jet bundles (Example \ref{exm:jet-groupoids}). 

\begin{proposition}\label{prop_Pfaffian_isomorphism_zero_jet}
Let $K$ be a Lie group acting freely and transitively on $\BB$. Then the Lie groupoid $K \ltimes \BB \tto \BB$ is a full Pfaffian groupoid when equipped with the Pfaffian form (in fact, a flat multiplicative connection)
\[
d\pr_1 \in \Omega^1(K\ltimes \BB, T^{\rm vert}(K\ltimes \BB)),\quad d_{(k,x)}\pr_1 (v_1,v_2) = v_1.
\]

Consider now a Lie subgroup  $G$ of $\GL(T_{x_0}\BB)$, for some $x_0 \in \BB$. Then the Lie groupoid $(K \ltimes \BB) \times G \tto \BB$ is a full Pfaffian groupoid, together with the representation on $(\Phi^{-1}_{x_0})^*TK \to \BB$
 \[
  (K \ltimes \BB \times G) \times (\Phi^{-1}_{x_0})^*TK \to (\Phi^{-1}_{x_0})^*TK, \quad \quad ((k,x,A), v) \mapsto \bar{A}(v),
 \]
%  {\color{blue} This is simply $A$ if I read in on $T_{x_o}\BB$, right? See also after the examples below} \textcolor{red}{It is, because $d \Phi$ and its inverse (taken in the right points) moves precisely from/to $T\BB$ to/from $TK$}
 where
 \[
  \bar{A} := d_{x_0} \Phi^{-1}_{\Phi^{-1}_{k \cdot x} (x_0) \cdot x_0 } \circ A \circ d_{\Phi^{-1}_{x_0}(x)} \Phi_{\Phi^{-1}_x(x_0) \cdot x_0}: T_{\Phi^{-1}_{x_0}(x)} K \to T_{\Phi^{-1}_{x_0}(k \cdot x)} K,
 \]
and the differential form
%
%{\color{blue}I believe the formula is incorrect. In fact, if you try to check, left translations by elements of $K$, which are tangent to vectors $(0,v,0)$ are not holonomic! In the translation example you see why: it is correct if you take the direct product with the pair groupoid. Can you check whether this formula still needs to be composed with the isomorphism with $K\ltimes \BB$? Something involving an action of $g$ on the image of $v_1$ in $T\BB$ has more chances to work I think.
%
%We need to get vectors of the form $(0,v_2,0)$ (i.e. those tg to bisections that are left multiplications) and I think that the meaning of being in the kernel is that up to order one the map is a left multiplication followed by an action of $G$}
%\[
% \tilde{\omega} \in \Omega^1 (K \ltimes \BB \times G, T^{\rm vert}(K \ltimes \BB)), \quad \quad \tilde{\omega}_{(k,x,A)}(v_1,v_2,v_3) = (v_1 - d_{k \cdot x} \Phi^{-1}_x ( \tilde{A}(v_2)), 0),
%\]
\[
 \omega \in \Omega^1 (K \ltimes \BB \times G, T^{\rm vert}(K \ltimes \BB)), \quad \quad \omega_{(k,x,A)}(v_1,v_2,v_3) := d_{(k \cdot x, x)} \Phi \Big(d_{(k,x)}m^K (v_1, v_2) - \tilde{A}(v_2) , 0_x \Big),
\]
where
\[
\tilde{A}:= d_{x_0} T_{\Phi^{-1}_{x_0}(k \cdot x)} \circ A \circ d_x T^{-1}_{\Phi^{-1}_{x_0}(x)}: T_x \BB \to T_{k \cdot x}\BB,
\]
using the notations introduced in Remark \ref{properties_phi}.
%while $m^K$ denotes the $K$-action on $\BB$ and $\Phi$ denotes its divisor map, i.e.\
%\[
% \Phi: \BB \times \BB \to K, \quad \quad (y,x) \mapsto [y: x] = \Phi^{-1}_x(y), \quad \quad \text{s.t. } \Phi(y,x) \cdot x = y.
%\]
\end{proposition}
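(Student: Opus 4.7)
The plan is to verify the four axioms of Pfaffian groupoid (Definition~\ref{def_Pfaffian_groupoid-form}) for each of the two structures in turn, starting with the simpler one on $K\ltimes\BB$.

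First, for $K\ltimes\BB$ with form $d\pr_1$: under the canonical decomposition $T(K\ltimes\BB)=TK\oplus T\BB$, the kernel of $d\pr_1$ is $0\oplus T\BB$ while $\ker(ds)=TK\oplus 0$, so the symbol $\ker(d\pr_1)\cap\ker(ds)$ vanishes and is trivially involutive. Since $K$ acts freely and transitively on $\BB$, each $T_k$ is a diffeomorphism and $dT_k$ is invertible, forcing $\ker(d\pr_1)\cap\ker(dt)=0$ as well; this gives axiom~(iv). Constant rank is automatic. For multiplicativity, observe that $\pr_1\circ m=m^K\circ(\pr_1\times\pr_1)$ by inspection of the formula $(k_2,k_1 x)\cdot(k_1,x)=(k_2 k_1,x)$; differentiating this relation and decomposing the differential of group multiplication in $K$ into left and right translations yields the multiplicativity cocycle for $d\pr_1$ directly. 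The vanishing symbol matches the parenthetical assertion that $d\pr_1$ is a flat multiplicative connection.

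Second, for $(K\ltimes\BB)\times G$, I would begin by verifying that $\bar A$ defines a $G$-representation on $(\Phi^{-1}_{x_0})^*TK\to\BB$. Using the identity $\Phi^{-1}_{x_0}(x)\cdot\Phi^{-1}_x(x_0)=e_K$ from Remark~\ref{properties_phi}, one checks that the two differentials appearing in the definition of $\bar A$ depart from and land in the correct tangent spaces to yield $\bar A:T_{\Phi^{-1}_{x_0}(x)}K\to T_{\Phi^{-1}_{x_0}(kx)}K$; the representation property $\overline{A_2 A_1}=\bar A_2\circ \bar A_1$ follows from cancellations of $\Phi$-terms along the composition. The codomain $T^{\rm vert}(K\ltimes\BB)$ of $\omega$ gets identified with $t^*((\Phi^{-1}_{x_0})^*TK)$ using the relation $\Phi^{-1}_{x_0}(kx)=k\cdot\Phi^{-1}_{x_0}(x)$ and right translation in $K$.

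With these identifications in place, most of the Pfaffian axioms for $\omega$ are direct: constant rank follows because $d_{(kx,x)}\Phi(\cdot,0_x)$ is onto $T_kK$; since $\omega$ does not involve the $TG$-component $v_3$, the symbol $\ker(\omega)\cap\ker(ds)$ is the subbundle tangent to $G$, which is involutive, and axiom~(iv) holds by a symmetric computation exploiting the invertibility of $d T_k$. The main obstacle is multiplicativity of $\omega$: expanding both sides of the cocycle along a composable pair $((k_2,k_1x,A_2),(k_1,x,A_1))$, the left-hand side involves differentiating the group multiplication (already handled in the first paragraph for the $K$-component), while the right-hand side involves the $\bar A_2$-action on $\pr_1^*\omega$. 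The balance will hinge on the chain-rule identity $\widetilde{A_2 A_1}=\tilde A_2\circ\tilde A_1$ (analogous to the one for $\bar A$), which ensures that the subtracted term $\tilde A(v_2)$ behaves multiplicatively under composition. An alternative and arguably cleaner route is to pull back the Cartan form $\omega^1$ of $J^1\Gamma$ (Example~\ref{exm:jet-groupoids}) through the Lie groupoid isomorphism $\Psi_{x_0}$ of Proposition~\ref{prop:isomorphism_source_first_jet} and compute that $\Psi_{x_0}^*\omega=\omega^1$, thereby transferring Pfaffianity wholesale; however, this approach would partly anticipate the content of the next proposition.
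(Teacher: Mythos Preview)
Your proposal is correct and proceeds along the same lines as the paper's proof, which merely records that for $K\ltimes\BB$ one checks that $\ker(d\pr_1)$ is closed under multiplication (equivalently your cocycle argument) and that the symbol vanishes, while the second part is declared ``a long but straightforward computation.'' Your outline supplies considerably more detail than the paper's terse treatment, and your observation that the pullback route via $\Psi_{x_0}$ would anticipate Proposition~\ref{prop_Pfaffian_isomorphism_first_jet} is exactly right: the paper indeed separates the two, proving the present proposition by direct computation so that it applies to an arbitrary $G\subset\GL(T_{x_0}\BB)$, not only to isotropy groups of pseudogroups containing $K$.
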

\begin{proof}
For the first part it is enough to check that the kernel of $d\pr_1$ is closed under multiplication. Moreover, the symbol bundle (Definition \ref{def_Pfaffian_groupoid-form}) vanishes, i.e.\ the kernel of $d\pr_1$ is complementary to the kernel of $ds$, and defines therefore a flat (multiplicative) connection.

The second part is a long but straightforward computation.
\end{proof}

%The (translations by) elements of $K$ generate the holonomic bisections  of both Pfaffian groupoids $J^0 \Gamma$ and $J^1 \Gamma$.

\begin{proposition}\label{prop_holonomic_bisections_jets}
In the setting of Proposition \ref{prop_Pfaffian_isomorphism_zero_jet}, the following are holonomic bisections (Definition \ref{def_Pfaffian_groupoid-form}) of
$((K \ltimes \BB) \times G \tto \BB, \omega)$:
\[
 \sigma_k: \BB \to (K \ltimes \BB) \times G, \quad \quad x \mapsto (k,x,e_G) \quad \quad \forall k \in K.
\]
Similarly, the following are holonomic bisections of $(K \ltimes \BB \tto \BB, d\pr_1)$:
\[
 \pr \circ \sigma_k: \BB \to K \ltimes \BB, \quad \quad x \mapsto (k,x) \quad \quad \forall k \in K.
\]
\end{proposition}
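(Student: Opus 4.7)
The plan is to verify each of the three properties (section of $s$, injectivity of $t \circ \sigma_k$, and holonomicity) directly, relying on the algebraic identities for $\Phi$ collected in Remark \ref{properties_phi} and on the explicit formulas for $\omega$ and $d\pr_1$ from Proposition \ref{prop_Pfaffian_isomorphism_zero_jet}. First I would check the easy fact that $\sigma_k$ is a bisection: since $s(k,x,e_G)=x$ we have $s \circ \sigma_k = \id_\BB$, while $t \circ \sigma_k (x) = k \cdot x = T_k(x)$ is a global diffeomorphism. The same check works verbatim for $\pr \circ \sigma_k$, using the source/target on $K \ltimes \BB$.

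The second statement is the easier one, so I would address it next. A direct differentiation gives $d_x(\pr \circ \sigma_k)(v_2) = (0_k, v_2) \in T_k K \oplus T_x \BB$, and then $d\pr_1(0_k,v_2)=0$ from the very definition of the connection on $K \ltimes \BB$. Hence $(\pr\circ\sigma_k)^* d\pr_1 = 0$.

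For the holonomicity of $\sigma_k$, the computation is slightly more involved but remains algebraic. Since $d_x \sigma_k(v_2) = (0_k, v_2, 0_{e_G})$, we need to show
\[
\omega_{(k,x,e_G)}(0_k, v_2, 0_{e_G}) = d_{(k\cdot x,x)}\Phi\Big(d_{(k,x)} m^K(0_k,v_2) - \widetilde{e_G}(v_2), 0_x\Big) = 0.
\]
By the identity $d_{(k,x)} m^K(0_k,v_2) = d_x T_k(v_2)$ from Remark \ref{properties_phi}, it suffices to verify that $\widetilde{e_G}(v_2)$ (in which the $G$-component $A$ is the identity endomorphism of $T_{x_0}\BB$) equals $d_x T_k(v_2)$. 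This reduces to a Lie-group-theoretic identity: using that translations multiply as $T_a \circ T_b = T_{ab}$, one computes
\[
T_{\Phi^{-1}_{x_0}(k\cdot x)} \circ T^{-1}_{\Phi^{-1}_{x_0}(x)} = T_{\Phi^{-1}_{x_0}(k\cdot x)\,\Phi^{-1}_x(x_0)} = T_{\Phi^{-1}_x(k\cdot x)} = T_k,
\]
where the middle equality uses $(\Phi^{-1}_{x_0}(x))^{-1} = \Phi^{-1}_x(x_0)$ and the cocycle identity $\Phi^{-1}_{y}(z)\Phi^{-1}_x(y) = \Phi^{-1}_x(z)$ with $y=x_0$, $z=k\cdot x$. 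Differentiating at $x$ yields $\widetilde{e_G} = d_x T_k$, so the two terms inside $d\Phi$ cancel and $\sigma_k^*\omega = 0$.

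There is no real obstacle: the verification is entirely mechanical once the two identities from Remark \ref{properties_phi} are invoked. The only point requiring a bit of care is keeping track of base points in the chain of differentials defining $\widetilde{A}$, so that the three linear maps being composed really match up with the intended source and target in $T\BB$; aside from this bookkeeping, the whole proof is one line of algebra.
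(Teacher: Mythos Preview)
Your proof is correct and follows essentially the same route as the paper: both arguments reduce the holonomicity of $\sigma_k$ to the identity $\widetilde{e_G}=d_xT_k$, obtained by composing translations and applying the inversion and cocycle identities for $\Phi^{-1}$ from Remark~\ref{properties_phi}. You simply spell out the bisection check and the translation computation in more detail than the paper does.
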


\begin{proof}
Everything is immediate besides the proof that $\sigma_k$ is holonomic for every $k \in K$. We notice first that, if $A = e_G$, then
 \[
\tilde{A} = d_{x_0} T_{\Phi^{-1}_{x_0}(k \cdot x)} \circ d_x T^{-1}_{\Phi^{-1}_{x_0}(x)} = d_x T_k
\]
where we used the identities for $\Phi^{-1}$ from Remark \ref{properties_phi} %computed
%\[
%\Phi^{-1}_{x_0}(k \cdot x) (\Phi^{-1}_{x_0}(x))^{-1} = \Phi^{-1}_{x_0}(k \cdot x) \Phi^{-1}_x (x_0) = \Phi^{-1}_x (k \cdot x) = k
%\]
%using the equalities from Remark \ref{properties_phi}. 
Using also the identity for $d_{(k,x)} m^K (0,v) = d_x T_k (v)$ from the same remark, we conclude that 
\[
 (\sigma_k^*\omega)_x(v) = \omega_{(k,x,e_G)} (0,v,0) = d_{(k \cdot x, x)} \Phi \Big( d_{(k,x)} m^K (0,v) - \tilde{A}(v), 0_x \Big) = 0. \qedhere
\]
\end{proof}

%We can pullback the multiplicative connection from Proposition~\ref{prop_Pfaffian_isomorphism_zero_jet} to $J^0\Gamma$.
%Notice that, while the isomorphism $J^0\Gamma\cong K\ltimes \BB$ does not need $K$ to be an element of $\Gamma$
Observe that, given a transitive Lie pseudogroup $\Gamma$, the holonomic bisections of $(K \ltimes \BB, d\pr_1)$ are not necessarily 0-jets of elements of $\Gamma$, under the isomorphism $J^0\Gamma \cong K\ltimes \BB$ from Lemma~\ref{lem:isomorphism_source_zero_jet}. However, this happens precisely when $\Gamma$ contains $K$. On the other hand, for $((K \ltimes \BB) \times G, \omega)$ we need such hypothesis already to be able to write the isomorphism $J^1 \Gamma \cong (K \ltimes \BB) \times G$ from Lemma \ref{prop:isomorphism_source_first_jet}. 
%In such case, not only the holonomic bisections $((K \ltimes \BB) \times G, \omega)$ correspond to 1-jets of elements of $\Gamma$, but 

%Let us now move to the first jet. 
\begin{proposition}\label{prop_Pfaffian_isomorphism_first_jet}
 Let $\Gamma$ be a transitive Lie pseudogroup on a manifold $\BB$, which contains a Lie group $K$ (Definition~\ref{def:generated-pseudogroup}), and let $G$ be the isotropy group of $J^1 \Gamma$ at $x_0$. Then one can promote the isomorphism
 \[
 \Psi_{x_0}: J^1\Gamma \to (K\ltimes \BB)\times G  
 \]
 given below Proposition~\ref{prop:isomorphism_source_first_jet}  to an isomorphism of Pfaffian groupoids.
 
Moreover, the canonical projection 
\[
\pr: (K\ltimes \BB)\times G \to K\ltimes \BB, \quad \quad (k,x,g) \mapsto (k,x)
\]
is a groupoid morphism, and its canonical section
\[
\sigma: K\ltimes \BB \to (K\ltimes \BB)\times G,\quad (k,x) \mapsto (k,x,e)
\]
is a Pfaffian groupoid morphism (with respect to the structures of Proposition~\ref{prop_Pfaffian_isomorphism_zero_jet}).
\end{proposition}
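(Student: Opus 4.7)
The plan is to dispatch the three claims in increasing order of difficulty, handling the algebraic statements about $\pr$ and $\sigma$ first, then tackling the Pfaffian isomorphism statement which is the heart of the proposition.

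First, that $\pr: (K \ltimes \BB) \times G \to K \ltimes \BB$ is a Lie groupoid morphism is immediate from the multiplication in the product groupoid: projecting $(k_2,k_1 \cdot x,g_2)\cdot(k_1,x,g_1) = (k_2 k_1, x, g_2 g_1)$ onto the first two components yields precisely $(k_2 k_1, x) = (k_2, k_1 \cdot x)\cdot (k_1, x)$, and source, target, and unit are preserved by construction. The same computation shows $\sigma$ preserves multiplication (since $e_G \cdot e_G = e_G$) and is clearly a section of $\pr$.

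Second, to see that $\sigma$ is a \emph{Pfaffian} morphism I would substitute directly into the explicit formula for $\omega$. One has $d_{(k,x)}\sigma(v_1,v_2) = (v_1, v_2, 0_{e_G})$, and for $A = e_G$ the map $\tilde A$ reduces to $d_x T_k$. Using the splitting $d_{(k,x)}m^K(v_1,v_2) = d_{(k,x)}m^K(v_1,0_x) + d_x T_k(v_2)$ from Remark \ref{properties_phi}, the argument of $d\Phi$ in the definition of $\omega$ collapses to $d_{(k,x)} m^K(v_1,0_x)$, and the final identity of that remark gives $d_{(k \cdot x, x)}\Phi(d_{(k,x)}m^K(v_1,0_x),0_x) = v_1 = d\pr_1(v_1,v_2)$. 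The compatibility with the coefficient representations is automatic, since the image of $\sigma$ sits where the $G$-factor is $e_G$ and thus acts trivially.

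The main claim is that $\Psi_{x_0}^*\omega = \omega^1$, modulo the canonical identification of coefficient bundles induced by $d\Phi_{x_0}: TK \to T\BB$. Rather than compute the pullback pointwise, my strategy is to exploit multiplicativity: a multiplicative form on a Lie groupoid is determined by its restriction to the Lie algebroid $A$ together with the induced representation on its coefficient bundle (this is the ``infinitesimally multiplicative'' data underlying Proposition \ref{prp:coefficent-space-is-alg}). Thus, it suffices to check (i) that the symbol bundles correspond, namely the vertical bundle of $\pi^1_0: J^1\Gamma \to J^0\Gamma$ is sent by $d\Psi_{x_0}$ to $\{0\}\times 0 \times \g \subset T_{(e,x,e)}((K \ltimes \BB)\times G)$, which is clear from the explicit formula; and (ii) that on the Lie algebroid quotients both forms coincide with the canonical projection $A \to A/\g$, an isomorphism onto the coefficient bundle (Proposition \ref{prp:coefficent-space-is-alg}(iv)). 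The latter uses crucially that, at the $0$-jet level, $\Psi_{x_0}$ specialises to the isomorphism $J^0\Gamma \cong K \ltimes \BB$ of Lemma \ref{lem:isomorphism_source_zero_jet}, which intertwines the representations.

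As an independent verification (and to guide the matching of representations), I would exploit holonomic bisections. Since $T_k \in \Gamma$, the bisection $j^1 T_k$ is holonomic for $(J^1\Gamma, \omega^1)$. A direct calculation, analogous to the one proving $\Psi_{x_0}$ is well-defined, gives $\Psi_{x_0}\circ j^1 T_k = \sigma_k$, which by Proposition \ref{prop_holonomic_bisections_jets} is holonomic for $\omega$; this $\dim(\BB)$-parameter family of matching holonomic bisections, together with the matching of symbols, generates enough of the Cartan distribution to confirm the identification. The main obstacle is the bookkeeping needed to verify that the conjugation by the translations $T_{\Phi^{-1}_{x_0}(\cdot)}$ appearing in the definition of the coefficient representation of $\omega$ is exactly what is required to intertwine the natural $(K \ltimes \BB)\times G$-action on $(\Phi^{-1}_{x_0})^*TK$ with the canonical $J^1\Gamma$-action on $VJ^0\Gamma \cong A/\g$ under $d\Psi_{x_0}$; once this identification is in place, multiplicativity forces equality of the two Pfaffian forms.
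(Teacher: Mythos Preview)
Your treatment of $\pr$ and $\sigma$ is correct and essentially identical to the paper's: the paper also observes that the groupoid-morphism property is immediate and then computes $\sigma^*\omega$ directly, using the same identities from Remark~\ref{properties_phi} that you invoke.

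The gap is in your argument for the main claim. Your assertion that ``a multiplicative form on a Lie groupoid is determined by its restriction to the Lie algebroid $A$ together with the induced representation on its coefficient bundle'' is not correct as stated. The infinitesimal counterpart of a multiplicative $1$-form is its Spencer operator, which consists of the bundle map $l\colon A\to E$ \emph{and} a connection-like operator $D\colon\Gamma(A)\to\Omega^1(\BB,E)$; checking only that the symbols and the induced map $A\to A/\g$ agree fixes $l$ but says nothing about $D$. Two multiplicative forms with identical $l$ and identical coefficient representation can differ (their difference is a multiplicative form with $l=0$, and there are many such). Even if you did verify the full Spencer operator, the reconstruction result you are appealing to requires source-connectedness, i.e.\ that $K\times G$ be connected, which is not assumed. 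Your supplementary argument via holonomic bisections does not close the gap either: the family $\{\sigma_k\}_{k\in K}$ only passes through points with $g=e_G$, so it constrains $\ker(\omega)$ only on a proper submanifold of the arrow space.

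The paper bypasses all of this by a direct pointwise computation. It factors $\Psi_{x_0}=\Theta\circ\tilde\Psi_{x_0}$ as in Remark~\ref{rk_formula_isomorphism}, checks that $\tilde\Psi_{x_0}$ carries the Cartan form $\omega^1$ to the explicit form $\tilde\omega_{(y,x,A)}(v_1,v_2,v_3)=(v_1-\tilde A(v_2),0)$ on $(\BB\times\BB)\times G$, and then checks that $\Theta^*\omega=d\Phi\circ\tilde\omega$. This is purely computational and does not rely on any reconstruction-from-infinitesimal-data principle, so no connectedness hypothesis is needed. If you want to salvage your approach, you would need either to compute the $D$-component of both Spencer operators explicitly (at which point the direct computation is no harder), or to exhibit enough holonomic bisections through \emph{every} arrow, not just those over $G=\{e\}$.
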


%Last, the interaction between the Pfaffian structure on $J^1\Gamma$ and the Pfaffian structure on $J^0\Gamma$ resulting from pulling pack the connection from Proposition~\ref{prop_Pfaffian_isomorphism_zero_jet} is described in the following

Indeed, as already shown in Proposition \ref{prop_holonomic_bisections_jets}, holonomic bisections of $K \ltimes \BB$ are sent by $\sigma$ to holonomic bisections of $(K \ltimes \BB) \times G$ (and this would have been enough to conclude that $\sigma$ is a Pfaffian morphism, since the Pfaffian form on $K \ltimes \BB$ is a flat multiplicative connection).

\begin{proof}
 One checks first that the isomorphism $\tilde{\Psi}_{x_0}: J^1 \Gamma \to (\BB \times \BB) \times G$ (from Remark \ref{rk_formula_isomorphism}) transports the Cartan form $\omega^1$ of $J^1 \Gamma \subset J^1 (\BB,\BB)$ (see Section \ref{subs:cart-dist} for the explicit expression) to the following Pfaffian form $\tilde{\omega}$ on $(\BB \times \BB) \times G$:
 \[
  \tilde{\omega}_{(k \cdot x,x,A)}(v_1,v_2,v_3) := (v_1 - \tilde{A}(v_2), 0).
\] 
 In turn, when further composing with the isomorphism
\[
 \Theta: (\BB \times \BB) \times G \to (K \ltimes \BB) \times G, \quad \quad (y,x,g) \mapsto (\Phi(x,y),x,g),
\]
one recovers precisely $\omega$, i.e.\ $\Theta^* \omega = d\Phi \circ \tilde{\omega}$. The first claim follows from $\Psi_{x_0} = \Theta \circ \tilde{\Psi}_{x_0}$.
%one recovers the isomorphism $\Psi_{x_0}$, and the form $\omega$ above is sent precisely to the Cartan form of $K \ltimes \BB \times G$, viewed as a submanifold of $J^1 (K, \BB)$, which is in turn just $\tilde{\omega}$.
% 
% 
% let us use the explicit formula given below Proposition~\ref{prop:isomorphism_source_first_jet}. We see that $j^1_x \tilde{\varphi} \in J^1 \Gamma$ is sent by $\Psi_{x_0}^{-1}$ precisely to $(k,x,j^1_{x_0} \phi) \in K \ltimes \BB \times G$. This yields $\tilde{A} = d_x\tilde{\varphi}$, so that the explicit expression of the Cartan form of $J^1 \Gamma \subset J^1 (\BB,\BB)$ from Section \ref{subs:cart-dist} is transported to $(\BB \times \BB) \times G$ as
%\[
%  \omega_{(k \cdot x,x,j^1_{x_0}\phi)}(v_1,v_2,v_3) = (v_1 - d_x \tilde{\varphi}(v_2), 0).
%\] 
%When further composing with the isomorphism
%\[
% \BB \times \BB \times G \to K \ltimes \BB \times G, \quad \quad (y,x,g) \mapsto (\Phi(x,y),x,g),
%\]
%one recovers the isomorphism $\Psi_{x_0}$, and the form $\omega$ above is sent precisely to the Cartan form of $K \ltimes \BB \times G$, viewed as a submanifold of $J^1 (K, \BB)$, which is in turn just $\tilde{\omega}$.

For the second part, it is straighforward to see that both $\pr$ and $\sigma$ are groupoid morphism, so it is enough to check that $\sigma$ preserves the Pfaffian forms:
\[
 (\sigma^*\omega)_{(k,x)} (v_1,v_2) = d_{(k \cdot x, x)} \Phi \Big(d_{(k,x)}m^K (v_1,v_2) - d_x T_k (v_2), 0_x \Big) =
 \]
 \[
 = d_{(k \cdot x, x)} \Phi \Big(d_{(k,x)}m^K (v_1,v_2) - d_{(k,x)} m^K (0_k,v_2), 0_x \Big) =
 \]
 \[
 = d_{(k \cdot x, x)} \Phi \Big( d_{(k,x)}m^K (v_1,0_x), 0_x \Big) = v_1 = d_{(k,x)} \pr_2 (v_1,v_2),
\]
where we used the equalities from Remark \ref{properties_phi}.
%In the proof we used the equalities
%\[
%d_{(k,x)}m^K (0,v_2) = d_x T_k (v_2), \quad \quad d_{(k,x)}m^K (v_1,0) = d_e\Phi_x (v1)
%\]
%\[
% d_{(k,x)}\Phi (dm^K (v_1,0_x), 0_x) = v_1.
%\]
\end{proof}

%\textcolor{red}{the symbol space of $\tilde{\omega}$ from Proposition~\ref{prop_Pfaffian_isomorphism_first_jet} coincide with the kernel of $\pr$}

To conclude, we think that already the (groupoid-theoretical!) structure emerging from subsection \ref{section_groupoid_structure} deserves a separate investigation. Furthermore, we believe that the {\it Pfaffian} structure emerging from this subsection is deeply related to the flatness introduced in subsection \ref{section_integrability_Cartan_bundles}, and can therefore be exploited to obtain further results on geometric structures.

We postpone further investigations to \cite{INPROGRESS}.

\addcontentsline{toc}{section}{References}% adds the bibliography in the table of contents

\printbibliography

\end{document}